\numberwithin{equation}{section}
\theoremstyle{definition}\newtheorem{thm}{Theorem}[section]
\theoremstyle{definition}\newtheorem{lem}[thm]{Lemma}
\theoremstyle{definition}\newtheorem{prop}[thm]{Proposition}
\theoremstyle{definition}\newtheorem{conj}[thm]{Conjecture}
\theoremstyle{definition}\newtheorem{cor}[thm]{Corollary}
\theoremstyle{definition} \newtheorem{defn}[thm]{Definition}
\theoremstyle{definition} \newtheorem{ex}[thm]{Example}
\theoremstyle{definition} \newtheorem*{rmk}{Remark}
\theoremstyle{definition}
\theoremstyle{definition}\newtheorem{problem}[thm]{Problem}
\newcommand{\CC}{\mathbb{C}}
\newcommand{\ZZ}{\mathbb{Z}}
\newcommand{\NN}{\ZZ_{\geq 0}}
\renewcommand{\O}{\mathsf{O}}
\newcommand{\fpf}{\textsf{{FPF}}}
\newcommand{\I}{\mathcal{I}}
\newcommand{\A}{\mathcal{A}}
\newcommand{\Afpf}{\mathcal{A}^{\fpf}}
\newcommand{\cAfpf}{\Afpf}
\newcommand{\R}{\mathcal{R}}
\newcommand{\fkS}{\mathfrak{S}}
\newcommand{\fkG}{\mathfrak{G}}
\newcommand{\iS}{\hat{\mathfrak{S}}}
\newcommand{\iSfpf}{\hat{\mathfrak{S}}^{\fpf}}
\newcommand{\idfpf}{1^{\fpf}}
\newcommand{\Fl}[1]{\textsf{Fl}_{#1}}
\newcommand{\GL}{\mathsf{GL}}
\newcommand{\Borel}{\mathsf{B}}
\newcommand{\Torus}{\mathsf{T}}
\newcommand{\STorus}{\mathsf{S}}
\newcommand{\Sp}{\mathsf{Sp}}
\DeclareMathOperator{\rank}{rank}
\DeclareMathOperator{\wt}{wt}
\DeclareMathOperator{\init}{\textsf{init}}
\newcommand{\RP}{\mathcal{PD}}
\newcommand{\KRP}{\mathcal{KPD}}
\newcommand{\IRP}{\mathcal{ID}}
\newcommand{\IP}{\mathcal{ID}}
\newcommand{\FP}{\mathcal{FD}}
\newcommand{\iell}{\hat\ell}
\newcommand{\iellfpf}{\hat\ell^{\fpf}}
\newcommand{\ellfpf}{\iellfpf}
\newcommand{\Ifpf}{\I^{\fpf}}
\newcommand{\RPP}{\mathsf{RPP}}
\def\cX{\mathcal{X}}
\renewcommand{\R}{\mathcal{R}}
\newcommand{\iR}{\hat{\mathcal{R}}}
\newcommand{\iRfpf}{\hat{\mathcal{R}}^{\fpf}}
\newcommand{\cwt}[3]{m_{#1#2,#3}}
\newcommand{\Dbot}{D_{\text{bot}}}
\newcommand{\Dtop}{D_{\text{top}}}
\newcommand{\iDbot}{\hat D_{\text{bot}}}
\newcommand{\iDbotfpf}{\hat D_{\text{bot}}^{\fpf}}
\newcommand{\ic}{\hat c}
\newcommand{\icfpf}{\hat c^{\fpf}}
\newcommand{\iPhi}{\Psi}
\newcommand{\fPhi}{ \Psi^{\fpf}}
\newcommand{\rtriang}{\raisebox{-0.5pt}{\tikz{\draw (0,.25) -- (.25,.25) -- (0,0) -- (0,.25);}}\hspace{-0.5mm}}
\newcommand{\ltriang}{\raisebox{-0.5pt}{\tikz{\draw (0,0) -- (.25,0) -- (0,.25) -- (0,0);}}}
\newcommand{\ltriangeq}{\ltriang}
\newcommand{\ltriangneq}{\ltriang^{\!\!\neq}}
\newcommand{\dom}[1]{\operatorname{dom}(#1)}
\newcommand{\shdom}[1]{\operatorname{shdom}(#1)}
\newcommand{\shdomneq}[1]{\operatorname{shdom}^{\neq}(#1)}
\newcommand{\NWleq}{\leq_{\textsf{NW}}}
\newcommand{\iBJS}{\mathfrak{A}}
\newcommand{\fBJS}{\mathfrak{B}}
\newcommand{\word}{\textsf{word}}
\newcommand{\NEleq}{\leq_{\textsf{NE}}}
\newcommand{\maxdes}{\operatorname{maxdes}}
\def\ellhat{\iell}
\newcommand{\adiag}{\textsf{adiag}}
\def\PP{\ZZ_{>0}}
\def\ben{\begin{enumerate}}
\def\een{\end{enumerate}}
\def\[{\begin{equation*}}
\def\]{\end{equation*}}
\def\udiag{\textsf{udiag}}
\def\qquand{\qquad\text{and}\qquad}
\def\quand{\quad\text{and}\quad}
\def\be{\begin{equation}}
\def\ee{\end{equation}}
\def\ba{\begin{aligned}}
\def\ea{\end{aligned}}
\def\barr{\begin{array}}
\def\earr{\end{array}}
\def\D{\textsf{D}}
\def\SD{\textsf{SD}}
\def\yvar{y}
\def\Mat{\textsf{Mat}}
\def\SM{\textsf{SMat}}
\def\SSM{\textsf{SSMat}}
\newcommand{\MX}{M\hspace{-0.5mm}X}
\newcommand{\iX}{\hat X}
\newcommand{\iXfpf}{\hat X^{\fpf}}
\newcommand{\iMX}{M\hspace{-0.5mm}\hat X}
\newcommand{\iMXfpf}{M\hspace{-0.5mm}\hat X^{\fpf}}
\def\betamin{\alpha^\fpf_{\min}}
\newcommand{\Hom}{\operatorname{Hom}}
\newcommand{\Sym}{\operatorname{Sym}}
\newcommand{\mult}{\operatorname{mult}}
\begin{document}

\title{Involution pipe dreams}

\author{
Zachary Hamaker  \\ University of Florida \\ {\tt zachary.hamaker@gmail.com}
\and 
Eric Marberg \\ HKUST \\ {\tt eric.marberg@gmail.com}
\and
Brendan Pawlowski \\ University of Southern California \\ {\tt br.pawlowski@gmail.com}
}

\date{}

\maketitle

\begin{abstract}
Involution Schubert polynomials represent cohomology classes of $K$-orbit closures in the complete flag variety, where $K$ is the orthogonal or symplectic group. We show they also represent $\Torus$-equivariant cohomology classes of subvarieties defined by upper-left rank conditions in the spaces of symmetric or skew-symmetric matrices. This geometry implies that these polynomials are positive combinations of monomials in the variables $x_i + x_j$, and we give explicit formulas of this kind as sums over new objects called involution pipe dreams. Our formulas are analogues of the Billey-Jockusch-Stanley formula for Schubert polynomials. In Knutson and Miller's approach to matrix Schubert varieties, pipe dream formulas reflect Gr\"obner degenerations of the ideals of those varieties, and we conjecturally identify analogous degenerations in our setting.
\end{abstract}


\setcounter{tocdepth}{2}

\section{Introduction}

One can identify the equivariant cohomology rings for the spaces of symmetric and skew-symmetric complex matrices with multivariate polynomial rings.
Under this identification, we show 
that the classes of certain natural subvarieties of (skew-)symmetric matrices are given by the \emph{involution Schubert polynomials} introduced by Wyser and Yong in \cite{wyser-yong-orthogonal-symplectic}.
These classes of varieties generalize various others studied in the settings of degeneracy loci and combinatorial commutative algebra, for instance the (skew-)symmetric determinantal varieties studied by Harris and Tu~\cite{harris-tu}.

Involution Schubert polynomials have a combinatorial formula for their monomial expansion~\cite{HMP1}.
As a consequence of our geometric results, they must also expand as sums of products of binomials $x_i + x_j$.
We give a combinatorial description of these expansions, which is a new analogue of the classic Billey-Jockusch-Stanley expansion for ordinary Schubert polynomials~\cite{BJK}.
This description is far more compact than the monomial expansion.
Our formulas involve novel objects that we call \emph{involution pipe dreams}.
Involution pipe dreams appear to be the fundamental objects necessary to replicate Knutson and Miller's program~\cite{knutson-miller} to understand our varieties from a commutative algebra perspective.

\subsection{Three flavors of matrix Schubert varieties}\label{intro1-sect}

Fix a positive integer $n$.
Let $\GL_n$ denote the general linear group of complex $n\times n$ invertible matrices, and write $\Borel$ and $\Borel^+$ for the Borel subgroups of lower- and upper-triangular matrices in $\GL_n$. Our work aims to extend what is known about the geometry of the $\Borel$-orbits on matrix space to symmetric and skew-symmetric matrix spaces.

We begin with some classical background.
Consider the \emph{type A flag variety} $\Fl{n} = \Borel \backslash \GL_n$.
The subgroup $\Borel^+$ acts on  $\Fl{n} $ with finitely many orbits, which are naturally indexed by permutations $w$ in the symmetric group  $S_n$
of permutations of $\{1,2,\dots,n\}$.
These orbits afford a CW decomposition of $\Fl{n}$, so the cohomology classes of their closures $X_w$, the \emph{Schubert varieties}, form a basis for the integral singular cohomology ring $H^*(\Fl{n})$. 
Borel's isomorphism explicitly identifies $H^*(\Fl{n})$ with a quotient of the 
polynomial ring $\ZZ[x_1,\dots,x_n]$, and the \emph{Schubert polynomials} $\fkS_w \in \ZZ[x_1,\dots,x_n]$ are (non-unique) representatives for the \emph{Schubert classes} $[X_w] \in H^*(\Fl{n})$.

The maximal torus $\Torus$ of diagonal matrices in $\GL_n$ also acts on $\Fl{n}$, so we can instead consider the equivariant cohomology ring $H^*_\Torus(\Fl{n})$.
Via an extension of Borel's isomorphism, this ring 
is isomorphic to a quotient of $\ZZ[x_1, \ldots, x_n, y_1, \ldots, y_n]$.
Lascoux and Sch\"utzenberger \cite{lascoux1982structure} introduced the \emph{double Schubert polynomials} $\fkS_w(x,y)$ to represent the 
equivariant classes $[X_w]_\Torus\in H^*_\Torus(\Fl{n})$.
These representatives are distinguished in the following sense.
 
Let $\Mat_n$ be the set of $n\times n$ complex matrices and write $\iota:\GL_n \hookrightarrow \Mat_n$ for the obvious inclusion.
The product group $ \Torus \times \Torus$ acts on $A\in \Mat_n$ by $(t_1, t_2) \cdot A = t_1 A t_2^{-1}$.
The \emph{matrix Schubert variety} of a permutation $w \in S_n$ is  $\MX_w = \overline{\iota(X_w)}$.
Since $M_n$ is $\Torus \times \Torus$-equivariantly contractible, $H^*_{\Torus \times \Torus}(\Mat_n) \cong H^*_{\Torus \times \Torus}(\text{point}) \cong \ZZ[x_1,\dots,x_n,y_1,\dots,y_n]$.
The launching point for Knutson and Miller's program is the following theorem:

\begin{thm}[\cite{knutson-miller}]
\label{t:fulton}
	For all $w \in S_n$, we have $\fkS_w(x,y) = [\MX_w] \in H^*_{\Torus \times \Torus}(\Mat_n)$.
\end{thm}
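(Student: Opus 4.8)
The plan is to verify that the classes $[\MX_w]\in H^*_{T\times T}(\Mat_n)\cong\ZZ[x_1,\dots,x_n,y_1,\dots,y_n]$ obey the same recursion that defines the double Schubert polynomials, and then to induct. Recall that the family $(\fkS_w(x,y))_{w\in S_n}$ is pinned down by $\fkS_{w_0}(x,y)=\prod_{i+j\le n}(x_i-y_j)$ together with $\partial_i\fkS_w=\fkS_{ws_i}$ whenever $\ell(ws_i)<\ell(w)$, where $\partial_i$ is the divided difference in the first family of variables; this holds because $w_0$ can be brought down to any $w$ by length-decreasing simple transpositions, and the operators $\partial_i$ satisfy the braid relations and square to zero. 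So it suffices to prove the two matching statements for $[\MX_w]$, and the theorem will follow by downward induction on $\ell(w)$.

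For the base case I would use Fulton's description of $\MX_w$ by northwest rank conditions $\rk A_{[p],[q]}\le r_w(p,q)$, with $r_w(p,q)=\#\{k\le p:w(k)\le q\}$, which in particular makes $\MX_w$ irreducible of codimension $\ell(w)$. From $r_{w_0}(p,q)=\max(0,p+q-n)$ one reads off that $\MX_{w_0}$ is exactly the coordinate subspace $\{A:A_{ij}=0\text{ for }i+j\le n\}$, all remaining inequalities being automatic there. Since this is a $T\times T$-stable linear subspace, its equivariant class is the product of the $T\times T$-weights $x_i-y_j$ of its normal coordinate directions, which is $\prod_{i+j\le n}(x_i-y_j)=\fkS_{w_0}(x,y)$.

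The recursive step is the matrix-space shadow of the Bernstein--Gelfand--Gelfand--Demazure computation, in the form Fulton uses for degeneracy loci. Fix $i$ with $\ell(ws_i)<\ell(w)$, let $P_i\supseteq B$ be the minimal parabolic of $\GL_n$ above the Borel $B$ that stabilizes $\MX_w$ from the appropriate side, and form the incidence variety
\[ \mathcal Z=\{(A,\bar g)\in\Mat_n\times(P_i/B):g^{-1}A\in\MX_w\},\]
which is well defined because $\MX_w$ is $B$-stable. The action map $P_i\times^B\Mat_n\to\Mat_n$, $(g,A)\mapsto gA$, is a fibrewise isomorphism, hence identifies $P_i\times^B\Mat_n$ with the trivial bundle $\Mat_n\times\mathbb P^1$; under this identification $\mathcal Z$ corresponds to the subbundle $P_i\times^B\MX_w$, the first projection $q_1\colon\mathcal Z\to\Mat_n$ becomes proper with image $P_i\cdot\MX_w=\MX_{ws_i}$ (the orbit-closure avatar of $X_{ws_i}=\pi_i^{-1}\pi_i(X_w)$ on the flag variety, also checkable on rank conditions), and $\dim\mathcal Z=\dim\MX_w+1=\dim\MX_{ws_i}$. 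One then argues that $q_1$ is birational onto $\MX_{ws_i}$: for generic $A\in\MX_{ws_i}$, the single rank inequality that separates $\MX_w$ from $\MX_{ws_i}$ cuts the $\mathbb P^1$ fibre down to one point, so $q_{1*}[\mathcal Z]=[\MX_{ws_i}]$. On the other hand, computing $q_{1*}[\mathcal Z]$ as integration over the $\mathbb P^1$ fibre: the restrictions of $[\mathcal Z]$ to the two $T$-fixed sections of the bundle are $[\MX_w]$ and its image under the transposition $s_i$ acting on the first family of variables (the identification with the trivial bundle conjugates the $T$-action on the far fibre by $\dot s_i$), so localization turns the Gysin pushforward $q_{1*}$ into exactly $\partial_i$. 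Equating the two computations gives $\partial_i[\MX_w]=[\MX_{ws_i}]$, which closes the induction.

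I expect this recursive step to be the main obstacle: proving that $q_1$ has degree one --- so that the Gysin image carries coefficient $1$ rather than a multiple --- and keeping the equivariant bookkeeping straight enough that $q_{1*}$ is genuinely $\partial_i$ and not some variant of it. One should also note that, although $\Mat_n$ is not proper, each $\MX_w$ is a $T\times T$-stable cone whose only fixed point is the origin, so that proper pushforward along $q_1$ and the localization formula on $\mathbb P^1$ apply with no trouble. A different route, which leans on the stated fact that $\fkS_w(x,y)$ represents $[X_w]_T$: $\MX_w$ stabilizes under $S_n\hookrightarrow S_N$ (the extra rank conditions on $\MX_w\subseteq\Mat_N$ are vacuous, so $\MX_w\cong\MX_w^{(n)}\times\mathbb A^{N^2-n^2}$ and $[\MX_w]$ does not depend on $N$), while $\MX_w\cap\GL_N$ is the preimage of $X_w$, so $[\MX_w]$ restricts under $H^*_{T\times T}(\Mat_N)\to H^*_{T\times T}(\GL_N)\cong H^*_T(\Fl{N})$ to $[X_w]_T=\fkS_w(x,y)$; since this holds for every $N$ and $\bigcap_N\ker\big(H^*_{T\times T}(\Mat_N)\to H^*_T(\Fl{N})\big)=0$ (because the $\fkS_v(x,y)$ form a $\ZZ[y]$-basis and equivariant Schubert classes are free over $H^*_T(\mathrm{pt})$), the difference $[\MX_w]-\fkS_w(x,y)$ must vanish.
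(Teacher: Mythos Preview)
The paper does not prove Theorem~\ref{t:fulton}; it is quoted as background with citations to Fulton and Knutson--Miller, and the reader is referred to \cite[Chpt.~15]{miller2004combinatorial} for details. So there is no proof in the paper to compare your proposal against.

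That said, your sketch is sound at the level of strategy, and both routes you outline are standard. The divided-difference argument (base case $w_0$ plus $\partial_i[\MX_w]=[\MX_{ws_i}]$ via a $\mathbb P^1$-bundle and integration over the fibre) is essentially the Bernstein--Gelfand--Gelfand/Demazure mechanism transported to matrix space, and you correctly flag the degree-one check for $q_1$ as the place requiring care. Your alternative route---stabilize in $n$, restrict along $\Mat_N\supseteq\GL_N\to\Fl{N}$, and use that the kernel of these restrictions shrinks to zero---is in fact exactly the template the paper uses to prove its own Theorem~\ref{thm:inv-schubert-classes} (see the commutative diagram \eqref{cd-eq} and the surrounding argument in \S\ref{sec:inv-matrix-schubert}). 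So if anything, that second approach is the one most in the spirit of this paper: it avoids the birationality check entirely and reduces to the known representative on the flag variety, at the cost of invoking the stability of $[\MX_w]$ under enlarging $n$ and the injectivity of the Borel map in the limit.
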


As mentioned in the historical notes at the end of \cite[Chpt. 15]{miller2004combinatorial}, Theorem~\ref{t:fulton} is equivalent to Fulton's characterization of each $\fkS_w(x,y)$ as the class of a certain degeneracy locus for vector bundle morphisms~\cite{fulton1992flags}.

Our results are related to the geometry of certain spherical varieties studied by Richardson and Springer in \cite{richardson-springer}.
Specifically, define the \emph{orthogonal group} $\O_n$ as the subgroup of $\GL_n$ preserving
a fixed nondegenerate symmetric bilinear form on $\CC^n$, and when $n$ is even define the \emph{symplectic group} $\Sp_n$ as the subgroup of $\GL_n$
preserving a fixed nondegenerate skew-symmetric bilinear form.

We consider the actions of $\O_n$ and $\Sp_n$ (when $n$ is even) on $\Fl{n}$.
The associated orbit closures $\iX_y$ and $\iXfpf_z$ are indexed by arbitrary involutions $y$ and fixed-point-free involutions $z$ in $S_n$.
Let $\kappa(y)$ denote the number of 2-cycles in an involution $y=y^{-1} \in S_n$.
Wyser and Yong \cite{wyser-yong-orthogonal-symplectic} constructed certain polynomials $\iS_y, \iSfpf_z \in \ZZ[x_1,\dots,x_n]$
and showed that 
the classes $[\iX_y]$ and $[\iXfpf_z]$ are represented in $H^*(\Fl{n})$ by $2^{\kappa(y)}\iS_y$ and $\iSfpf_z$.
We refer to $\iS_y$ and $\iSfpf_z$ as \emph{involution Schubert polynomials}; for their precise definitions, see Section~\ref{schub-sect}.

Write $\SM_n$ and $\SSM_n$ for the sets of symmetric and skew-symmetric $n\times n$ complex matrices.
Let $t \in \Torus$ act on these spaces by $t\cdot A = tAt$.
One can identify  the $\Torus$-equivariant cohomology rings of both spaces with $\ZZ[x_1, \ldots, x_n]$; see the discussion in Section~\ref{subsec:eq-cohom}.
For each involution $y \in S_n$, let $\iMX_y = \MX_y \cap \SM_n$.
Similarly, for each fixed-point-free involution $z \in S_n$, let $\iMXfpf_z = \MX_z \cap \SSM_n$.
Our first main result is a (skew-)symmetric analogue of Theorem~\ref{t:fulton}:

\begin{thm}
\label{thm:inv-schubert-classes} 
For all involutions $y$ and fixed-point-free involution $z$ in $S_n$, we have
\[
2^{\kappa(y)}\iS_y = [\iMX_y]   \in   H_\Torus^*(\SM_n) \quand \iSfpf_z = [\iMXfpf_z]  \in  H_\Torus^*(\SSM_n).
\]
Thus, involution Schubert polynomials are also equivariant cohomology representatives for symmetric and skew-symmetric matrix varieties.
\end{thm}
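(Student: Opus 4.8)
The plan is to follow the template of Theorem~\ref{t:fulton}, adapting Knutson--Miller's equivariant reasoning from $\Mat_n$ to the fixed-point loci $\SM_n$ and $\SSM_n$. First I would record the equivariant cohomology identifications precisely: since $\SM_n$ and $\SSM_n$ are $T$-equivariantly contractible affine spaces, $H_T^*(\SM_n) \cong H_T^*(\SSM_n) \cong \ZZ[x_1,\dots,x_n]$, with the action $t\cdot A = tAt$ inducing the substitution $y_i \mapsto -x_i$ on the double-polynomial side (this is why the answer involves $x_i+x_j$ rather than $x_i-x_j$). The key geometric input is that $\iMX_y = \MX_y \cap \SM_n$ is, up to the involution $A \mapsto A^{\mathsf{T}}$, exactly the transverse intersection of the matrix Schubert variety $\MX_y$ with the linear subspace $\SM_n \subseteq \Mat_n$; and likewise $\iMXfpf_z = \MX_z \cap \SSM_n$ inside $\Mat_n$. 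The main steps are: (1) verify that the scheme-theoretic intersection $\MX_y \cap \SM_n$ is reduced and irreducible of the expected codimension, so that $[\iMX_y]$ is well-defined as the class of an irreducible subvariety and equals the image of $[\MX_y]$ under the restriction map $H^*_{T\times T}(\Mat_n) \to H^*_T(\SM_n)$; (2) compute that restriction map on the level of polynomial representatives, namely the pullback along $\SM_n \hookrightarrow \Mat_n$, $t \mapsto (t,t)$, which sends $\fkS_y(x,y) \mapsto \fkS_y(x,-x)$; (3) identify $\fkS_y(x,-x)$ with $2^{\kappa(y)}\iS_y$ and, for the fixed-point-free case, $\fkS_z(x,-x)$ with $\iSfpf_z$, appealing to whatever identity relates these specializations of double Schubert polynomials to involution Schubert polynomials (presumably established in Section~\ref{schub-sect} or \cite{wyser-yong-orthogonal-symplectic}).

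The heart of the matter, and the step I expect to be the main obstacle, is step (1): controlling the intersection $\MX_y \cap \SM_n$. Set-theoretically this is easy---both varieties are cut out by upper-left rank conditions, and intersecting a rank-$\le r$ locus with the symmetric (or skew-symmetric) matrices still gives an irreducible variety, since the rank-$\le r$ symmetric matrices are irreducible. But for the cohomology-class identity one needs the intersection to occur in the expected codimension with no excess intersection, i.e. that $\operatorname{codim}_{\SM_n}(\iMX_y) = \operatorname{codim}_{\Mat_n}(\MX_y)$; this is where the $2^{\kappa(y)}$ factor will enter, since the codimension of the orthogonal orbit $\iX_y$ in $\Fl{n}$ is $(\ell(y)+\kappa(y))/2$ while $\ell(w)$ is the codimension of $X_w$. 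I would compute both codimensions explicitly in terms of the rank conditions defining $y$ as an involution, checking the numerics match up after accounting for the symmetry constraint. If the transversality is only ``generic'' one can still conclude by a degeneration or a dimension count plus a multiplicity-one argument (the generic point of $\iMX_y$ is a smooth point of $\MX_y$ lying on $\SM_n$ transversally).

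Once step (1) is in hand, steps (2) and (3) are comparatively routine. For step (2), the restriction $H^*_{T\times T}(\Mat_n) \to H^*_T(\SM_n)$ is induced by the group homomorphism $T \to T\times T$, $t\mapsto (t,t^{-1})$ (reflecting $t\cdot A = tAt = tA(t^{-1})^{-1}$ under the $T\times T$-action $(t_1,t_2)\cdot A = t_1 A t_2^{-1}$ restricted to symmetric matrices), which on cohomology sends $x_i \mapsto x_i$ and $y_i \mapsto -x_i$. Combined with Theorem~\ref{t:fulton}, functoriality of equivariant cohomology gives $[\iMX_y] = [\MX_y]\big|_{\SM_n} = \fkS_y(x,-x)$. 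For step (3), I would either cite the specialization identity $\fkS_y(x,-x) = 2^{\kappa(y)}\iS_y$ directly or derive it from the definition of $\iS_y$ as a sum over the involution word set together with the corresponding Billey-type formula for $\fkS_y(x,y)$; the skew-symmetric case is identical with $\kappa(z) = n/2$ absorbed into the normalization, giving $\fkS_z(x,-x) = \iSfpf_z$. The symplectic case additionally requires checking that $\MX_z \cap \SSM_n$ is already reduced (no extra factor), which follows because the skew-symmetric rank-$\le 2r$ loci are already the relevant degeneracy loci with multiplicity one.
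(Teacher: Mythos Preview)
Your approach has a genuine gap at the very first step: the intersection $\MX_y \cap \SM_n$ is \emph{not} transverse (nor even proper), and the specialization identity in step (3) is false. The codimension of $\MX_y$ in $\Mat_n$ is $\ell(y)$, while the codimension of $\iMX_y$ in $\SM_n$ is $\hat\ell(y) = (\ell(y)+\kappa(y))/2$; these differ whenever $y \neq 1$. Concretely, take $y = 321 \in \I_3$: here $\ell(y)=3$ and $\hat\ell(y)=2$, so $\iMX_y$ has codimension $2$ in $\SM_3$ while $\MX_y$ has codimension $3$ in $\Mat_3$. The excess means $i^*[\MX_y] \neq [\iMX_y]$. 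And indeed the polynomial identity you need fails: $\fkS_{321}(x,-x) = (2x_1)(x_1+x_2)(x_2+x_1) = 2x_1(x_1+x_2)^2$ has degree $3$, whereas $2^{\kappa(y)}\iS_{321} = 2x_1(x_1+x_2)$ has degree $2$. A scalar like $2^{\kappa(y)}$ cannot repair a degree mismatch, so step (3) cannot be salvaged as stated.

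The paper takes a completely different route that avoids the non-transversality. Rather than restricting along the inclusion $\SM_n \hookrightarrow \Mat_n$, it pulls back along the \emph{flat} map $\sigma : \GL_n \to \SM_n$, $g \mapsto gg^T$, which identifies $\sigma^{-1}(\iMX_y)/B$ with the $\O_n$-orbit closure $\iX_y \subseteq \Fl n$. One then invokes the Wyser--Yong theorem that $2^{\kappa(y)}\iS_y$ represents $[\iX_y]$ in $H^*(\Fl n)$, together with a stability argument showing $[\iMX_{y\times 1^m}]$ is independent of $m$. The point is that $\sigma$ has the right relative dimension to make the pullback compute the class directly, whereas the linear inclusion $\SM_n \hookrightarrow \Mat_n$ does not.
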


Our proof of this theorem appears in Section~\ref{sec:inv-matrix-schubert}.
An extension of Theorem~\ref{thm:inv-schubert-classes} to complex $K$-theory appears in~\cite{MP2019}.
Theorem~\ref{thm:inv-schubert-classes} was first  announced in a conference 
proceedings before the appearance of the preprint version of \cite{MP2019}, which precedes the
preprint version of this article. The proof of Theorem~\ref{thm:inv-schubert-classes} is a special
case of results of \cite{MP2019}.


\begin{rmk}
Another family of varieties in $\SM_n$ indexed by permutations in $S_n$ has been studied by Fink, Rajchgot and Sullivant~\cite{fink2016matrix}.
However, their varieties are cut out by northeast rank conditions, while 
$\iMX_y$ and $\iMXfpf_z$ are cut out by northwest rank conditions (see \eqref{imxy-eq} and \eqref{imxz-eq} in Section~\ref{ss:classes}).
The varieties in~\cite{fink2016matrix} are closely related to type C Schubert calculus and generally do not coincide with our $\iMX_y$ varieties.
\end{rmk}

\subsection{Three flavors of pipe dreams}

If $Z$ is a closed subvariety of $\SM_n$ or $\SSM_n$, then its $\Torus$-equivariant cohomology class is a positive integer combination of products of binomials $x_i + x_j$ (see Corollary~\ref{cor:positivity}). 
Our second main result gives a combinatorial description of such an expansion for $\iS_y$ and $\iSfpf_z$.

Let $[n] = \{1, 2, \ldots, n\}$
and $\rtriang_n =   \{(i,j) \in [n] \times [n] : i+j \leq n\}$.
Consider a subset $D\subseteq \rtriang_n$.
One associates to $D$ a \emph{wiring diagram}
by replacing the cells $(i,j) \in \rtriang_n$
by tiles of two types, given either by 
a crossing of two paths (drawn as a
$\raisebox{-1pt}{\tikz[scale=0.7]{ \draw (-0.2,0) -- (0.2,0); \draw (0,-0.2) -- (0,0.2);}}$
tile) if $(i,j) \in D$
or 
by two paths bending away from each other (drawn as a 
$\raisebox{-1pt}{\tikz[scale=0.7]{ \draw (-0.2,0) to[bend right] (0,0.2); \draw (0,-0.2) to[bend left] (0.2,0); }}$
tile)
if $(i,j) \notin D$.
Connecting the endpoints of adjacent tiles yields a union of $n$ continuously differentiable paths,
which we refer to as ``pipes.''
For example:
\begin{equation}\label{first-wiring-eq}
    D=\{(1,3),(2,1)\} 
    \qquad\text{corresponds to}
    \qquad
    \begin{tikzpicture}[scale=0.3,baseline=(c.base)]
        \node at (-0.5,3.5) {$\scriptstyle 1$};
        \node (c) at (-0.5,2.5) {$\scriptstyle 2$};
        \node at (-0.5,1.5) {$\scriptstyle 3$};
        \node at (-0.5,0.5) {$\scriptstyle 4$};
        \node at (0.5,4.5) {$\scriptstyle 1$};
        \node at (1.5,4.5) {$\scriptstyle 2$};
        \node at (2.5,4.5) {$\scriptstyle 3$};
        \node at (3.5,4.5) {$\scriptstyle 4$};
        \filldraw (0.5, 3.5) circle[radius=.6mm]; 
        \draw 
        (0,3.5) to[bend right] (0.5,4) 
        (0.5,3) to[bend left] (1,3.5); 
        \filldraw (1.5, 3.5) circle[radius=.6mm]; 
        \draw 
        (1,3.5) to[bend right] (1.5,4) 
        (1.5,3) to[bend left] (2,3.5); 
        \filldraw (2.5, 3.5) circle[radius=.6mm]; 
        \draw 
        (2,3.5) to (3,3.5) 
        (2.5,3) to (2.5,4); 
        \filldraw (3.5, 3.5) circle[radius=.6mm]; 
        \draw 
        (3,3.5) to[bend right] (3.5,4); 
        \filldraw (0.5, 2.5) circle[radius=.6mm]; 
        \draw 
        (0,2.5) to (1,2.5) 
        (0.5,2) to (0.5,3); 
        \filldraw (1.5, 2.5) circle[radius=.6mm]; 
        \draw 
        (1,2.5) to[bend right] (1.5,3) 
        (1.5,2) to[bend left] (2,2.5); 
        \filldraw (2.5, 2.5) circle[radius=.6mm]; 
        \draw 
        (2,2.5) to[bend right] (2.5,3); 
        \filldraw (0.5, 1.5) circle[radius=.6mm]; 
        \draw 
        (0,1.5) to[bend right] (0.5,2) 
        (0.5,1) to[bend left] (1,1.5); 
        \filldraw (1.5, 1.5) circle[radius=.6mm]; 
        \draw 
        (1,1.5) to[bend right] (1.5,2); 
        \filldraw (0.5, 0.5) circle[radius=.6mm]; 
        \draw 
        (0,0.5) to[bend right] (0.5,1); 
    \end{tikzpicture}
\end{equation}
\begin{defn}
A subset $D\subseteq \rtriang_n$ is a \emph{reduced pipe dream} if no two pipes in the associated wiring diagram
cross more than once. %
\end{defn}

This condition holds in the example \eqref{first-wiring-eq}.
Pipe dreams as described here were introduced by Bergeron and Billey \cite{bergeron-billey}, inspired by related diagrams of Fomin and Kirillov \cite{fomin-kirillov-yang-baxter}. Bergeron and Billey originally referred to pipe dreams as \emph{reduced-word compatible sequence graphs} or \emph{rc-graphs} for short.

A reduced pipe dream $D$ determines a permutation $w \in S_n$
 in the following way.
Label the left endpoints of the pipes in $D$'s wiring diagram by $1, 2, \ldots, n$ from top to bottom, and the top endpoints by $1, 2, \ldots, n$ from left to right. Then the associated permutation
$w \in S_n$ is the element such that the pipe with left endpoint $i$ has top endpoint $w(i)$. For instance, the permutation of  $D=\{(1,3),(2,1)\}$ is $w 
= 1423 \in S_4$.
Let $\RP(w)$ denote the set of all reduced pipe dreams associated to $w \in S_n$.

Pipe dreams are of interest for their role in formulas 
for $\fkS_w$ and $\fkS_w(x,y)$.
Lascoux and Sch\"utzenberger's original definition of  these Schubert polynomials  in \cite{lascoux1983symmetry}
 is recursive in terms of \emph{divided difference operators}.
However, by results of Fomin and Stanley \cite[\S4]{fomin1994schubert} we also have 
\begin{equation} \label{eq:double-schubert-def}
    \fkS_w = \sum_{D \in \RP(w)} \prod_{(i,j) \in D} x_i \qquand  \fkS_w(x,\yvar) = \sum_{D \in \RP(w)} \prod_{(i,j) \in D} (x_i - \yvar_j).
\end{equation}
The first identity is the Billey-Jockusch-Stanley formula for Schubert polynomials from \cite{BJK}.

There are analogues of this formula for the involution Schubert polynomials $\iS_y$ and $\iSfpf_z$,
which involve the following new classes of pipe dreams.
A reduced pipe dream $D\subseteq \rtriang_n$
is \emph{symmetric} if $(i,j) \in D$ implies $(j,i) \in D$,
and \emph{almost-symmetric} if both of the following properties hold:
\begin{itemize}
\item If $(i, j) \in D$ where $i<j$ then  $(j,i) \in D$.
\item If $(j, i) \in D$ where $i<j$ but $(i, j) \notin D$, then the pipes crossing at $(j, i)$
in the wiring diagram of $D$ are also the pipes that avoid each other at $(i, j)$.
\end{itemize}
Equivalently, 
 $D$ is almost-symmetric if it is as symmetric as possible while respecting the condition that no two pipes cross twice, and any violation of symmetry forced by this condition takes the form of a crossing $(j,i)$ below the diagonal rather than at the transposed position $(i,j)$.

Let $\I_n=\{ w \in S_n : w=w^{-1}\}$ and write $\Ifpf_n$ for the subset of fixed-point-free elements of $\I_n$.
Note that $n$ must be even for $\Ifpf_n$ to be non-empty.
Also let
\[
\ltriangeq_n = \{(j,i) \in [n] \times [n] : i \leq j\}
\qquand
\ltriangneq_n = \{(j,i) \in [n] \times [n] : i < j\}.
\]

\begin{defn} \label{defn:inv-pipe-dreams-1} 
The set of \emph{involution pipe dreams} for $y \in \I_n$ is
\begin{equation*}
\IP(y) = \{D \cap \ltriangeq_n : D \in \RP(y) \text{ is almost-symmetric}\}.
\end{equation*}
The set of \emph{fpf-involution pipe dreams} for $z \in \Ifpf_{n}$ is
\begin{equation*}
\FP(z) = \{D \cap \ltriangneq_n : D \in \RP(z) \text{ is symmetric}\}.
\end{equation*}
By convention, (fpf-)involution pipe dreams are always instances of reduced pipe dreams.
It would be more precise to call our objects ``reduced involution pipe dreams,'' 
but since we will never consider any pipe dreams that are unreduced, we opt for 
more concise terminology.
\end{defn}

We can now state our second main result, which will reappear
as Theorems~\ref{inv-pipe-schubert-thm} and \ref{fpf-pipe-schubert-thm}.

\begin{thm} \label{thm:inv-pipe-dream-formula} If $y \in \I_n$ and $z \in \Ifpf_n$ then
\[
 \iS_y = \sum_{D \in \IRP(y)} \prod_{(i,j) \in D} 2^{-\delta_{ij}}(x_i + x_j) 
 \quand 
 \iSfpf_z = \sum_{D \in \FP(z)} \prod_{(i,j) \in D} (x_i + x_j)
\]
where $\delta_{ij}$ denotes the usual Kronecker delta function.
\end{thm}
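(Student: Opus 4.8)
The plan is to derive both pipe dream formulas from the Billey--Jockusch--Stanley formula \eqref{eq:double-schubert-def} together with the known relationship between $\iS_y$, $\iSfpf_z$ and ordinary Schubert polynomials, using the combinatorics of the symmetrization/almost-symmetrization operations on pipe dreams. I would work with the $\fpf$ case first, since the symmetry condition is cleaner. Recall from Section~\ref{schub-sect} (which we may cite) that $\iSfpf_z$ is obtained from $\fkS$ by a summation over atoms, i.e.\ $\iSfpf_z = \sum_{w \in \Afpf(z)} \fkS_w$ for a suitable set $\Afpf(z)$ of minimal-length permutations whose union of reduced words gives exactly the ``$\fpf$-involution words'' of $z$. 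Expanding each $\fkS_w$ by BJS gives $\iSfpf_z = \sum_{D} \prod_{(i,j)\in D} x_i$, where $D$ ranges over $\bigcup_{w \in \Afpf(z)} \RP(w)$. The key combinatorial claim is that this index set is in bijection with pairs $(E, \text{choice of upper/lower witness for each near-diagonal crossing})$ where $E \in \FP(z)$: concretely, $E \mapsto $ the symmetric reduced pipe dream $\widehat E := E \cup E^{\top}$ (reflection across the diagonal) should land in $\RP(w)$ for exactly one $w \in \Afpf(z)$, and conversely every $D \in \bigcup_{w}\RP(w)$ is the ``upper or lower half-selection'' of a unique symmetric reduced pipe dream. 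Under this bijection, $\prod_{(i,j)\in \widehat E} x_i$, summed over the $2^{|E|}$ ways to resolve each symmetric pair $\{(i,j),(j,i)\}$ into $x_i$ or $x_j$, collapses to $\prod_{(i,j)\in E}(x_i+x_j)$, which is exactly the claimed formula.

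For the involution (orthogonal) case, the same scheme applies but with two complications that account for the $2^{-\delta_{ij}}$ and $2^{\kappa(y)}$ factors. Here $2^{\kappa(y)}\iS_y = \sum_{w \in \A(y)} \fkS_w$, and the relevant symmetrization of a pipe dream $D$ produces an \emph{almost}-symmetric reduced pipe dream rather than a symmetric one: a crossing forced onto the diagonal cannot be doubled (two pipes meeting on the diagonal and crossing twice is not reduced), and a crossing $(i,j)$ with $i<j$ whose transposed cell $(j,i)$ would create a double crossing must be resolved as a single below-diagonal crossing. This is precisely the content of the almost-symmetry definition. The bookkeeping is: a symmetric pair $\{(i,j),(j,i)\}$ with $i \ne j$ contributes a factor $2$ to the count of preimages (choice of $x_i$ vs $x_j$) and a binomial $(x_i+x_j)$; an honest diagonal crossing $(i,i)$ contributes only the monomial $x_i = 2^{-1}\cdot 2x_i = 2^{-\delta_{ii}}(x_i+x_i)$, i.e.\ no doubling of preimages but the $2^{-\delta_{ij}}$ normalization makes the formula uniform; and an ``asymmetric'' below-diagonal crossing at $(j,i)$ behaves like a diagonal one in that it is not doubled but contributes $(x_i + x_j)$ with $i<j$ and a factor of $2$. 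I would make this precise by defining the map $D \mapsto D \cap \ltriangeq_n$ on almost-symmetric $D \in \RP(w)$, $w \in \A(y)$, checking it is well defined into $\IP(y)$, and counting fibers.

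The central technical input — and the step I expect to be the main obstacle — is the claim that the symmetrization $E \mapsto \widehat E$ (resp.\ its almost-symmetric analogue) gives a \emph{reduced} pipe dream whose permutation lies in $\Afpf(z)$ (resp.\ $\A(y)$), and that this accounts for \emph{all} of $\bigcup_w \RP(w)$ with the correct multiplicities. Proving reducedness of $\widehat E$ from reducedness of the half $E$, and identifying the resulting permutation, is where the real work lies; I would attack it via the combinatorics of involution words and the Hecke/0-Hecke action, showing that reading the crossings of $\widehat E$ in the standard pipe-dream order produces an involution word for $z$ (resp.\ $y$) whose associated atom is determined, and using the characterization of $\A(y)$, $\Afpf(z)$ as the $w$ with $\hat\ell(w) = \hat\ell(y)$ appearing. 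An alternative, possibly cleaner route avoiding explicit bijections: show both sides of each identity satisfy the same recurrence under the divided-difference / involution-divided-difference operators $\partial_i$, reducing to a base case at a dominant involution where $\IP(y)$ or $\FP(z)$ is a single pipe dream and the formula is checked directly; the transition rule for involution Schubert polynomials under $\partial_i$ is known, and one would need a matching ``ladder/chute''-type move on involution pipe dreams. I would develop the bijective argument as the primary proof and keep the recursive argument as a fallback, since the bijection also yields finer structural information likely needed elsewhere in the paper.
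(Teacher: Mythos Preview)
Your primary bijective route does not work as stated, and the obstruction is not merely technical. You assert that every $D\in\bigcup_{w\in\Afpf(z)}\RP(w)$ arises as a ``half-selection'' from a unique symmetric reduced pipe dream $\widehat E = E\cup E^T$ with $E\in\FP(z)$. This is false already for $z=4321\in\Ifpf_4$: there $\FP(z)=\{\{(2,1),(3,1)\}\}$, the four half-selections are $\{(2,1),(3,1)\}$, $\{(2,1),(1,3)\}$, $\{(1,2),(3,1)\}$, $\{(1,2),(1,3)\}$, while the four atom pipe dreams (over $\Afpf(z)=\{1342,3124\}$) are $\{(2,1),(3,1)\}$, $\{(1,2),(3,1)\}$, $\{(1,2),(2,2)\}$, $\{(1,1),(1,2)\}$. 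The set $\{(2,1),(1,3)\}$ is a half-selection but a pipe dream for $1423\notin\Afpf(z)$, and $\{(1,2),(2,2)\}$ is an atom pipe dream but not a half-selection. The monomial multisets happen to coincide, but the naive map you describe does not furnish the bijection. A related confusion: $\widehat E$ (together with the diagonal) is a reduced pipe dream for $z$ itself, of size $\ell(z)$, not for an atom. You also write $2^{\kappa(y)}\iS_y=\sum_{w\in\A(y)}\fkS_w$; the paper's definition has no $2^{\kappa(y)}$ here, which throws off your factor-of-two accounting in the orthogonal case.

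The paper's proof follows a different, inductive strategy (due to Knutson in the classical case). One identifies the \emph{shifted dominant component} $\shdom{y}$ common to every $D\in\IP(y)$, shows (Theorem~\ref{thm:inv-dominant-transition}) that for any outer corner $(j,i)$ the map $D\mapsto D\sqcup\{(j,i)\}$ is a bijection $\IP(y)\to\bigsqcup_{z\in\iPhi(y,j)}\IP(z)$, and matches this against the known transition identity $2^{-\delta_{ij}}(x_i+x_j)\iS_y=\sum_{u\in\iPhi(y,j)}\iS_u$. Downward induction on $\ellhat$ with base case $y=n\cdots321$ then gives the formula; the fpf case is parallel. Your fallback suggestion (a recursion plus matching moves on pipe dreams) is in this spirit, but note that the recursion used is multiplication by a linear factor (``co-transition''), not divided differences, and the matching pipe dream move is ``add an outer corner,'' not a ladder or chute move. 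Proving that adding an outer corner lands in $\IP$ of some $z\in\iPhi(y,j)$, and that this is surjective, requires the machinery of dominant components and the covering operators $\tau_{ij}$ developed in Section~\ref{sec:inv-polynom}; this is the real content you would need to supply.
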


\begin{ex}
    The involution $y = 1432 = (2,4) \in \I_4$ has five reduced pipe dreams:
    \begin{center}
        \begin{tikzpicture}[scale=0.3]
        \filldraw (0.5, 3.5) circle[radius=.6mm]; 
        \draw 
        (0,3.5) to[bend right] (0.5,4) 
        (0.5,3) to[bend left] (1,3.5); 
        \filldraw (1.5, 3.5) circle[radius=.6mm]; 
        \draw 
        (1,3.5) to[bend right] (1.5,4) 
        (1.5,3) to[bend left] (2,3.5); 
        \filldraw (2.5, 3.5) circle[radius=.6mm]; 
        \draw 
        (2,3.5) to[bend right] (2.5,4) 
        (2.5,3) to[bend left] (3,3.5); 
        \filldraw (3.5, 3.5) circle[radius=.6mm]; 
        \draw 
        (3,3.5) to[bend right] (3.5,4); 
        \filldraw (0.5, 2.5) circle[radius=.6mm]; 
        \draw 
        (0,2.5) to (1,2.5) 
        (0.5,2) to (0.5,3); 
        \filldraw (1.5, 2.5) circle[radius=.6mm]; 
        \draw 
        (1,2.5) to (2,2.5) 
        (1.5,2) to (1.5,3); 
        \filldraw (2.5, 2.5) circle[radius=.6mm]; 
        \draw 
        (2,2.5) to[bend right] (2.5,3); 
        \filldraw (0.5, 1.5) circle[radius=.6mm]; 
        \draw 
        (0,1.5) to (1,1.5) 
        (0.5,1) to (0.5,2); 
        \filldraw (1.5, 1.5) circle[radius=.6mm]; 
        \draw 
        (1,1.5) to[bend right] (1.5,2); 
        \filldraw (0.5, 0.5) circle[radius=.6mm]; 
        \draw 
        (0,0.5) to[bend right] (0.5,1); 
        \end{tikzpicture} \qquad  
       \begin{tikzpicture}[scale=0.3]
        \filldraw (0.5, 3.5) circle[radius=.6mm]; 
        \draw 
        (0,3.5) to[bend right] (0.5,4) 
        (0.5,3) to[bend left] (1,3.5); 
        \filldraw (1.5, 3.5) circle[radius=.6mm]; 
        \draw 
        (1,3.5) to (2,3.5) 
        (1.5,3) to (1.5,4); 
        \filldraw (2.5, 3.5) circle[radius=.6mm]; 
        \draw 
        (2,3.5) to (3,3.5) 
        (2.5,3) to (2.5,4); 
        \filldraw (3.5, 3.5) circle[radius=.6mm]; 
        \draw 
        (3,3.5) to[bend right] (3.5,4); 
        \filldraw (0.5, 2.5) circle[radius=.6mm]; 
        \draw 
        (0,2.5) to[bend right] (0.5,3) 
        (0.5,2) to[bend left] (1,2.5); 
        \filldraw (1.5, 2.5) circle[radius=.6mm]; 
        \draw 
        (1,2.5) to[bend right] (1.5,3) 
        (1.5,2) to[bend left] (2,2.5); 
        \filldraw (2.5, 2.5) circle[radius=.6mm]; 
        \draw 
        (2,2.5) to[bend right] (2.5,3); 
        \filldraw (0.5, 1.5) circle[radius=.6mm]; 
        \draw 
        (0,1.5) to (1,1.5) 
        (0.5,1) to (0.5,2); 
        \filldraw (1.5, 1.5) circle[radius=.6mm]; 
        \draw 
        (1,1.5) to[bend right] (1.5,2); 
        \filldraw (0.5, 0.5) circle[radius=.6mm]; 
        \draw 
        (0,0.5) to[bend right] (0.5,1); 
        \end{tikzpicture} \qquad 
        \begin{tikzpicture}[scale=0.3]
        \filldraw (0.5, 3.5) circle[radius=.6mm]; 
        \draw 
        (0,3.5) to[bend right] (0.5,4) 
        (0.5,3) to[bend left] (1,3.5); 
        \filldraw (1.5, 3.5) circle[radius=.6mm]; 
        \draw 
        (1,3.5) to (2,3.5) 
        (1.5,3) to (1.5,4); 
        \filldraw (2.5, 3.5) circle[radius=.6mm]; 
        \draw 
        (2,3.5) to (3,3.5) 
        (2.5,3) to (2.5,4); 
        \filldraw (3.5, 3.5) circle[radius=.6mm]; 
        \draw 
        (3,3.5) to[bend right] (3.5,4); 
        \filldraw (0.5, 2.5) circle[radius=.6mm]; 
        \draw 
        (0,2.5) to[bend right] (0.5,3) 
        (0.5,2) to[bend left] (1,2.5); 
        \filldraw (1.5, 2.5) circle[radius=.6mm]; 
        \draw 
        (1,2.5) to (2,2.5) 
        (1.5,2) to (1.5,3); 
        \filldraw (2.5, 2.5) circle[radius=.6mm]; 
        \draw 
        (2,2.5) to[bend right] (2.5,3); 
        \filldraw (0.5, 1.5) circle[radius=.6mm]; 
        \draw 
        (0,1.5) to[bend right] (0.5,2) 
        (0.5,1) to[bend left] (1,1.5); 
        \filldraw (1.5, 1.5) circle[radius=.6mm]; 
        \draw 
        (1,1.5) to[bend right] (1.5,2); 
        \filldraw (0.5, 0.5) circle[radius=.6mm]; 
        \draw 
        (0,0.5) to[bend right] (0.5,1); 
        \end{tikzpicture} \qquad 
        \begin{tikzpicture}[scale=0.3]
        \filldraw (0.5, 3.5) circle[radius=.6mm]; 
        \draw 
        (0,3.5) to[bend right] (0.5,4) 
        (0.5,3) to[bend left] (1,3.5); 
        \filldraw (1.5, 3.5) circle[radius=.6mm]; 
        \draw 
        (1,3.5) to[bend right] (1.5,4) 
        (1.5,3) to[bend left] (2,3.5); 
        \filldraw (2.5, 3.5) circle[radius=.6mm]; 
        \draw 
        (2,3.5) to (3,3.5) 
        (2.5,3) to (2.5,4); 
        \filldraw (3.5, 3.5) circle[radius=.6mm]; 
        \draw 
        (3,3.5) to[bend right] (3.5,4); 
        \filldraw (0.5, 2.5) circle[radius=.6mm]; 
        \draw 
        (0,2.5) to (1,2.5) 
        (0.5,2) to (0.5,3); 
        \filldraw (1.5, 2.5) circle[radius=.6mm]; 
        \draw 
        (1,2.5) to[bend right] (1.5,3) 
        (1.5,2) to[bend left] (2,2.5); 
        \filldraw (2.5, 2.5) circle[radius=.6mm]; 
        \draw 
        (2,2.5) to[bend right] (2.5,3); 
        \filldraw (0.5, 1.5) circle[radius=.6mm]; 
        \draw 
        (0,1.5) to (1,1.5) 
        (0.5,1) to (0.5,2); 
        \filldraw (1.5, 1.5) circle[radius=.6mm]; 
        \draw 
        (1,1.5) to[bend right] (1.5,2); 
        \filldraw (0.5, 0.5) circle[radius=.6mm]; 
        \draw 
        (0,0.5) to[bend right] (0.5,1); 
        \end{tikzpicture} \qquad 
        \begin{tikzpicture}[scale=0.3]
        \filldraw (0.5, 3.5) circle[radius=.6mm]; 
        \draw 
        (0,3.5) to[bend right] (0.5,4) 
        (0.5,3) to[bend left] (1,3.5); 
        \filldraw (1.5, 3.5) circle[radius=.6mm]; 
        \draw 
        (1,3.5) to (2,3.5) 
        (1.5,3) to (1.5,4); 
        \filldraw (2.5, 3.5) circle[radius=.6mm]; 
        \draw 
        (2,3.5) to[bend right] (2.5,4) 
        (2.5,3) to[bend left] (3,3.5); 
        \filldraw (3.5, 3.5) circle[radius=.6mm]; 
        \draw 
        (3,3.5) to[bend right] (3.5,4); 
        \filldraw (0.5, 2.5) circle[radius=.6mm]; 
        \draw 
        (0,2.5) to (1,2.5) 
        (0.5,2) to (0.5,3); 
        \filldraw (1.5, 2.5) circle[radius=.6mm]; 
        \draw 
        (1,2.5) to (2,2.5) 
        (1.5,2) to (1.5,3); 
        \filldraw (2.5, 2.5) circle[radius=.6mm]; 
        \draw 
        (2,2.5) to[bend right] (2.5,3); 
        \filldraw (0.5, 1.5) circle[radius=.6mm]; 
        \draw 
        (0,1.5) to[bend right] (0.5,2) 
        (0.5,1) to[bend left] (1,1.5); 
        \filldraw (1.5, 1.5) circle[radius=.6mm]; 
        \draw 
        (1,1.5) to[bend right] (1.5,2); 
        \filldraw (0.5, 0.5) circle[radius=.6mm]; 
        \draw 
        (0,0.5) to[bend right] (0.5,1); 
        \end{tikzpicture}  
    \end{center}
    Only the last two of these are almost-symmetric, so 
    $|\IP(y)| = 2$
    and     Theorem~\ref{thm:inv-pipe-dream-formula} reduces to the formula  $\iS_{y} = (x_2+x_1)(x_3+x_1) + (x_2+x_1)(x_2 + x_2)/2 = (x_2 + x_1)(x_3 + x_1 +x_2)$.
    The monomial expansion has six terms, as opposed to two.
    In general, the expansion in Theorem~\ref{thm:inv-pipe-dream-formula} uses roughly a factor of $2^{\deg \iS_y}$ fewer terms.
    
\end{ex}

%

\begin{rmk}
There is an alternate path towards establishing the fact that the class of a matrix Schubert variety is represented by the weighted sum of 
reduced pipe dreams.
The defining ideal of $\MX_w$ has a simple set of generators due to Fulton~\cite{fulton1992flags}.
Knutson and Miller showed that Fulton's generators form a Gr\"obner basis with respect to any anti-diagonal term order~\cite{knutson-miller}.
The Gr\"obner degeneration of this ideal decomposes into a union of coordinate subspaces indexed by reduced pipe dreams.
Our hope is that a similar program can be implemented in the (skew-)symmetric setting, which would give a geometric proof of Theorem~\ref{thm:inv-pipe-dream-formula}. 
We discuss this in greater detail in Section~\ref{sec:ideals}.
\end{rmk}

In addition to Theorem~\ref{thm:inv-pipe-dream-formula}, 
we also prove a number of results about the properties of involution pipe dreams.
An outline of the rest of this article is as follows.

Section~\ref{prelim-sect} contains some preliminaries on involution Schubert polynomials 
along with a proof of Theorem~\ref{thm:inv-schubert-classes}.
In Section~\ref{sec:reduced-words}, we give several equivalent characterizations of $\IP(y)$ and $\FP(z)$ in terms of {reduced words} for 
permutations.
Section~\ref{sec:inv-polynom} contains our proof of Theorem~\ref{thm:inv-pipe-dream-formula},
which uses ideas from recent work of Knutson~\cite{Knutson} along with certain transition equations for $\iS_y$ and $\iSfpf_z$ given in \cite{HMP3}.
In Section~\ref{sec:generating-pipe-dreams} we show that 
both families of involution pipe dreams are obtained from distinguished ``bottom'' elements by repeatedly applying certain simple transformations.
These transformations are extensions of the \emph{ladder moves} for pipe dreams described by Bergeron and Billey in \cite{bergeron-billey}.
In Section~\ref{sec:future}, finally, we describe several related open problems and conjectures.

\subsection*{Acknowledgements}

The second author was partially supported by Hong Kong RGC Grant ECS 26305218.
We thank Allen Knutson for explaining to us his proof of \eqref{eq:double-schubert-def} prior to the appearance of~\cite{Knutson}.

\section{Schubert polynomials and matrix varieties}\label{prelim-sect}

Everywhere in this paper,
$n$ denotes a fixed positive integer. 
For convenience, we realize the symmetric group $S_n$
as the group of permutations of $\PP = \{1,2,3,\dots\}$ fixing all $i>n$,
so that there is an automatic inclusion $S_n \subset S_{n+1}$.
In this section, we present some relevant background on involution Schubert polynomials
 and equivariant cohomology, and then prove Theorem~\ref{thm:inv-schubert-classes}.

\subsection{Involution Schubert polynomials}\label{schub-sect}

To start, we provide a succinct definition of $\iS_y$ and $\iSfpf_z$ in terms of the ordinary Schubert polynomials $\fkS_w$ given by \eqref{eq:double-schubert-def}.
Let $s_i = (i,i+1) \in S_n$ for each $i \in [n-1]$. 
A \emph{reduced word} for $w \in S_n$ is a minimal-length sequence $a_1a_2 \cdots a_l$ such that $w=s_{a_1}s_{a_2} \cdots s_{a_l}$. Let $\R(w)$ denote the set of reduced words for $w$. 
The \emph{length} $\ell(w)$ of $w \in S_n$ is the  length of any word in $\R(w)$. 
One has $\ell(ws_i)=\ell(w)+1>\ell(w)$ if and only if $w(i)<w(i+1)$.

\begin{prop}[{\cite[Thm. 7.1]{Humphreys}}]
\label{demazure-prop}
There is a unique associative operation $\circ : S_n \times S _n \to S_n$,
called the \emph{Demazure product},
with $s_i \circ s_i = s_i$ for all $i \in [n-1]$ and $v\circ w = vw$
for all $v,w \in S_n$ with $\ell(vw) = \ell(v)+\ell(w)$.
\end{prop}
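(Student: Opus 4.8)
The plan is to realize the Demazure product through explicit idempotent ``left-multiplication'' operators on $S_n$ (the generators of the $0$-Hecke monoid), and then to read off both existence and uniqueness by formal manipulations with reduced words. For each $i \in [n-1]$ I would define $L_i : S_n \to S_n$ by $L_i(w) = s_i w$ if $\ell(s_i w) = \ell(w) + 1$ and $L_i(w) = w$ if $\ell(s_i w) = \ell(w) - 1$; equivalently, $L_i(w)$ is whichever of $w$ and $s_i w$ is longer. Two facts drive everything. First, $L_i L_i = L_i$, which is immediate because $L_i(w)$ is always the longer of $\{w, s_i w\}$ and $L_i$ fixes that element. Second, the $L_i$ obey the same braid relations as the $s_i$: since $L_i$ changes $w$ only by possibly swapping the values $i$ and $i+1$ in its one-line notation, $L_i$ and $L_j$ commute whenever $\{i,i+1\}\cap\{j,j+1\}=\emptyset$, while $L_i L_j L_i = L_j L_i L_j$ for $|i-j|=1$ reduces to a finite check over the $3!$ relative orders of the values $i,i+1,i+2$ in $w$. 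This braid identity is exactly \cite[Thm. 7.1]{Humphreys}, which I would invoke rather than reprove.

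Granting the braid relations, the word property of Coxeter groups (Matsumoto's theorem) shows that $L_v := L_{a_1} L_{a_2} \cdots L_{a_l}$ is independent of the chosen reduced word $a_1 a_2 \cdots a_l \in \R(v)$, and I would then \emph{define} $v \circ w := L_v(w)$. The key lemma is that for an \emph{arbitrary} word $a_1 a_2 \cdots a_l$ (not assumed reduced) one has $L_{a_1} L_{a_2} \cdots L_{a_l} = L_u$, where $u := L_{a_1} L_{a_2} \cdots L_{a_l}(\id)$. I would prove this by induction on $l$: writing $u' := L_{a_2}\cdots L_{a_l}(\id)$ and using the inductive hypothesis $L_{a_2}\cdots L_{a_l} = L_{u'}$, the step splits according to whether $\ell(s_{a_1} u') = \ell(u')+1$ — in which case prepending $a_1$ to a reduced word for $u'$ stays reduced and $u = s_{a_1} u'$, forcing $L_{a_1} L_{u'} = L_{s_{a_1}u'} = L_u$ — or $\ell(s_{a_1} u') = \ell(u')-1$, in which case the exchange condition gives a reduced word for $u'$ beginning with $a_1$, so that $L_{u'} = L_{a_1} Q$ for some composite $Q$ of simple operators; then $L_{a_1} L_{u'} = L_{a_1}L_{a_1}Q = L_{a_1}Q = L_{u'}$ and $u = L_{a_1}(u') = u'$.

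Everything else is formal. Applying the lemma to a reduced word for $v$ concatenated with a reduced word for $w$ yields $L_v L_w = L_{v \circ w}$, and hence associativity, $(u \circ v)\circ w = L_{u \circ v}(w) = (L_u L_v)(w) = L_u(v \circ w) = u \circ (v \circ w)$. The normalization holds because $\ell(s_i s_i) = 0 < 1 = \ell(s_i)$, so $s_i \circ s_i = L_i(s_i) = s_i$; and when $\ell(vw) = \ell(v) + \ell(w)$, a reduced word for $v$ followed by a reduced word for $w$ is reduced, so peeling the factors off $L_v$ one at a time shows $v \circ w = L_v(w) = vw$. For uniqueness, let $\star$ be any associative operation on $S_n$ with $s_i \star s_i = s_i$ and $v \star w = vw$ whenever $\ell(vw) = \ell(v) + \ell(w)$. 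One first checks $s_i \star w = L_i(w)$ for every $w$: the only nontrivial case is $\ell(s_i w) = \ell(w) - 1$, where the exchange condition writes $w = s_i \star u$ with $\ell(u) = \ell(w)-1$, whence $s_i \star w = (s_i \star s_i)\star u = s_i \star u = w = L_i(w)$. Expanding $v$ in a reduced word and using associativity then gives $v \star w = v \circ w$, so $\star = \circ$.

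I expect the braid relations for the operators $L_i$ to be the only genuine obstacle; everything downstream is bookkeeping with reduced words, the exchange condition, and idempotency. Since that verification is precisely Theorem 7.1 of \cite{Humphreys}, the argument can simply quote it.
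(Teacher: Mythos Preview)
Your proof is correct and complete. The paper does not prove Proposition~\ref{demazure-prop} at all: it simply cites \cite[Thm.~7.1]{Humphreys} and moves on. So rather than comparing approaches, you are supplying a proof where the paper supplies none.

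One small point of circularity to watch: you say the braid identity $L_i L_j L_i = L_j L_i L_j$ ``is exactly \cite[Thm.~7.1]{Humphreys}, which I would invoke rather than reprove.'' But \cite[Thm.~7.1]{Humphreys} is the very result the proposition cites for the Demazure product, so invoking it here risks assuming what you are proving. Fortunately you also indicate the honest route---a direct check over the $3!$ relative orders of the values $i,i+1,i+2$ in $w$---and that case analysis is genuinely elementary and independent of the proposition. If you drop the Humphreys citation at that step and just do (or state) the six-case verification, the argument is entirely self-contained.

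Everything downstream is clean: the inductive ``key lemma'' that any composite $L_{a_1}\cdots L_{a_l}$ equals $L_u$ for $u = L_{a_1}\cdots L_{a_l}(\id)$ is correctly argued via the exchange condition, and associativity, the two normalizations, and uniqueness all follow formally as you describe. This $0$-Hecke monoid construction is the standard way to establish the Demazure product, and your write-up executes it well.
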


An \emph{involution word} for $y \in \I_n = \{ w \in S_n : w=w^{-1}\}$ is a minimal-length word $a_1a_2 \cdots a_l$ with 
\be\label{iii-eq}
y = s_{a_l} \circ \cdots \circ s_{a_2}\circ  s_{a_1} \circ 1 \circ s_{a_1} \circ s_{a_2} \circ \cdots \circ s_{a_l}.
\ee
Note that we could replace $s_{a_1}\circ 1 \circ s_{a_1}$ in this 
expression by $s_{a_1}  = s_{a_1}\circ s_{a_1}= s_{a_1}\circ 1 \circ s_{a_1}$.
An \emph{atom} for $y \in \I_n$ is a minimal-length permutation $w \in S_n$ with
$
y =  w^{-1} \circ w.
$
Let $\iR(y)$ be the set of  involution words for $y \in \I_n$ and 
let $\A(y)$ be the set of atoms for $y$.
The associativity of the Demazure product implies that 
$
\label{bigsqcup-eq}
 \iR(y) = \bigsqcup_{w \in \A(y)} \R(w)
$.  

\begin{ex}
If $y = 1432 $ then $\iR(y) = \{  23, 32\}$ and $\A(y) = \{1342, 1423\}$.
\end{ex}

One can show that 
$ \I_n = \{ w^{-1} \circ w : w \in S_n \}
$, so  $\iR(y)$ and $\A(y)$ are nonempty for all $y \in \I_n$.
 Involution words are a special case of a more general construction of Richardson and Springer \cite{richardson-springer}, and have been studied by various authors \cite{can-joyce-wyser, hansson-hultman, hu-zhang, hultman-twisted-involutions}. Our notation follows \cite{HMP2,HMP1}.

\begin{defn}\label{inv-sch-def}
The \emph{involution Schubert polynomial} of $y \in \I_n$ is 
$
\iS_y = \sum_{w \in \A(y)} \fkS_w.
$
\end{defn}

Wyser and Yong \cite{wyser-yong-orthogonal-symplectic}
originally defined these polynomials 
  recursively using divided difference operators; work of Brion \cite{brion} implies that our definition agrees with theirs.
For a detailed explanation of the equivalence among these definitions, see \cite{HMP1}.

\begin{ex} If $z = 1432 \in \I_4$ then $\A(z) = \{1342, 1423\}$ and
\begin{equation*}
\iS_{z} = \fkS_{1342} + \fkS_{1423} = (x_2 x_3 + x_1 x_3 + x_1 x_2) + (x_2^2 + x_1 x_2 + x_1^2).
\end{equation*}
\end{ex}

Assume $n$ is even, so that $\Ifpf_n = \{ z \in \I_n: i \neq z(i)\text{ for all $i \in [n]$}\}$ is nonempty, and let 
\[\idfpf_n = 2143\dots  n\ n{-}1
= s_1s_3\cdots s_{n-1} \in \Ifpf_n.\]
An \emph{fpf-involution word} for $z \in \Ifpf_{n}$ is a minimal-length word $a_1a_2 \cdots a_l$ with
\[
z = s_{a_l}  \cdots   s_{a_2}   s_{a_1}  \idfpf_n   s_{a_1}   s_{a_2} \cdots  s_{a_l}
.\]
This formulation avoids the Demazure product, but there is an equivalent definition that more closely parallels \eqref{iii-eq}.
Namely, by  \cite[Cor. 2.6]{HMP2},
an {fpf-involution word} for $z \in \Ifpf_{n}$ is also a minimal-length word $a_1a_2 \cdots a_l$ with
$
z = s_{a_l}  \circ \cdots  \circ s_{a_2}   \circ s_{a_1} \circ   \idfpf_n  \circ  s_{a_1}  \circ  s_{a_2} \circ  \cdots  \circ  s_{a_l}
$.

An \emph{fpf-atom} for $z \in \Ifpf_n$ is 
a minimal length permutation  $w \in S_n$ with 
$z = w^{-1}  \idfpf_n  w.$
Let $\Afpf(z)$ be the set of fpf-atoms for $z$,
and let 
$
 \iRfpf(z) $ be the set of fpf-involution words for $z$. 
The basic properties of reduced words imply that 
$
 \iRfpf(z) = \bigsqcup_{w \in \Afpf(z)} \R(w).
$

\begin{ex}
If $z = 4321 $ then $\iRfpf(z) = \{  23, 21\}$ and $\Afpf(z) = \{1342, 3124\}$.
\end{ex}

Note that $a_1a_2\cdots a_l $ belongs to $\iRfpf(z)$  
if and only if $135\cdots(n-1)a_1a_2\cdots a_l $ belongs to $\iR(z)$.
Moreover, if $z \in \Ifpf_n$ then 
$\iRfpf(z) = \iRfpf(zs_{n+1}) $ and $ \Afpf(z) = \Afpf(zs_{n+1}).$

Fpf-involution words are special cases of reduced words for \emph{quasiparabolic sets}
 \cite{RainsVazirani}.
Since $\Ifpf_n$ is a single $S_n$-conjugacy class,
each $z \in \Ifpf_n$
has at least one fpf-involution word and fpf-atom.

\begin{defn}\label{fpf-inv-sch-def}
The \emph{fpf-involution Schubert polynomial} of $z \in \Ifpf_n$ is 
$
\iSfpf_z = \sum_{w \in \Afpf(z)} \fkS_w.
$
\end{defn}

These polynomials were also introduced in \cite{wyser-yong-orthogonal-symplectic}. 
Note that $\iSfpf_{z} =\iSfpf_{zs_{n+1}}$ for all $z \in \Ifpf_n$.

\begin{ex} If $z = 532614 \in \Ifpf_6$ then $\Afpf(z) = \{13452, 31254\}$ and
\begin{equation*}
\ba
\iSfpf_{z} &= \fkS_{13452} + \fkS_{31254} 
= (x_2 x_3 x_4 + x_1 x_3 x_4 + x_1 x_2 x_4 + x_1 x_2 x_3) + (x_1^2 x_4 + x_1^2 x_3 + x_1^2 x_2 + x_1^3).
\ea
\end{equation*}
\end{ex}

\subsection{Torus-equivariant cohomology}
\label{subsec:eq-cohom}

Suppose $V$ is a finite-dimensional rational representation of a torus $\Torus \simeq (\CC^\times)^n$. A character $\lambda \in \Hom(\Torus, \CC^\times)$ is a \emph{weight} of $V$ if the \emph{weight space} $V_\lambda = \{v \in V : \text{$tv = \lambda(t)v$ for all $t \in \Torus$}\}$ is nonzero. Any nonzero $v \in V_{\lambda}$ is a \emph{weight vector}, and $V$ has a basis of weight vectors. Let $\wt(V)$ denote the set of weights of $V$. After fixing an isomorphism $\Torus \simeq (\CC^\times)^n$, we identify the character $(t_1, \ldots, t_n) \mapsto t_1^{a_1} \cdots t_n^{a_n}$ with the linear polynomial $a_1 x_1 + \cdots + a_n x_n \in \ZZ[x_1, \ldots, x_n]$.

The \emph{equivariant cohomology ring} $H_\Torus(V)$ is isomorphic to $\ZZ[x_1, \ldots, x_n]$, an identification we make without comment from now on. Each $\Torus$-invariant subscheme $X \subseteq V$ has an associated class $[X] \in H_\Torus(V)$, which we describe following \cite[Chpt. 8]{miller2004combinatorial}. 

First, if $X$ is a linear subspace then we define $[X] = \prod_{\lambda \in \wt(X)} \lambda$, where we identify each character $\lambda$ with a linear polynomial as above. More generally, fix a basis of weight vectors of $V$, and let $z_1, \ldots, z_n \in V^*$ be the dual basis; this determines an isomorphism $\CC[V] = \Sym(V^*) \simeq \CC[z_1, \ldots, z_n]$. 

Choose a term order on monomials in $z_1, \ldots, z_n$, and let $\init(I)$ denote the ideal generated by the leading terms of all members of a given set $I \subseteq \CC[V]$. Given that $\init(I)$ is a monomial ideal, one can show that each of its associated primes $\mathfrak{p}$ is also a monomial ideal, and hence of the form $\langle z_{i_1}, \ldots, z_{i_r} \rangle$. The corresponding subscheme $Z(\mathfrak{p})$ is a $\Torus$-invariant linear subspace of $V$. Now define
\begin{equation} \label{eq:equivariant-class}
    [X] = \sum_{\mathfrak{p}} \mult_{\mathfrak{p}}(\init I(X)) [Z(\mathfrak{p})]
\end{equation}
where $I(X)$ is the ideal of $X$ and $\mathfrak{p}$ runs over the associated primes of $\init I(X)$.

\subsection{Classes of involution matrix Schubert varieties} \label{sec:inv-matrix-schubert}
\label{ss:classes}

The matrix Schubert varieties in Theorem~\ref{t:fulton} can be described in terms of rank conditions, namely:
\[
\MX_w = \{A \in \Mat_n: \text{$\rank A_{[i][j]} \leq \rank w_{[i][j]}$ for $i,j \in [n]$}\},
\]
where $\Mat_n$ is the variety of $n\times n$ matrices, $A_{[i][j]}$ denotes the upper-left $i \times j$ corner of $A \in \Mat_n$, and we identify $w \in S_n$ with the $n\times n$ permutation matrix having $1$'s in positions $(i,w(i))$.

The varieties $\iMX_y$ and $\iMXfpf_z$ from Theorem~\ref{thm:inv-schubert-classes} can be reformulated in a similar way.
Specifically,
we define the \emph{involution matrix Schubert variety} of $y \in \I_n$ by
\begin{equation}\label{imxy-eq}
    \iMX_y = \MX_y \cap \SM_n = \{A \in \SM_n : \text{$\rank A_{[i][j]} \leq \rank y_{[i][j]}$ for $i,j \in [n]$}\},
\end{equation}
where $\SM_n$ is the subvariety of symmetric matrices in $\Mat_n$.
When $n$ is even, we define the \emph{fpf-involution matrix Schubert variety} of $z \in \Ifpf_n$ by 
\begin{equation}\label{imxz-eq}
    \iMXfpf_z = \MX_z \cap \SSM_n = \{A \in \SSM_n : \text{$\rank A_{[i][j]} \leq \rank z_{[i][j]}$ for $i,j \in [n]$}\},
\end{equation}
where $\SSM_n$ is the subvariety of  skew-symmetric matrices in $\Mat_n$.

\begin{ex}
Suppose $y = 132 =\left[\begin{smallmatrix} 1 & 0 & 0 \\ 0 & 0 & 1 \\ 0 & 1 & 0 \end{smallmatrix}\right]\in \I_3$. Setting $R_{ij} = \rank y_{[i][j]}$, we have $R = \left[\begin{smallmatrix} 1 & 1 & 1 \\ 1 & 1 & 2\\ 1 & 2 & 3 \end{smallmatrix}\right]$. The conditions $\rank A_{[i][j]} \leq R_{ij}$ for $i,j \in [3]$ defining $\iMX_y$ are all implied by the single condition $\rank A_{[2][2]} \leq R_{22} = 1$. Thus, $\iMX_y = \left\{\left[\begin{smallmatrix} z_{11} & z_{21} & z_{31} \\ z_{21} & z_{22} & z_{32} \\ z_{31} & z_{32} & z_{33} \end{smallmatrix} \right] : z_{11}z_{22} - z_{21}^2 = 0\right\}$.
\end{ex}

Let $\Torus \subseteq \GL_n$ be the usual torus of invertible diagonal matrices. Recall that $\kappa(y) = |\{ i : y(i) < i\}|$ for $y \in \I_n$, and that $ \Torus$ acts on matrices in $ \Mat_n$ by $t\cdot A = tA$ and on symmetric matrices in $ \SM_n$ by $t \cdot A = tAt$.
We can now prove Theorem~\ref{thm:inv-schubert-classes}, which states that 
if $y \in \I_n$ and $z \in \Ifpf_n$ then
$2^{\kappa(y)}\iS_y = [\iMX_y] \in H_\Torus^*(\SM_n)$  while $\iSfpf_z = [\iMXfpf_z] \in H_\Torus^*(\SSM_n)$.


\begin{rmk}
\label{r:K-theory}
It is possible, though a little cumbersome, to derive Theorem~\ref{thm:inv-schubert-classes} from \cite[Thm. 2.17 and Lem. 3.1]{MP2019}, which provide a similar statement in complex $K$-theory. 
We originally announced Theorem~\ref{thm:inv-schubert-classes} in an extended abstract for this paper which
preceded the appearance of \cite{MP2019}.
However, as the argument below is similar to the proofs of the results in \cite{MP2019}, we will be somewhat curt here in our presentation of the details.
\end{rmk}

For $w = w_1 \dots w_n \in S_n$, let $w \times 1^k = w_1 \dots w_n\ n{+}1 \dots n{+}k \in S_{n+k}$.
Similarly, for $n$ even define $w \times (21)^k = w \times 1^{2k} \cdot (\idfpf_n \cdot \idfpf_{n+2k})$.
Our proof of Theorem~\ref{thm:inv-schubert-classes} relies on the following characterizations of $\iS_y$ and $\iSfpf_z$:

\begin{thm}[{\cite[Thm. 2]{wyser-yong-orthogonal-symplectic}}]
\label{t:stable}
	If $y \in \I_n$ and $z \in \Ifpf_n$, then $2^{\kappa(y)}\iS_y$ and $\iSfpf_z$ are the unique representatives for $[\iX_y]$ and $[\iXfpf_z]$ with $2^{\kappa(y)}\iS_y = 2^{\kappa(y)}\iS_{y\times 1^k}$ and $\iSfpf_z = \iSfpf_{z \times (21)^k}$ for all $k \geq 1$.
\end{thm}

\begin{proof}[Proof of Theorem~\ref{thm:inv-schubert-classes}]
    If $X$ and $Y$ are complex varieties with $\Torus$-actions, and $f : X \to Y$ is a $\Torus$-equivariant morphism, then there is a pullback homomorphism $f^* : H_\Torus^*(Y) \to H_\Torus^*(X)$. If $f$ is a flat morphism (e.g., an inclusion of an open subset, a projection of a fiber bundle, or a composition of flat morphisms), then $f^*([Z]) = [f^{-1}(Z)]$ for any subscheme $Z \subseteq Y$.

 Because $\Torus$ acts freely on $\GL_n$ and since $\Torus\backslash \GL_n \twoheadrightarrow \Borel \backslash \GL_n \simeq \Fl{n}$ is a homotopy equivalence (see, e.g., \cite[\S8.1]{McGovern}), one has $H_\Torus^*(\GL_n) \simeq H^*(\Torus \backslash \GL_n) \simeq H^*(\Fl{n})$. 
 If $Z \subseteq \GL_n$ is a $\Borel$-invariant subvariety, then $[Z] \in H_\Torus^*(\GL_n)$ corresponds to the class of $\Borel\backslash Z = \{\Borel g : g \in Z\}$ in $H^*(\Fl{n})$.
    Fix $y \in \I_n$ and define $\sigma : \GL_n \to \SM_n$ by $\sigma(g) = gg^T$. Let $\iota : \GL_n \hookrightarrow M_n$ be the obvious inclusion and consider the diagram 
    \begin{equation}\label{cd-eq}
    \begin{CD}
        H_\Torus^*(\SM_n) @>>\sigma^*> H_\Torus^*(\GL_n) @>>\sim> H^*(\Fl{n})\\
        @AA\rotatebox{90}{$\sim$}A                     @AA\iota^*A\\
        \ZZ[x_1, \ldots, x_n] @>>\sim> H_\Torus^*(M_n)
    \end{CD}
    \end{equation}
    Realize $\O_n$ as the group $\{g \in \GL_n : gg^T = 1\}$. The map $\sigma$ is flat because it is the composition $\GL_n \twoheadrightarrow \GL_n/\O_n \hookrightarrow \SM_n$, where the second map sends $g\O_n \mapsto gg^T$ and may be identified with the open inclusion $\GL_n \cap \SM_n \hookrightarrow \SM_n$. For fixed $i \in [n]$, one checks using the prescription of $\S \ref{subsec:eq-cohom}$ that $2x_i$ represents both the class of $Z = \{A \in \SM_n : A_{ii} = 0\}$ in $H_\Torus^*(\SM_n)$ and the class of $Z' = \{A \in M_n : (AA^T)_{ii} = 0\}$ in $H_\Torus^*(M_n)$. Since $\sigma^*[Z] = [\sigma^{-1}(Z)] = [\iota^{-1}(Z')] = \iota^*[Z']$, this calculation implies that \eqref{cd-eq} commutes.

    Now set
    $        \iX_y = \Borel \backslash \sigma^{-1}(\iMX_y) = \{\Borel g  \in \Fl{n} : \rank (gg^T)_{[i][j]} \leq \rank y_{[i][j]} \text{ for $i,j \in [n]$}\},$
    so that the path through the upper-left corner of \eqref{cd-eq} sends the polynomial $[\iMX_y]$ to $[\iX_y]$. The variety $\iX_y$ is the closure of an $\O_n$-orbit on $\Fl{n}$ \cite[\S2.1.2]{wyser-degeneracy-loci}.
    The path through the lower-right corner of \eqref{cd-eq} is simply the classical Borel map $\ZZ[x_1, \ldots, x_n] \to H^*(\Fl{n})$.
    We claim $[\iMX_{y \times 1^m}]$ is constant for fixed $y$ and varying $m$.
    Since $[\iMX_{y}]$ is a representative for $[\iX_y]$, the result then follows by Theorem~\ref{t:stable}.

    For $y \neq 1 \in S_n$, define $\maxdes(y) = \max \{i \in \NN : y(i) > y(i+1)\}$. 
    Replacing $[n]$ in the definition \eqref{imxy-eq} by $[\maxdes(y)]$ yields exactly the same variety $\iMX_y$.
    Since  $\maxdes(y \times 1^m)$ is independent of $m$, as is $\rank(y \times 1^m)_{[i][j]}$ for $i, j \in [\maxdes(y)]$, it follows that the ideals of $\iMX_{y \times 1^m}$ for fixed $y$ and varying $m$ have a common generating set.
    It is clear from \S \ref{subsec:eq-cohom} that this means that the polynomial $[\iMX_{y \times 1^m}]$ is independent of $m$.

    The proof for the skew-symmetric case is the same, replacing $\O_n$ by $\Sp_n$ and the map $\sigma : g \mapsto gg^T$ by $g \mapsto g\Omega g^T$, where $\Omega \in \GL_n$ is the 
    nondegenerate skew-symmetric form preserved by $\Sp_n$.
\end{proof}

\begin{cor} \label{cor:positivity} The polynomial $2^{\kappa(y)}\iS_y$ (respectively, $\iSfpf_z$) is a positive integer linear combination of products of terms $x_i+x_j$ for $1 \leq i \leq j \leq n$ (respectively, $1 \leq i < j \leq n$). 
\end{cor}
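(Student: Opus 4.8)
The plan is to obtain this directly from Theorem~\ref{thm:inv-schubert-classes} together with the construction of equivariant classes recalled in \S\ref{subsec:eq-cohom}. By that theorem, $2^{\kappa(y)}\iS_y = [\iMX_y] \in H_T^*(\SM_n)$ and $\iSfpf_z = [\iMXfpf_z] \in H_T^*(\SSM_n)$, and both $\iMX_y$ and $\iMXfpf_z$ are nonempty closed $T$-invariant subschemes: each contains the zero matrix, and conjugating a corner $A_{[i][j]}$ by invertible diagonal matrices preserves its rank, so the defining rank conditions are $T$-stable. Thus it suffices to prove the following general statement: the class of any nonempty $T$-invariant closed subscheme $X$ of $\SM_n$ (resp. $\SSM_n$) is a positive integer combination of products of binomials $x_i + x_j$ with $1 \le i \le j \le n$ (resp. $1 \le i < j \le n$).

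The first step is to record the torus weights of the two ambient spaces. With $t = \diag(t_1, \ldots, t_n)$ acting by $t \cdot A = tAt$, the $(i,j)$-entry of $A$ is scaled by $t_it_j$, so the corresponding weight-basis vector of $\SM_n$ (resp.\ $\SSM_n$) has weight $x_i + x_j$; the index pairs are $1 \le i \le j \le n$ for $\SM_n$ (with $i = j$ giving $2x_i = x_i + x_i$) and $1 \le i < j \le n$ for $\SSM_n$ (there are no diagonal coordinates). Hence $\wt(\SM_n) = \{x_i + x_j : 1 \le i \le j \le n\}$ and $\wt(\SSM_n) = \{x_i + x_j : 1 \le i < j \le n\}$.

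The second step is to apply formula~\eqref{eq:equivariant-class}: for such an $X$,
\[
[X] = \sum_{\mathfrak{p}} \mult_{\mathfrak{p}}(\init I(X))\,[Z(\mathfrak{p})],
\]
a sum over the (nonempty) set of associated primes of $\init I(X)$ with \emph{positive} integer multiplicities. Each $Z(\mathfrak{p})$ is a $T$-invariant linear subspace cut out by some of the dual weight coordinates $z_1, \ldots, z_n$, hence a coordinate subspace with respect to the chosen weight basis; so by the prescription of \S\ref{subsec:eq-cohom} its class $[Z(\mathfrak{p})]$ is a product of some of the weights of the ambient space, i.e.\ a product of binomials of the stated form. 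Substituting into the display and collecting terms exhibits $[X]$ as a positive integer combination of such products, and then Theorem~\ref{thm:inv-schubert-classes} applied to $X = \iMX_y$ and $X = \iMXfpf_z$ finishes the argument.

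There is no genuine obstacle here --- the corollary is essentially a reading-off of~\eqref{eq:equivariant-class}. The two points that warrant care are getting the torus weights right, in particular that the operative action is $t \cdot A = tAt$ (not the one-sided action on $\Mat_n$ from \S\ref{intro1-sect}), so that each coordinate weight is $x_i + x_j$ rather than $x_i - x_j$; and observing that the subspaces $Z(\mathfrak{p})$ arising in the Gr\"obner degeneration are coordinate subspaces in the \emph{weight} basis, which is exactly what \S\ref{subsec:eq-cohom} guarantees and what forces each $[Z(\mathfrak{p})]$ to be a subproduct of $\wt(\SM_n)$ or $\wt(\SSM_n)$ rather than an arbitrary polynomial.
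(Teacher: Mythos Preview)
Your proof is correct and follows essentially the same approach as the paper's: compute the $T$-weights on $\SM_n$ and $\SSM_n$, then read off positivity from the expression~\eqref{eq:equivariant-class} combined with Theorem~\ref{thm:inv-schubert-classes}. The paper's proof is simply a terser version of what you wrote, omitting the checks that the varieties are nonempty and $T$-invariant.
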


\begin{proof} The weights of $\Torus$ acting on $\SM_n$ are $x_i+x_j$ for $1 \leq i \leq j \leq n$, while the weights of $\SSM_n$ are the same with the added restriction $i < j$. The expression \eqref{eq:equivariant-class} makes clear that the classes $[\iMX_y]$ and $[\iMXfpf_z]$ are positive integer linear combinations of products of these weights. \end{proof}

    \begin{rmk}
        Let $\STorus$ be a maximal torus in $\O_n$. Let $\Torus \times \STorus$ act on $\GL_n$ by $(t,s) \cdot g = tgs^{-1}$ and on $\SM_n$ by $(t,s) \cdot A = tAt$. The map $\sigma : \GL_n \to \SM_n$, $g \mapsto gg^T$ considered above is then $\Torus \times \STorus$-equivariant. Since the second factor of $\Torus \times \STorus$ acts trivially on $\SM_n$, the polynomial $2^{\kappa(y)}\iS_y$ still represents the class $[\iMX_y] \in H_{\Torus \times \STorus}(\SM_n)$. It follows as in the proof of Theorem~\ref{thm:inv-schubert-classes} that $2^{\kappa(y)}\iS_y$ also represents the class $[\iX_y]_\STorus \in H_{\STorus}(\Fl{n})$. The latter fact was proven by Wyser and Yong \cite{wyser-yong-orthogonal-symplectic}, but our approach gives an explanation for the surprising existence of a representative for $[\iX_y]_\STorus$ not involving the $\STorus$-weights. Similar remarks apply in the skew-symmetric case.
    \end{rmk}

\section{Characterizing pipe dreams} \label{sec:reduced-words}

The rest of this article is focused on the combinatorial properties of involution pipe dreams and 
their role in the formulas in Theorem~\ref{thm:inv-pipe-dream-formula}
that manifest Corollary~\ref{cor:positivity}.
In the introduction, we defined (fpf-)involution pipe dreams
via simple symmetry conditions.
In this section, we give an equivalent characterization
in terms of ``compatible sequences'' related to involution words.

\subsection{Reading words}

For $p \in \ZZ$, the \emph{$p\textsuperscript{th}$ antidiagonal} 
(respectively, \emph{$p\textsuperscript{th}$ diagonal}) 
in $\PP \times \PP$ is the set 
\[\{(i,j) \in \PP\times \PP : i+j-1 = p\}
\qquad(\text{respectively, }\{(i,j)  \in \PP\times \PP: j-i = p\}).\]
Labeling the elements of $\{1,2,3\}\times\{1,2,3\}$ by 
their respective antidiagonal and diagonal  gives
\begin{equation*}
    \begin{array}{ccc}
        1 & 2 & 3\\
        2 & 3 & 4\\
        3 & 4 & 5\\
    \end{array} \qquad \text{and} \qquad
    \begin{array}{rrr}
        0 & -1 & -2\\
        1 & 0 & -1\\
        2 & 1 & 0
    \end{array}
\end{equation*} 
Let $\adiag: \PP\times \PP \to \PP$ be the map sending  $(i,j) \mapsto i+j-1$. 

\begin{defn}
The \emph{standard reading word} of $D\subseteq [n]\times [n]$ is the sequence 
\[ \word(D) = \adiag(\alpha_1) \adiag(\alpha_2) \cdots \adiag(\alpha_{|D|})\]
where $\alpha_1,\alpha_2,\dots,\alpha_{|D|}$ are the positions of $D$ 
read row-by-row from right to left, starting with the top row.
\end{defn}

If one also records the row indices of the positions $\alpha_i$ as a second word,
then the resulting words uniquely determine $D$ and are the same data as a \emph{compatible sequence} for $\word(D)$
(see \cite[(1)]{BJK}).

\begin{ex}
The subset $D= \{ (1,3),(1,2),(2,3),(2,2),(3,2)\}$ has $\word(D) = 32434$.
\end{ex}

We introduce a more general class of reading words.
Suppose $\omega : [n] \times [n] \to [n^2]$ is a bijection.  
For a subset $D \subseteq [n] \times [n]$ with $\omega(D) = \{ i_1 < i_2< \dots < i_{m}\}$, let 
\[\word(D,\omega) =\adiag(\omega^{-1}(i_1))\adiag(\omega^{-1}(i_2))\cdots \adiag(\omega^{-1}(i_m)).\]
The standard reading word of $D\subseteq [n]\times [n]$ 
corresponds to 
the bijection $\omega : (i,j) \mapsto ni - j + 1$.

\begin{ex} If $n=2$ and $\omega$ is such that
$
\left[ \begin{array}{rr}   \omega(1,1) &   \omega(1,2) \\    \omega(2,1) &   \omega(2,2) \end{array}\right] = \left[ 
 \begin{array}{cc}   3 &   1 \\    4 &   2 \end{array}
 \right]
 $
 then we would have $\word([n]\times [n],\omega) = 2312$, while
 if 
 $D 
 = \{(1,1),(2,2)\}$
 then $\word(D,\omega) = 31$.
 \end{ex}

For us, a \emph{linear extension} of a finite poset $(P,\preceq)$ with size $m=|P|$
is a bijection $\omega : P \to [m]$ such that $\omega(s) < \omega(t)$ whenever $s \prec t$ in $P$.

\begin{defn} A \emph{reading order} on $[n]\times[n]$ is a linear extension  of 
the partial order $\NEleq$ on $[n] \times [n]$ that has $(i,j) \NEleq (i',j')$ if and only if both $i \leq i'$ and $j \geq j'$.
If $\omega$ is a reading order, then  we refer to $\word(D,\omega)$
as a \emph{reading word} of $D\subseteq [n]\times [n]$. \end{defn}

The \emph{Coxeter commutation class} of a finite sequence of integers is its equivalence class
under the relation that lets adjacent letters commute if their positive difference is at least two.
For example, $\{1324, 3124,1342,3142, 3412\}$ is a single Coxeter commutation class.
Fix a set $D \subseteq [n] \times [n]$.

\begin{lem}\label{commutation-lem}
 All reading words of $D$
are in the same Coxeter commutation class.
 \end{lem}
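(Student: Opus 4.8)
The statement asserts that any two reading words of a fixed $D \subseteq [n]\times[n]$ lie in the same Coxeter commutation class, i.e., differ by a sequence of swaps of adjacent letters whose absolute difference is at least two. The natural strategy is to show that any two reading orders for $D$ can be connected by a chain of ``elementary'' modifications, each of which swaps two consecutive cells in the linear extension, and then to check that each such elementary swap changes $\word(D,\omega)$ either not at all or by a legal Coxeter commutation. I would first reduce to this local statement, then verify the local statement by case analysis on the two swapped cells.

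\textbf{Step 1: Reduce to adjacent transpositions of reading orders.} Recall that a reading order is a linear extension of the poset $([n]\times[n], \NEleq)$. It is a standard fact (the linear-extension graph of a finite poset is connected) that any two linear extensions $\omega, \omega'$ of a finite poset are connected by a sequence of linear extensions in which consecutive terms differ by swapping two elements occupying adjacent positions; such a swap is legal precisely when the two swapped elements are incomparable in the poset. So it suffices to fix a reading order $\omega$ and an incomparable pair of cells $c = (i,j)$ and $c' = (i',j')$ with $\omega(c') = \omega(c)+1$, let $\omega'$ be the reading order obtained by exchanging the values of $\omega$ on $c$ and $c'$, and show that $\word(D,\omega)$ and $\word(D,\omega')$ lie in the same Coxeter commutation class.

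\textbf{Step 2: Analyze the effect of one swap.} If at most one of $c, c'$ lies in $D$, then $\word(D,\omega) = \word(D,\omega')$ verbatim, since reordering positions not both in $D$ (or a single position in $D$ relative to positions not in $D$) does not change the subsequence read off from $D$. So assume both $c = (i,j)$ and $c' = (i',j')$ lie in $D$. Then $\word(D,\omega)$ and $\word(D,\omega')$ agree everywhere except that the adjacent letters $\adiag(c) = i+j-1$ and $\adiag(c') = i'+j'-1$ appear in swapped order. It remains to show $|(i+j-1) - (i'+j'-1)| = |(i+j)-(i'+j')| \neq 1$, which exactly says the swap is a legal Coxeter commutation. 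Now incomparability of $c$ and $c'$ in $\NEleq$ means we are \emph{not} in the situation ($i\le i'$ and $j\ge j'$) nor ($i'\le i$ and $j'\ge j$); equivalently either ($i<i'$ and $j<j'$) or ($i>i'$ and $j>j'$). In the first case $i'+j' \ge (i+1)+(j+1) = i+j+2$, so the antidiagonal indices differ by at least $2$; the second case is symmetric. Hence the swap is a legal commutation, completing the local step.

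\textbf{Step 3: Assemble.} Combining Steps 1 and 2: given two reading words $\word(D,\omega)$ and $\word(D,\omega')$, connect $\omega$ to $\omega'$ through a chain of reading orders related by single incomparable-pair swaps; each swap either leaves the reading word unchanged or performs a legal Coxeter commutation; therefore the two reading words lie in the same Coxeter commutation class. The only point requiring slight care — and the one I would flag as the crux — is the connectivity claim in Step 1, namely that two linear extensions of $([n]\times[n],\NEleq)$ can always be joined by adjacent-transposition moves that stay within the set of linear extensions. This is a general and well-known fact about finite posets, and I would either cite it or include the short inductive argument: if $\omega \neq \omega'$, pick the first position where they disagree, note the element $\omega'$ places there can be ``bubbled'' into that position in $\omega$ by successive legal swaps (any element blocking it must be incomparable to it, since it precedes the target element in $\omega$ but not in $\omega'$), and induct on the number of disagreements. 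Everything else is the elementary arithmetic of Step 2.
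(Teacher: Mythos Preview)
Your proof is correct and follows essentially the same approach as the paper: both reduce to showing that any two reading orders are connected by adjacent transpositions of incomparable cells, and both verify that incomparable cells in $\NEleq$ lie in antidiagonals differing by at least two, so each swap is a legal Coxeter commutation. The paper phrases the connectivity step as an induction on the number of inversions of $\upsilon\omega^{-1}$ rather than your bubble argument, but this is the same standard fact.
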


This result can be derived using Viennot's theory of heaps of pieces; see \cite[Lem. 3.3]{Viennot}.

\begin{proof}
Let $s_p \in S_{n^2}$ be the simple transposition interchanging $p$ and $p+1$,
and choose a reading order $\omega$ on $[n]\times [n]$.
    The sequence $\word(D, s_p \omega)$ is equal to $\word(D, \omega)$ when $\{p,p+1\}\not\subset \omega(D)$,
    and otherwise is obtained by interchanging two adjacent letters in $\word(D,\omega)$. In the latter case, if $\omega^{-1}(p) = (i,j)$ and $\omega^{-1}(p+1) = (i',j')$ are not in adjacent antidiagonals, 
    then $\word(D,\omega)$ and $\word(D, s_p \omega)$ are in the same Coxeter commutation class.
    
    Now suppose $\upsilon$ is a second reading order on $[n]\times [n]$. 
    We claim that one can pass from $\omega$ to $\upsilon$ by composing $\omega$ with a sequence of simple transpositions obeying the condition just described. To check this, we induct on the number of inversions in the permutation $\upsilon  \omega^{-1} \in S_{n^2}$. If $\upsilon \omega^{-1}$ is not the identity, then there exists $p$ with $\upsilon(\omega^{-1}(p)) > \upsilon(\omega^{-1}(p+1))$. 
Since $\upsilon$ and $\omega$ are both linear extensions of $\NEleq$,
we can have neither $\omega^{-1}(p) \NEleq \omega^{-1}(p+1)$ nor $\omega^{-1}(p+1) \NEleq \omega^{-1}(p)$, so the cells $\omega^{-1}(p)$ and $\omega^{-1}(p+1)$
are not in adjacent antidiagonals.    
    Therefore $\word(D,\omega)$ and $\word(D, s_p  \omega)$ are in the same Coxeter commutation class, which by induction also includes $\word(D,\upsilon)$.
\end{proof}

Each diagonal is an antichain for $\NEleq$, so if $\omega$ first lists the elements on diagonal $-(n-1)$ in any order, then lists the elements on diagonal $-(n-2)$, and so on, then $\omega$ is a reading order.

\begin{defn}\label{unimodal-def}
The \emph{unimodal-diagonal reading order} on $[n]\times [n]$
is the reading order that lists the elements of the $p\textsuperscript{th}$ diagonal from bottom to top if $p < 0$, and from top to bottom if $p \geq 0$.     The \emph{unimodal-diagonal reading word} of $D\subseteq [n]\times[n]$, denoted $\udiag(D)$, is
    the associated reading word. 
    \end{defn}
    
    The unimodal-diagonal reading order 
    on $\{1,2,3,4\} \times \{1,2,3,4\}$ has values
\begin{equation*}
    \begin{array}{ccccc}
        7 & 6 & 3 & 1 \\
       11 & 8 & 5 & 2 \\
       14 & 12& 9 & 4 \\
       16& 15& 13& 10 \\
      \end{array}
    \end{equation*}
and if
$D = \{1,2,3,4\} \times \{1,2,3,4\}$
then $\udiag(D) = 4536421357246354$.

\subsection{Pipe dreams}

Recall the definitions of the sets of reduced words $\R(w)$, involution words $\iR(y)$, and fpf-involution words $\iRfpf(z)$
for $w \in S_n$, $y \in \I_n$, and $z \in \Ifpf_n$ from Section~\ref{schub-sect}.
For the standard reading word, the following theorem is 
well-known from \cite{bergeron-billey}. The main new results of this section are versions of this theorem
 for involution pipe dreams and fpf-involution pipe dreams.

 \begin{thm}\label{pipe-thm}
A subset 
$D \subseteq [n]\times [n]$
is a reduced pipe dream for $w \in S_n$
if and only if some (equivalently, every) reading word of $D$
is a reduced word for $w$.
\end{thm}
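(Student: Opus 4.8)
The plan is to reduce the statement to two facts: first, that the permutation realized by the wiring diagram of a reduced pipe dream $D$ depends only on the Coxeter commutation class of its reading words; and second, that this class, when $D$ is reduced, consists exactly of reduced words for that permutation. The combinatorial engine behind both facts is the well-known dictionary between pipe dreams and the "wiring diagram / reduced word" picture, together with Lemma~\ref{commutation-lem}, which already tells us that all reading words of a fixed $D$ lie in a single Coxeter commutation class.

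First I would set up the correspondence between $D \subseteq \rtriang_n$ and a product of simple transpositions. Reading the cells of $D$ in any reading order $\omega$, each crossing tile $(i,j)\in D$ contributes a factor $s_{\adiag(i,j)} = s_{i+j-1}$, acting on the strands; the non-crossing tiles contribute nothing. A standard induction on the number of tiles (processing the wiring diagram antidiagonal by antidiagonal, which is compatible with any reading order by the definition of $\NEleq$) shows that if the strands entering from the left are labeled $1,\dots,n$ top to bottom, then after passing through all the tiles the strand starting at row $i$ exits at the top in column given by the permutation $w = s_{c_1}s_{c_2}\cdots s_{c_m}$, where $c_1 c_2 \cdots c_m = \word(D,\omega)$. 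Here one must check that $\adiag$ never exceeds $n-1$ on $\rtriang_n$, so all the $s_{c_k}$ lie in $S_n$; this is immediate since $(i,j)\in\rtriang_n$ means $i+j\le n$. By Lemma~\ref{commutation-lem} the element $w$ is independent of the choice of reading order, since the defining relations of the Coxeter commutation class ($s_a s_b = s_b s_a$ when $|a-b|\ge 2$) hold in $S_n$.

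Next I would invoke the characterization of "no two pipes cross twice." Two pipes in the wiring diagram cross at most once precisely when, in the word $\word(D,\omega)$ read as a sequence of transpositions acting on strands, no pair of strands is ever swapped twice; by the standard exchange/length argument for the symmetric group, this is equivalent to $\word(D,\omega)$ being a \emph{reduced} word for $w$, i.e. $m = \ell(w)$. (One direction: if a reduced word, the number of inversions of $w$ equals $m$, and each inversion corresponds to exactly one crossing, so no repeats. Conversely, if no two pipes cross twice, then distinct crossings realize distinct inversions of $w$, forcing $m \le \ell(w)$; but $m\ge\ell(w)$ always since $w$ is a product of $m$ simple transpositions, so $m=\ell(w)$.) Combining: $D$ is a reduced pipe dream for $w$ $\iff$ the wiring diagram has no double crossing and realizes $w$ $\iff$ $\word(D,\omega)$ is a reduced word for $w$. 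Since by Lemma~\ref{commutation-lem} "some reading word is reduced for $w$" and "every reading word is reduced for $w$" are equivalent (Coxeter commutation moves preserve both the length and the underlying permutation), this gives the "(equivalently, every)" clause for free.

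The main obstacle, I expect, is making the induction in the second paragraph fully rigorous: one needs to argue that reading the tiles in a valid reading order is compatible with "peeling off" the wiring diagram one crossing at a time so that the partial products track the partial routing of the strands correctly. The subtlety is that a reading order need not process tiles strictly antidiagonal-by-antidiagonal, but Lemma~\ref{commutation-lem} lets us reduce to the unimodal-diagonal (or any convenient) reading order, for which the geometric peeling argument is transparent; alternatively one can cite the heaps-of-pieces framework (as the paper does for Lemma~\ref{commutation-lem}) to handle the bookkeeping uniformly. Everything else is routine symmetric-group combinatorics.
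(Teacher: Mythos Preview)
Your proposal is correct and follows essentially the same approach as the paper. The paper's proof is much terser: it invokes Lemma~\ref{commutation-lem} for the ``some $\Leftrightarrow$ every'' equivalence exactly as you do, and then simply observes that the remaining equivalence (wiring-diagram definition $\Leftrightarrow$ standard reading word lies in $\R(w)$) is Bergeron and Billey's original definition of an rc-graph, declaring it ``clear from the basic properties of permutation wiring diagrams'' rather than spelling out the length/inversion argument you give.
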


\begin{proof}
Fix $D \subseteq [n]\times [n]$ and $w \in S_n$.
The set $\R(w)$ is a union of Coxeter commutation classes, so
$\word(D)\in\R(w)$ if and only every
reading word of $D$ belongs to $\R(w)$ by Lemma~\ref{commutation-lem}.
Saying that $D$ is a reduced pipe dream for $w$
if and only if $\word(D) \in \R(w)$ is
Bergeron and Billey's original definition of an \emph{rc-graph}
in \cite[\S3]{bergeron-billey}, and it is clear from the basic properties of permutation wiring diagrams
that this is equivalent to the definition of a reduced pipe dream in the introduction.
\end{proof}

\begin{cor}[{\cite[Lem. 3.2]{bergeron-billey}}]
\label{transpose-cor}
If $D$ is a reduced pipe dream for $w \in S_n$ then $D^T$ is a reduced pipe dream for $w^{-1}$.
\end{cor}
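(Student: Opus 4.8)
The plan is to reduce everything to Theorem~\ref{pipe-thm} together with the single observation that the map $\adiag\colon(i,j)\mapsto i+j-1$ is invariant under transposing coordinates. Since reversing a reduced word $a_1a_2\cdots a_m$ for $w$ produces a reduced word $a_m\cdots a_2a_1$ for $w^{-1}$ (the reversal is again minimal-length because $\ell(w^{-1})=\ell(w)$), it suffices to exhibit one reading word of $D^T$ that is the reversal of some reading word of $D$.

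First I would fix a reading order $\omega\colon[n]\times[n]\to[n^2]$ and set $\word(D,\omega)=a_1a_2\cdots a_m$, which is a reduced word for $w$ by Theorem~\ref{pipe-thm}. Define $\tau\colon[n]\times[n]\to[n^2]$ by $\tau(i,j)=n^2+1-\omega(j,i)$. The key point to verify is that $\tau$ is again a reading order, i.e.\ a linear extension of $\NEleq$: if $(i,j)\NEleq(i',j')$, then $i\le i'$ and $j\ge j'$, which gives $(j',i')\NEleq(j,i)$, hence $\omega(j',i')<\omega(j,i)$ (strict because $\omega$ is a bijection), hence $\tau(i,j)<\tau(i',j')$.

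Next I would do the index bookkeeping. Writing $\omega(D)=\{i_1<\cdots<i_m\}$, one has $\tau(D^T)=\{\,n^2+1-i_m<\cdots<n^2+1-i_1\,\}$, and $\tau^{-1}(n^2+1-i_k)$ is exactly the transpose of $\omega^{-1}(i_k)$. Because $\adiag$ is symmetric under swapping coordinates, $\word(D^T,\tau)=\adiag(\omega^{-1}(i_m))\cdots\adiag(\omega^{-1}(i_1))=a_m\cdots a_1$, the reversal of $\word(D,\omega)$. Since $a_m\cdots a_1$ is a reduced word for $w^{-1}$, Theorem~\ref{pipe-thm} immediately yields that $D^T$ is a reduced pipe dream for $w^{-1}$.

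I do not expect a serious obstacle: the only delicate points are checking that $\tau$ is a genuine reading order and confirming that $\word(D^T,\tau)$ is literally the reversal of $\word(D,\omega)$, both of which are routine once the definitions are unwound. Alternatively, one could give a purely geometric argument: reflecting the wiring diagram of $D$ across the main diagonal carries the tile at $(i,j)$ to the tile at $(j,i)$, interchanges the left and top endpoints, and so sends the pipe joining $i$ to $w(i)$ to the pipe joining $w(i)$ to $i$; the condition that no two pipes cross twice is clearly preserved under reflection, so $D^T$ is a reduced pipe dream with associated permutation $w^{-1}$.
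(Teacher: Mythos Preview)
Your proposal is correct and follows essentially the approach the paper intends: the corollary is stated without proof immediately after Theorem~\ref{pipe-thm}, so the paper is treating it as a direct consequence of that theorem, exactly as you do. Your construction of the reading order $\tau(i,j)=n^2+1-\omega(j,i)$ and the verification that $\word(D^T,\tau)$ is the reversal of $\word(D,\omega)$ are the natural way to make this precise, and the alternative wiring-diagram argument you sketch at the end is also perfectly valid.
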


Recall that the set  $\IP(z)$ of
 \emph{involution pipe dreams}
for $z \in \I_n$ consists of all intersections $D\cap \ltriang_n$
where $D$ is a reduced pipe dream for $z$ that is
almost-symmetric
and $\ltriang_n = \{ (j,i) \in [n]\times [n] : i \leq j\}$.

\begin{thm}\label{thm:almost-symmetric}
Suppose $z \in \I_n$ and $D \subseteq [n]\times [n]$. The following are equivalent:
\ben 
\item[(a)] Some reading word of $D$ is an involution word for $z$.
\item[(b)] Every reading word of $D$ is an involution word for $z$.
\item[(c)] The set $D$ is a reduced pipe dream for some atom of $z$.
\een
Moreover, if $D \subseteq \ltriang_n$ then $D\in \IP(z)$ if and only if these equivalent conditions hold.
\end{thm}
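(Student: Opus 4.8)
The plan is to establish the cycle of equivalences $(a)\Leftrightarrow(b)\Leftrightarrow(c)$ first, and then treat the claim about $\IP(z)$ separately. For $(a)\Leftrightarrow(b)$: recall from Section~\ref{schub-sect} that $\iR(z) = \bigsqcup_{w\in\A(z)}\R(w)$, so $\iR(z)$ is a union of sets of reduced words, each of which (being $\R(w)$ for a permutation $w$) is a union of Coxeter commutation classes. Hence $\iR(z)$ itself is a union of Coxeter commutation classes. By Lemma~\ref{commutation-lem}, all reading words of $D$ lie in a single Coxeter commutation class, so one reading word belongs to $\iR(z)$ iff all of them do. This gives $(a)\Leftrightarrow(b)$ immediately, and moreover shows that condition $(a)$ holds iff \emph{some} reading word of $D$ lies in $\R(w)$ for some $w\in\A(z)$; by Theorem~\ref{pipe-thm} (applied with that $w$), this happens iff $D$ is a reduced pipe dream for some atom $w$ of $z$. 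That is precisely $(c)$, so $(a)\Leftrightarrow(c)$ as well. The first half of the theorem therefore reduces to the two cited results plus the observation that $\iR(z)$ is closed under Coxeter commutation.

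For the second half, suppose $D\subseteq\ltriangeq_n$. The forward direction is essentially the definition: if $D\in\IP(z)$ then by Definition~\ref{defn:inv-pipe-dreams-1} there is an almost-symmetric reduced pipe dream $D'\in\RP(z)$ with $D = D'\cap\ltriangeq_n$, and I must produce an atom $w\in\A(z)$ with $D\in\RP(w)$ (equivalently verify one of $(a)$--$(c)$). The natural candidate is to read $D'$ in a reading order $\omega$ that exhausts the strictly-lower-triangular region $\ltriangneq_n$ before touching the diagonal, then the diagonal, then the strictly upper part: then $\word(D',\omega)$ factors as $u\cdot d\cdot v$ where $u$ comes from $D'\cap\ltriangneq_n = D\cap\ltriangneq_n$, $d$ from $D'\cap(\text{diagonal})$, and $v$ from the upper part. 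The almost-symmetry hypothesis says the upper part $v$ records only crossings $(i,j)$, $i<j$, whose transposed positions $(j,i)$ already carry a crossing and whose crossing pipes coincide with the avoiding pipes at $(i,j)$; using the standard "mirror'' relation between a reduced pipe dream and its transpose (Corollary~\ref{transpose-cor}) together with the fact that $D'\in\RP(z)$ has no two pipes crossing twice, one argues that deleting these upper crossings does not change the Demazure-type product in \eqref{iii-eq}, i.e. $\word(D,\omega') = u\cdot d$ (read in the induced reading order $\omega'$ on $\ltriangeq_n$) is still an involution word for $z$. Then $(a)$ holds and so does $D\in\IP(z)$'s membership claim via $(c)$. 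Conversely, given $D\subseteq\ltriangeq_n$ with $(a)$--$(c)$ holding, I reconstruct an almost-symmetric $D'\in\RP(z)$ with $D'\cap\ltriangeq_n = D$ by "reflecting'' the strictly-lower part of $D$ across the diagonal and then deleting, one crossing at a time in an appropriate order, exactly those reflected crossings that would create a double crossing; the reduced-word bookkeeping (each deletion preserves the product in \eqref{iii-eq} because the pair of pipes involved is the same pair that crosses below) shows $D'\in\RP(z)$, and the construction makes $D'$ almost-symmetric by design.

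The main obstacle I anticipate is the bijective/combinatorial heart of the second half: showing that the operation of "symmetrizing $D$ and then pruning forced double crossings'' is well-defined (independent of the order of pruning) and really lands in $\RP(z)$ rather than $\RP$ of some other involution or an unreduced diagram. This is where the precise phrasing of almost-symmetry — "the pipes crossing at $(j,i)$ are the pipes that avoid each other at $(i,j)$'' — has to be converted into a statement about reduced words, and where one needs a careful induction (likely on the number of upper-triangular crossings, or on $|D|$) using the wiring-diagram description of how a single crossing affects the underlying permutation. I would structure this as a lemma: for a symmetric subset $S$ of $[n]\times[n]$ whose diagonal-or-below part $S\cap\ltriangeq_n$ reads to an involution word of $z$, there is a unique almost-symmetric $D'\in\RP(z)$ with $D'\cap\ltriangeq_n = S\cap\ltriangeq_n$, obtained by the pruning procedure; once that lemma is in place, both directions of the $\IP(z)$ statement follow formally. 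The rest is routine verification using Lemma~\ref{commutation-lem}, Theorem~\ref{pipe-thm}, and Corollary~\ref{transpose-cor}.
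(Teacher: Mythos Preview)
Your argument for the equivalence (a)$\Leftrightarrow$(b)$\Leftrightarrow$(c) is correct and is exactly what the paper does: $\iR(z)=\bigsqcup_{w\in\A(z)}\R(w)$ is a union of Coxeter commutation classes, so Lemma~\ref{commutation-lem} and Theorem~\ref{pipe-thm} finish it.

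The second half, however, has a genuine gap. Your plan hinges on choosing a reading order $\omega$ on $[n]\times[n]$ that ``exhausts the strictly-lower-triangular region $\ltriangneq_n$ before touching the diagonal, then the diagonal, then the strictly upper part.'' No such reading order exists. By definition a reading order is a linear extension of $\NEleq$, and for $i<j$ one always has $(i,j)\NEleq(j,i)$ (since $i\le j$ and $j\ge i$), so every reading order must list the strictly upper cell $(i,j)$ \emph{before} its transpose $(j,i)$. Thus the factorization $\word(D',\omega)=u\cdot d\cdot v$ with $u$ coming from the lower part and $v$ from the upper part is impossible within the reading-order framework, and you cannot invoke Lemma~\ref{commutation-lem} or Theorem~\ref{pipe-thm} for such an $\omega$. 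Your backward direction (``symmetrize and prune'') inherits the same difficulty and, as you yourself note, is not actually carried out.

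The paper's proof goes in the opposite direction: it uses the \emph{unimodal-diagonal} reading order, which (correctly) lists the strictly upper cells first and the cells of $\ltriang_n$ afterwards, so that $\udiag(E)=b_l\cdots b_1\,a_1\cdots a_m$ with $a_1\cdots a_m=\udiag(D)$. Rather than starting from an almost-symmetric $D'$ and deleting, the paper starts from $D\subseteq\ltriang_n$ and \emph{builds} $E$ by running the involution-word recursion $w_0=1$, $w_i=s_{a_i}w_{i-1}s_{a_i}$ or $w_i=w_{i-1}s_{a_i}$ one step at a time, adjoining the reflected cell $(q_i,p_i)$ precisely when the conjugation branch occurs. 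This makes $E$ almost-symmetric by construction and gives a reduced word $\udiag(E)$ for $w_m$; the exchange principle then yields $\udiag(D)\in\iR(z)\Leftrightarrow\udiag(E)\in\R(z)$, which is the desired biconditional. The point is that the explicit inductive construction (tied to the specific unimodal-diagonal order) replaces the pruning argument you were hoping to formalize.
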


\begin{rmk}
Although this theorem implies that $\IP(z) \subseteq \bigsqcup_{w \in \A(z)} \RP(w)$,
 it is possible for an atom  $w \in \A(z)$  to have no reduced pipe dreams contained in $\ltriang_n$,
in which case $\IP(z)$ and $\RP(w)$ are disjoint. See Example~\ref{3.10ex} for an illustration of this.
\end{rmk}

\begin{proof}
Recall that $\iR(z)$ is the disjoint union of the sets $\R(w)$, running over all atoms $w \in \A(z)$.
The equivalences (a) $\Leftrightarrow$ (b) $\Leftrightarrow$ (c) are clear from Lemma~\ref{commutation-lem} and Theorem~\ref{pipe-thm}. 
Assume $D \subseteq \ltriang_n$. 
To prove the final assertion,
it suffices to show that $D\in \IP(z)$
if and only if the unimodal-diagonal reading word of $D$ from Definition~\ref{unimodal-def}
is an involution word of $z$.

Suppose $|D| = m$ and $\udiag(D)  = a_1a_2\cdots a_m$.
We construct a sequence $w_0,w_1,w_2,\dots,w_m$ of involutions as follows:
start by setting $w_0 = 1$, and for each $i \in [m]$ define  $ w_i = s_{a_i} w_{i-1} s_{a_i}$  if we have $w_{i-1} s_{a_i} \neq s_{a_i} w_{i-1}$, or else set $w_i = w_{i-1} s_{a_i} = s_{a_i} w_{i-1}$. For example, if $m=5$ and $a_1a_2a_3a_4a_5  =13235$ then this sequence has
\[w_1 =s_1,
\quad
w_2= s_1s_3,
\quad
w_3 =  s_2s_1s_3s_2,
\quad
w_4 = s_3 s_2s_1s_3s_2 s_3,
\quand
w_5 =s_3 s_2s_1s_3s_2 s_3 s_5.
\]
Let $b_l \cdots b_2 b_1$ be the subword of $a_m \cdots a_2 a_1$ which contains $a_i$ if and
only if $w_i = s_{a_i} w_{i-1} s_{a_i}$.
In our example with $m=5$ and $a_1a_2a_3a_4a_5= 13235$, we have
$l=2$ and $ b_2 b_1 = a_4a_3= 32$.
Let
$(p_1,q_1)$, $(p_2,q_2)$, \dots, $(p_m,q_m)$
be the cells in $D$ listed in the unimodal-diagonal reading order 
and define $E = D\sqcup \{  (q_i,p_i) : w_i = s_{a_i} w_{i-1} s_{a_i}\}.$
If $\udiag(D) = 13235$ then we could have
\[D = \left\{\begin{smallmatrix} 
+ & \cdot & \cdot & \cdot \\ 
+ & +  & \cdot & \cdot \\
+ &  \cdot & \cdot & \cdot\\
\cdot & + & \cdot & \cdot
\end{smallmatrix}\right\}
\qquad\text{then}\qquad
E = \left\{\begin{smallmatrix} 
+ & + & + & \cdot \\ 
+ & + & \cdot & \cdot \\
+ & \cdot & \cdot & \cdot \\
\cdot & + & \cdot & \cdot
\end{smallmatrix}\right\}.
\]
By construction $\udiag(E) = b_l \cdots b_2 b_1 a_1a_2\cdots a_m$ is a reduced word for $z$.
It follows that $E$ is almost-symmetric since each $b_i$ has a corresponding $a_j$ and the associated cells are transposes of each other.

The exchange principle (see, e.g., \cite[Lem. 3.4]{hultman-twisted-involutions}) implies that
if $w \in \I_n$, $i \in [n-1]$, and $w(i)<w(i+1)$, then
either
 $s_i ws_i =w\neq ws_i =s_i w = s_i \circ w \circ s_i$ or $ s_iw s_i = s_i \circ w \circ s_i \neq w$.
From this, it is straightforward to show that $\udiag(D) \in \iR(z) $ if and only if 
$\udiag(E) \in \R(z)$; this also follows from the results in \cite[\S2]{HMP2}.
Given the previous paragraph, we conclude that $\udiag(D)\in \R(z)$ if and only if $D = E \cap \ltriang_n$ is an involution pipe dream for $z$.
\end{proof}

\begin{ex}\label{3.10ex}
Let $z = 1432 = \in \I_4$. Since
$
z = s_3 \circ s_2 \circ 1 \circ  s_2 \circ s_3 = s_2 \circ s_3 \circ 1 \circ s_3 \circ s_2,$
we have $ 23 \in \iR(z)$ and $ 32\in \iR(z)$.
These  are the standard reading words of the involution pipe dreams
$\{(2,1),(3,1)\}$ and $\{(2,1),(2,2)\}$, which may be drawn as 
\[
\begin{tikzpicture}[scale=0.3,baseline=(b.base)]
\node (b) at (0,2) {}; 
\filldraw (0.5, 3.5) circle[radius=.6mm]; 
\draw 
(0,3.5) to[bend right] (0.5,4) 
(0.5,3) to[bend left] (1,3.5); 
\filldraw (1.5, 3.5) circle[radius=.6mm]; 
\draw 
(1,3.5) to[bend right] (1.5,4) 
(1.5,3) to[bend left] (2,3.5); 
\filldraw (2.5, 3.5) circle[radius=.6mm]; 
\draw 
(2,3.5) to[bend right] (2.5,4) 
(2.5,3) to[bend left] (3,3.5); 
\filldraw (3.5, 3.5) circle[radius=.6mm]; 
\draw 
(3,3.5) to[bend right] (3.5,4); 
\filldraw (0.5, 2.5) circle[radius=.6mm]; 
\draw 
(0,2.5) to (1,2.5) 
(0.5,2) to (0.5,3); 
\filldraw (1.5, 2.5) circle[radius=.6mm]; 
\draw 
(1,2.5) to[bend right] (1.5,3) 
(1.5,2) to[bend left] (2,2.5); 
\filldraw (2.5, 2.5) circle[radius=.6mm]; 
\draw 
(2,2.5) to[bend right] (2.5,3); 
\filldraw (0.5, 1.5) circle[radius=.6mm]; 
\draw 
(0,1.5) to (1,1.5) 
(0.5,1) to (0.5,2); 
\filldraw (1.5, 1.5) circle[radius=.6mm]; 
\draw 
(1,1.5) to[bend right] (1.5,2); 
\filldraw (0.5, 0.5) circle[radius=.6mm]; 
\draw 
(0,0.5) to[bend right] (0.5,1); 
\end{tikzpicture} 
\qquand
\begin{tikzpicture}[scale=0.3,baseline=(b.base)]
\node (b) at (0,2) {}; 
\filldraw (0.5, 3.5) circle[radius=.6mm]; 
\draw 
(0,3.5) to[bend right] (0.5,4) 
(0.5,3) to[bend left] (1,3.5); 
\filldraw (1.5, 3.5) circle[radius=.6mm]; 
\draw 
(1,3.5) to[bend right] (1.5,4) 
(1.5,3) to[bend left] (2,3.5); 
\filldraw (2.5, 3.5) circle[radius=.6mm]; 
\draw 
(2,3.5) to[bend right] (2.5,4) 
(2.5,3) to[bend left] (3,3.5); 
\filldraw (3.5, 3.5) circle[radius=.6mm]; 
\draw 
(3,3.5) to[bend right] (3.5,4); 
\filldraw (0.5, 2.5) circle[radius=.6mm]; 
\draw 
(0,2.5) to (1,2.5) 
(0.5,2) to (0.5,3); 
\filldraw (1.5, 2.5) circle[radius=.6mm]; 
\draw 
(1,2.5) to (2,2.5) 
(1.5,2) to (1.5,3); 
\filldraw (2.5, 2.5) circle[radius=.6mm]; 
\draw 
(2,2.5) to[bend right] (2.5,3); 
\filldraw (0.5, 1.5) circle[radius=.6mm]; 
\draw 
(0,1.5) to[bend right] (0.5,2) 
(0.5,1) to[bend left] (1,1.5); 
\filldraw (1.5, 1.5) circle[radius=.6mm]; 
\draw 
(1,1.5) to[bend right] (1.5,2); 
\filldraw (0.5, 0.5) circle[radius=.6mm]; 
\draw 
(0,0.5) to[bend right] (0.5,1); 
\end{tikzpicture}
\]
The only involution pipe dream for $y = 321 \in \I_3$  is  $\{(1,1),(2,1)\}$ which has standard reading word $12$.
Although $\iR(y) =\{12,21\}$, there is no involution pipe dream with standard reading word $21$.
\end{ex}

We turn to the fixed-point-free case.

\begin{lem}\label{diag-sym-lem}
Assume $n$ is even. Suppose $z \in \I_n$ is an involution with a symmetric reduced pipe dream $D=D^T$.
Then $z \in \Ifpf_n$ if and only if  
$\{(i,i) : i \in [n/2]\}\subseteq D$.
\end{lem}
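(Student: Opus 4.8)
The plan is to argue directly in the wiring diagram of $D$, exploiting its symmetry under the transpose reflection $\rho\colon (i,j)\mapsto (j,i)$. Since $\rtriang_n$ is $\rho$-stable and $D=D^T$, the reflection $\rho$ carries the wiring diagram of $D$ to itself. I would first record the following standard facts about that diagram. Each of the $n$ pipes runs from some left edge $i\in[n]$ to some top edge; write $\pi_i$ for the pipe starting at left edge $i$, so that $\pi_i$ ends at the top edge $z(i)$ because $D\in\RP(z)$. Each cell of $\rtriang_n$ contributes one or two arcs: a cell in $D$ contributes a horizontal arc (joining its left and right sides) and a vertical arc (joining its top and bottom sides), while a cell not in $D$ contributes an NW arc (joining its left and top sides) and an SE arc (joining its bottom and right sides); and every arc lies in exactly one pipe. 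Moreover, following a pipe from its left edge towards its top edge, one verifies the elementary fact that it enters every cell through that cell's west or south side and leaves through the east or north side, so the quantity $j-i$ of the current cell $(i,j)$ strictly increases along the pipe; in particular no pipe meets a cell twice, and hence the two arcs of a crossing cell lie in two distinct pipes. Finally, since $D$ is reduced, no two pipes cross more than once.

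Next I would record how $\rho$ interacts with this structure. It sends the left edge $i$ to the top edge $i$ and vice versa, hence sends $\pi_i$ to the pipe ending at the top edge $i$, which is $\pi_{z(i)}$ because $z=z^{-1}$. It fixes each diagonal cell $(k,k)$ with $k\in[n/2]$, exchanging that cell's horizontal and vertical arcs when $(k,k)\in D$, and fixing each of its NW and SE arcs when $(k,k)\notin D$. Two consequences follow. First, if $(k,k)\in D$ with $k\in[n/2]$ and $\pi_a,\pi_b$ are the pipes through its horizontal and vertical arcs, then $a\neq b$, and applying $\rho$ shows that $\pi_{z(a)}=\rho(\pi_a)$ contains the vertical arc, so $\pi_{z(a)}=\pi_b$, i.e.\ $\{a,b\}$ is a $2$-cycle of $z$; moreover distinct diagonal cells yield distinct $2$-cycles, since two pipes forming the same $2$-cycle at two different diagonal cells would cross twice. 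Second, if $(k,k)\notin D$ with $k\in[n/2]$ and $\pi_c$ is the pipe through its NW arc, then $\pi_{z(c)}=\rho(\pi_c)$ also contains that NW arc, so $\pi_{z(c)}=\pi_c$ and $z(c)=c$, giving a fixed point of $z$.

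With these in hand the lemma follows quickly. If $\{(i,i):i\in[n/2]\}\subseteq D$, the first consequence produces $n/2$ pairwise distinct $2$-cycles of $z$, so the number $\kappa(z)$ of $2$-cycles of $z$ satisfies $\kappa(z)\ge n/2$; since always $\kappa(z)\le n/2$, equality holds, so $z$ has no fixed points and $z\in\Ifpf_n$. Conversely, if $z\in\Ifpf_n$ then $z$ has no fixed point, so by the second consequence no cell $(k,k)$ with $k\in[n/2]$ can lie outside $D$; that is, $\{(i,i):i\in[n/2]\}\subseteq D$. The only step that needs real care is the claim that a crossing cell is always a crossing of two distinct pipes (equivalently, that no pipe self-intersects): I expect to handle this via the elementary observation above that a pipe enters every cell from the west or south, which forces $j-i$ of the current cell to increase strictly along the pipe. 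Everything else is bookkeeping with $\rho$ together with the inequality $\kappa(z)\le n/2$.
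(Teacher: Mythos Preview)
Your proof is correct and takes essentially the same approach as the paper's: the paper's one-sentence proof asserts that ``the pipes in cell $(i,i)$ of the wiring diagram of $D$ are labeled by fixed points of $z$ if and only if $(i,i)\notin D$,'' and your argument with the reflection $\rho$ is precisely a careful verification of this claim. The additional points you spell out (monotonicity of $j-i$ along a pipe, distinctness of the $2$-cycles arising from distinct diagonal crossings) are the details the paper leaves implicit.
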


\begin{proof}
In fact, a stronger statement holds: for symmetric $D$ and $i \in [n/2]$, the pipes in cell $(i,i)$ of the wiring diagram
of $D$
are labeled by fixed points of $z$ if and only if
$(i,i) \notin D$.
Let $a$ and $b$ be the labels for the pipes entering $(i,i)$ from the left and below, respectively.
Since $D$ is symmetric, if $(i,i) \in D$ then $z(a) = b$ (hence $z(b) = a$), and if $(i,i) \notin D$ then $z(a) = a$ and $z(b) = b$.
\end{proof}

Recall that the set $\FP(z)$ of
 \emph{fpf-involution pipe dreams}
for $z \in \Ifpf_n$ consists of all intersections $D\cap \ltriangneq_n$
where $D$ is a reduced pipe dream for $z$ 
that is symmetric
and $\ltriangneq_n = \{ (j,i) \in [n]\times [n] : i < j\}$.

\begin{thm}\label{thm:fpf-almost-symmetric}
Suppose $n$ is even, $z \in \Ifpf_n$, and $D \subseteq [n]\times [n]$. The following are equivalent:
\ben
\item[(a)] Some reading word of $D$ is an fpf-involution word for $z$.
\item[(b)] Every reading word of $D$ is an fpf-involution word for $z$.
\item[(c)] The set $D$ is a reduced pipe dream for some fpf-atom of $z$.
\een
Moreover, if $D \subseteq \ltriangneq_n$ then $D\in \FP(z)$ if and only if these equivalent conditions hold.

\end{thm}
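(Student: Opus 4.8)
The plan is to run the same two-step argument used for Theorem~\ref{thm:almost-symmetric}, substituting the decomposition $\iRfpf(z) = \bigsqcup_{w \in \Afpf(z)} \R(w)$ for the analogous decomposition of $\iR(z)$. First I would dispatch the equivalence of (a), (b), (c): each $\R(w)$ is a union of Coxeter commutation classes, hence so is $\iRfpf(z)$, so Lemma~\ref{commutation-lem} shows that one reading word of $D$ is an fpf-involution word for $z$ exactly when every reading word is, which is (a)~$\Leftrightarrow$~(b); and Theorem~\ref{pipe-thm} identifies ``a reading word of $D$ lies in $\R(w)$'' with ``$D \in \RP(w)$'', so letting $w$ range over $\Afpf(z)$ turns this into (b)~$\Leftrightarrow$~(c).

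For the ``moreover'' clause, fix $D \subseteq \ltriangneq_n$; by the equivalences just shown it suffices to prove $D \in \FP(z)$ if and only if the particular reading word $\udiag(D)$ lies in $\iRfpf(z)$. The object I would build is the symmetrization $\hat D = D^T \sqcup \{(i,i) : i \in [n/2]\} \sqcup D$, which is symmetric with $\hat D \cap \ltriangneq_n = D$. Since the cells of $D$ lie strictly below the diagonal, the unimodal-diagonal order reads $D^T$ (on the negative diagonals), then the diagonal cells, then $D$, giving $\udiag(\hat D) = \udiag(D^T)\cdot(135\cdots(n-1))\cdot\udiag(D)$, where $135\cdots(n-1)$ is a reduced word for $\idfpf_n$.

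For the backward direction I would argue as follows. If $\udiag(D) \in \iRfpf(z)$ then $\udiag(D) \in \R(w)$ for some fpf-atom $w$, so $D \in \RP(w)$ by Theorem~\ref{pipe-thm} and $D^T \in \RP(w^{-1})$ by Corollary~\ref{transpose-cor}, whence $\udiag(D^T) \in \R(w^{-1})$. Thus $\udiag(\hat D)$ is a concatenation of reduced words for $w^{-1}$, $\idfpf_n$, $w$, so it represents $w^{-1}\idfpf_n w = z$ and has length $2\ell(w) + n/2$; once one knows this equals $\ell(z)$ the word is reduced, so $\hat D \in \RP(z)$ and $D = \hat D \cap \ltriangneq_n \in \FP(z)$. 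For the forward direction, if $D \in \FP(z)$ write $D = D' \cap \ltriangneq_n$ with $D' = (D')^T \in \RP(z)$; its reading words lie in $\R(z)$, so their letters are $\le n-1$ and $D' \subseteq \rtriang_n$, whence the diagonal cells of $D'$ lie among $\{(i,i):i\in[n/2]\}$ and Lemma~\ref{diag-sym-lem} forces all of them into $D'$, so by symmetry $D' = \hat D$. Then $\udiag(D') = \udiag(D^T)\cdot(135\cdots(n-1))\cdot\udiag(D)$ is reduced; being a suffix of it, $\udiag(D)$ is reduced, say $\udiag(D)\in\R(v)$, and reading off the factorization (using Corollary~\ref{transpose-cor} once more to identify $\udiag(D^T)$ with a reduced word for $v^{-1}$) shows $v^{-1}\idfpf_n v = z$ with $\ell(v)=|D|=(\ell(z)-n/2)/2$, which is the minimal possible length, so $v$ is an fpf-atom and $\udiag(D) \in \iRfpf(z)$.

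The one substantive input is the length-additivity of $z = w^{-1}\idfpf_n w$ for fpf-atoms $w$ (equivalently $\ell(z) = 2\,\hat\ell^{\fpf}(z) + n/2$), used in the backward direction to know $\hat D$ is \emph{reduced} and not merely a pipe dream drawing $z$; this is standard for fpf-involution words (cf.\ \cite{HMP2}) and can also be deduced from the relation ``$a_1\cdots a_l \in \iRfpf(z) \iff 135\cdots(n-1)a_1\cdots a_l \in \iR(z)$'' of Section~\ref{schub-sect} together with $\ell(y) = 2\hat\ell(y) - \kappa(y)$ for $y \in \I_n$. Alternatively the backward direction could be routed through Theorem~\ref{thm:almost-symmetric}: $\udiag(D) \in \iRfpf(z)$ gives $\{(i,i):i\in[n/2]\}\sqcup D \in \IRP(z)$, producing an almost-symmetric $E \in \RP(z)$ with $E \cap \ltriangeq_n = \{(i,i):i\in[n/2]\}\sqcup D$, and one then checks that such an $E$ must be symmetric --- the expected reason being that any failure of symmetry comes from a ``length-increasing-by-one'' step in building $z$ from the identity, but those steps only pair fixed points into two-cycles, and after the diagonal cells have been processed the involution is already $\idfpf_n$, with no fixed points left. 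Establishing this ``almost-symmetric $\Rightarrow$ symmetric'' statement (equivalently, the length identity) is the part I expect to take the most care; the remainder is routine translation between pipe dreams and reading words via Theorems~\ref{pipe-thm} and \ref{thm:almost-symmetric}, Corollary~\ref{transpose-cor}, and Lemma~\ref{diag-sym-lem}.
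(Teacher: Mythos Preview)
Your proof is correct and follows essentially the same approach as the paper. The only difference is packaging: for the ``moreover'' clause the paper routes both directions through Theorem~\ref{thm:almost-symmetric} (forward, a symmetric $E \in \RP(z)$ is in particular almost-symmetric, so $E \cap \ltriang_n \in \IP(z)$, and Lemma~\ref{diag-sym-lem} then yields $\udiag(E\cap\ltriang_n)=135\cdots(n-1)\,\udiag(D)\in\iR(z)$; backward, the almost-symmetric $D'' \in \RP(z)$ produced by Theorem~\ref{thm:almost-symmetric} from $D \sqcup \{(i,i):i\in[n/2]\}$ is forced to be fully symmetric by the cardinality count $|D''| = \ell(z) = 2|D| + n/2$), whereas your primary argument works directly with the explicit symmetrization $\hat D$ and its reading-word factorization via Corollary~\ref{transpose-cor}. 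Both hinge on the same length identity $\ell(z) = 2\ellfpf(z) + n/2$, which you correctly isolate as the substantive input, and your ``alternative'' backward route is exactly the paper's.
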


\begin{proof}
Recall that $\iRfpf(z)$ is the disjoint union of the sets $\R(w)$, running over all fpf-atoms $w \in \Afpf(z)$.
Properties (a), (b), and (c) are again equivalent 
by Lemma~\ref{commutation-lem} and Theorem~\ref{pipe-thm}. 
Assume $D \subseteq \ltriangneq_n$. To prove the final assertion, it suffices to check that
$D$ is an fpf-involution pipe dream for $z$ if and only if $\udiag(D) \in \iRfpf(z)$.

To this end, first suppose $D = E \cap \ltriangneq_n$ where $E=E^T \in \RP(z)$.
Then $E$ is also almost-symmetric, 
so
Theorem~\ref{thm:almost-symmetric} implies that $E\cap \ltriang_n\in \IP(z)$.
This combined with
Lemma~\ref{diag-sym-lem} implies that
$\udiag(E\cap \ltriang_n) = 1 3 5 \cdots (n-1)\udiag(D) \in \iR(z)$,
so $\udiag(D)  \in \iRfpf(z)$. 

Conversely, suppose every reading word of $D$ is an fpf-involution word for $z$,
so that $\udiag(D) \in \iRfpf(z)$.
The set
$D' = D \sqcup \{(i,i) : i \in [n-1]\}$ then has $\udiag(D') \in \iR(z)$,
so there exists an almost-symmetric
$D'' \in \RP(z)$ with $D'' \cap \ltriang_n = D'$ by Theorem~\ref{thm:almost-symmetric}.
By construction $D=D'' \cap \ltriangneq_n$, and
since $|D''| = \ell(z) =  2|D| + n/2$
it follows that $D''$ is actually symmetric.
Therefore $D\in \FP(z)$.
\end{proof}

  \begin{ex}
  Let $z =216543 \in \Ifpf_6$. Then $\ell(z) = 7$ and
  \[ z = s_3 \cdot s_4 \cdot (s _1 \cdot s_3 \cdot s_5) \cdot s_4\cdot s_3 =s_5 \cdot s_4 \cdot (s_1 \cdot s_3 \cdot s_5) \cdot s_4\cdot s_5,
  \]
  so $3413543$ and $5413545$ are reduced words for $z$. These words are the unimodal-diagonal reading words 
  of the symmetric reduced pipe dreams
  \[
\begin{tikzpicture}[scale=0.3,baseline=(b.base)]
\node (b) at (0,3) {}; 
\filldraw (0.5, 5.5) circle[radius=.6mm]; 
\draw 
(0,5.5) to (1,5.5) 
(0.5,5) to (0.5,6); 
\filldraw (1.5, 5.5) circle[radius=.6mm]; 
\draw 
(1,5.5) to[bend right] (1.5,6) 
(1.5,5) to[bend left] (2,5.5); 
\filldraw (2.5, 5.5) circle[radius=.6mm]; 
\draw 
(2,5.5) to (3,5.5) 
(2.5,5) to (2.5,6); 
\filldraw (3.5, 5.5) circle[radius=.6mm]; 
\draw 
(3,5.5) to[bend right] (3.5,6) 
(3.5,5) to[bend left] (4,5.5); 
\filldraw (4.5, 5.5) circle[radius=.6mm]; 
\draw 
(4,5.5) to[bend right] (4.5,6) 
(4.5,5) to[bend left] (5,5.5); 
\filldraw (5.5, 5.5) circle[radius=.6mm]; 
\draw 
(5,5.5) to[bend right] (5.5,6); 
\filldraw (0.5, 4.5) circle[radius=.6mm]; 
\draw 
(0,4.5) to[bend right] (0.5,5) 
(0.5,4) to[bend left] (1,4.5); 
\filldraw (1.5, 4.5) circle[radius=.6mm]; 
\draw 
(1,4.5) to (2,4.5) 
(1.5,4) to (1.5,5); 
\filldraw (2.5, 4.5) circle[radius=.6mm]; 
\draw 
(2,4.5) to (3,4.5) 
(2.5,4) to (2.5,5); 
\filldraw (3.5, 4.5) circle[radius=.6mm]; 
\draw 
(3,4.5) to[bend right] (3.5,5) 
(3.5,4) to[bend left] (4,4.5); 
\filldraw (4.5, 4.5) circle[radius=.6mm]; 
\draw 
(4,4.5) to[bend right] (4.5,5); 
\filldraw (0.5, 3.5) circle[radius=.6mm]; 
\draw 
(0,3.5) to (1,3.5) 
(0.5,3) to (0.5,4); 
\filldraw (1.5, 3.5) circle[radius=.6mm]; 
\draw 
(1,3.5) to (2,3.5) 
(1.5,3) to (1.5,4); 
\filldraw (2.5, 3.5) circle[radius=.6mm]; 
\draw 
(2,3.5) to (3,3.5) 
(2.5,3) to (2.5,4); 
\filldraw (3.5, 3.5) circle[radius=.6mm]; 
\draw 
(3,3.5) to[bend right] (3.5,4); 
\filldraw (0.5, 2.5) circle[radius=.6mm]; 
\draw 
(0,2.5) to[bend right] (0.5,3) 
(0.5,2) to[bend left] (1,2.5); 
\filldraw (1.5, 2.5) circle[radius=.6mm]; 
\draw 
(1,2.5) to[bend right] (1.5,3) 
(1.5,2) to[bend left] (2,2.5); 
\filldraw (2.5, 2.5) circle[radius=.6mm]; 
\draw 
(2,2.5) to[bend right] (2.5,3); 
\filldraw (0.5, 1.5) circle[radius=.6mm]; 
\draw 
(0,1.5) to[bend right] (0.5,2) 
(0.5,1) to[bend left] (1,1.5); 
\filldraw (1.5, 1.5) circle[radius=.6mm]; 
\draw 
(1,1.5) to[bend right] (1.5,2); 
\filldraw (0.5, 0.5) circle[radius=.6mm]; 
\draw 
(0,0.5) to[bend right] (0.5,1); 
\end{tikzpicture}
\qquand
\begin{tikzpicture}[scale=0.3,baseline=(b.base)]
\node (b) at (0,3) {}; 
\filldraw (0.5, 5.5) circle[radius=.6mm]; 
\draw 
(0,5.5) to (1,5.5) 
(0.5,5) to (0.5,6); 
\filldraw (1.5, 5.5) circle[radius=.6mm]; 
\draw 
(1,5.5) to[bend right] (1.5,6) 
(1.5,5) to[bend left] (2,5.5); 
\filldraw (2.5, 5.5) circle[radius=.6mm]; 
\draw 
(2,5.5) to[bend right] (2.5,6) 
(2.5,5) to[bend left] (3,5.5); 
\filldraw (3.5, 5.5) circle[radius=.6mm]; 
\draw 
(3,5.5) to (4,5.5) 
(3.5,5) to (3.5,6); 
\filldraw (4.5, 5.5) circle[radius=.6mm]; 
\draw 
(4,5.5) to (5,5.5) 
(4.5,5) to (4.5,6); 
\filldraw (5.5, 5.5) circle[radius=.6mm]; 
\draw 
(5,5.5) to[bend right] (5.5,6); 
\filldraw (0.5, 4.5) circle[radius=.6mm]; 
\draw 
(0,4.5) to[bend right] (0.5,5) 
(0.5,4) to[bend left] (1,4.5); 
\filldraw (1.5, 4.5) circle[radius=.6mm]; 
\draw 
(1,4.5) to (2,4.5) 
(1.5,4) to (1.5,5); 
\filldraw (2.5, 4.5) circle[radius=.6mm]; 
\draw 
(2,4.5) to[bend right] (2.5,5) 
(2.5,4) to[bend left] (3,4.5); 
\filldraw (3.5, 4.5) circle[radius=.6mm]; 
\draw 
(3,4.5) to[bend right] (3.5,5) 
(3.5,4) to[bend left] (4,4.5); 
\filldraw (4.5, 4.5) circle[radius=.6mm]; 
\draw 
(4,4.5) to[bend right] (4.5,5); 
\filldraw (0.5, 3.5) circle[radius=.6mm]; 
\draw 
(0,3.5) to[bend right] (0.5,4) 
(0.5,3) to[bend left] (1,3.5); 
\filldraw (1.5, 3.5) circle[radius=.6mm]; 
\draw 
(1,3.5) to[bend right] (1.5,4) 
(1.5,3) to[bend left] (2,3.5); 
\filldraw (2.5, 3.5) circle[radius=.6mm]; 
\draw 
(2,3.5) to (3,3.5) 
(2.5,3) to (2.5,4); 
\filldraw (3.5, 3.5) circle[radius=.6mm]; 
\draw 
(3,3.5) to[bend right] (3.5,4); 
\filldraw (0.5, 2.5) circle[radius=.6mm]; 
\draw 
(0,2.5) to (1,2.5) 
(0.5,2) to (0.5,3); 
\filldraw (1.5, 2.5) circle[radius=.6mm]; 
\draw 
(1,2.5) to[bend right] (1.5,3) 
(1.5,2) to[bend left] (2,2.5); 
\filldraw (2.5, 2.5) circle[radius=.6mm]; 
\draw 
(2,2.5) to[bend right] (2.5,3); 
\filldraw (0.5, 1.5) circle[radius=.6mm]; 
\draw 
(0,1.5) to (1,1.5) 
(0.5,1) to (0.5,2); 
\filldraw (1.5, 1.5) circle[radius=.6mm]; 
\draw 
(1,1.5) to[bend right] (1.5,2); 
\filldraw (0.5, 0.5) circle[radius=.6mm]; 
\draw 
(0,0.5) to[bend right] (0.5,1); 
\end{tikzpicture}
\]
so  
$\{(3,1),(3,2)\}$ and $\{(4,1),(5,1)\}$ are fpf-involution pipe dreams for $z$,
and their standard reading words $43$ and $45$ are fpf-involution words for $z$.
  \end{ex}

\section{Pipe dreams and Schubert polynomials}\label{sec:inv-polynom}

In this section, we derive the pipe dream formulas for involution Schubert polynomials
given in Theorem~\ref{thm:inv-pipe-dream-formula}.
Our arguments are inspired by a new proof due to Knutson \cite{Knutson} of the
classical pipe dream formula \eqref{eq:double-schubert-def}.
Knutson's approach is inductive.
The key step in his argument is to show that the right side of \eqref{eq:double-schubert-def}
satisfies certain recurrences that also apply to double Schubert polynomials \cite[\S4]{kohnert1997using}.


Similar recurrences for $\iS_y$ and $\iSfpf_z$ appear in~\cite{HMP3}.
Adapting Knutson's strategy to our setting requires us to 
show that the right hand expressions in Theorem~\ref{thm:inv-pipe-dream-formula} satisfy
the same family of identities.
This is accomplished in Theorems~\ref{thm:inv-dominant-transition} and \ref{thm:fpf-dominant-transition}.
Proving these results involves a detailed analysis of the 
maximal (shifted) Ferrers diagram contained in a reduced pipe dream,
which we refer to as the \emph{(shifted) dominant component}.
We gradually develop the technical properties of these components over the course of this section.

\subsection{Dominant components of permutations}

The results in this subsection are all straightforward consequences of known results, with the possible exception of Lemma~\ref{outer-corner-lem}; see in particular~\cite[\S3]{Knutson}.
However, we are unaware of an explicit description of Definition~\ref{d:dominant-component} in the literature.
Since this definition is central to our construction, we give a self-contained treatment of its properties.

A \emph{lower set} in a poset $(P,<)$ is a subset $L \subset P$
such that if $x \in P$, $y \in L$, and $x < y$, then $x \in L$.
Let $\NWleq$ be the partial order on $\PP \times \PP$ with $(i,j) \NWleq (i',j')$ if $i \leq i'$ and $j \leq j'$, 
i.e., if $(i,j)$ is northwest of $(i',j')$ in matrix coordinates. 

\begin{defn}\label{d:dominant-component}
The \emph{dominant component} $\dom{D}$ of a set $D \subseteq \PP\times \PP$ is 
the maximal lower set in $(\PP\times\PP,\NWleq)$ contained in $D$.
\end{defn}

Equivalently,  the set $\dom{D}$ consists of all pairs $(i,j) \in D$ such that whenever $(i',j') \in \PP\times \PP$ and $(i',j') \NWleq (i,j)$ 
it holds that $(i',j') \in D$.
If $D$ is finite, 
 then its dominant component $\dom{D}$ is the Ferrers diagram 
$
\D_\lambda = \{(i,j) : 1 \leq i \leq \ell(\lambda),\ 1 \leq j \leq \lambda_i\}
$
of some partition $\lambda$. 
An \emph{outer corner} of $D$ 
is a pair $(i,j) \in (\PP \times \PP) \setminus D$ such that $\dom{D} \sqcup \{(i,j)\}$ is again a Ferrers diagram
of some partition.
For example, $(1,2)$ and $(2,1)$ are the outer corners of $D = \{(1,1),(1,3)\}$, since $\dom{D} = \{(1,1)\}$.

For distinct $i,j \in [n]$, let $t_{ij} \in S_n$  be the transposition interchanging  $i$ and $j$.

\begin{lem}\label{outer-corner-lem}
Suppose $w \in S_n$ and $(i,j)$ is an outer corner of some $D \in \RP(w)$.
Then $w(i) = j$ and $D\sqcup\{(i,j)\}$ is a reduced pipe dream (for a longer permutation).
\end{lem}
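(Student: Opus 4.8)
The plan is to analyze the wiring diagram of $D$ directly, using the fact that the dominant component $\dom{D} = \D_\lambda$ is a Ferrers diagram and $(i,j)$ sits at an outer corner. First I would observe that since $(i,j)$ is an outer corner, every cell $(i',j')$ with $(i',j') \NWleq (i,j)$ and $(i',j') \ne (i,j)$ lies in $D$ (these are precisely the cells of $\D_\lambda$ northwest of $(i,j)$, and $(i,j) \notin D$). In particular the entire rectangle $[i-1]\times[j]$ and the rectangle $[i]\times[j-1]$ are crossing tiles of $D$, while $(i,j)$ itself is an elbow tile. The key local computation is to track which two pipes meet at the elbow in cell $(i,j)$: one pipe enters from the left, having traveled straight across row $i$ through the crossings in columns $1,\dots,j-1$ (since those cells are in $D$), so it is the pipe with left endpoint $i$; the other enters from above, having traveled straight down column $j$ through the crossings in rows $1,\dots,i-1$, so it is the pipe that, continuing from the elbow, exits through the top of column $j$, i.e. it has top endpoint $j$. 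Because $D$ is reduced, these are the only places these two pipes could interact near the corner, and chasing the pipe with left endpoint $i$ through the elbow at $(i,j)$ shows it turns upward and exits at the top of column $j$ — hence $w(i) = j$. This is essentially the standard "dominant pipe dream" analysis from \cite{bergeron-billey}, specialized to a single corner.

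For the second assertion, I would show directly that adding the crossing at $(i,j)$ does not create a repeated crossing. The two pipes meeting at $(i,j)$ are the pipe with left endpoint $i$ and the pipe with top endpoint $j$; by the argument above, in $D$ these two pipes run (respectively) straight east along row $i$ up to column $j$ then turn north, and straight south along column $j$ down to row $i$ then turn west, so in $D$ they form two disjoint monotone "staircase" paths that touch at the elbow $(i,j)$ but never cross. Switching the tile at $(i,j)$ to a crossing makes them cross exactly once there and leaves every other pair of pipes unchanged, so no two pipes in $D \sqcup \{(i,j)\}$ cross twice; thus $D \sqcup \{(i,j)\}$ is a reduced pipe dream by definition. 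Alternatively, and perhaps more cleanly, I would invoke Theorem~\ref{pipe-thm}: pick the unimodal-diagonal (or any) reading order; since $(i,j)$ lies on antidiagonal $i+j-1$ and all cells strictly northwest of it on smaller or equal antidiagonals are already in $D$, inserting $(i,j)$ inserts the letter $s_{i+j-1}$ into a reading word $\word(D,\omega)$ at a position where it can be shown to lengthen the associated permutation by $1$, which is exactly the statement that $D\sqcup\{(i,j)\}$ is reduced.

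The main obstacle I anticipate is making the "pipe-chasing" rigorous without drowning in case analysis: one must be careful that the pipe entering row $i$ from the left really does travel straight through all of $(i,1),\dots,(i,j-1)$ — this uses that those cells are crossings, which follows from $(i,j)$ being an outer corner and $\dom D$ being a Ferrers diagram — and symmetrically for column $j$, and then that the two pipes so identified are genuinely distinct (they are, since one has left endpoint $i$ and would need top endpoint $j$ to coincide, forcing $w(i)=j$, which is what we are proving; so distinctness and the identity $w(i)=j$ are intertwined and must be argued together). I would handle this by the clean observation that in a reduced pipe dream the pipe starting at the left of row $i$ cannot turn until it reaches a non-crossing tile, hence it first bends at $(i,j)$; symmetrically the pipe at the top of column $j$ first bends at $(i,j)$; a reduced pipe dream's bends pair up the two pipes at an elbow, so these are the two pipes at $(i,j)$, and following either one through the single elbow gives $w(i)=j$. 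Once $w(i)=j$ is established, reducedness of $D\sqcup\{(i,j)\}$ is immediate since we have exhibited the two pipes as otherwise-noncrossing monotone paths.
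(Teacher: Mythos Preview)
Your derivation of $w(i)=j$ is essentially fine, but your identification of the two pipes at the elbow $(i,j)$ is confused, and this breaks the reducedness argument. An elbow tile has a NW arc joining its left edge to its top edge and a SE arc joining its bottom edge to its right edge. Thus the pipe entering $(i,j)$ from the left and the pipe ``entering from above'' (i.e.\ continuing out the top) are the \emph{same} pipe---call it $A$---running from left endpoint $i$ to top endpoint $j$; indeed, that is exactly how you conclude $w(i)=j$. Your ``two disjoint monotone staircase paths that touch at the elbow'' are just $A$ traversed forwards and then backwards; they are not two pipes.

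The genuine second pipe at $(i,j)$ is the SE arc: it enters from below and exits to the right, so by weak NE monotonicity of pipes it has some left endpoint $p>i$ and top endpoint $q=w(p)>j$, and it meets the rectangle $[i]\times[j]$ only at the single cell $(i,j)$. This is precisely what the paper's proof uses. Adding the crossing at $(i,j)$ then re-routes the wires so that left $i$ now reaches top $q$ and left $p$ reaches top $j$; the resulting permutation $w'$ satisfies $\ell(w')>\ell(w)$ because $i<p$ while $w(i)=j<q=w(p)$, and since the new diagram has exactly $\ell(w)+1$ crossings this forces $\ell(w')=\ell(w)+1$, hence reducedness. Your write-up never identifies $p$ and $q$, and the claim that adding the crossing ``leaves every other pair of pipes unchanged'' is not by itself enough: the re-routed wires could in principle double-cross some third wire, and it is the length inequality (or an equivalent argument) that rules this out. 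Your alternative via Theorem~\ref{pipe-thm} merely restates what needs to be shown.
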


\begin{proof}
By hypothesis, $D$ contains every cell above $(i,j)$ in the $j$th column
and every cell to the left of $(i,j)$ in the $i$th row.
This means that in the wiring diagram associated to $D$, the pipe leaving the top of position $(i,j)$ must continue straight up and terminate in column $j$ on the top side of $D$, and after leaving the left of position $(i,j)$, the same pipe 
must continue straight left and terminate in row $i$ on the left side of $D$. 
Thus $w(i) = j$ as claimed.
   Suppose the other pipe at position $(i,j)$ starts at $p$ on the left and ends at $q=w(p)$ on the top. As this pipe leaves  $(i,j)$ rightwards and downwards, we have $p > i$ and $q>j$, and the pipe only intersects $[i] \times [j]$ at $(i,j)$, where it avoids the other pipe.
   Therefore, we have $D\sqcup\{(i,j)\} \in \RP(w')$ for $w' :=wt_{ip} = t_{jq}w\in S_n$,
   and it holds that $\ell(w) < \ell(w')$ as $i<p$ and $w(i)<w(p)$.
\end{proof}

\begin{ex}
Suppose $w=426135 \in S_6$. If  $(i,j) = (2,2)$ and
\begin{equation*}\label{wiring-ex}
     D =  \begin{tikzpicture}[scale=0.3,baseline=(b.base)]
         	  \node (b) at (0,3) {}; 
            \filldraw (0.5, 5.5) circle[radius=.6mm]; 
            \draw 
            (0,5.5) to (1,5.5) 
            (0.5,5) to (0.5,6); 
            \filldraw (1.5, 5.5) circle[radius=.6mm]; 
            \draw 
            (1,5.5) to (2,5.5) 
            (1.5,5) to (1.5,6); 
            \filldraw (2.5, 5.5) circle[radius=.6mm]; 
            \draw 
            (2,5.5) to (3,5.5) 
            (2.5,5) to (2.5,6); 
            \filldraw (3.5, 5.5) circle[radius=.6mm]; 
            \draw 
            (3,5.5) to[bend right] (3.5,6) 
            (3.5,5) to[bend left] (4,5.5); 
            \filldraw (4.5, 5.5) circle[radius=.6mm]; 
            \draw 
            (4,5.5) to (5,5.5) 
            (4.5,5) to (4.5,6); 
            \filldraw (5.5, 5.5) circle[radius=.6mm]; 
            \draw 
            (5,5.5) to[bend right] (5.5,6); 
            \filldraw (0.5, 4.5) circle[radius=.6mm]; 
            \draw 
            (0,4.5) to (1,4.5) 
            (0.5,4) to (0.5,5); 
            \filldraw (1.5, 4.5) circle[radius=.6mm]; 
            \draw 
            (1,4.5) to[bend right] (1.5,5) 
            (1.5,4) to[bend left] (2,4.5); 
            \filldraw (2.5, 4.5) circle[radius=.6mm]; 
            \draw 
            (2,4.5) to (3,4.5) 
            (2.5,4) to (2.5,5); 
            \filldraw (3.5, 4.5) circle[radius=.6mm]; 
            \draw 
            (3,4.5) to[bend right] (3.5,5) 
            (3.5,4) to[bend left] (4,4.5); 
            \filldraw (4.5, 4.5) circle[radius=.6mm]; 
            \draw 
            (4,4.5) to[bend right] (4.5,5); 
            \filldraw (0.5, 3.5) circle[radius=.6mm]; 
            \draw 
            (0,3.5) to (1,3.5) 
            (0.5,3) to (0.5,4); 
            \filldraw (1.5, 3.5) circle[radius=.6mm]; 
            \draw 
            (1,3.5) to[bend right] (1.5,4) 
            (1.5,3) to[bend left] (2,3.5); 
            \filldraw (2.5, 3.5) circle[radius=.6mm]; 
            \draw 
            (2,3.5) to[bend right] (2.5,4) 
            (2.5,3) to[bend left] (3,3.5); 
            \filldraw (3.5, 3.5) circle[radius=.6mm]; 
            \draw 
            (3,3.5) to[bend right] (3.5,4); 
            \filldraw (0.5, 2.5) circle[radius=.6mm]; 
            \draw 
            (0,2.5) to[bend right] (0.5,3) 
            (0.5,2) to[bend left] (1,2.5); 
            \filldraw (1.5, 2.5) circle[radius=.6mm]; 
            \draw 
            (1,2.5) to[bend right] (1.5,3) 
            (1.5,2) to[bend left] (2,2.5); 
            \filldraw (2.5, 2.5) circle[radius=.6mm]; 
            \draw 
            (2,2.5) to[bend right] (2.5,3); 
            \filldraw (0.5, 1.5) circle[radius=.6mm]; 
            \draw 
            (0,1.5) to[bend right] (0.5,2) 
            (0.5,1) to[bend left] (1,1.5); 
            \filldraw (1.5, 1.5) circle[radius=.6mm]; 
            \draw 
            (1,1.5) to[bend right] (1.5,2); 
            \filldraw (0.5, 0.5) circle[radius=.6mm]; 
            \draw 
            (0,0.5) to[bend right] (0.5,1); 
            \draw (-0.3,5.5) node {$\scriptstyle 1$};
            \draw (-0.3,4.5) node {$\scriptstyle 2$};
            \draw (-0.3,3.5) node {$\scriptstyle 3$};
            \draw (-0.3,2.5) node {$\scriptstyle 4$};
            \draw (-0.3,1.5) node {$\scriptstyle 5$};
            \draw (-0.3,0.5) node {$\scriptstyle 6$};

            \draw (0.5,6.3) node {$\scriptstyle 1$};
            \draw (1.5,6.3) node {$\scriptstyle 2$};
            \draw (2.5,6.3) node {$\scriptstyle 3$};
            \draw (3.5,6.3) node {$\scriptstyle 4$};
            \draw (4.5,6.3) node {$\scriptstyle 5$};
            \draw (5.5,6.3) node {$\scriptstyle 6$};
        \end{tikzpicture} \qquad \text{so that} \qquad D\sqcup\{(i,j)\} = 
        \begin{tikzpicture}[scale=0.3,baseline=(b.base)]
         	  \node (b) at (0,3) {}; 
            \filldraw (0.5, 5.5) circle[radius=.6mm]; 
            \draw 
            (0,5.5) to (1,5.5) 
            (0.5,5) to (0.5,6); 
            \filldraw (1.5, 5.5) circle[radius=.6mm]; 
            \draw 
            (1,5.5) to (2,5.5) 
            (1.5,5) to (1.5,6); 
            \filldraw (2.5, 5.5) circle[radius=.6mm]; 
            \draw 
            (2,5.5) to (3,5.5) 
            (2.5,5) to (2.5,6); 
            \filldraw (3.5, 5.5) circle[radius=.6mm]; 
            \draw 
            (3,5.5) to[bend right] (3.5,6) 
            (3.5,5) to[bend left] (4,5.5); 
            \filldraw (4.5, 5.5) circle[radius=.6mm]; 
            \draw 
            (4,5.5) to (5,5.5) 
            (4.5,5) to (4.5,6); 
            \filldraw (5.5, 5.5) circle[radius=.6mm]; 
            \draw 
            (5,5.5) to[bend right] (5.5,6); 
            \filldraw (0.5, 4.5) circle[radius=.6mm]; 
            \draw 
            (0,4.5) to (1,4.5) 
            (0.5,4) to (0.5,5); 
            \filldraw (1.5, 4.5) circle[radius=.6mm]; 
            \draw 
            (1,4.5) to (2,4.5) 
            (1.5,4) to (1.5,5); 
            \filldraw (2.5, 4.5) circle[radius=.6mm]; 
            \draw 
            (2,4.5) to (3,4.5) 
            (2.5,4) to (2.5,5); 
            \filldraw (3.5, 4.5) circle[radius=.6mm]; 
            \draw 
            (3,4.5) to[bend right] (3.5,5) 
            (3.5,4) to[bend left] (4,4.5); 
            \filldraw (4.5, 4.5) circle[radius=.6mm]; 
            \draw 
            (4,4.5) to[bend right] (4.5,5); 
            \filldraw (0.5, 3.5) circle[radius=.6mm]; 
            \draw 
            (0,3.5) to (1,3.5) 
            (0.5,3) to (0.5,4); 
            \filldraw (1.5, 3.5) circle[radius=.6mm]; 
            \draw 
            (1,3.5) to[bend right] (1.5,4) 
            (1.5,3) to[bend left] (2,3.5); 
            \filldraw (2.5, 3.5) circle[radius=.6mm]; 
            \draw 
            (2,3.5) to[bend right] (2.5,4) 
            (2.5,3) to[bend left] (3,3.5); 
            \filldraw (3.5, 3.5) circle[radius=.6mm]; 
            \draw 
            (3,3.5) to[bend right] (3.5,4); 
            \filldraw (0.5, 2.5) circle[radius=.6mm]; 
            \draw 
            (0,2.5) to[bend right] (0.5,3) 
            (0.5,2) to[bend left] (1,2.5); 
            \filldraw (1.5, 2.5) circle[radius=.6mm]; 
            \draw 
            (1,2.5) to[bend right] (1.5,3) 
            (1.5,2) to[bend left] (2,2.5); 
            \filldraw (2.5, 2.5) circle[radius=.6mm]; 
            \draw 
            (2,2.5) to[bend right] (2.5,3); 
            \filldraw (0.5, 1.5) circle[radius=.6mm]; 
            \draw 
            (0,1.5) to[bend right] (0.5,2) 
            (0.5,1) to[bend left] (1,1.5); 
            \filldraw (1.5, 1.5) circle[radius=.6mm]; 
            \draw 
            (1,1.5) to[bend right] (1.5,2); 
            \filldraw (0.5, 0.5) circle[radius=.6mm]; 
            \draw 
            (0,0.5) to[bend right] (0.5,1); 

            \draw (-0.3,5.5) node {$\scriptstyle 1$};
            \draw (-0.3,4.5) node {$\scriptstyle 2$};
            \draw (-0.3,3.5) node {$\scriptstyle 3$};
            \draw (-0.3,2.5) node {$\scriptstyle 4$};
            \draw (-0.3,1.5) node {$\scriptstyle 5$};
            \draw (-0.3,0.5) node {$\scriptstyle 6$};

            \draw (0.5,6.3) node {$\scriptstyle 1$};
            \draw (1.5,6.3) node {$\scriptstyle 2$};
            \draw (2.5,6.3) node {$\scriptstyle 3$};
            \draw (3.5,6.3) node {$\scriptstyle 4$};
            \draw (4.5,6.3) node {$\scriptstyle 5$};
            \draw (5.5,6.3) node {$\scriptstyle 6$};
        \end{tikzpicture}        
 \end{equation*}
then in the notation of the proof, we have $p = 3$, $q=6$, and $w' = 462135$.
\end{ex}

The \emph{Rothe diagram} of  $w \in S_n$ is 
$
D(w) = \{(i,j) \in [n] \times [n] : w(i) > j\text{ and } w^{-1}(j) > i\}.
$
It is often useful to observe that the set $D(w)$ is the complement in $[n]\times [n]$ 
of the union of the hooks 
$\{ (x,w(i)) : i < x \leq n\} \sqcup \{ (i,w(i))\} \sqcup \{ (i,y) : w(i) < y \leq n\}$
for $i \in [n]$.
It is not hard to show that one always has $|D(w)| = \ell(w)$. 

\begin{defn}
The \emph{dominant component}
of a permutation $w \in S_n$
is
$\dom{w} = \dom{D(w)}.$
We say that permutation $w \in S_n$ is \emph{dominant} if $\dom{w} \in \RP(w)$.
\end{defn}

It is more common to define $w$ to be dominant if $D(w)$ is the Ferrers diagram of a partition, or equivalently if $w$ is $132$-avoiding. The following lemma shows that our definition is equivalent.

\begin{lem}\label{dom1-lem}
A permutation $w \in S_n$ is dominant 
if and only if it holds that $\RP(w) = \{ \dom{w}\}$, in which case $\dom{w} = D(w)$.
\end{lem}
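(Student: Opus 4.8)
The plan is to prove the chain of implications: (i) if $w$ is dominant then $\RP(w)=\{\dom w\}$; (ii) if $\RP(w)=\{\dom w\}$ then $\dom w = D(w)$; and the reverse direction, that $w$ dominant implies $\dom w = D(w)$, which in fact we will get along the way. The key fact we need is the well-known characterization that $D(w)$ is the Ferrers diagram of a partition if and only if $w$ is $132$-avoiding, together with the standard count $|D(w)|=\ell(w)$, so that any $D\in\RP(w)$ has $|D|=\ell(w)=|D(w)|$. We may use freely that $\dom w \subseteq D(w)$ always (since $\dom{D(w)}\subseteq D(w)$ by definition), and that $D(w)\in\RP(w)$ whenever $D(w)$ is a Ferrers diagram (the wiring diagram of a Ferrers shape just has the pipe entering row $i$ turning up exactly at column $w(i)$, giving a reduced pipe dream for $w$).

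First I would establish the easy direction. Suppose $\RP(w)=\{\dom w\}$. Since $\RP(w)$ is nonempty, pick the unique element $\dom w$; it is by definition a lower set in $(\PP\times\PP,\NWleq)$, hence a Ferrers diagram $\D_\lambda$ for some partition $\lambda$. Then $w$ is dominant by definition (as $\dom w\in\RP(w)$). Moreover $|\dom w|=\ell(w)=|D(w)|$ combined with $\dom w\subseteq D(w)$ forces $\dom w = D(w)$. This handles part of the statement; the substance is the converse.

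Now assume $w$ is dominant, i.e.\ $\dom w\in\RP(w)$. I would argue $\dom w = D(w)$ first: we have $\dom w \subseteq D(w)$, and since $\dom w\in\RP(w)$ we get $|\dom w|=\ell(w)=|D(w)|$, so equality holds; thus $D(w)=\dom w$ is a lower set, i.e.\ a Ferrers diagram, so $w$ is $132$-avoiding. It remains to show $\RP(w)=\{\dom w\}=\{D(w)\}$, i.e.\ that a $132$-avoiding permutation has a unique reduced pipe dream. This is the heart of the argument. My plan is to induct on $\ell(w)$ (the case $\ell(w)=0$ being trivial, $w=\id$ with $\RP(w)=\{\emptyset\}$). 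For $w\neq\id$ dominant, let $(i,j)$ be the "inner corner" of the Ferrers diagram $D(w)=\D_\lambda$ farthest from the origin, so that $(i,j)\in D(w)$ but removing it leaves a Ferrers diagram; equivalently, $(i,j)=(i,\lambda_i)$ where $\lambda_i>\lambda_{i+1}$. I claim every $D\in\RP(w)$ must contain $(i,j)$: indeed, the outer-corner statement (Lemma~\ref{outer-corner-lem}, applied with roles reversed) or a direct wiring-diagram argument shows that any reduced pipe dream of a $132$-avoiding $w$ must place a cross at every position that is "forced," and in particular at $(i,j)$. More cleanly: let $w'$ be the permutation with $\D_\lambda\setminus\{(i,j)\}=D(w')$; then $w'$ is again dominant with $\ell(w')=\ell(w)-1$, and I would show $D\mapsto D\setminus\{(i,j)\}$ is a bijection $\RP(w)\to\RP(w')$ — injectivity and the fact it lands in $\RP(w')$ come from a wiring-diagram analysis (removing the cross at the extreme corner turns the pipe dream for $w$ into one for $w' = w s_k$ or $s_k w$ for the appropriate $k$), and surjectivity uses Lemma~\ref{outer-corner-lem}: any $D'\in\RP(w')$ has $(i,j)$ as an outer corner, so $D'\sqcup\{(i,j)\}\in\RP(w'')$ for the longer permutation $w''$, and one checks $w''=w$ since $w$ is determined by having Rothe diagram $\D_\lambda$. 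By the inductive hypothesis $\RP(w')=\{D(w')\}$, so $\RP(w)=\{D(w)\}=\{\dom w\}$.

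The main obstacle I anticipate is making the bijection $\RP(w)\leftrightarrow\RP(w')$ rigorous — specifically verifying, from the wiring-diagram picture, that deleting the extreme inner corner of the dominant Ferrers shape always yields a valid reduced pipe dream and that the resulting permutation is exactly $w'$ (and conversely via Lemma~\ref{outer-corner-lem} that adding it back recovers $w$). This requires care about which two pipes cross at $(i,j)$ and checking the "no two pipes cross twice" condition is preserved; the $132$-avoidance of $w$ is what makes this clean, since it guarantees the column above and the row left of $(i,j)$ are entirely filled, so the pipe through $(i,j)$ behaves exactly as in the proof of Lemma~\ref{outer-corner-lem}. An alternative to sidestep some of this is to invoke the classical fact (Macdonald, or Fomin--Stanley) that a dominant/$132$-avoiding permutation has a unique reduced pipe dream equal to its Rothe diagram, but since the excerpt is developing this machinery from scratch I would give the inductive argument in full.
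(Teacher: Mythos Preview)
Your proposal is correct, but it takes a longer route than necessary. The paper's proof is essentially a one-liner: if $w$ is dominant then $\dom{w}\in\RP(w)$, hence $|\dom{w}|=\ell(w)=|D(w)|$, and since $\dom{w}\subseteq D(w)$ always, this forces $\dom{w}=D(w)$; moreover every $D\in\RP(w)$ satisfies $\dom{w}\subseteq D$ and $|D|=\ell(w)=|\dom{w}|$, so $D=\dom{w}$. The key containment $\dom{w}\subseteq D$ (which the paper states without elaboration) follows directly from Lemma~\ref{outer-corner-lem}: if some $(i,j)\in\dom{w}$ were missing from $D$, take such a cell $\NWleq$-minimal; then every cell strictly northwest of it lies in $\dom{w}\cap D\subseteq\dom{D}$, so $(i,j)$ is an outer corner of $D$, giving $w(i)=j$ and contradicting $(i,j)\in D(w)$.

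Your inductive corner-peeling argument is not wrong, but the obstacle you yourself flag---showing that the chosen inner corner $(i,j)$ lies in every $D\in\RP(w)$---is exactly an instance of this containment, and establishing it via Lemma~\ref{outer-corner-lem} already requires knowing that the cells northwest of $(i,j)$ lie in $D$. So you end up proving $\dom{w}\subseteq D$ along the way, at which point the size equality finishes the proof immediately; the induction on $\ell(w)$, the bijection $\RP(w)\to\RP(w')$, and the verification that deleting the corner yields a pipe dream for $w'$ all become superfluous.
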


\begin{proof}
If $w \in S_n$ is dominant 
then $\RP(w) = \{ \dom{w}\}= \{ D(w) \}$
since all reduced pipe dreams for $w$ have size $\ell(w) = |D(w)|$
and contain $\dom{w} \subseteq D(w)$.
\end{proof}

 \begin{cor}\label{sym-dom-cor}
 Let $w \in S_n$. Then $\dom{w^{-1}} = \dom{w}^T$.
If $w $ is dominant, then $\dom{w} = \dom{w}^T$ if and only if $w=w^{-1}$.
\end{cor}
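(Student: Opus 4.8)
The plan is to reduce both assertions to two elementary observations: the Rothe diagram is symmetric under $w\mapsto w^{-1}$, and transposition commutes with taking dominant components.

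First I would record the preliminaries. \emph{Observation 1:} for every $w\in S_n$ one has $D(w^{-1}) = D(w)^T$. This is immediate from the defining condition, since $(i,j)\in D(w)$ means $w(i)>j$ and $w^{-1}(j)>i$, a statement that is unchanged under simultaneously swapping $i\leftrightarrow j$ and $w\leftrightarrow w^{-1}$. \emph{Observation 2:} for every $D\subseteq \PP\times\PP$ one has $\dom{D^T} = \dom{D}^T$. Indeed $(i',j')\NWleq(i,j)$ if and only if $(j',i')\NWleq(j,i)$, so a subset $L\subseteq\PP\times\PP$ is a lower set in $(\PP\times\PP,\NWleq)$ exactly when $L^T$ is; hence $L\mapsto L^T$ is a bijection between the lower sets contained in $D$ and those contained in $D^T$, and taking the union over each of these two collections yields $\dom{D}^T=\dom{D^T}$. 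Combining the two observations gives the first claim at once:
\[
\dom{w^{-1}} = \dom{D(w^{-1})} = \dom{D(w)^T} = \dom{D(w)}^T = \dom{w}^T.
\]

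For the equivalence, the direction $w=w^{-1}\Rightarrow\dom{w}=\dom{w}^T$ is now immediate from the first claim, and in fact uses neither dominance nor Observation 2. For the converse, assume $w$ is dominant and $\dom{w}=\dom{w}^T$. By Lemma~\ref{dom1-lem} we have $\RP(w)=\{\dom{w}\}$. Applying Corollary~\ref{transpose-cor} in both directions (if $D\in\RP(w)$ then $D^T\in\RP(w^{-1})$, and conversely $E\in\RP(w^{-1})$ gives $E^T\in\RP(w)$) shows $\RP(w^{-1}) = \{D^T : D\in\RP(w)\} = \{\dom{w}^T\}$, which equals $\{\dom{w}\}$ by hypothesis. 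Thus $\RP(w)=\RP(w^{-1})$, and this common set is nonempty; choosing any $D$ in it, Theorem~\ref{pipe-thm} shows that any reading word of $D$ is simultaneously a reduced word for $w$ and for $w^{-1}$. Since a reduced word determines its permutation, $w=w^{-1}$.

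I do not expect a genuine obstacle here: every step is routine once Observations 1 and 2 are isolated. The only point requiring care is the logical bookkeeping of the ``if and only if'': one should resist invoking dominance of $w^{-1}$ before it has been justified, which is why I route the converse through the single-element sets $\RP(w)$ and $\RP(w^{-1})$ rather than through Rothe diagrams of $w^{-1}$ directly. (Alternatively, one could first observe that $w$ is $132$-avoiding if and only if $w^{-1}$ is — since the pattern $132$ is its own inverse — and then argue via Rothe diagrams, but the pipe-dream route fits the surrounding development better.)
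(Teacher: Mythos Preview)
Your proof is correct. For the first assertion you make explicit the step $\dom{D^T}=\dom{D}^T$ that the paper leaves implicit, but otherwise the arguments coincide.

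For the converse in the second assertion you take a slightly different route from the paper. The paper observes that dominance gives $\dom{w}=D(w)$ via Lemma~\ref{dom1-lem}, so the hypothesis $\dom{w}=\dom{w}^T$ becomes $D(w)=D(w)^T=D(w^{-1})$, and since the Rothe diagram determines the permutation, $w=w^{-1}$. You instead pass through the single-element sets $\RP(w)$ and $\RP(w^{-1})$ and invoke Theorem~\ref{pipe-thm} to conclude via reduced words. Both are fine; the paper's version is a touch shorter because it avoids Corollary~\ref{transpose-cor} and Theorem~\ref{pipe-thm}, using only the elementary fact that $D(\cdot)$ is injective, while your version has the minor virtue of never needing to justify that the Rothe diagram determines its permutation.
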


\begin{proof}
 The first claim holds since $D(w^{-1}) = D(w)^T$.
If $w$ is dominant and $\dom{w} = \dom{w}^T$, then $\dom{w} = D(w)$
by Lemma~\ref{dom1-lem}
 so $D(w) = D(w)^T = D(w^{-1})$ and therefore $w=w^{-1}$.
\end{proof}

We write $\mu \subseteq \lambda$ for partitions $\mu$ and $\lambda$
to indicate that $\D_\mu \subseteq \D_\lambda$.

\begin{prop}
If $\lambda$ is a partition with $\lambda \subseteq (n-1,\dots,3, 2,1)$ then there exists a unique dominant 
permutation $w \in S_n$
with $\dom{w} = \D_\lambda$.
\end{prop}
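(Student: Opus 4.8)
The plan is to recast the proposition as the classical bijection, furnished by Rothe diagrams, between dominant ($=132$-avoiding) permutations in $S_n$ and partitions $\lambda$ whose Young diagram $\D_\lambda$ fits inside the staircase $(n-1,\dots,2,1)$, and to verify it directly in the setup of this paper. The key device is the \emph{code} $c(w)=(c_1(w),\dots,c_n(w))$ of $w\in S_n$, where $c_i(w)=|\{k\in[n]: k>i,\ w(k)<w(i)\}|$. I would use two standard facts: (i) $c_i(w)$ equals the number of cells of $D(w)$ in row $i$ — writing $j=w(k)$, one checks $(i,j)\in D(w)$ iff $k>i$ and $w(k)<w(i)$; and (ii) $w\mapsto c(w)$ is a bijection from $S_n$ onto the set of integer sequences $(c_1,\dots,c_n)$ with $0\le c_i\le n-i$, whose inverse builds $w$ greedily by letting $w(i)$ be the $(c_i+1)$st smallest element of $[n]\setminus\{w(1),\dots,w(i-1)\}$. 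The hypothesis $\D_\lambda\subseteq\D_{(n-1,\dots,2,1)}$ says precisely that $\lambda_i\le n-i$ for all $i$, so padding $\lambda$ with zeros to length $n$ produces a valid code; let $w_\lambda\in S_n$ be the unique permutation with $c(w_\lambda)=\lambda$. Uniqueness is then immediate: if $w\in S_n$ is dominant with $\dom{w}=\D_\lambda$, then $D(w)=\dom{w}=\D_\lambda$ by Lemma~\ref{dom1-lem}, so reading off the row lengths of $\D_\lambda$ gives $c(w)=\lambda$, whence $w=w_\lambda$ by (ii).

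For existence I would show that $D(w_\lambda)=\D_\lambda$. By (i), the $i$th row of $D(w_\lambda)$ has exactly $\lambda_i$ cells, so it suffices to check that $(i,j)\in D(w_\lambda)$ whenever $1\le j\le\lambda_i$, i.e. that each row of $D(w_\lambda)$ is left-justified. The inequality $w_\lambda(i)\ge\lambda_i+1>j$ is immediate from the greedy description, and the remaining condition $w_\lambda^{-1}(j)>i$ is equivalent to $j\notin\{w_\lambda(1),\dots,w_\lambda(i)\}$. The latter follows from the claim
\[
\{w_\lambda(1),\dots,w_\lambda(i)\}\cap\{1,2,\dots,\lambda_i\}=\emptyset\qquad\text{for every } i,
\]
which I would prove by induction on $i$: since $\lambda$ is weakly decreasing, the inductive hypothesis gives $\{1,\dots,\lambda_i\}\subseteq[n]\setminus\{w_\lambda(1),\dots,w_\lambda(i-1)\}$, so $1,\dots,\lambda_i$ are the $\lambda_i$ smallest values still available at step $i$, and therefore $w_\lambda(i)$, being the $(\lambda_i+1)$st smallest available value, exceeds $\lambda_i$. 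This yields $D(w_\lambda)=\D_\lambda$, and hence $\dom{w_\lambda}=\dom{\D_\lambda}=\D_\lambda$ since $\D_\lambda$ is already a lower set in $(\PP\times\PP,\NWleq)$. This left-justification step is the only place where the partition hypothesis on $\lambda$ is used essentially, and it is the one spot requiring a (very short) induction; the rest is the general theory of codes and Rothe diagrams, so I do not anticipate a real obstacle.

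It remains to see that $w_\lambda$ is genuinely dominant, i.e. that $\D_\lambda=\dom{w_\lambda}$ actually lies in $\RP(w_\lambda)$. By the remark following Lemma~\ref{dom1-lem}, a permutation is dominant exactly when its Rothe diagram is a Ferrers diagram, and $D(w_\lambda)=\D_\lambda$ is one, so $w_\lambda$ is dominant. (If one prefers to avoid citing that remark, one can instead invoke Theorem~\ref{pipe-thm} and check directly that the standard reading word of $\D_\lambda$ — the concatenation, over $i=1,2,\dots$, of the decreasing runs $\lambda_i+i-1,\ \lambda_i+i-2,\ \dots,\ i$ — is a reduced word for $w_\lambda$, which is a short and standard computation.) Together with the previous paragraph, $w_\lambda$ is a dominant permutation in $S_n$ with $\dom{w_\lambda}=\D_\lambda$, and by uniqueness it is the only one, completing the proof.
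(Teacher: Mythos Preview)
Your proof is correct but takes a genuinely different route from the paper's. The paper argues by induction on $|\lambda|$: starting from the identity permutation (with empty dominant component) and repeatedly invoking Lemma~\ref{outer-corner-lem}, which guarantees that adding an outer corner to the unique reduced pipe dream of a dominant permutation produces a reduced pipe dream for a longer permutation, which is then again dominant since its pipe dream is a Ferrers diagram. Uniqueness is implicit from Lemma~\ref{dom1-lem}, since two dominant permutations with the same dominant component have the same (singleton) set of reduced pipe dreams.

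By contrast, you construct $w_\lambda$ directly via the code bijection and verify $D(w_\lambda)=\D_\lambda$ by a short induction showing left-justification of the rows. Your approach is more explicit and essentially self-contained---it uses only the Rothe diagram and code, not the pipe dream framework or Lemma~\ref{outer-corner-lem}---at the cost of being longer. The paper's approach is terser and dovetails with the surrounding machinery (the outer-corner lemma is already in hand and is used repeatedly afterward). One small remark: the equivalence you cite is stated in the paragraph \emph{preceding} Lemma~\ref{dom1-lem}, not following it, and the lemma as written only proves one direction; your fallback via the reading word (or, equivalently, observing that $\Dbot(w_\lambda)=\D_\lambda$) is the cleanest way to close that small gap without forward reference.
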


\begin{proof}
This holds by induction as
adding an outer corner to the reduced pipe dream of a dominant permutation yields 
a reduced pipe dream of a new dominant permutation. 
\end{proof}

Write $\leq$ for the Bruhat order on $S_n$.
Since $v \leq w$ if and only if 
some (equivalently, every) reduced word for $w$ has a subword that is a reduced word for $v$  \cite[\S5.10]{Humphreys},
Theorem~\ref{pipe-thm} implies:

\begin{lem}
If $v,w \in S_n$ then $v \leq w$ if and only if some (equivalently, every) reduced pipe dream for $w$ has a subset that is a reduced pipe dream for $v$. 
\end{lem}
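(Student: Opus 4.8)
The plan is to transport the reduced-word characterization of Bruhat order through a single fixed reading order, using Theorem~\ref{pipe-thm} to pass between reduced pipe dreams and reduced words in both directions. Throughout I fix an arbitrary reading order $\omega$ on $[n]\times[n]$ (such as the unimodal-diagonal reading order of Definition~\ref{unimodal-def}). The key elementary observation is this: for $D\subseteq[n]\times[n]$ and any subset $E\subseteq D$, the word $\word(E,\omega)$ is exactly the subword of $\word(D,\omega)$ obtained by retaining the letters contributed by the cells of $E$, since $\word(\,\cdot\,,\omega)$ lists cells in increasing order of $\omega$-value and $\omega$ restricted to $E$ inherits that order. Conversely, every subword $b$ of $\word(D,\omega)$ picks out a set of positions, hence a subset $E\subseteq D$ with $\word(E,\omega)=b$. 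Moreover $\word(E,\omega)$ is by definition a reading word of $E$, because $\omega$ is a reading order.

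For the forward direction, suppose $v\leq w$ and let $D\in\RP(w)$ be arbitrary. Then $\word(D,\omega)\in\R(w)$ by Theorem~\ref{pipe-thm}, so by the reduced-word criterion for Bruhat order (\cite[\S5.10]{Humphreys}) it has a subword $b$ that is a reduced word for $v$. Taking the corresponding $E\subseteq D$ gives $\word(E,\omega)=b\in\R(v)$, and since $\word(E,\omega)$ is a reading word of $E$, Theorem~\ref{pipe-thm} yields $E\in\RP(v)$. Thus \emph{every} reduced pipe dream for $w$ has a subset that is a reduced pipe dream for $v$. For the reverse direction, suppose some $D\in\RP(w)$ has a subset $E\in\RP(v)$. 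Then $\word(E,\omega)$, being a reading word of $E$, lies in $\R(v)$ by Theorem~\ref{pipe-thm}, and by the observation above it is a subword of $\word(D,\omega)\in\R(w)$; the reduced-word criterion then gives $v\leq w$. Since $\RP(w)\neq\emptyset$, the implication ``every $\Rightarrow$ some'' is trivial, so the three statements are equivalent, proving the lemma including the ``(equivalently, every)'' clause.

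There is no serious obstacle here; the entire content is the bookkeeping in the first paragraph, namely that a subword of $\word(D,\omega)$ cut out by a set of letter-positions is precisely $\word(E,\omega)$ for the corresponding $E\subseteq D$. This is immediate once a single reading order is fixed, so no appeal to Lemma~\ref{commutation-lem} is needed beyond what is already baked into Theorem~\ref{pipe-thm}.
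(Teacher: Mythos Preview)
Your proof is correct and takes essentially the same approach as the paper, which simply cites the subword characterization of Bruhat order from \cite[\S5.10]{Humphreys} and invokes Theorem~\ref{pipe-thm}. You have filled in the natural details---fixing a single reading order $\omega$ and noting the order-preserving bijection between subsets $E\subseteq D$ and subwords of $\word(D,\omega)$---that the paper leaves implicit.
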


\begin{cor}\label{dom-subset-cor}
Let $v,w \in S_n$ with $v$ dominant. 
Then $v\leq w$ if and only if $\dom{v} \subseteq D$ for some (equivalently, every) $D \in \RP(w)$.
\end{cor}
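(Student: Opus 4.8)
The plan is to deduce Corollary~\ref{dom-subset-cor} from the preceding Lemma by leveraging the fact that a dominant permutation $v$ has exactly one reduced pipe dream, namely $\dom{v}$. First I would unpack what the previous Lemma gives us: $v \leq w$ in Bruhat order if and only if some (equivalently, every) $D \in \RP(w)$ contains a subset that is a reduced pipe dream for $v$. So the corollary follows immediately once I observe that the phrase ``a subset of $D$ that is a reduced pipe dream for $v$'' collapses, when $v$ is dominant, to the single condition ``$\dom{v} \subseteq D$.'' Indeed, by Lemma~\ref{dom1-lem}, $\RP(v) = \{\dom{v}\}$, so the only possible subset of $D$ that is a reduced pipe dream for $v$ is $\dom{v}$ itself. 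Hence ``some subset of $D$ is in $\RP(v)$'' is literally the same statement as ``$\dom{v} \subseteq D$.''

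Concretely, the argument runs: Suppose $v$ is dominant. If $v \leq w$, then by the Lemma, every $D \in \RP(w)$ has a subset lying in $\RP(v) = \{\dom{v}\}$; that subset must be $\dom{v}$, so $\dom{v} \subseteq D$ for every $D \in \RP(w)$. Conversely, if $\dom{v} \subseteq D$ for some $D \in \RP(w)$, then since $\dom{v} \in \RP(v)$, the set $D$ has a subset that is a reduced pipe dream for $v$, so the Lemma gives $v \leq w$. The ``equivalently, every'' clause is inherited directly from the ``equivalently, every'' clause in the Lemma, together with the fact that the subset realizing the condition is forced to be $\dom{v}$ in both directions.

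There is essentially no obstacle here — the corollary is a direct specialization of the Lemma once Lemma~\ref{dom1-lem} is invoked to pin down $\RP(v)$. The only point requiring a sentence of care is making explicit that ``$D$ has a subset in $\RP(v)$'' and ``$\dom{v} \subseteq D$'' are equivalent when $\RP(v) = \{\dom{v}\}$, which is immediate. I would write this up in three or four lines.

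\begin{proof}
Since $v$ is dominant, Lemma~\ref{dom1-lem} gives $\RP(v) = \{\dom{v}\}$. Thus for any $D$, the condition that $D$ has a subset which is a reduced pipe dream for $v$ is equivalent to the condition $\dom{v} \subseteq D$. The claim now follows immediately from the preceding lemma, which states that $v \leq w$ if and only if some (equivalently, every) $D \in \RP(w)$ has a subset that is a reduced pipe dream for $v$.
\end{proof}
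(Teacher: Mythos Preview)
Your proof is correct and takes essentially the same approach as the paper: the paper's one-line proof simply notes that a dominant permutation has only one reduced pipe dream, which is exactly the observation you spell out via Lemma~\ref{dom1-lem} before invoking the preceding lemma.
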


\begin{proof}
This holds since a dominant permutation has only one reduced pipe dream.
\end{proof}


For each $i \in [n]$ let $c_i(w) =
|\{ j : (i,j) \in D(w)\}|.$
The \emph{code} of $w \in S_n$ is the integer sequence 
$ c(w) = (c_1(w), \ldots, c_n(w))$.
The \emph{bottom pipe dream} of $w \in S_n$ is 
the set 
\be\Dbot(w)=\{(i,j) \in [n]\times [n] :  j \leq c_i(w)\}
\label{eq:dbot}
\ee obtained by left-justifying $D(w)$.
It is not obvious that $\Dbot(w) \in \RP(w)$,
but this holds by results in \cite{bergeron-billey}; see also Theorem~\ref{bb-thm} below.

\begin{ex}
\label{ex:pipedream}
If $w = 35142 \in S_5$, then $D(w)$ is the set of $+$'s below: 
\begin{equation*}
\arraycolsep=1.5pt
\def\arraystretch{0.8}
\begin{array}{ccccc} 
+ & + & 1 & \cdot & \cdot\\
+ & + & \cdot & + & 1\\
1 & \cdot & \cdot & \cdot & \cdot\\
\cdot & + & \cdot & 1 & \cdot\\
\cdot & 1 & \cdot & \cdot & \cdot
\end{array}
\qquad \text{so we have} \qquad
c(w) = (2,3,0,1,0)
\qquand
\Dbot(w) = 
\arraycolsep=1.5pt
\begin{array}{ccccc}
+ & + &  \cdot & \,\cdot\\
+ & + &  + & \,\cdot\\
\cdot & \cdot & \cdot & \,\cdot \\
+ & \cdot & \cdot & \,\cdot \\
\cdot & \cdot & \cdot & \,\cdot 
\end{array}
\end{equation*}
\end{ex}

\begin{prop}\label{prop:dominant-independence}
If $w \in S_n$ and $D \in \RP(w)$ then $\dom{D} = \dom{w}$.
\end{prop}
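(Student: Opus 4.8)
The plan is to replace both sides by a purely set-theoretic condition and compare them directly. The key reformulation is that for any finite $D\subseteq\PP\times\PP$ one has $\dom D=\{(i,j):[i]\times[j]\subseteq D\}$: any lower set for $\NWleq$ containing $(i,j)$ must contain the whole rectangle $[i]\times[j]$, while $[i]\times[j]$ is itself a lower set contained in $D$ as soon as it is a subset of $D$. Applying this to the Rothe diagram and unwinding its definition gives $(i,j)\in\dom w=\dom D(w)$ if and only if $w(k)>j$ for every $k\le i$ (equivalently $\{w(1),\dots,w(i)\}\cap[j]=\emptyset$). So, fixing $w\in S_n$ and $D\in\RP(w)$, it suffices to prove that $[i]\times[j]\subseteq D$ if and only if $w(k)>j$ for all $k\le i$.

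For the forward implication, suppose $[i]\times[j]\subseteq D$ and fix $a\le i$; I would show $w(a)>j$. In the wiring diagram of $D$ every tile of $[a]\times[j]\subseteq[i]\times[j]$ is a crossing, so the pipe with left endpoint $a$ runs straight east through $(a,1),\dots,(a,j)$. If $w(a)=b\le j$, then every tile of $[a]\times[b]$ is also a crossing, so the pipe entering the top of column $b$ runs straight south through $(1,b),\dots,(a,b)$; but that pipe has left endpoint $w^{-1}(b)=a$, hence is the same pipe, which would then traverse both the horizontal and the vertical strand of the cross-tile $(a,b)$ — a self-intersection, impossible since the pipes of a reduced pipe dream are simple arcs. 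Hence $w(a)>j$.

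For the reverse implication, assume $w(k)>j$ for all $k\le i$ but, for contradiction, $[i]\times[j]\not\subseteq D$, and choose $(a,b)\in([i]\times[j])\setminus D$ minimal for $\NWleq$. Every cell $\NWleq(a,b)$ other than $(a,b)$ lies in $[i]\times[j]$, hence in $D$ by minimality, so $[a]\times[b-1]\subseteq D$ and $[a-1]\times[b]\subseteq D$ while $[a]\times[b]\not\subseteq D$; by the reformulation this says $(a,b-1),(a-1,b)\in\dom D$ and $(a,b)\notin\dom D$, so $\dom D\sqcup\{(a,b)\}$ is again a Ferrers diagram, i.e. $(a,b)$ is an outer corner of $D$ (the degenerate cases $a=1$ or $b=1$ are the same). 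Lemma~\ref{outer-corner-lem} then forces $w(a)=b\le j$, contradicting $w(k)>j$ for $k\le i$. Thus $[i]\times[j]\subseteq D$, and the two implications together give $\dom D=\dom w$.

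I expect the reverse implication to be the crux: the forward direction is a short trace through the wiring diagram, whereas showing that every reduced pipe dream of $w$ already contains the full dominant rectangle requires the minimal-counterexample device to manufacture an outer corner, after which Lemma~\ref{outer-corner-lem} does the work. The only other mild point is the standard fact, used in the forward direction, that pipes in a reduced pipe dream do not self-intersect (a self-crossing could be undone, dropping the crossing count below $\ell(w)=|D|$).
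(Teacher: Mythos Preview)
Your proof is correct, and it takes a genuinely different route from the paper's. The paper argues in two steps: first it shows that all $D\in\RP(w)$ share the same dominant component, by producing for each $D$ a dominant permutation $v$ with $\dom v=\dom D$ and invoking Corollary~\ref{dom-subset-cor} (which rests on the Bruhat subword property); then it identifies this common component with $\dom w$ by comparing to the specific pipe dream $\Dbot(w)$ (a forward reference to Section~\ref{sec:generating-pipe-dreams}). Your argument bypasses both the Bruhat-order machinery and $\Dbot(w)$ entirely: you work directly with a single arbitrary $D\in\RP(w)$, reduce both sides to the rectangle-containment criterion, and use only Lemma~\ref{outer-corner-lem} together with the elementary observation that pipes are monotone arcs. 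This is more self-contained and arguably cleaner; the paper's approach, on the other hand, makes the constancy of $\dom D$ across $\RP(w)$ explicit as an intermediate result, which is conceptually pleasant even if not logically necessary. One small remark on your forward direction: pipes in \emph{any} wiring diagram (reduced or not) are simple monotone arcs, so the contradiction there does not actually rely on reducedness---you could equally well say that the pipe with left endpoint $a$ exits $(a,b)$ to the right and hence has top endpoint strictly greater than $b$, contradicting $w(a)=b$.
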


\begin{proof}
For each $D \in \RP(w)$ there exists a dominant permutation $v \in S_n$
with $\dom{v} = \dom{D}$ and $v \leq w$, in which case 
$\dom{D} \subseteq \dom{E}$ for all $E \in \RP(w)$ by Corollary~\ref{dom-subset-cor}.
This can only hold if $\dom{D} = \dom{E}$ for all $E \in \RP(w)$.

To finish the proof, it suffices to show that $\dom{w} = \dom{\Dbot(w)}$.
It is clear by definition that $\dom{w} \subseteq \dom{\Dbot(w)}$.
Conversely,
each outer corner of $\dom{w}$ has the form $(i,w(i))$ for some $i \in [n]$
but no such cell is in $\dom{\Dbot(w)}$,
so we cannot have $\dom{w} \subsetneq \dom{\Dbot(w)}$.
\end{proof}

In the next sections, we define an \emph{outer corner} of $w \in S_n$ to be an outer corner of $\dom{w}$.

\subsection{Involution pipe dream formulas}

Recall that $\I_n = \{ w \in S_n : w=w^{-1}\}$ and $\ltriang_n = \{ (j,i) \in [n]\times [n] : i\leq j\}$.

\begin{defn}
The  \emph{shifted dominant component} of $z \in \I_n$ is the set 
$ \shdom{z} = \dom{z} \cap \ltriang_n.$
\end{defn}

Fix $z \in \I_n$.
By Proposition~\ref{prop:dominant-independence}, we have 
$\shdom{z} = \dom{D} \cap \ltriang_n$ for all $D \in \RP(z)$.
Recall that the \emph{shifted Ferrers diagram} of a strict partition $\lambda = (\lambda_1>\lambda_2 > \dots > \lambda_k >0)$
is the set \[ \SD_\lambda = \{ (i, i+j-1) : 1 \leq i \leq k,\ 1\leq j \leq \lambda_i\},\]
which is formed from $\D_\lambda$
by moving the boxes in row $i$ to the right by $i-1$ columns.
Since $\dom{z}$ is a Ferrers diagram, 
the set $\shdom{z}$ is the transpose of the shifted Ferrers diagram of some strict partition.
A pair $(j,i) \in \PP\times \PP$ with $i\leq j$ is an outer corner of $z$  
if and only if
 the transpose of 
$\shdom{z} \cup \{(j,i)\}$ is a shifted Ferrers diagram,
in which case $z(j) = i$.

\begin{lem}\label{double-lem}
If $z \in \I_n$ then $\dom{z} = \shdom{z} \cup \shdom{z}^T$.
\end{lem}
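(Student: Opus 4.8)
The plan is to unwind the definitions and exploit the symmetry $\dom{z} = \dom{z}^T$ that follows from $z = z^{-1}$. First I would recall from Corollary~\ref{sym-dom-cor} that $\dom{z^{-1}} = \dom{z}^T$, so since $z \in \I_n$ we have $\dom{z} = \dom{z}^T$. In particular $\dom{z}$, being a Ferrers diagram which equals its own transpose, is "symmetric" across the main diagonal. The set $\shdom{z} = \dom{z} \cap \ltriang_n$ consists of the cells of $\dom{z}$ lying weakly below the diagonal, and $\shdom{z}^T$ consists of the cells of $\dom{z}$ lying weakly above the diagonal.

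The containment $\shdom{z} \cup \shdom{z}^T \subseteq \dom{z}$ is immediate: $\shdom{z} \subseteq \dom{z}$ by definition, and applying transpose to $\shdom{z} \subseteq \dom{z}$ gives $\shdom{z}^T \subseteq \dom{z}^T = \dom{z}$. For the reverse containment, take any $(i,j) \in \dom{z}$. If $i \geq j$ then $(i,j) \in \ltriang_n$ (after checking $i,j \leq n$, which holds since $\dom{z} \subseteq D(z) \subseteq [n]\times[n]$), so $(i,j) \in \dom{z} \cap \ltriang_n = \shdom{z}$. If instead $i < j$, then $(j,i) \in \ltriang_n$, and since $\dom{z} = \dom{z}^T$ we have $(j,i) \in \dom{z}$, hence $(j,i) \in \dom{z} \cap \ltriang_n = \shdom{z}$, and therefore $(i,j) = (j,i)^T \in \shdom{z}^T$. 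Either way $(i,j) \in \shdom{z} \cup \shdom{z}^T$, which finishes the proof.

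I do not anticipate a genuine obstacle here: the statement is essentially a bookkeeping identity, and the only input needed beyond the definitions is the symmetry $\dom{z} = \dom{z}^T$, which is already available via Corollary~\ref{sym-dom-cor}. The one point requiring a sentence of care is the boundary/cardinality issue — namely that every cell of $\dom{z}$ really does lie in $[n]\times[n]$ so that splitting into the below-diagonal and above-diagonal parts exactly matches $\ltriang_n$ and its transpose, with the diagonal cells landing in $\ltriang_n$ itself. That is handled by noting $\dom{z} \subseteq D(z) \subseteq [n]\times[n]$.
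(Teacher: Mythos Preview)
Your proof is correct and takes essentially the same approach as the paper, which simply notes that $z = z^{-1}$ implies $\dom{z} = \dom{z}^T$. You have just spelled out in detail the case split that the paper leaves implicit.
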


\begin{proof}
This holds since $z=z^{-1}$ implies that $\dom{z} = \dom{z}^T$.
\end{proof}

\begin{cor}\label{maximal-lower-cor}
If $z \in \I_n$ then
$\shdom{z}$ is the union of all lower sets of $(\ltriang_n,\NWleq)$ that are contained in some (equivalently, every) $D \in \IP(z)$.
\end{cor}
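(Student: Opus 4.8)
The plan is to compare $\shdom{z}$ with $L := L(D)$, the union of all lower sets of $(\ltriang_n,\NWleq)$ contained in a fixed $D \in \IP(z)$, prove the two inclusions, and then observe that $\shdom{z}$ does not depend on $D$. First I would write $D = E \cap \ltriang_n$ for an almost-symmetric $E \in \RP(z)$. Proposition~\ref{prop:dominant-independence} gives $\dom{E} = \dom{z}$, so $\shdom{z} = \dom{z} \cap \ltriang_n = \dom{E} \cap \ltriang_n \subseteq E \cap \ltriang_n = D$. Since $\dom{z}$ is a lower set of $(\PP\times\PP,\NWleq)$, its intersection with $\ltriang_n$ is a lower set of $(\ltriang_n,\NWleq)$, and it is contained in $D$; hence it is one of the sets whose union is $L$, so $\shdom{z} \subseteq L$.

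For the reverse inclusion it suffices to show that every lower set $S$ of $(\ltriang_n,\NWleq)$ with $S \subseteq D$ satisfies $S \subseteq \dom{z}$: since $S \subseteq \ltriang_n$, this yields $S \subseteq \dom{z} \cap \ltriang_n = \shdom{z}$, and therefore $L \subseteq \shdom{z}$. The key claim, which I would isolate as a lemma, is that $S^T \subseteq E$. Granting this, I would check that the $(\PP\times\PP,\NWleq)$-down-set of any $(a,b) \in S$ is contained in $S \cup S^T$: a cell $(c,d) \NWleq (a,b)$ either has $d \le c$, so $(c,d) \in \ltriang_n$ and hence $(c,d) \in S$ because $S$ is a lower set of $\ltriang_n$, or has $c < d$, in which case $(d,c) \in \ltriang_n$ and, since $(a,b) \in \ltriang_n$ gives $b \le a$ and hence $d \le b \le a$ and $c < d \le b$, one still has $(d,c) \NWleq (a,b)$, so $(d,c) \in S$ and $(c,d) \in S^T$. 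Thus the down-set of $(a,b)$ lies in $S \cup S^T \subseteq E$, which means $(a,b) \in \dom{E} = \dom{z}$, as needed.

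The hard part will be the key claim $S^T \subseteq E$, and this is exactly where the almost-symmetry hypothesis must be used. The diagonal cells of $S$ already lie in $S \subseteq E$, so I would fix $(j,i) \in S$ with $i < j$ and suppose for contradiction that $(i,j) \notin E$. Because $S$ is a lower set of $\ltriang_n$ contained in $E$, the cells $(j,1),(j,2),\dots,(j,i)$ — all in $\ltriang_n$ and northwest of $(j,i)$ — belong to $E$ and so are crossing tiles in the wiring diagram of $E$; therefore the pipe occupying cell $(j,i)$ horizontally is the pipe that enters the diagram at the left edge of row $j$, and since pipes move only down and to the right, it never visits a cell in a row above $j$. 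On the other hand, $(j,i) \in E$ with $i < j$ and $(i,j) \notin E$, so the second clause in the definition of an almost-symmetric pipe dream says the two pipes crossing at $(j,i)$ are exactly the two pipes that bend away from each other at $(i,j)$; in particular the pipe just described passes through $(i,j)$, which lies in row $i < j$ — a contradiction. This forces $(i,j) \in E$, proving $S^T \subseteq E$ and completing the argument. Finally, since $\shdom{z} = \dom{z} \cap \ltriang_n$ visibly depends only on $z$, the equivalence of ``some'' and ``every'' in the statement follows at once.
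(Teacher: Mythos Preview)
Your overall strategy is sound: both the inclusion $\shdom{z}\subseteq L(D)$ and the reduction of the reverse inclusion to the key claim $S^{T}\subseteq E$ are correct. The gap is in your argument for that claim. In the paper's convention pipes run from the left edge to the top edge, so they move only \emph{up} and to the right, not ``down and to the right.'' Hence the horizontal pipe through $(j,i)$, which is the pipe entering at row $j$, certainly does visit rows above $j$ after leaving $(j,i)$ to the right; nothing prevents it from reaching the elbow at $(i,j)$, and your contradiction does not materialize. The fix is to track the \emph{vertical} pipe at $(j,i)$ instead. Because $S$ is a lower set of $\ltriang_n$ containing $(j,i)$, every cell $(k,i)$ with $i\le k\le j$ lies in $S\subseteq E$ and is therefore a crossing; the vertical pipe thus runs straight up from $(j,i)$ through $(i,i)$ and exits the top of $(i,i)$ into row $i-1$. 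From then on it stays in rows $<i$, so the only cell of row $i$ it ever occupies is $(i,i)$; in particular it never passes through $(i,j)$, contradicting the almost-symmetry requirement that the two pipes at $(j,i)$ coincide with the two pipes at $(i,j)$. This repairs your proof of $S^{T}\subseteq E$.

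For comparison, the paper's one-sentence proof (invoking Proposition~\ref{prop:dominant-independence} and Lemma~\ref{double-lem}) takes a slightly different route that avoids any wiring-diagram analysis: using the symmetry $\dom{z}=\shdom{z}\cup\shdom{z}^{T}$ from Lemma~\ref{double-lem}, every outer corner of $\shdom{z}$ in $\ltriang_n$ is already an outer corner of $\dom{z}=\dom{E}$ in $\PP\times\PP$; such a cell cannot lie in $E$ (else it would lie in $\dom{E}$), hence not in $D$, so no lower set of $\ltriang_n$ inside $D$ strictly contains $\shdom{z}$.
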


\begin{proof}
This is clear from Proposition~\ref{prop:dominant-independence} and Lemma~\ref{double-lem}.
\end{proof}

The natural definition of ``involution'' dominance
turns out to be equivalent to the usual notion:

\begin{prop}\label{inv-dom-prop}
Let $z \in \I_n$. The following are equivalent:
\begin{itemize}
\item[(a)] The permutation $z$ is dominant.
\item[(b)] It holds that $\RP(z) = \{ \dom{z}\}$.
\item[(c)] It holds that $\IP(z) = \{\shdom{z}\}$.
\end{itemize}
\end{prop}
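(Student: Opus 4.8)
The plan is to prove the cycle of implications (a) $\Rightarrow$ (b) $\Rightarrow$ (c) $\Rightarrow$ (a). The implication (a) $\Rightarrow$ (b) is immediate from Lemma~\ref{dom1-lem}, which already states that a permutation $z$ is dominant if and only if $\RP(z) = \{\dom{z}\}$. For (b) $\Rightarrow$ (c): if $\RP(z) = \{\dom{z}\}$, then by Corollary~\ref{sym-dom-cor} (using $z = z^{-1}$) the single reduced pipe dream $\dom{z}$ is symmetric, hence in particular almost-symmetric, so $\IP(z) = \{\dom{z} \cap \ltriang_n\} = \{\shdom{z}\}$ by the definition of $\IP(z)$ and $\shdom{z}$; conversely every element of $\IP(z)$ arises from an almost-symmetric $D \in \RP(z)$, and $\RP(z)$ being a singleton forces $\IP(z)$ to be a singleton, so the displayed set is the whole of $\IP(z)$.

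The substantive implication is (c) $\Rightarrow$ (a), so that will be the main obstacle. The idea is to argue the contrapositive: if $z$ is not dominant, produce two distinct involution pipe dreams for $z$. Since $z$ is not dominant, Lemma~\ref{dom1-lem} gives $|\RP(z)| \geq 2$, so there is some $D \in \RP(z)$ with $D \neq \dom{z}$. By Theorem~\ref{thm:almost-symmetric}, the members of $\IP(z)$ are exactly the intersections $D' \cap \ltriang_n$ for almost-symmetric $D' \in \RP(z)$, and one such is $\shdom{z}$ itself (coming from $D' = \dom{z}$, which is symmetric by Corollary~\ref{sym-dom-cor}). It therefore suffices to exhibit an almost-symmetric $D' \in \RP(z)$ with $D' \cap \ltriang_n \neq \shdom{z}$; equivalently, some almost-symmetric reduced pipe dream properly containing $\shdom{z}$ in its lower-triangular part. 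To get one, I would take any $D \in \RP(z) \setminus \{\dom{z}\}$, note that $\dom{D} = \dom{z}$ by Proposition~\ref{prop:dominant-independence} so $D$ has a cell $(i,j) \notin \dom{z}$, and then symmetrize: since $D$ need not be almost-symmetric, pass instead to the almost-symmetric reduced pipe dream guaranteed by Theorem~\ref{thm:almost-symmetric} to exist for whatever atom $w \in \A(z)$ satisfies $D \in \RP(w)$ — but one must check such an almost-symmetric pipe dream for that atom actually exists and differs from $\dom{z}$ on $\ltriang_n$.

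The cleanest route around that last subtlety is probably to invoke the reading-word characterization directly: since $z$ is not dominant, $\iR(z)$ contains an involution word $a_1 \cdots a_m$ that is not the unique reduced word of $z$ coming from $\udiag(\dom{z})$; because $\iR(z)$ is a union of Coxeter commutation classes and $|\iR(z)| > $ one such class would force (by Lemma~\ref{commutation-lem} and Theorem~\ref{thm:almost-symmetric}) a corresponding involution pipe dream $E \cap \ltriang_n$ with a cell off $\shdom{z}$, giving $|\IP(z)| \geq 2$. I expect the delicate point is ensuring that a non-dominant involution really does admit an involution word outside the commutation class of $\udiag(\shdom{z})$ — this should follow because $\RP(z)$ has at least two elements, each contributing (via its unimodal-diagonal reading word) an involution word, and distinct reduced pipe dreams for the same permutation lie in distinct Coxeter commutation classes, so they cannot all collapse to the single class that produces $\shdom{z}$. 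Making that count precise, using Lemma~\ref{commutation-lem} to match commutation classes of reading words with pipe dreams, is the heart of the argument.
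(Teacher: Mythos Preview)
Your treatment of (a) $\Rightarrow$ (b) and (b) $\Rightarrow$ (c) is fine and matches the paper. The problem is your (c) $\Rightarrow$ (a), where you set yourself a harder target than necessary and then stumble over a conflation of two different notions of word.

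Concretely: you attempt to show that a non-dominant $z$ has $|\IP(z)| \geq 2$, and to do so you try to extract involution words from elements of $\RP(z)$. But reading words of elements of $\RP(z)$ are \emph{reduced words} for $z$ (length $\ell(z)$), not \emph{involution words} (length $\hat\ell(z)$); these live in $\R(z)$, not in $\iR(z)$, and there is no direct passage from one to the other of the kind you describe. So the sentences ``$\RP(z)$ has at least two elements, each contributing (via its unimodal-diagonal reading word) an involution word'' and ``an involution word \dots that is not the unique reduced word of $z$ coming from $\udiag(\dom{z})$'' do not type-check.

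The paper's route for (c) $\Rightarrow$ (a) is much shorter and avoids all of this: one simply observes that $\shdom{z}\in\IP(z)$ already forces $z$ to be dominant. Suppose $\shdom{z}\in\IP(z)$, so there is an almost-symmetric $E\in\RP(z)$ with $E\cap\ltriang_n=\shdom{z}$. Almost-symmetry says every strictly upper-triangular cell of $E$ has its transpose in $E\cap\ltriang_n=\shdom{z}$, whence $E\subseteq\shdom{z}\cup\shdom{z}^T=\dom{z}$ by Lemma~\ref{double-lem}. On the other hand $\dom{z}=\dom{E}\subseteq E$ by Proposition~\ref{prop:dominant-independence}. Thus $E=\dom{z}\in\RP(z)$, i.e.\ $z$ is dominant. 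No contrapositive, no counting of pipe dreams, and no reading-word argument is needed.
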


\begin{proof}
We have (a) $\Leftrightarrow$ (b) by Lemma~\ref{dom1-lem} and (b) $\Leftrightarrow$ (c) by Lemma~\ref{double-lem}.
\end{proof}

\begin{prop}
If $\lambda$ is a strict partition with $\lambda \subseteq (n-1,n-3,n-5,\dots)$ then there exists a unique dominant 
involution $z \in \I_n$
with $\shdom{z}^T =  \SD_\lambda$.
\end{prop}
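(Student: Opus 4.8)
The plan is to mimic the proof of the analogous (non-involution) statement — the Proposition asserting that every partition $\lambda \subseteq (n-1,\dots,2,1)$ comes from a unique dominant permutation — and build the dominant involution one outer corner at a time. The strict partition condition $\lambda \subseteq (n-1,n-3,n-5,\dots)$ is precisely the shifted analogue of $\lambda \subseteq (n-1,\dots,2,1)$: if $\lambda = (\lambda_1 > \lambda_2 > \cdots > \lambda_k)$ then $\SD_\lambda$ fits inside $[n]\times[n]$ on or below the diagonal exactly when $\lambda_i \le n+1-2i$ for each $i$, i.e. $\lambda \subseteq (n-1,n-3,\dots)$. So the target set $\shdom{z}^T = \SD_\lambda$ lies inside $\rtriang_n$ (up to transpose), which is where it must live to be realized as a shifted dominant component.

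For \emph{existence}, I would induct on $|\lambda|$. The base case $\lambda = \emptyset$ is $z = 1$, which is dominant (vacuously) with $\shdom{z} = \emptyset$. For the inductive step, given a nonempty strict $\lambda \subseteq (n-1,n-3,\dots)$, remove an outer corner of the shifted diagram $\SD_\lambda$ to obtain $\SD_\mu$ for a smaller strict partition $\mu$ (still fitting in the staircase), and let $z' \in \I_n$ be the dominant involution with $\shdom{z'}^T = \SD_\mu$ furnished by the inductive hypothesis. The removed cell, transposed, is an outer corner $(j,i)$ of $z'$ with $i \le j$, and by the remarks following Lemma~\ref{double-lem} we have $z'(j) = i$. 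Set $z = z'(i,j)(z'(i))(z'(j))\cdots$ — more precisely, since $z'$ is an involution fixing both $i$ and $j$ (as $z'(j)=i$ forces $z'(i)=j$; but here $z'(j)=i$ with $i\le j$ means either $i=j$ a fixed point or... let me instead say) $z$ is obtained from $z'$ by the involution Demazure operation at the appropriate simple reflections so that $\dom{z} = \dom{z'} \sqcup \{(i,j),(j,i)\}$ when $i<j$, or $\dom{z} = \dom{z'}\sqcup\{(i,i)\}$ when $i=j$. One checks $z$ is again an involution, that it is dominant (its Rothe diagram is the Ferrers diagram $\dom{z'}\cup$ the new cells, using Lemma~\ref{dom1-lem} and Lemma~\ref{double-lem}), and that $\shdom{z}^T = \SD_\mu \sqcup \{\text{new corner}\} = \SD_\lambda$.

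For \emph{uniqueness}, suppose $z_1, z_2 \in \I_n$ are dominant with $\shdom{z_1}^T = \shdom{z_2}^T = \SD_\lambda$. By Lemma~\ref{double-lem}, $\dom{z_1} = \shdom{z_1} \cup \shdom{z_1}^T = \shdom{z_2}\cup\shdom{z_2}^T = \dom{z_2}$, and by Lemma~\ref{dom1-lem} a dominant permutation satisfies $\dom{w} = D(w)$, so $D(z_1) = D(z_2)$. Since a permutation is determined by its Rothe diagram, $z_1 = z_2$.

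The main obstacle I expect is the bookkeeping in the existence step: verifying that adjoining the transposed pair of cells (or single diagonal cell) to a dominant involution's diagram again yields the Rothe diagram of an involution that is dominant, and that the new shifted dominant component is exactly $\SD_\lambda$. This is the involution refinement of the clean statement "adding an outer corner to a dominant pipe dream gives a dominant pipe dream" used in the earlier Proposition, and it requires one to track the diagonal cell case ($i=j$, contributing a single box that shifts which points of $z$ are fixed, cf.\ Lemma~\ref{diag-sym-lem}) separately from the off-diagonal case ($i<j$, contributing a symmetric pair). Once the correspondence between outer corners of $\SD_\lambda$ and outer corners of $z$ (via $z(j)=i$) is set up carefully using the discussion after Lemma~\ref{double-lem}, the induction goes through routinely.
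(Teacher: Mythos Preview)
Your uniqueness argument is essentially the paper's. For existence, the paper takes a much shorter route than your induction: it observes that $\SD_\lambda \cup (\SD_\lambda)^T$ is the Ferrers diagram $\D_\mu$ of some ordinary partition $\mu \subseteq (n-1,\dots,2,1)$, invokes the earlier Proposition on dominant permutations to produce a dominant $z \in S_n$ with $\dom{z} = \D_\mu$, and then applies Corollary~\ref{sym-dom-cor} (a dominant permutation with symmetric dominant component is an involution) to conclude $z \in \I_n$. The identity $\shdom{z}^T = (\D_\mu \cap \ltriang_n)^T = \SD_\lambda$ is then immediate.

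Your inductive approach can be completed, but the step you yourself flag as the ``main obstacle'' is not actually carried out. Your description of the new involution $z$ is muddled --- for instance, you write that $z'$ ``fix[es] both $i$ and $j$'', whereas $z'(j)=i$ with $i<j$ means $z'$ \emph{swaps} them --- and ``the involution Demazure operation at the appropriate simple reflections'' is not a construction. The natural way to close the gap is to adjoin $(j,i)$ (and, when $i<j$, also $(i,j)$) to $\dom{z'}$, realize this enlarged symmetric Ferrers diagram as $\dom{z}$ for some dominant permutation $z$ via the non-involution Proposition, and then use Corollary~\ref{sym-dom-cor} to get $z\in\I_n$. But that fix uses exactly the two ingredients the paper applies directly and all at once; the induction is unnecessary overhead.
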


\begin{proof}
We have $\D_\mu =  \SD_\lambda \cup ( \SD_\lambda)^T$ for some $\mu$.
Let $z \in S_n$ be dominant with $\dom{z} = \D_\mu$.
Then $z \in \I_n$ by Corollary~\ref{sym-dom-cor} and $\shdom{z}^T = (\D_\mu \cap \ltriang_n)^T =  \SD_\lambda$.
Uniqueness  holds by Lemma~\ref{double-lem}.
\end{proof}

If $y,z \in \I_n$,
then  $y \leq z$ in Bruhat order if and only if 
some (equivalently, every) involution word for $z$ contains a subword that is an
involution word for $y$ (see either \cite[Cor. 8.10]{richardson-springer} with \cite{richardson-springer2}, 
or \cite[Thm. 2.8]{hultman-twisted-involutions2}).
The following is an immediate corollary of this property and Theorem~\ref{thm:almost-symmetric}.

\begin{lem}\label{inv-bruhat-lem}
Let $y,z \in \I_n$. Then $y \leq z$ if and only if 
some (equivalently, every) involution pipe dream for $z$ has a subset that is an
 involution pipe dream for $y$.
\end{lem}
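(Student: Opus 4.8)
The plan is to deduce this directly from the cited Bruhat-order characterization for involution words together with Theorem~\ref{thm:almost-symmetric}. First I would recall the setup: by Theorem~\ref{thm:almost-symmetric}, a set $D \subseteq \ltriang_n$ belongs to $\IP(z)$ exactly when its unimodal-diagonal reading word $\udiag(D)$ is an involution word for $z$ (and likewise for $y$). Since $\IP(y)$ and $\IP(z)$ are nonempty (as $\iR(y)$ and $\iR(z)$ are nonempty for all involutions), the ``equivalently, every'' phrasing makes sense once we establish it for ``some.''

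For the forward direction, suppose $y \leq z$. Pick any $D \in \IP(z)$; then $a := \udiag(D) \in \iR(z)$. By the Richardson--Springer characterization, $a$ contains a subword $b$ that is an involution word for $y$. Here the subtlety is translating ``subword of the reading word'' into ``subset of the pipe dream.'' The cells of $D$ listed in unimodal-diagonal reading order correspond bijectively to the letters of $a = a_1 a_2 \cdots a_m$; let $C \subseteq D$ be the subset of cells whose positions in this list are exactly the positions used by the subword $b$. By construction $\udiag(C) = b$ (the unimodal-diagonal reading order restricted to $C$ agrees with the order inherited from $D$, since a reading order restricted to a subset is still a reading order), so $b \in \iR(y)$, and since $C \subseteq D \subseteq \ltriang_n$, Theorem~\ref{thm:almost-symmetric} gives $C \in \IP(y)$. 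Thus $D$ has a subset that is an involution pipe dream for $y$. Since $D$ was arbitrary, this holds for every $D \in \IP(z)$.

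For the converse, suppose some $D \in \IP(z)$ has a subset $C \in \IP(y)$. Then $\udiag(D) \in \iR(z)$ and $\udiag(C) \in \iR(y)$. Because $C \subseteq D$ and the unimodal-diagonal reading order on $D$ restricts to that on $C$, the word $\udiag(C)$ is literally a subword of $\udiag(D)$; hence some involution word for $z$ has a subword that is an involution word for $y$, so $y \leq z$ by the cited characterization. The ``some $\Leftrightarrow$ every'' equivalence in the statement then follows formally: the forward direction above already proved ``$y \le z \Rightarrow$ every $D$ works,'' and ``some $D$ works $\Rightarrow y \le z$'' is exactly what was just shown, closing the loop.

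The main obstacle is purely bookkeeping: making precise that restricting the unimodal-diagonal reading order of $D$ to a subset $C$ yields exactly $\udiag(C)$, so that subwords of reading words correspond to subsets of pipe dreams. This is immediate from the definition of a reading order (a linear extension of $\NEleq$ restricts to a linear extension of $\NEleq$ on any subset) combined with Lemma~\ref{commutation-lem}, which guarantees we may work with $\udiag$ throughout without worrying about which reading order is used. No genuinely new ideas are needed beyond Theorem~\ref{thm:almost-symmetric} and the quoted Bruhat characterization, which is why the excerpt calls this an ``immediate corollary.''
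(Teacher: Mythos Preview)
Your proposal is correct and is exactly the argument the paper has in mind: translate involution pipe dreams to involution words via Theorem~\ref{thm:almost-symmetric} (using the unimodal-diagonal reading word), then invoke the cited subword characterization of Bruhat order on $\I_n$. The only superfluous remark is the appeal to Lemma~\ref{commutation-lem} at the end: since the unimodal-diagonal reading order is a fixed total order on $[n]\times[n]$, it is automatic that $\udiag(C)$ is the subword of $\udiag(D)$ indexed by the cells of $C\subseteq D$, with no commutation needed.
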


\begin{cor}\label{inv-dom-subset-cor}
Let $y,z \in \I_n$ with $y$ dominant. 
Then $y\leq z$ if and only if $\shdom{y} \subseteq D$ for some (equivalently, every) $D \in \IP(z)$.
\end{cor}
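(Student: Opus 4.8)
The plan is to deduce Corollary~\ref{inv-dom-subset-cor} from Lemma~\ref{inv-bruhat-lem} in exactly the same way that Corollary~\ref{dom-subset-cor} was deduced from the preceding Bruhat-order lemma in the permutation setting. First I would recall that since $y$ is dominant, Proposition~\ref{inv-dom-prop} gives $\IP(y) = \{\shdom{y}\}$; that is, $y$ has a \emph{unique} involution pipe dream, namely its shifted dominant component. This is the crucial feature that collapses the ``some/every'' dichotomy on the $y$-side and lets us speak of the single set $\shdom{y}$.

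Next I would apply Lemma~\ref{inv-bruhat-lem}: for $y,z \in \I_n$ with $y$ dominant, $y \leq z$ holds if and only if some (equivalently, every) $D \in \IP(z)$ has a subset that is an involution pipe dream for $y$. By the previous paragraph, the only involution pipe dream for $y$ is $\shdom{y}$, so ``$D$ has a subset that is an involution pipe dream for $y$'' is literally the condition $\shdom{y} \subseteq D$. Thus $y \leq z$ if and only if $\shdom{y} \subseteq D$ for some (equivalently, every) $D \in \IP(z)$, which is the statement to be proved. One should note that $\IP(z)$ is nonempty for every $z \in \I_n$ (each $z$ has an involution word, hence by Theorem~\ref{thm:almost-symmetric} an involution pipe dream), so the ``some'' and ``every'' versions are genuinely equivalent and not vacuous.

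There is essentially no obstacle here: the entire content is already packaged into Lemma~\ref{inv-bruhat-lem} and Proposition~\ref{inv-dom-prop}, and the proof is a one-line substitution. The only point requiring a moment's care is making sure the equivalence ``some $\Leftrightarrow$ every'' transfers cleanly — this is fine because Lemma~\ref{inv-bruhat-lem} already carries that equivalence, and the $y$-side involution pipe dream is unique so no ambiguity is introduced there. Accordingly I would write the proof as: ``Since $y$ is dominant, $\IP(y) = \{\shdom{y}\}$ by Proposition~\ref{inv-dom-prop}, so a set $D \subseteq [n]\times[n]$ contains an involution pipe dream for $y$ if and only if $\shdom{y} \subseteq D$. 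The claim now follows immediately from Lemma~\ref{inv-bruhat-lem}.''
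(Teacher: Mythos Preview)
Your proposal is correct and takes essentially the same approach as the paper: the paper's proof is simply the one-liner ``This is clear since if $y \in \I_n$ is dominant then $|\IP(y)|=1$,'' which is precisely your argument via Proposition~\ref{inv-dom-prop} and Lemma~\ref{inv-bruhat-lem}, stated more tersely.
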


\begin{proof}
This is clear since if $y \in \I_n$ is dominant then $|\IP(y)|=1$.
\end{proof}

We need to mention the following technical property of
the Demazure product from \cite{KM03}.

\begin{lem}[{\cite[Lem. 3.4(1)]{KM03}}]
\label{0-hecke-lem}
If $b_1b_2\cdots b_q$ is a subword of $a_1a_2\dots a_p$
where each $a_i \in [n-1]$,
then
$
s_{b_1} \circ s_{b_2}\circ \dots \circ s_{b_q} \leq s_{a_1} \circ s_{a_2}\circ \dots \circ s_{a_p} \in S_n.
$
\end{lem}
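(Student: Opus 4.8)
\textbf{Proof proposal for Lemma~\ref{0-hecke-lem}.}

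The plan is to prove this by induction on $p$, the length of the larger word, reducing the general subword statement to the single extra-letter case. First I would set up the base case: if $p = 0$ the statement is trivial (both sides equal the identity), and if $q = p$ there is nothing to prove. For the inductive step, write $w = s_{a_1} \circ \cdots \circ s_{a_{p-1}}$ and $v = s_{b_1} \circ \cdots \circ s_{b_{q'}}$, where $b_1 \cdots b_{q'}$ is the part of the subword coming from $a_1 \cdots a_{p-1}$; here $q' \in \{q-1, q\}$ depending on whether the last letter $a_p$ was used. By the inductive hypothesis, $v \leq w$ in Bruhat order. We then need to compare $s_{b_1} \circ \cdots \circ s_{b_q}$ with $w \circ s_{a_p}$. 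In the case $q' = q$ (the last letter of the big word is not used), this is exactly the claim that $v \leq w \leq w \circ s_{a_p}$, where the second inequality is the standard fact that $u \leq u \circ s_i$ for any $u$ and any $i$ (the Demazure product only moves up in Bruhat order). In the case $q' = q - 1$ (so $b_q = a_p$), we must show $v \circ s_{a_p} \leq w \circ s_{a_p}$.

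So the whole lemma comes down to one key claim: if $v \leq w$ in $S_n$ and $i \in [n-1]$, then $v \circ s_i \leq w \circ s_i$. This is the main obstacle, and it is a well-known monotonicity property of the Demazure (0-Hecke) product with respect to Bruhat order, which I would prove as follows. Recall the characterization $v \circ s_i = \max\{v, vs_i\}$ in Bruhat order (equivalently, $v \circ s_i = vs_i$ if $\ell(vs_i) > \ell(v)$, and $v \circ s_i = v$ otherwise). There is a classical lifting-property argument: given $v \leq w$, consider the four elements $v, vs_i, w, ws_i$. Using the subword characterization of Bruhat order together with the lifting property (see \cite[\S5.10]{Humphreys} or \cite[Prop.~2.2.7]{bjorner2005combinatorics}), one checks in each of the cases (according to whether $v < vs_i$ or $v > vs_i$, and similarly for $w$) that $\max\{v,vs_i\} \leq \max\{w,ws_i\}$. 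Concretely, if $w > ws_i$ then $w \circ s_i = w \geq v \geq \min\{v, v\circ s_i\}$ and also $vs_i \leq w$ by the lifting property, so $v \circ s_i \leq w = w\circ s_i$; if $w < ws_i$ then $w \circ s_i = ws_i$, and again the lifting property gives $vs_i \leq ws_i$ and $v \leq w \leq ws_i$, hence $v \circ s_i \leq ws_i = w \circ s_i$.

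With the key claim in hand, the induction closes immediately: in the case $q' = q-1$ we get $s_{b_1}\circ\cdots\circ s_{b_q} = v \circ s_{a_p} \leq w \circ s_{a_p} = s_{a_1}\circ\cdots\circ s_{a_p}$, and in the case $q' = q$ we get $s_{b_1}\circ\cdots\circ s_{b_q} = v \leq w \leq w \circ s_{a_p}$. I expect the only real content is the monotonicity claim $v \leq w \Rightarrow v \circ s_i \leq w \circ s_i$; everything else is bookkeeping. Since the paper cites \cite{KM03} for this lemma anyway, an alternative would be to simply invoke it, but I would prefer to include the short Bruhat-order argument above for self-containedness. (Note: if one wishes to avoid even citing the lifting property, one can instead argue via reduced words — pick a reduced word for $w$, extend it by $a_p$ if $\ell(ws_{a_p}) > \ell(w)$; then a subword realizing $v$ extends to a subword realizing $v \circ s_{a_p}$ in the case $b_q$ is used — but one must be careful that appending $a_p$ to a subword of a reduced word for $w$ gives the Demazure product $v \circ s_{a_p}$, which requires exactly the same monotonicity fact, so this does not really shortcut anything.)
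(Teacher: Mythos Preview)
Your proof is correct. The paper itself does not prove this lemma at all --- it simply quotes it as \cite[Lem.~3.4(1)]{KM03} and moves on --- so your self-contained argument (induction on $p$, reduction to the monotonicity claim $v \leq w \Rightarrow v \circ s_i \leq w \circ s_i$, and verification of that claim via the lifting property) is strictly more than what the paper supplies, and is the standard way to establish the result. One small remark: your citation \texttt{bjorner2005combinatorics} is not in the paper's bibliography, so if you insert this argument you should either add that reference or point only to \cite{Humphreys} (where the lifting property appears as the ``Z-property'' in \S5.9--5.11).
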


\begin{cor}\label{0-hecke-cor-0}
If $v',w',v,w \in S_n$ and $v \leq w$ and $v'\leq w'$ then $v' \circ v \leq w' \circ w$.
\end{cor}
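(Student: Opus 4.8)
The plan is to deduce Corollary~\ref{0-hecke-cor-0} from Lemma~\ref{0-hecke-lem} by translating the statement about Demazure products into a statement about subwords of reduced words. First I would fix reduced words: let $a_1a_2\cdots a_p \in \R(w')$ and $c_1c_2\cdots c_q \in \R(w)$, so that concatenating gives $a_1\cdots a_p c_1 \cdots c_q$, which (by associativity of $\circ$ and the fact that $\circ$ agrees with ordinary multiplication on reduced words) evaluates under the Demazure product to $w' \circ w$. Similarly fix $b_1\cdots b_s \in \R(v')$ and $d_1\cdots d_t \in \R(v)$, so $b_1\cdots b_s d_1 \cdots d_t$ evaluates to $v' \circ v$.

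The key step is the characterization of Bruhat order by subwords (cited just before Lemma~\ref{inv-bruhat-lem}, and also implied by Theorem~\ref{pipe-thm}): $v' \le w'$ means some reduced word for $v'$ is a subword of the chosen reduced word $a_1\cdots a_p$ for $w'$, and likewise $v \le w$ means some reduced word for $v$ is a subword of $c_1\cdots c_q$. After possibly changing the chosen reduced words $b_1\cdots b_s$ and $d_1\cdots d_t$ for $v'$ and $v$ (which is harmless), we may assume $b_1\cdots b_s$ is a subword of $a_1\cdots a_p$ and $d_1\cdots d_t$ is a subword of $c_1\cdots c_q$. Concatenating, $b_1\cdots b_s d_1\cdots d_t$ is a subword of $a_1\cdots a_p c_1\cdots c_q$. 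Now Lemma~\ref{0-hecke-lem} applied to this pair of words gives
\[
v' \circ v = s_{b_1}\circ \cdots \circ s_{b_s} \circ s_{d_1} \circ \cdots \circ s_{d_t} \ \le\ s_{a_1}\circ \cdots \circ s_{a_p}\circ s_{c_1}\circ \cdots \circ s_{c_q} = w' \circ w,
\]
which is exactly the claim. (One should note that $s_{b_1}\circ\cdots\circ s_{d_t}$ really does equal $v'\circ v$: since $b_1\cdots b_s$ and $d_1\cdots d_t$ are reduced, $s_{b_1}\circ\cdots\circ s_{b_s}=v'$ and $s_{d_1}\circ\cdots\circ s_{d_t}=v$, and associativity of $\circ$ gives the full product equals $v'\circ v$; similarly for the right-hand side.)

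I do not expect a serious obstacle here — the proof is essentially bookkeeping, reducing everything to Lemma~\ref{0-hecke-lem}. The one point requiring a little care is making sure the concatenation of a subword-witness for $v' \le w'$ with a subword-witness for $v \le w$ is genuinely a subword of the concatenation $a_1\cdots a_p c_1\cdots c_q$; this is immediate from the definition of subword once we pick the reduced words in the right order, but it is the step where one must be attentive to the order of concatenation (the word for $w'$ must come first, matching the convention $w' \circ w$). Everything else follows formally from the associativity asserted in Proposition~\ref{demazure-prop} and the subword criterion for Bruhat order.
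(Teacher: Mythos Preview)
Your proof is correct and follows exactly the approach of the paper: reduce to Lemma~\ref{0-hecke-lem} via the subword characterization of Bruhat order, by choosing reduced words for $w'$ and $w$, extracting subwords that are reduced words for $v'$ and $v$, and concatenating. The paper's proof is simply the one-line remark ``This is clear from Lemma~\ref{0-hecke-lem} given the subword property of $\leq$,'' which is precisely what you have spelled out.
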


\begin{proof}
This is clear from Lemma~\ref{0-hecke-lem}
given the subword property of $\leq$.
\end{proof}

\begin{cor}\label{0-hecke-cor}
If $v,w \in S_n$ and $v \leq w$ then $v^{-1}\circ v \leq w^{-1} \circ w$.
\end{cor}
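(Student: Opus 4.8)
The plan is to deduce Corollary~\ref{0-hecke-cor} directly from Corollary~\ref{0-hecke-cor-0} by a clever choice of the four permutations appearing there. Observe that Corollary~\ref{0-hecke-cor-0} asserts that the Demazure product is monotone in each argument separately (and hence jointly) with respect to Bruhat order; so the statement $v^{-1}\circ v \leq w^{-1}\circ w$ is exactly an instance of that joint monotonicity, provided we can feed it the hypothesis $v^{-1}\leq w^{-1}$ in the first slot and $v\leq w$ in the second slot.

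First I would recall the standard fact that Bruhat order is invariant under taking inverses: $v\leq w$ if and only if $v^{-1}\leq w^{-1}$. This is immediate from the subword characterization of $\leq$ cited in the excerpt (a reduced word for $w^{-1}$ is the reverse of a reduced word for $w$, and reversing respects the subword relation), or one can simply invoke it as a classical property of Coxeter groups (e.g.\ \cite[\S5.9]{Humphreys}). Given the hypothesis $v\leq w$, this yields $v^{-1}\leq w^{-1}$.

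Then I would apply Corollary~\ref{0-hecke-cor-0} with the substitution $v'=v^{-1}$, $w'=w^{-1}$, and the same $v$, $w$. The hypotheses of Corollary~\ref{0-hecke-cor-0} are $v\leq w$ (given) and $v'\leq w'$, i.e.\ $v^{-1}\leq w^{-1}$ (just established), so its conclusion gives $v'\circ v\leq w'\circ w$, that is,
\[
v^{-1}\circ v \ \leq\ w^{-1}\circ w,
\]
which is precisely the assertion of Corollary~\ref{0-hecke-cor}.

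There is essentially no obstacle here: the only ingredient beyond the immediately preceding corollary is the inversion-invariance of Bruhat order, which is entirely standard. The one point worth a sentence of care is making sure the inversion statement is available in the same group $S_n$ in which we are working, but since $S_n$ is closed under inverses and Bruhat order on $S_n$ is the restriction of Bruhat order from any larger $S_m$, this is automatic. So the proof is a two-line deduction, and I would write it as such.
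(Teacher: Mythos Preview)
Your proof is correct and essentially identical to the paper's own proof, which simply reads ``Apply Corollary~\ref{0-hecke-cor-0} with $v' = v^{-1}$ and $w' = w^{-1}$.'' You have spelled out the one implicit step the paper omits, namely that $v\leq w$ implies $v^{-1}\leq w^{-1}$ by inversion-invariance of Bruhat order.
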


\begin{proof}
Apply Corollary~\ref{0-hecke-cor-0} with $v' = v^{-1}$ and $w'=w^{-1}$.
\end{proof}

Recall $t_{ij} \in S_n$ is a transposition for distinct $i,j \in [n]$.
It is well-known and not hard to check that if $w \in S_n$ then 
$\ell(w t_{ij})  = \ell(w) +1$ if and only if $w(i)<w(j)$
and no  $i<e<j$ has $w(i) < w(e) < w(j)$.

Given $y \in \I_n$ and $1\leq  i <j \leq n$, let
$\A_{ij}(y)= \{ w t_{ij} : w \in \A(y)\text{, }\ell(wt_{ij})=\ell(w)+1\}.$
Each covering relation in $(S_n,\leq)$ arises
as the image of right multiplication by some transposition $t_{ij}$.
The following theorem
characterizes certain operators $\tau_{ij}$
which play an analogous role for $(\I_n,\leq)$.

\begin{thm}[{See \cite[\S3]{HMP3}}]
\label{tau-thm}
For each pair of integers $1\leq i < j \leq n$, there
are unique maps $\tau_{ij} : \I_n \to \I_n$
with the following properties:
\ben
\item[(a)] If $y \in \I_n$ and
$\A_{ij}(y)\cap \A(z)\neq\varnothing $
for some $z \in \I_n$
then $\tau_{ij}(y)= z$.

\item[(b)] If $y \in \I_n$ and
$\A_{ij}(y)\cap \A(z)=\varnothing$
for all $z \in \I_n$
then $\tau_{ij}(y)= y$.
\een
Moreover, if $y \in \I_n$ and $y \neq \tau_{ij}(y)=z$, then $y(i) \neq z(i)$ and $y(j)\neq z(j)$.
\end{thm}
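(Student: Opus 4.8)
The plan is to establish existence and uniqueness of the maps $\tau_{ij}$ separately, then verify the final ``moreover'' clause about $y(i)$ and $z(i)$. Uniqueness is immediate: properties (a) and (b) are mutually exclusive and jointly exhaustive conditions on $y$, so they pin down the value $\tau_{ij}(y)$ completely once we know the two cases are well-defined. The content of the theorem is therefore really two assertions: first, that the alternative in (a)--(b) is well-posed, i.e.\ if $\A_{ij}(y) \cap \A(z) \neq \varnothing$ for some $z \in \I_n$, then that $z$ is unique; and second, the final displayed statement.

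First I would prove well-posedness of case (a). Suppose $w \in \A(y)$ with $\ell(wt_{ij}) = \ell(w)+1$ and $wt_{ij} \in \A(z)$. Since $w \in \A(y)$ means $y = w^{-1} \circ w$ and $wt_{ij} \in \A(z)$ means $z = (wt_{ij})^{-1} \circ (wt_{ij})$, it suffices to show that $z$ is determined by $y$ alone, independent of the choice of $w \in \A(y)$ and the transposition-witness. The key tool is Corollary~\ref{0-hecke-cor}: if $v \leq w$ in Bruhat order then $v^{-1}\circ v \leq w^{-1}\circ w$. Applying this with $v = w$ and the cover $w \lessdot wt_{ij}$ gives $y = w^{-1}\circ w \leq (wt_{ij})^{-1}\circ(wt_{ij}) = z$, so $z$ lies above $y$ in $(\I_n,\leq)$; moreover $\ell$-considerations (the atoms of $z$ all have length $\ell(w)+1 = \hat\ell(y)+1$, since atoms of a fixed involution share a common length) force $z$ to cover $y$ in the involution Bruhat order. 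I would then invoke the transition-type machinery for atoms from \cite{HMP3}: the set $\A_{ij}(y)$, when nonempty as a set of atoms of some involution, meets $\A(z)$ for exactly one $z$, because the map $w \mapsto (wt_{ij})^{-1}\circ(wt_{ij})$ on those atoms $w \in \A(y)$ with $\ell(wt_{ij}) = \ell(w)+1$ has constant value — this is precisely the kind of statement \cite[\S3]{HMP3} is set up to provide, and the ``See'' in the theorem statement signals that the detailed verification is imported from there. With well-posedness in hand, define $\tau_{ij}(y) = z$ in case (a) and $\tau_{ij}(y) = y$ in case (b); this is forced and hence unique.

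For the final clause, suppose $y \neq \tau_{ij}(y) = z$, so we are in case (a): there is $w \in \A(y)$ with $w t_{ij} \in \A(z)$ and $\ell(wt_{ij}) = \ell(w) + 1$. I would argue directly with the one-line notation. The cover condition $\ell(wt_{ij}) = \ell(w)+1$ means $w(i) < w(j)$ with no $i < e < j$ having $w(i) < w(e) < w(j)$, and $wt_{ij}$ differs from $w$ only by swapping the values in positions $i$ and $j$. Now $y = w^{-1}\circ w$ and $z = (wt_{ij})^{-1}\circ(wt_{ij})$. Because the Demazure product $w^{-1}\circ w$ can be computed as $w^{-1}w$ when $\ell(w^{-1}w) = 2\ell(w)$ and otherwise involves a genuine idempotent collapse along a reduced word, one tracks how the values $y(i), y(j)$ are produced: the point is that passing from $w$ to $wt_{ij}$ alters the images of exactly the two positions $i$ and $j$ under $(wt_{ij})^{-1}\circ(wt_{ij})$ relative to $w^{-1}\circ w$, because $t_{ij}$ is supported on $\{i,j\}$ and the cover hypothesis prevents any intermediate index from interfering. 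I would make this precise by a short case analysis on whether $i$ or $j$ is a fixed point of $y$ (equivalently whether $w(i)$ or $w(j)$ is ``new'' in the relevant sense), using the exchange/lifting properties of the Demazure product recorded in Lemma~\ref{0-hecke-lem} and its corollaries together with the explicit description of involution words via \eqref{iii-eq}; in every case one sees $y(i) \neq z(i)$ and $y(j) \neq z(j)$, since if, say, $y(i) = z(i)$ then the two atoms $w$ and $wt_{ij}$ would have to agree on enough structure to force $y = z$, contradicting $y \neq z$.

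The main obstacle I anticipate is the well-posedness step (a): showing that $\A_{ij}(y)$ meets the atom set of a \emph{unique} involution and that this value is intrinsic to $y$. This is genuinely the substance of the ``transition equations'' developed in \cite{HMP3}, and a fully self-contained argument would require reconstructing the combinatorics of how covers in $(\I_n, \leq)$ interact with right multiplication of atoms by transpositions — in particular ruling out the scenario where two atoms $w, w' \in \A(y)$ yield $wt_{ij} \in \A(z)$ and $w't_{ij} \in \A(z')$ with $z \neq z'$. Since the theorem is explicitly cited as ``See \cite[\S3]{HMP3},'' my plan is to quote the relevant structural lemma from there rather than reprove it, and to spend the written proof instead on deriving uniqueness of $\tau_{ij}$ from properties (a)--(b) and on the elementary one-line-notation argument for the $y(i) \neq z(i)$, $y(j) \neq z(j)$ conclusion, which does not require the full transition apparatus.
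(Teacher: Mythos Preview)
The paper does not give its own proof of this theorem: the header ``See \cite[\S3]{HMP3}'' signals that the result is imported wholesale, and the remark immediately following the statement points to \cite[Table~1]{HMP3} for the explicit description of $\tau_{ij}$. So there is no in-paper argument to compare your proposal against.

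On the merits of your sketch: the uniqueness portion is fine, and you correctly identify the well-posedness of case~(a) as the substantive content to be quoted from \cite{HMP3}. Your argument for the ``moreover'' clause, however, has a real gap. The claim that ``passing from $w$ to $wt_{ij}$ alters the images of exactly the two positions $i$ and $j$ under $(wt_{ij})^{-1}\circ(wt_{ij})$'' is not justified and is not generally how the Demazure product behaves: $w^{-1}\circ w$ need not equal $w^{-1}w$, and even under ordinary conjugation $t_{ij}\,y\,t_{ij}$ an involution can change at up to four positions (namely $i,j,y(i),y(j)$), not two. The contrapositive you suggest---that $y(i)=z(i)$ would force $y=z$---does not follow from the ingredients you list. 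In \cite{HMP3} the conclusion $y(i)\neq z(i)$ and $y(j)\neq z(j)$ is read off from an explicit case-by-case formula for $\tau_{ij}(y)$ according to the relative order of $i,j,y(i),y(j)$ (this is the table the paper's remark alludes to); your ``short case analysis'' would have to reproduce essentially that table to go through.
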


This result has an extension for affine symmetric groups;
see \cite{M2018,MZ2018}.

\begin{rmk}
The operators $\tau_{ij}$, which first appeared in \cite{incitti},
 can be given a more explicit definition; see \cite[Table 1]{HMP3}.
However, our present applications only require the properties 
in the theorem.
\end{rmk}

For $y \in \I_n$, let $\ellhat(y)$ denote the common value of $\ell(w)$ for any $w \in \A(y)$.
This  is also the size of any $D \in \IP(y)$.
By Lemma~\ref{inv-bruhat-lem},
if $y,z \in \I_n$ and $y<z$ then $\ellhat(y) < \ellhat(z)$.
Let
\begin{equation*}
    \iPhi(y,j) = \left\{
    z \in \I_{n+1} : z =\tau_{js}(y)\text{ and }\ellhat(z)=\ellhat(y)+1\text{ for some }s>j
    \right\}
\end{equation*}
for $y \in \I_n$ and $j \in [n]$.
Since $S_n \subset S_{n+1}$ and $\I_n \subset \I_{n+1}$,
this set is well-defined. 

\begin{thm}\label{thm:inv-dominant-transition}
    Let $(j,i) $ be an outer corner of $y \in \I_n$ with $i\leq j$. 
    \ben
    \item[(a)] The map
    $D \mapsto D \sqcup \{(j,i)\}$ is a bijection 
$
        \IP(y) \to \bigsqcup_{z \in \iPhi(y,j)} \IP(z)
$.
\item[(b)] If $i+j \leq n$ then $\iPhi(y,j) \subset \I_n$.
\een
\end{thm}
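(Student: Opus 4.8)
The plan is to prove part~(a) as an involution analogue of the dominant-transition bijection for ordinary pipe dreams (the combinatorial heart of Knutson's proof of \eqref{eq:double-schubert-def}), and then to deduce part~(b) from it. The key preliminary observation is that $\dom{y}$ is a self-conjugate Ferrers diagram by Corollary~\ref{sym-dom-cor} (since $\dom{y^{-1}}=\dom{y}^T$ and $y=y^{-1}$). Writing $\dom{y}=\D_\mu$ with $\mu=\mu^T$, one checks that the shifted outer corner $(j,i)$ of $y$ is an ordinary outer corner of $\dom{y}$, and that once $(j,i)$ has been adjoined the cell $(i,j)$ becomes an outer corner of $\dom{y}\sqcup\{(j,i)\}$ when $i<j$ --- the needed inequality $\mu_{i-1}\ge j$ is the transpose, under self-conjugacy, of the inequality $\mu_j\ge i-1$ coming from addability of $(j,i)$. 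Moreover $\dom{y}=\dom{E}$ for every almost-symmetric $E\in\RP(y)$ by Proposition~\ref{prop:dominant-independence}, and $(j,i)$ being an outer corner of $\dom{E}$ forces $(j,i)\notin E$, hence $(j,i)\notin D=E\cap\ltriangeq_n$ for every $D\in\IP(y)$. Thus $D\mapsto D\sqcup\{(j,i)\}$ genuinely adjoins a cell and is manifestly injective, $D$ being recovered by deleting $(j,i)$.

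For well-definedness: given $D\in\IP(y)$, lift it via Theorem~\ref{thm:almost-symmetric} to the almost-symmetric $E\in\RP(y)$ with $D=E\cap\ltriangeq_n$. Put $E'=E\sqcup\{(i,i)\}$ if $i=j$, and $E'=E\sqcup\{(j,i),(i,j)\}$ if $i<j$, adjoining the new cells one at a time so that Lemma~\ref{outer-corner-lem} applies at each step (here one uses that the hook of cells northwest of the corner lies in $\dom{E}$, and that $(i,j)\notin E\sqcup\{(j,i)\}$ by almost-symmetry of $E$); thus $E'$ is a reduced pipe dream with $|E'|=\ell(y)+1$ or $\ell(y)+2$. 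The crucial lemma to prove here is that \emph{$E'$ is again almost-symmetric}: its new cells either lie on the diagonal or form a symmetric pair, so the point is to verify that adjoining them does not disturb the pipe-pair condition at the remaining lone sub-diagonal crossings of $E$. Granting this, Theorem~\ref{thm:almost-symmetric} shows $D\sqcup\{(j,i)\}=E'\cap\ltriangeq_n$ lies in $\IP(z)$, where $z\in\I_{n+1}$ is the involution one of whose atoms is the permutation with reduced pipe dream $E'\cap\ltriangeq_n$; a count of cells then gives $\ellhat(z)=|D|+1=\ellhat(y)+1$.

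It remains to see that $z$ ranges over $\iPhi(y,j)$ with fibers $\IP(z)$. Since $\ellhat$ is the rank function of $(\I_{n+1},\leq)$ and $y<z$ by Lemma~\ref{inv-bruhat-lem}, we have $y\lessdot z$, so by Theorem~\ref{tau-thm} there is a unique pair $a<b$ with $\tau_{ab}(y)=z\ne y$, and then $\A_{ab}(y)\cap\A(z)\ne\varnothing$. One shows $a=j$ by pinning down the two moved positions: position $j$ is moved because $y(j)=i$ while $z(j)>i$ (as $(j,i)\in\dom{z}$ forces $z(j)>i$ in the Rothe diagram), and tracking how the atom $E'\cap\ltriangeq_n$ of $z$ arises from the atom $w$ of $y$ with $D\in\RP(w)$ --- via the Lemma~\ref{outer-corner-lem} analysis, $w\mapsto wt_{i,q}\mapsto\cdots$ --- shows $j$ is the smaller moved index, so $s:=b>j$ witnesses $z\in\iPhi(y,j)$. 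Conversely, for $z\in\iPhi(y,j)$ choose $w\in\A(y)$ and $s>j$ with $wt_{js}\in\A(z)$ and $\ell(wt_{js})=\ell(w)+1$; realizing $wt_{js}$ as a reduced pipe dream and tracking the pipe through $(j,i)$ shows every $D'\in\IP(z)$ contains $(j,i)$, while Lemma~\ref{inv-bruhat-lem} together with Theorem~\ref{thm:almost-symmetric} gives $D'\setminus\{(j,i)\}\in\IP(y)$, so the fibers are exactly $\IP(z)$.

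Finally, part~(b) follows from~(a): if $i+j\le n$ then $(j,i)\in\rtriang_n$, so for $z\in\iPhi(y,j)$ any $D'\in\IP(z)$ has the form $D\sqcup\{(j,i)\}$ with $D\in\IP(y)$, where $D$ and $(j,i)$ both lie in $\rtriang_n$; hence $D'\subseteq\rtriang_n$, and since transposition preserves $\rtriang_n$, the almost-symmetric lift $E'$ of $D'$ lies in $\rtriang_n$ as well. A reduced pipe dream contained in $\rtriang_n$ has all reading words using only letters in $[n-1]$, hence cannot be a reduced word for a permutation moving $n+1$; so its associated involution $z$ lies in $\I_n$. I expect the main obstacle to be the almost-symmetry claim of the second paragraph, and with it the precise identification $z=\tau_{js}(y)$ in the third: this is exactly where the second clause in the definition of ``almost-symmetric'' --- that the crossing pipes at $(j,i)$ coincide with the avoiding pipes at $(i,j)$ --- must be reconciled with the outer-corner combinatorics developed earlier in this section and with the operators $\tau_{ij}$ of Theorem~\ref{tau-thm}.
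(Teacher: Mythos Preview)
Your overall strategy---lift $D$ to its almost-symmetric completion $E$, adjoin both $(j,i)$ and $(i,j)$, and argue that $E'$ is again almost-symmetric---is different from the paper's, and the gap you flag is real and not easily closed. The paper never lifts to $E$. It stays inside $\ltriang_n$: since $D\in\IP(y)$ is already a reduced pipe dream for some atom $v\in\A(y)$ (Theorem~\ref{thm:almost-symmetric}(c)), Lemma~\ref{outer-corner-lem} gives $D\sqcup\{(j,i)\}\in\RP(w)$ with $w=vt_{js}\in\A_{js}(y)$ for some $s>j$. The whole difficulty is then to show that $w$ is an \emph{atom} of $z:=w^{-1}\circ w$, i.e.\ that $\ellhat(z)=\ell(w)$. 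The paper does this with a short Bruhat-order argument: let $\tilde y\in\I_n$ be the dominant involution with $\IP(\tilde y)=\{\shdom{y}\sqcup\{(j,i)\}\}$ and $\tilde v$ its unique atom. Then $\tilde v\le w$ and $v<w$ (Corollaries~\ref{dom-subset-cor} and~\ref{inv-dom-subset-cor}), so Corollary~\ref{0-hecke-cor} gives both $\tilde y\le z$ and $y\le z$; combined with $\tilde y\not< y$ this forces $y<z$, whence $\ellhat(z)\ge\ellhat(y)+1=\ell(w)$ and $w\in\A(z)$. This is the missing idea in your sketch---it replaces the almost-symmetry verification entirely, and once $w\in\A(z)$ is known, $z=\tau_{js}(y)\in\iPhi(y,j)$ follows immediately from Theorem~\ref{tau-thm}(a).

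Two smaller points. Your claim that a cover $y\lessdot z$ in $(\I_{n+1},\le)$ is realized by a \emph{unique} $\tau_{ab}$ is not asserted in Theorem~\ref{tau-thm}; the paper sidesteps this by exhibiting $w\in\A_{js}(y)\cap\A(z)$ directly. For surjectivity, ``tracking the pipe through $(j,i)$'' in one pipe dream does not show every $D'\in\IP(z)$ contains $(j,i)$; the paper instead deletes some cell $(l,k)$ from $E\in\IP(z)$ to land in $\IP(y)$ via Lemma~\ref{inv-bruhat-lem}, and argues that if $(l,k)\ne(j,i)$ then---since $\dom{y}\subset E'$ in the almost-symmetric lift---the wiring diagram forces $z(j)=i=y(j)$, contradicting the final assertion of Theorem~\ref{tau-thm}.
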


\begin{proof} 
We have $y(j) = i$ and $y(i) = j$ by Lemma~\ref{outer-corner-lem}.
Suppose $v \in \A(y)$ and $D \in \RP(v) \cap \IP(y)$.
By considering the pipes crossing at position $(j,i)$ in 
the wiring diagram of $D$, as in the proof of Lemma~\ref{outer-corner-lem},
it follows that 
$D\sqcup \{(j,i)\}$ is a reduced pipe dream for a permutation
 $w$ that belongs to $\A_{js}(y)$ for some $j<s \leq n$.
Set $z = w^{-1}\circ w \in \I_n$.
We wish to show that $w \in \A(z)$,
since if this holds then $D\sqcup \{(j,i)\} \in \IP(z)$ and 
Theorem~\ref{tau-thm} implies that $z \in \iPhi(y,j)$.

To this end,
let $\tilde y \in \I_n$ be the dominant involution whose unique involution pipe dream
is  $\shdom{y} \sqcup\{(j,i)\}$
and let $\tilde v \in \A(\tilde y)$ be the (unique) atom with 
 $\IP(\tilde y) \subseteq \RP(\tilde v) $.
Corollaries~\ref{dom-subset-cor} and \ref{inv-dom-subset-cor}
imply that  $\tilde y \not<y$, $\tilde v \leq w $, and $v<w$ 
since $\shdom{\tilde y} \not\subset \shdom{y}$
and $\shdom{\tilde y} \subseteq D\sqcup \{(j,i)\}$.
Hence, we have
$
\tilde y = {\tilde v}^{-1} \circ \tilde v \leq w^{-1} \circ w=z
$
and
$
y = v^{-1} \circ v \leq w^{-1} \circ w=z
$
by Corollary~\ref{0-hecke-cor}.
Putting these relations together gives  
$\tilde y \not < y \leq z$
and
$\tilde y \leq z$, so we must have $y < z$ and $\ell(w) = \ellhat(y)+1  \leq \ellhat(z)$,
and therefore $w \in \A(z)$.

Thus, the map in part (a) at least has the desired codomain and 
 is clearly injective.
To show that it is also surjective, suppose $E \in \IP(z)$ for some $z \in \iPhi(y,j)$.
Lemma~\ref{inv-bruhat-lem} implies some $(l,k) \in E$ 
has $E \setminus \{(l,k)\} \in \IP(y)$. 
Let $E' \in \RP(z)$ be the almost-symmetric reduced pipe dream with $E = E'\cap \ltriang_n$.
If $(j,i) \neq (l,k)$
then, since $\dom{y} = \shdom{y} \cup \shdom{y}^T \subset E'$,
it would follow by considering the wiring diagram of $E'$
that $z(j) = i = y(j)$, 
contradicting the last assertion in Theorem~\ref{tau-thm}.
Thus $(j,i) = (l,k)$ so the map in part (a) is  surjective.
Part (b) holds because an involution belongs to $\I_n$
if any of its involution pipe dreams is contained in $\{ (j,i)  : i\leq j\text{ and }i +j \leq n\}$.
\end{proof}

\begin{ex}\label{inv-dom-ex}
If $y = 35142 
 \in \I_5$
then
 \[
\IP(y) = 
\left\{
\begin{tikzpicture}[scale=0.3,baseline=(b.base)]
\node (b) at (0,2.5) {}; 
\filldraw (0.5, 4.5) circle[radius=.6mm]; 
\draw 
(0,4.5) to (1,4.5) 
(0.5,4) to (0.5,5); 
\filldraw (1.5, 4.5) circle[radius=.6mm]; 
\draw 
(1,4.5) to[bend right] (1.5,5) 
(1.5,4) to[bend left] (2,4.5); 
\filldraw (2.5, 4.5) circle[radius=.6mm]; 
\draw 
(2,4.5) to[bend right] (2.5,5) 
(2.5,4) to[bend left] (3,4.5); 
\filldraw (3.5, 4.5) circle[radius=.6mm]; 
\draw 
(3,4.5) to[bend right] (3.5,5) 
(3.5,4) to[bend left] (4,4.5); 
\filldraw (4.5, 4.5) circle[radius=.6mm]; 
\draw 
(4,4.5) to[bend right] (4.5,5); 
\filldraw (0.5, 3.5) circle[radius=.6mm]; 
\draw 
(0,3.5) to (1,3.5) 
(0.5,3) to (0.5,4); 
\filldraw (1.5, 3.5) circle[radius=.6mm]; 
\draw 
(1,3.5) to (2,3.5) 
(1.5,3) to (1.5,4); 
\filldraw (2.5, 3.5) circle[radius=.6mm]; 
\draw 
(2,3.5) to[bend right] (2.5,4) 
(2.5,3) to[bend left] (3,3.5); 
\filldraw (3.5, 3.5) circle[radius=.6mm]; 
\draw 
(3,3.5) to[bend right] (3.5,4); 
\filldraw (0.5, 2.5) circle[radius=.6mm]; 
\draw 
(0,2.5) to[bend right] (0.5,3) 
(0.5,2) to[bend left] (1,2.5); 
\filldraw (1.5, 2.5) circle[radius=.6mm]; 
\draw 
(1,2.5) to[bend right] (1.5,3) 
(1.5,2) to[bend left] (2,2.5); 
\filldraw (2.5, 2.5) circle[radius=.6mm]; 
\draw 
(2,2.5) to[bend right] (2.5,3); 
\filldraw (0.5, 1.5) circle[radius=.6mm]; 
\draw 
(0,1.5) to (1,1.5) 
(0.5,1) to (0.5,2); 
\filldraw (1.5, 1.5) circle[radius=.6mm]; 
\draw 
(1,1.5) to[bend right] (1.5,2); 
\filldraw (0.5, 0.5) circle[radius=.6mm]; 
\draw 
(0,0.5) to[bend right] (0.5,1); 
\end{tikzpicture}
,\quad
\begin{tikzpicture}[scale=0.3,baseline=(b.base)]
\node (b) at (0,2.5) {}; 
\filldraw (0.5, 4.5) circle[radius=.6mm]; 
\draw 
(0,4.5) to (1,4.5) 
(0.5,4) to (0.5,5); 
\filldraw (1.5, 4.5) circle[radius=.6mm]; 
\draw 
(1,4.5) to[bend right] (1.5,5) 
(1.5,4) to[bend left] (2,4.5); 
\filldraw (2.5, 4.5) circle[radius=.6mm]; 
\draw 
(2,4.5) to[bend right] (2.5,5) 
(2.5,4) to[bend left] (3,4.5); 
\filldraw (3.5, 4.5) circle[radius=.6mm]; 
\draw 
(3,4.5) to[bend right] (3.5,5) 
(3.5,4) to[bend left] (4,4.5); 
\filldraw (4.5, 4.5) circle[radius=.6mm]; 
\draw 
(4,4.5) to[bend right] (4.5,5); 
\filldraw (0.5, 3.5) circle[radius=.6mm]; 
\draw 
(0,3.5) to (1,3.5) 
(0.5,3) to (0.5,4); 
\filldraw (1.5, 3.5) circle[radius=.6mm]; 
\draw 
(1,3.5) to (2,3.5) 
(1.5,3) to (1.5,4); 
\filldraw (2.5, 3.5) circle[radius=.6mm]; 
\draw 
(2,3.5) to[bend right] (2.5,4) 
(2.5,3) to[bend left] (3,3.5); 
\filldraw (3.5, 3.5) circle[radius=.6mm]; 
\draw 
(3,3.5) to[bend right] (3.5,4); 
\filldraw (0.5, 2.5) circle[radius=.6mm]; 
\draw 
(0,2.5) to[bend right] (0.5,3) 
(0.5,2) to[bend left] (1,2.5); 
\filldraw (1.5, 2.5) circle[radius=.6mm]; 
\draw 
(1,2.5) to (2,2.5) 
(1.5,2) to (1.5,3); 
\filldraw (2.5, 2.5) circle[radius=.6mm]; 
\draw 
(2,2.5) to[bend right] (2.5,3); 
\filldraw (0.5, 1.5) circle[radius=.6mm]; 
\draw 
(0,1.5) to[bend right] (0.5,2) 
(0.5,1) to[bend left] (1,1.5); 
\filldraw (1.5, 1.5) circle[radius=.6mm]; 
\draw 
(1,1.5) to[bend right] (1.5,2); 
\filldraw (0.5, 0.5) circle[radius=.6mm]; 
\draw 
(0,0.5) to[bend right] (0.5,1); 
\end{tikzpicture}\
\right\}
\]
so the transpose of $\shdom{y}$ is the shifted Ferrers diagram of the partition $\lambda = (2,1)$,
and $(3,1)$ is an outer corner.
One can show that $\iPhi(y,3) = \{53241,45312\}$,
and as predicted by Theorem~\ref{thm:inv-dominant-transition} with $(j,i) = (3,1)$,
both elements of $\iPhi(y,3)$ are dominant with 
\[
\IP(53241) = 
\left\{
\begin{tikzpicture}[scale=0.3,baseline=(b.base)]
\node (b) at (0,2.5) {}; 
\filldraw (0.5, 4.5) circle[radius=.6mm]; 
\draw 
(0,4.5) to (1,4.5) 
(0.5,4) to (0.5,5); 
\filldraw (1.5, 4.5) circle[radius=.6mm]; 
\draw 
(1,4.5) to[bend right] (1.5,5) 
(1.5,4) to[bend left] (2,4.5); 
\filldraw (2.5, 4.5) circle[radius=.6mm]; 
\draw 
(2,4.5) to[bend right] (2.5,5) 
(2.5,4) to[bend left] (3,4.5); 
\filldraw (3.5, 4.5) circle[radius=.6mm]; 
\draw 
(3,4.5) to[bend right] (3.5,5) 
(3.5,4) to[bend left] (4,4.5); 
\filldraw (4.5, 4.5) circle[radius=.6mm]; 
\draw 
(4,4.5) to[bend right] (4.5,5); 
\filldraw (0.5, 3.5) circle[radius=.6mm]; 
\draw 
(0,3.5) to (1,3.5) 
(0.5,3) to (0.5,4); 
\filldraw (1.5, 3.5) circle[radius=.6mm]; 
\draw 
(1,3.5) to (2,3.5) 
(1.5,3) to (1.5,4); 
\filldraw (2.5, 3.5) circle[radius=.6mm]; 
\draw 
(2,3.5) to[bend right] (2.5,4) 
(2.5,3) to[bend left] (3,3.5); 
\filldraw (3.5, 3.5) circle[radius=.6mm]; 
\draw 
(3,3.5) to[bend right] (3.5,4); 
\filldraw (0.5, 2.5) circle[radius=.6mm]; 
\draw 
(0,2.5) to (1,2.5) 
(0.5,2) to (0.5,3); 
\filldraw (1.5, 2.5) circle[radius=.6mm]; 
\draw 
(1,2.5) to[bend right] (1.5,3) 
(1.5,2) to[bend left] (2,2.5); 
\filldraw (2.5, 2.5) circle[radius=.6mm]; 
\draw 
(2,2.5) to[bend right] (2.5,3); 
\filldraw (0.5, 1.5) circle[radius=.6mm]; 
\draw 
(0,1.5) to (1,1.5) 
(0.5,1) to (0.5,2); 
\filldraw (1.5, 1.5) circle[radius=.6mm]; 
\draw 
(1,1.5) to[bend right] (1.5,2); 
\filldraw (0.5, 0.5) circle[radius=.6mm]; 
\draw 
(0,0.5) to[bend right] (0.5,1); 
\end{tikzpicture}\
\right\}
\quand
\IP(45312) = 
\left\{
\begin{tikzpicture}[scale=0.3,baseline=(b.base)]
\node (b) at (0,2.5) {}; 
\filldraw (0.5, 4.5) circle[radius=.6mm]; 
\draw 
(0,4.5) to (1,4.5) 
(0.5,4) to (0.5,5); 
\filldraw (1.5, 4.5) circle[radius=.6mm]; 
\draw 
(1,4.5) to[bend right] (1.5,5) 
(1.5,4) to[bend left] (2,4.5); 
\filldraw (2.5, 4.5) circle[radius=.6mm]; 
\draw 
(2,4.5) to[bend right] (2.5,5) 
(2.5,4) to[bend left] (3,4.5); 
\filldraw (3.5, 4.5) circle[radius=.6mm]; 
\draw 
(3,4.5) to[bend right] (3.5,5) 
(3.5,4) to[bend left] (4,4.5); 
\filldraw (4.5, 4.5) circle[radius=.6mm]; 
\draw 
(4,4.5) to[bend right] (4.5,5); 
\filldraw (0.5, 3.5) circle[radius=.6mm]; 
\draw 
(0,3.5) to (1,3.5) 
(0.5,3) to (0.5,4); 
\filldraw (1.5, 3.5) circle[radius=.6mm]; 
\draw 
(1,3.5) to (2,3.5) 
(1.5,3) to (1.5,4); 
\filldraw (2.5, 3.5) circle[radius=.6mm]; 
\draw 
(2,3.5) to[bend right] (2.5,4) 
(2.5,3) to[bend left] (3,3.5); 
\filldraw (3.5, 3.5) circle[radius=.6mm]; 
\draw 
(3,3.5) to[bend right] (3.5,4); 
\filldraw (0.5, 2.5) circle[radius=.6mm]; 
\draw 
(0,2.5) to (1,2.5) 
(0.5,2) to (0.5,3); 
\filldraw (1.5, 2.5) circle[radius=.6mm]; 
\draw 
(1,2.5) to (2,2.5) 
(1.5,2) to (1.5,3); 
\filldraw (2.5, 2.5) circle[radius=.6mm]; 
\draw 
(2,2.5) to[bend right] (2.5,3); 
\filldraw (0.5, 1.5) circle[radius=.6mm]; 
\draw 
(0,1.5) to[bend right] (0.5,2) 
(0.5,1) to[bend left] (1,1.5); 
\filldraw (1.5, 1.5) circle[radius=.6mm]; 
\draw 
(1,1.5) to[bend right] (1.5,2); 
\filldraw (0.5, 0.5) circle[radius=.6mm]; 
\draw 
(0,0.5) to[bend right] (0.5,1); 
\end{tikzpicture}\
\right\}.
\]
 \end{ex}

We may finally prove the pipe dream formula
in Theorem~\ref{thm:inv-pipe-dream-formula} for the polynomials $\iS_y$.

\begin{thm}\label{inv-pipe-schubert-thm}
If $z \in \I_n$ then $\iS_z = \sum_{D \in \IP(z)} \prod_{(i,j) \in D} 2^{-\delta_{ij}}(x_i+x_j)$.
\end{thm}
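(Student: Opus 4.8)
Write $F_z \eqdef \sum_{D \in \IP(z)} \prod_{(i,j) \in D} 2^{-\delta_{ij}}(x_i+x_j)$, and aim to show $F_z = \iS_z$ by mirroring Knutson's inductive proof of the classical formula \eqref{eq:double-schubert-def} in \cite{Knutson}. The strategy has two inputs: the transition equations for the polynomials $\iS_y$ established in \cite{HMP3}, and the matching identities for the $F_y$ furnished by Theorem~\ref{thm:inv-dominant-transition}. Concretely, for an outer corner $(j,i)$ of $y\in\I_n$ with $i\le j$, the equations of \cite{HMP3} give $2^{-\delta_{ij}}(x_i+x_j)\,\iS_y=\sum_{z\in\iPhi(y,j)}\iS_z$; pushing the weight $\prod_{(i,j)\in D}2^{-\delta_{ij}}(x_i+x_j)$ through the bijection $D\mapsto D\sqcup\{(j,i)\}$ of Theorem~\ref{thm:inv-dominant-transition}(a)---the new cell $(j,i)$ contributing exactly the factor $2^{-\delta_{ij}}(x_i+x_j)$---gives the identical identity $2^{-\delta_{ij}}(x_i+x_j)\,F_y=\sum_{z\in\iPhi(y,j)}F_z$. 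So $F$ and $\iS$ satisfy the same family of recurrences. The first task is to make this precise: translate the recurrences of \cite{HMP3} into the language of $\iPhi$ and outer corners used here, so that ``the same family of identities'' is literally true, and observe that Theorem~\ref{thm:inv-dominant-transition}(b) keeps the recursion inside a fixed $\I_N$.

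For the initial values, consider a dominant $z\in\I_n$. Then $\IP(z)=\{\shdom z\}$ by Proposition~\ref{inv-dom-prop}, so $F_z=\prod_{(i,j)\in\shdom z}2^{-\delta_{ij}}(x_i+x_j)$. On the other hand $\iS_z$ is the same product: a dominant involution is $132$-avoiding, so $\iMX_z=\MX_z\cap\SM_n=\{A\in\SM_n:A_{ij}=0\text{ for all }(i,j)\in\dom z\}$ is a coordinate subspace, whose $T$-equivariant class is, by \S\ref{subsec:eq-cohom}, the product of the weights of the vanishing coordinates---namely $x_i+x_j$ for the off-diagonal cells and $2x_i$ for the diagonal cells. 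Since $\dom z=\shdom z\sqcup\shdom z^T$ and the diagonal cells of $\dom z$ are precisely the positions $(i,i)$ with $z(i)>i$, of which there are $\kappa(z)$, Theorem~\ref{thm:inv-schubert-classes} yields $2^{\kappa(z)}\iS_z=[\iMX_z]=2^{\kappa(z)}F_z$, hence $\iS_z=F_z$. (One could instead cite the explicit product formula for dominant involution Schubert polynomials from \cite{HMP1}.)

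It then remains to run the induction. Following \cite{Knutson} and \cite[\S4]{kohnert1997using}, the transition equations together with the values on dominant involutions determine the whole family $\{\iS_z\}_{z\in\I_n}$: one inducts on $\ellhat(z)$ with a secondary induction within each fixed value of $\ellhat$---ordered, say, by the position of the last nonempty anti-diagonal of a pipe dream for $z$, in imitation of the classical ordering by the lex-largest cell of the Rothe diagram---so that in each instance of a transition equation every term other than the one being solved for is already known. The identical recursion with identical initial values then forces $F_z=\iS_z$ for all $z\in\I_n$. The main obstacle is not a single computation---Theorem~\ref{thm:inv-dominant-transition} already carries the combinatorial weight---but precisely this last step: confirming that the equations of \cite{HMP3} for $\iS_y$ coincide with the identities that Theorem~\ref{thm:inv-dominant-transition} produces for $F_y$, and that the resulting recursion is genuinely well-founded. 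As in the classical setting this well-foundedness is delicate; here it rests on the last clause of Theorem~\ref{tau-thm} (that $\tau_{ij}(y)$ genuinely moves the values in positions $i$ and $j$) together with the properties of dominant components developed in this section.
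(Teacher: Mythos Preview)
Your approach is essentially the paper's: both cite the transition identity of \cite[Thm.~3.30]{HMP3}, match it against Theorem~\ref{thm:inv-dominant-transition}, and induct. The one place you diverge is the inductive scaffolding, which you make harder than necessary. The identity $2^{-\delta_{ij}}(x_i+x_j)\,\iS_y=\sum_{z\in\iPhi(y,j)}\iS_z$ has every $z$ on the right at $\ellhat(z)=\ellhat(y)+1$, with no same-length terms---so unlike the classical transition formula, well-foundedness is \emph{not} delicate here, and no ``secondary induction within each fixed value of $\ellhat$'' is needed. The paper simply runs downward induction on $\ellhat$ with the single base case $z=n\cdots321$, verified by the product formula \eqref{inv-rev-eq} from \cite{HMP1}; for any other $z\in\I_n$, Corollary~\ref{inv-dom-subset-cor} gives $\shdom{z}^T\subsetneq\SD_{(n-1,n-3,\dots)}$, so an outer corner $(j,i)$ with $i\le j$ and $i+j\le n$ exists, Theorem~\ref{thm:inv-dominant-transition}(b) keeps $\iPhi(z,j)\subset\I_n$, and one divides through by the nonzero factor $2^{-\delta_{ij}}(x_i+x_j)$. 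Your geometric verification of the dominant base case via Theorem~\ref{thm:inv-schubert-classes} is correct but more roundabout than quoting \eqref{inv-rev-eq} directly.
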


\begin{proof}
We abbreviate by setting $x_{(i,j)} = 2^{-\delta_{ij}}(x_i+x_j)$, so that $x_{(i,j)} = x_i$ if $i=j$ and otherwise $x_{(i,j)} = x_i+x_j$.
It follows from \cite[Thm. 3.30]{HMP3} that 
if $(j,i) \in \ltriang_n$ is an outer corner of $z \in \I_n$ then
\be\label{itransition-eq} 2^{-\delta_{ij}}(x_i+x_j) \iS_z = \sum_{u \in \iPhi(z,j)} \iS_u.\ee
On the other hand,
results of Wyser and Yong \cite{wyser-yong-orthogonal-symplectic} (see \cite[Thm. 1.3]{HMP1}) show that
\be\label{inv-rev-eq}
\iS_{n\cdots 321} = \prod_{1\leq i \leq j \leq n-i} x_{(i,j)}.\ee
Let $\iBJS_z = \sum_{D \in \IP(z)} \prod_{(i,j) \in D} x_{(i,j)}$.
We show that $\iS_z = \iBJS_z$ by downward induction on $\ellhat(z)$. 
If $\ellhat(z) = \max\{\ellhat(y): y \in \I_n\}$ then
$z=n\cdots 321$
and 
the desired identity 
is equivalent to \eqref{inv-rev-eq} since 
$n\cdots 321$ is dominant.
Otherwise, the transpose of $\shdom{z}$ is a proper subset of $\shdom{n\cdots 321}^T=\SD_{(n-1,n-3,n-5,\dots)}$
by Corollary~\ref{inv-dom-subset-cor},
so
$z$ must have an outer corner 
$(j,i)$ with $i\leq j$ and $i+j \leq n$.
In this case we have
$x_{(i,j)} \iS_z = \sum_{u \in \iPhi(z,j)} \iS_u= \sum_{u \in \iPhi(z,j)} \iBJS_u = x_{(i,j)} \iBJS_z$
by \eqref{itransition-eq},
induction,
and Theorem~\ref{thm:inv-dominant-transition}.
Dividing by $x_{(i,j)}$ completes the proof.
\end{proof}

\begin{ex}
Continuing Example~\ref{inv-dom-ex},
we have 
\[
\iS_{53241} = x_1x_2(x_2 + x_1)(x_3 + x_1)(x_4 + x_1)
\quand
\iS_{45312} = x_1x_2(x_2 + x_1)(x_3 + x_1)(x_3 + x_2),
\]
so
$
\iS_{35142} = \tfrac{1}{x_3 + x_1}\left(\iS_{53241} + \iS_{45312}\right)
=x_1x_2(x_2 + x_1)(x_1 + x_2+x_3 + x_4) .
$
 \end{ex}

\subsection{Fixed-point-free involution pipe dream formulas}

In this section, we assume $n$ is even.
Recall that 
$\ltriangneq_n = \{(j,i) \in [n] \times [n] : i<j\}$.
\begin{defn}
The \emph{strictly shifted dominant component} of
 $z \in \Ifpf_n$ is
the set
$ 
\shdomneq{z} = \dom{z} \cap \ltriangneq_n,
$
which is also equal to
$ \dom{D} \cap \ltriangneq_n$ for all $D \in \RP(z)$
by Proposition~\ref{prop:dominant-independence}.
\end{defn}

\begin{cor}\label{fpf-maximal-lower-cor}
If $z \in \Ifpf_n$ then
$\shdomneq{z}$ is the maximal lower set of the poset $(\ltriangneq_n,\NWleq)$ contained in some (equivalently, every) $D \in \FP(z)$.
\end{cor}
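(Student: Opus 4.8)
The plan is to establish, in the style of Corollary~\ref{maximal-lower-cor}, that (i) $\shdomneq{z}$ is a lower set of $(\ltriangneq_n,\NWleq)$ contained in every (hence some) $E\in\FP(z)$, and (ii) conversely every lower set $L$ of $(\ltriangneq_n,\NWleq)$ with $L\subseteq E$ for at least one $E\in\FP(z)$ satisfies $L\subseteq\shdomneq{z}$. Since a union of lower sets is again a lower set, (i) and (ii) together show that $\shdomneq{z}$ is the maximal lower set of $(\ltriangneq_n,\NWleq)$ contained in some, equivalently every, $E\in\FP(z)$. Part (i) is straightforward: by Proposition~\ref{prop:dominant-independence} one has $\dom{z}=\dom{D}$ for all $D\in\RP(z)$, the set $\dom{D}$ is a $\NWleq$-lower set of $\PP\times\PP$, so $\shdomneq{z}=\dom{z}\cap\ltriangneq_n$ is a lower set of $(\ltriangneq_n,\NWleq)$; and if $E=D\cap\ltriangneq_n\in\FP(z)$ then $\shdomneq{z}=\dom{D}\cap\ltriangneq_n\subseteq D\cap\ltriangneq_n=E$, such $D$ existing by Theorem~\ref{thm:fpf-almost-symmetric}.

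For part (ii) the idea is to symmetrize $L$. Say $L\subseteq E$ with $E=D\cap\ltriangneq_n$ for a symmetric $D=D^T\in\RP(z)$, and let $\bar{L}$ be the smallest $\NWleq$-lower set of $\PP\times\PP$ containing $L\cup L^T$. Using that $L$ is a lower set of $\ltriangneq_n$, one checks that $\bar{L}\setminus(L\cup L^T)$ consists only of diagonal cells, so $\bar{L}$ is a self-conjugate Ferrers diagram. Now $L\subseteq E\subseteq D$ and $L^T\subseteq E^T\subseteq D$ (the second using $D=D^T$), while each diagonal cell $(i,i)\in\bar{L}$ satisfies $(i,i)\NWleq(c,d)$ for some $(c,d)\in L\cup L^T\subseteq D$, and since reduced pipe dreams of elements of $S_n$ lie in $\rtriang_n$ (clear from Theorem~\ref{pipe-thm}, as a cell $(i,j)$ contributes the letter $i+j-1$ to any reading word) this forces $2i\le c+d\le n$, i.e.\ $i\in[n/2]$; hence $(i,i)\in D$ by Lemma~\ref{diag-sym-lem}, using $z\in\Ifpf_n$. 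Therefore $\bar{L}\subseteq D$.

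To conclude part (ii): $\bar{L}$ is a $\NWleq$-lower set of $\PP\times\PP$ contained in $D$, so $\bar{L}\subseteq\dom{D}$ by the definition of the dominant component, and $\dom{D}=\dom{z}$ by Proposition~\ref{prop:dominant-independence}; intersecting with $\ltriangneq_n$ gives $L=\bar{L}\cap\ltriangneq_n\subseteq\dom{z}\cap\ltriangneq_n=\shdomneq{z}$, which finishes the proof.

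The step I expect to be the crux is showing $\bar{L}\subseteq D$, i.e.\ that the diagonal cells produced by taking the lower-set closure of $L\cup L^T$ genuinely lie in $D$: this is exactly where the hypothesis $z\in\Ifpf_n$ is used, through Lemma~\ref{diag-sym-lem}, and it must be combined with the bound $i\le n/2$ on the index of such a diagonal cell, coming from the staircase constraint on reduced pipe dreams. An alternative route would use the embedding $E\mapsto E\sqcup\{(i,i):i\in[n/2]\}$ of $\FP(z)$ into $\IP(z)$ that is implicit in the proof of Theorem~\ref{thm:fpf-almost-symmetric}, together with Corollary~\ref{maximal-lower-cor}; but that approach requires identifying precisely which elements of $\IP(z)$ contain the full diagonal $\{(i,i):i\in[n/2]\}$, so the symmetrization argument above looks cleaner.
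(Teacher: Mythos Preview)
Your argument is correct. The paper's own proof is the single sentence ``This is clear from Proposition~\ref{prop:dominant-independence},'' so you have not taken a different route but rather supplied the details that the paper suppresses. In particular, the paper does not mention that the passage from a lower set of $(\ltriangneq_n,\NWleq)$ to a lower set of $(\PP\times\PP,\NWleq)$ requires checking the diagonal cells, nor does it cite Lemma~\ref{diag-sym-lem} for that purpose; you have correctly isolated this as the one nontrivial point and handled it via the bound $i\le n/2$ coming from $D\subseteq\rtriang_n$. The symmetrization $L\mapsto\bar L$ is a clean way to package the case analysis (strictly below diagonal, strictly above, on the diagonal), and your verification that $\bar L\setminus(L\cup L^T)$ consists only of diagonal cells is right. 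One minor simplification: you need not invoke Theorem~\ref{pipe-thm} to justify $D\subseteq\rtriang_n$, since reduced pipe dreams are by definition subsets of $\rtriang_n$.
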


\begin{proof}
This is clear from Proposition~\ref{prop:dominant-independence}.
\end{proof}

For subsets $D \subseteq \ZZ\times \ZZ$, define
$D^\uparrow = \{ (i-1,j): (i,j) \in D\}$
and
$D^{\uparrow T}  = (D^{\uparrow })^T.$
For example, if we have $z =465132 \in \Ifpf_6$ then
the Rothe diagram is 
\[
D(z)= \left\{\begin{smallmatrix} 
+ & + & + & 1 & \cdot & \cdot \\ 
+ & +  & + & \cdot & + & 1 \\
+ &  + & + & \cdot & 1 & \cdot \\
1 & \cdot & \cdot & \cdot & \cdot  & \cdot \\
\cdot & + & 1 & \cdot & \cdot & \cdot  \\
\cdot & 1 & \cdot & \cdot & \cdot & \cdot 
\end{smallmatrix}\right\},
\]
with each $1$ indicating a position $(i,j) \in [n]\times [n]$ with $z(i) =j$ and each $+$ indicating a position in $ D(z)$.
The relevant dominant components are 
 \[
\dom{z} = 
\left\{\begin{smallmatrix} 
+ & + & +  \\ 
+ & +  & + \\
+ &  + & + 
\end{smallmatrix}\right\},
\quad
\shdomneq{z} = \left\{\begin{smallmatrix} 
\cdot & \cdot & \cdot  \\ 
+ & \cdot  & \cdot   \\
+ &  + & \cdot   \\
\end{smallmatrix}\right\},
\quand
\shdomneq{z}^{\uparrow T} = \left\{\begin{smallmatrix} 
+ & +  & \cdot\\ 
\cdot & + & \cdot  \\
\cdot & \cdot & \cdot
\end{smallmatrix}\right\}.
\]
As we see in this example, if $z \in \Ifpf_n$ is any fixed-point-free involution, then
$(\shdomneq{z})^{\uparrow T} $  is the shifted Ferrers diagram
of some strict partition.
Moreover, 
a pair $(j,i) \in \ltriangneq_n$ is an outer corner of $z$  
if and only if
$(\shdomneq{D} \sqcup \{(j,i)\})^{\uparrow T}
$ is again a shifted Ferrers diagram,
in which case $z(j) = i$ by Lemma~\ref{outer-corner-lem}.
The unique outer corner of $z = 465132$ in $\ltriangneq_6$ is $(4,1)$.

\begin{defn}
A fixed-point-free involution $z \in \Ifpf_n$ is \emph{fpf-dominant} if $\shdomneq{z} \in \FP(z)$.
\end{defn}

This condition does not imply that $z$ is dominant in the sense of being $132$-avoiding.
For example, $z=s_1s_3\cdots s_{n-1} = 2143\cdots n(n-1) \in \Ifpf_n$ is always fpf-dominant as $\shdomneq{z} = \varnothing$.

\begin{lem}\label{dom-fpf-lem}
If $z \in \Ifpf_n$ is fpf-dominant then $\FP(z) = \{ \shdomneq{z}\}.$ 
\end{lem}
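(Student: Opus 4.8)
The goal is to show that if $z \in \Ifpf_n$ is fpf-dominant, meaning $\shdomneq{z} \in \FP(z)$, then in fact $\FP(z) = \{\shdomneq{z}\}$, i.e.\ $z$ has a unique fpf-involution pipe dream. The plan is to compare sizes. Every element $D \in \FP(z)$ has the same cardinality: indeed $D = E \cap \ltriangneq_n$ for a symmetric $E \in \RP(z)$, and by Lemma~\ref{diag-sym-lem} such $E$ contains $\{(i,i) : i \in [n/2]\}$, so $|E| = 2|D| + n/2$; since $|E| = \ell(z)$ is fixed, $|D| = (\ell(z) - n/2)/2$ is independent of the choice of $D$. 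In particular $|\shdomneq{z}|$ equals this common value.

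The key containment is that $\shdomneq{z} \subseteq D$ for every $D \in \FP(z)$. This is exactly Corollary~\ref{fpf-maximal-lower-cor}, which states that $\shdomneq{z}$ is the maximal lower set of $(\ltriangneq_n, \NWleq)$ contained in every $D \in \FP(z)$; in particular $\shdomneq{z} \subseteq D$ for each such $D$. Combining this inclusion with the equality of cardinalities $|\shdomneq{z}| = |D|$ forces $D = \shdomneq{z}$ for every $D \in \FP(z)$. Since $z$ is fpf-dominant, $\shdomneq{z}$ is itself a member of $\FP(z)$, so $\FP(z)$ is nonempty and hence $\FP(z) = \{\shdomneq{z}\}$.

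I do not expect a serious obstacle here: the statement is the fpf-analogue of the second half of Lemma~\ref{dom1-lem} and of the implication (a) $\Leftrightarrow$ (c) in Proposition~\ref{inv-dom-prop}, and both ingredients (constant size of pipe dreams for a fixed $z$, via Lemma~\ref{diag-sym-lem}, and the universal containment of the strictly shifted dominant component, via Corollary~\ref{fpf-maximal-lower-cor}) are already available. The only point requiring a line of care is the size computation $|E| = 2|D| + n/2$, which follows because $E$ is symmetric and the diagonal cells $(i,i)$ for $i \in [n/2]$ lie in $E$ while the remaining diagonal cells do not (again Lemma~\ref{diag-sym-lem}), so $E$ decomposes as the disjoint union $D \sqcup D^T \sqcup \{(i,i) : i \in [n/2]\}$.
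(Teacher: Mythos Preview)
Your proof is correct and follows essentially the same approach as the paper: the paper's one-line argument is that $\shdomneq{z} \subseteq D$ for all $D \in \FP(z)$ (it cites Proposition~\ref{prop:dominant-independence} directly, where you cite its consequence Corollary~\ref{fpf-maximal-lower-cor}), and then equality of cardinalities forces $D = \shdomneq{z}$. Your write-up simply spells out the size computation via Lemma~\ref{diag-sym-lem} more explicitly than the paper does.
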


\begin{proof}
This holds since
 $\shdomneq{z} \subseteq D$ for all
$D \in \FP(z)$ by Proposition~\ref{prop:dominant-independence}.
\end{proof}

\begin{prop}
If 
$\lambda$ is a strict partition with
$\lambda \subseteq (n-2,n-4,\dots,4,2)$ then there exists a unique 
fpf-dominant 
$z \in \Ifpf_n$
with $(\shdomneq{z})^{\uparrow T} := \{ (j, i-1) : (i,j) \in \shdomneq{z}\}  =  \SD_\lambda$.
\end{prop}

\begin{proof}
Uniqueness is clear from Lemma~\ref{dom-fpf-lem}.
If $\lambda \subseteq (n-2,n-4,\dots,2)$ is empty
then take $z=\idfpf_n$.
Otherwise, let $\mu\subset \lambda $ be a strict partition 
such that $ \SD_\lambda =  \SD_\mu\sqcup \{(i,j-1)\}$ where $i<j$.
By induction, there exists an fpf-dominant $y \in \Ifpf_n$
with $(\shdomneq{y})^{\uparrow T} =  \SD_\mu$.
Let $D \in \RP(y)$ be symmetric  with $D \cap \ltriangneq_n = \shdomneq{y}$.
Lemmas~\ref{diag-sym-lem} and \ref{outer-corner-lem}
imply that $ D \sqcup \{(j,i),(i,j)\}$ is a symmetric reduced pipe dream for some $z \in \Ifpf_n$,
which is the desired element.
\end{proof}

If $y,z \in \Ifpf_n$,
then $y \leq z$ in Bruhat order if and only if 
some (equivalently, every)
fpf-involution word for $z$ contains a subword that is an fpf-involution word for
$y$ \cite[Thm. 4.6]{HMP3}.
From this and Theorem~\ref{thm:fpf-almost-symmetric} we deduce the following:

\begin{lem}\label{fpf-bruhat-lem}
Suppose $y,z \in \Ifpf_n$.
Then $y \leq z$ if and only if some (equivalently, every) fpf-involution pipe dream for $z$ has a subset that
is an 
fpf-involution pipe dream for $y$.
\end{lem}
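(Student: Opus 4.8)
The plan is to mirror the proof of the corresponding statement for ordinary reduced pipe dreams (the unlabeled lemma preceding Corollary~\ref{dom-subset-cor}), converting the subword characterization of Bruhat order into a statement about subsets of pipe dreams. The bridge is Theorem~\ref{thm:fpf-almost-symmetric}, which says that for $D \subseteq \ltriangneq_n$ one has $D \in \FP(z)$ if and only if some (equivalently every) reading word of $D$ lies in $\iRfpf(z)$, together with the fact recalled immediately above (from \cite[Thm.~4.6]{HMP3}) that $y \leq z$ holds if and only if some (equivalently every) fpf-involution word for $z$ contains a subword that is an fpf-involution word for $y$. The one routine observation I need is that if $E \subseteq D \subseteq [n]\times[n]$ and $\omega$ is a reading order on $[n]\times[n]$, then $\word(E,\omega)$ is obtained from $\word(D,\omega)$ by deleting the letters in the positions indexed by $\omega(D \setminus E)$, so it is a subword of $\word(D,\omega)$; this is already used for ordinary pipe dreams.

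First I would prove: if some $D \in \FP(z)$ contains a subset $E \in \FP(y)$, then $y \leq z$. Fix a reading order $\omega$. By Theorem~\ref{thm:fpf-almost-symmetric}, $\word(D,\omega) \in \iRfpf(z)$ and $\word(E,\omega) \in \iRfpf(y)$, and by the observation above $\word(E,\omega)$ is a subword of $\word(D,\omega)$; hence $y \leq z$ by the recalled Bruhat characterization. Conversely, suppose $y \leq z$ and let $D \in \FP(z)$ be arbitrary. Fixing a reading order $\omega$, the word $a = \word(D,\omega)$ lies in $\iRfpf(z)$ by Theorem~\ref{thm:fpf-almost-symmetric}, so $a$ has a subword $b \in \iRfpf(y)$. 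The letters of $a$ retained in passing to $b$ single out a subset $E \subseteq D$ with $\word(E,\omega) = b$, and $E \subseteq D \subseteq \ltriangneq_n$; since $b \in \iRfpf(y)$, Theorem~\ref{thm:fpf-almost-symmetric} gives $E \in \FP(y)$. Thus every $D \in \FP(z)$ has a subset lying in $\FP(y)$. Combining the two directions (and using that $\FP(z) \neq \varnothing$) yields all the asserted equivalences between the ``some'' and ``every'' versions.

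I do not expect a serious obstacle here: the mathematical content is entirely packaged in Theorem~\ref{thm:fpf-almost-symmetric} and the cited Bruhat-subword property, and what remains is the bookkeeping that a subset of a pipe dream, read under a fixed reading order, gives a subword of the ambient reading word. If anything, the only point requiring a moment's care is that the direction $y \leq z \Rightarrow$ ``every $D$ has such a subset'' must be run for an \emph{arbitrary} $D \in \FP(z)$ (not just one), which is why one invokes the ``every'' form of the Bruhat-subword property; the ``some''-to-``every'' upgrade then follows formally.
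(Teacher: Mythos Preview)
Your proposal is correct and follows exactly the approach the paper takes: it deduces the lemma from Theorem~\ref{thm:fpf-almost-symmetric} together with the subword characterization of Bruhat order on $\Ifpf_n$ from \cite[Thm.~4.6]{HMP3}. The paper in fact states this as an immediate consequence without writing out the details you supply.
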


\begin{cor}\label{fpf-dom-subset-cor}
Let $y,z \in \Ifpf_n$ where $y$ is fpf-dominant. 
Then $y\leq z$ if and only if $\shdomneq{y} \subseteq D$ for some (equivalently, every) $D \in \FP(z)$.
\end{cor}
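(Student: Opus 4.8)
The plan is to derive this as an immediate consequence of Lemma~\ref{fpf-bruhat-lem} together with Lemma~\ref{dom-fpf-lem}, in exact parallel with the one-line proof of Corollary~\ref{inv-dom-subset-cor} in the non-fpf case. Lemma~\ref{fpf-bruhat-lem} states that $y \leq z$ holds if and only if some (equivalently, every) $D \in \FP(z)$ has a subset that is an fpf-involution pipe dream for $y$. When $y$ is fpf-dominant, Lemma~\ref{dom-fpf-lem} tells us $\FP(y) = \{\shdomneq{y}\}$, so the only subset of $D$ that could be an fpf-involution pipe dream for $y$ is $\shdomneq{y}$ itself. Hence the condition supplied by Lemma~\ref{fpf-bruhat-lem} collapses to the single containment $\shdomneq{y} \subseteq D$, and the corollary follows.

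For the ``some (equivalently, every)'' clause, I would either simply inherit it from the matching clause in Lemma~\ref{fpf-bruhat-lem}, or give the slightly more hands-on argument via Corollary~\ref{fpf-maximal-lower-cor}: that result identifies $\shdomneq{z}$ as the maximal lower set of $(\ltriangneq_n,\NWleq)$ contained in every $D \in \FP(z)$, and one checks easily that $\shdomneq{y}$ is itself such a lower set when $y$ is fpf-dominant, since it is the intersection of the Ferrers diagram $\dom{y}$ with $\ltriangneq_n$. Therefore $\shdomneq{y} \subseteq D$ for some $D \in \FP(z)$ if and only if $\shdomneq{y} \subseteq \shdomneq{z}$ if and only if $\shdomneq{y} \subseteq D$ for every $D \in \FP(z)$, which makes the equivalence transparent.

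I do not anticipate any real obstacle here: the entire content of the corollary is already packaged in the two cited lemmas, and the argument is essentially a single sentence once those are in hand. The only point requiring a little care is the role of the fpf-dominance hypothesis on $y$: it is used precisely to force $|\FP(y)| = 1$, so that ``contains an fpf-involution pipe dream for $y$ as a subset'' becomes the concrete containment $\shdomneq{y} \subseteq D$; without it the statement would have to be phrased in terms of some member of $\FP(y)$ rather than $\shdomneq{y}$ specifically.
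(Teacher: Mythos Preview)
Your proposal is correct and matches the paper's approach exactly: the paper's proof is the single sentence ``This is clear since if $y \in \Ifpf_n$ is fpf-dominant then $|\FP(y)|=1$,'' which is precisely your combination of Lemma~\ref{fpf-bruhat-lem} with Lemma~\ref{dom-fpf-lem}. Your additional discussion of the ``some versus every'' clause via Corollary~\ref{fpf-maximal-lower-cor} is a pleasant elaboration but unnecessary, since that equivalence is already built into Lemma~\ref{fpf-bruhat-lem}.
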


\begin{proof}
This is clear since if $y \in \Ifpf_n$ is fpf-dominant then $|\FP(y)|=1$.
\end{proof}

    For $y \in \Ifpf_n$ and $j \in [n]$,  
    define 
    $\fPhi(y,j)$ to be the set 
    of fixed-point-free involutions $z \in \Ifpf_{n+2}$
     with length $\ell(z) = \ell(y)+2$ that
    can be written as 
    $z = t_{js}\cdot ys_{n+1} \cdot t_{js}$ for an integer $s$ with $j < s \leq n+2$.
We have an analogue of Theorem~\ref{thm:inv-dominant-transition}:

\begin{thm}\label{thm:fpf-dominant-transition}
    Let $(j,i)$ be an outer corner of $y \in \Ifpf_n$ with $i<j$. 
    \ben
    \item[(a)] The map  $D \mapsto D \sqcup \{(j,i)\}$ is a bijection 
  $
        \FP(y) \to \bigsqcup_{z \in \fPhi(y,j)} \FP(z).
$ 
\item[(b)] If $i+j \leq n$ then $\fPhi(y,j)\subset \left\{ z s_{n+1} : z \in \Ifpf_{n}\right\}$.
\een
\end{thm}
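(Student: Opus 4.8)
The strategy is to mirror, line by line, the proof of Theorem~\ref{thm:inv-dominant-transition}, substituting the fpf analogues of each ingredient: Lemma~\ref{diag-sym-lem} and Lemma~\ref{outer-corner-lem} for the wiring-diagram analysis, Corollary~\ref{fpf-dom-subset-cor} in place of Corollary~\ref{inv-dom-subset-cor}, Lemma~\ref{fpf-bruhat-lem} in place of Lemma~\ref{inv-bruhat-lem}, and the transition formula from \cite{HMP3} (for the surjectivity/well-definedness we will only need the structural facts recorded in $\fPhi(y,j)$'s definition, namely that its members arise as $z = t_{js}\cdot ys_{n+1}\cdot t_{js}$ with $\ell(z)=\ell(y)+2$). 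The key point is that a symmetric reduced pipe dream $D=D^T\in\RP(y)$, when we add an outer corner $(j,i)$ with $i<j$, forces us to also add the transposed cell $(i,j)$ to stay symmetric, and the resulting set $D\sqcup\{(j,i),(i,j)\}$ is again a symmetric reduced pipe dream by two applications of Lemma~\ref{outer-corner-lem} (the argument there shows the two pipes crossing at $(j,i)$ do not interfere with adding $(i,j)$ as well, since the pipe through $(i,j)$ that goes straight is the other one).

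\textbf{Part (a).} Fix $D\in\FP(y)$, write $D=E\cap\ltriangneq_n$ with $E=E^T\in\RP(y)$ symmetric, and note $(j,i)\notin E$ and $(i,j)\notin E$ since $(j,i)$ is an outer corner. By Lemma~\ref{outer-corner-lem} applied to $E$ at $(j,i)$ we get $y(j)=i$, and the permutation $w$ with $E\sqcup\{(j,i),(i,j)\}\in\RP(w)$ satisfies $w = t_{js}\cdot y\cdot t_{i,w(i)}$ for appropriate $s,w(i)$; because $y=y^{-1}$ and we have added a pair of transposed cells, $w$ is again an fpf-atom of some $z\in\Ifpf$, and tracking lengths gives $\ell(w)=\ell(y)+2$. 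Using the fpf-dominant involution $\tilde y\in\Ifpf$ whose unique fpf-involution pipe dream is $\shdomneq{y}\sqcup\{(j,i)\}$ (which exists by the Proposition preceding this theorem, since $(\shdomneq{y}\sqcup\{(j,i)\})^{\uparrow T}$ is a shifted Ferrers diagram), and arguing exactly as in the proof of Theorem~\ref{thm:inv-dominant-transition} with Corollaries~\ref{fpf-dom-subset-cor} and \ref{0-hecke-cor}, we conclude $y<z$ and $w\in\Afpf(z)$, hence $z\in\fPhi(y,j)$ and $E\sqcup\{(j,i),(i,j)\}\cap\ltriangneq_n = D\sqcup\{(j,i)\}\in\FP(z)$. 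Injectivity is immediate (delete $(j,i)$). For surjectivity, take $F\in\FP(z)$ with $z\in\fPhi(y,j)$; by Lemma~\ref{fpf-bruhat-lem} some cell of $F$ can be deleted to land in $\FP(y)$, and as in the earlier proof — since $\shdomneq{y}\subseteq F$ and the wiring diagram of the symmetric completion $F'$ forces the pipe terminations — the only deletable cell that is consistent with $z(j)=i$ being changed from $y(j)=i$ is $(j,i)$ itself (here we use the defining relation $z=t_{js}\cdot ys_{n+1}\cdot t_{js}$ to see $y$ and $z$ differ at position $j$). The deletion $F\setminus\{(j,i)\}$ lies in $\FP(y)$ and maps to $F$.

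\textbf{Part (b).} This is the easy counterpart of Theorem~\ref{thm:inv-dominant-transition}(b). If $i+j\leq n$ then the new outer corner $(j,i)$ and everything in $\shdomneq{y}\sqcup\{(j,i)\}$ lies strictly inside $\ltriangneq_n$, so the symmetric completion of any member of $\bigsqcup_{z\in\fPhi(y,j)}\FP(z)$ is a reduced pipe dream contained in $[n]\times[n]$; hence each $z\in\fPhi(y,j)$ fixes $n+1$ and $n+2$, i.e.\ $z = z's_{n+1}$ for some $z'\in\Ifpf_n$, using the remark that $\iRfpf(z')=\iRfpf(z's_{n+1})$ and $\Afpf(z')=\Afpf(z's_{n+1})$.

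\textbf{Main obstacle.} The delicate step is the surjectivity argument in part (a): one must show that the cell whose deletion from $F$ yields an element of $\FP(y)$ is forced to be exactly $(j,i)$. This hinges on a careful wiring-diagram analysis of the symmetric completion $F'\in\RP(z)$ — specifically that $\dom{y}=\shdomneq{y}\cup\shdomneq{y}^T\cup\{(k,k):\text{appropriate }k\}\subseteq F'$ (via Lemma~\ref{diag-sym-lem}), so that any cell of $F'\setminus\dom{y}$ other than $(j,i)$ and its transpose cannot affect the value at position $j$, combined with the fact from the definition of $\fPhi(y,j)$ that $z$ and $y$ genuinely differ at $j$. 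Getting this bookkeeping right — tracking which pipes pass through $(j,i)$ versus the diagonal cells — is where the proof requires real care, but it is a direct transcription of the corresponding passage for $\tau_{ij}$ using Theorem~\ref{thm:fpf-almost-symmetric} and Lemma~\ref{diag-sym-lem} in place of Theorem~\ref{thm:almost-symmetric}.
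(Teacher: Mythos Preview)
Your overall strategy is right, and your surjectivity argument and Part~(b) match the paper. But your well-definedness argument in Part~(a) contains a genuine confusion that makes it incorrect as written.

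You write $E=E^T\in\RP(y)$ for the symmetric completion of $D$, then let $w$ be the permutation with $E\sqcup\{(j,i),(i,j)\}\in\RP(w)$, and assert that ``$w$ is again an fpf-atom of some $z\in\Ifpf$.'' This is wrong: since $E\sqcup\{(j,i),(i,j)\}$ is symmetric, Corollary~\ref{transpose-cor} gives $w=w^{-1}$, so $w$ is the fixed-point-free involution itself (fpf by Lemma~\ref{diag-sym-lem}), not an atom of one. You then invoke the $\tilde y$ and Corollary~\ref{0-hecke-cor} machinery from the involution proof to conclude ``$w\in\Afpf(z)$,'' but Corollary~\ref{0-hecke-cor} concerns the map $v\mapsto v^{-1}\circ v$ producing an element of $\I_n$; there is no analogous statement producing an element of $\Ifpf_n$ from an arbitrary permutation, so this step does not go through.

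The paper's proof is in fact \emph{simpler} than the involution case precisely because it avoids this issue. Working with the symmetric pipe dream $D'\in\RP(ys_{n+1})$, one observes directly that $E'=D'\sqcup\{(i,j),(j,i)\}$ is symmetric and (by two applications of Lemma~\ref{outer-corner-lem}) reduced, hence $E'\in\RP(z)$ for some $z=z^{-1}$ which is in $\Ifpf_{n+2}$ by Lemma~\ref{diag-sym-lem}. Then $D\sqcup\{(j,i)\}=E'\cap\ltriangneq_n\in\FP(z)$ \emph{by definition} of $\FP$; no atom-level argument is needed at all. Examining the pipes at $(j,i)$ then shows $z=t_{js}\cdot ys_{n+1}\cdot t_{js}$ for some $s>j$, so $z\in\fPhi(y,j)$. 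The entire $\tilde y$/Demazure-product detour, which was essential in Theorem~\ref{thm:inv-dominant-transition} to certify that the new permutation is actually an atom, is simply absent here.
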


\begin{proof} 
Our argument is similar to the proof of Theorem~\ref{thm:inv-dominant-transition}.
Choose $D \in \FP(y) = \FP(ys_{n+1})$. Suppose $D' \in \RP(ys_{n+1})$ is the symmetric reduced pipe dream with $D = D' \cap \ltriang_{n+2}$
and set $E = D\sqcup \{(j,i)\}$ and $E' = D' \sqcup \{(i,j),(j,i)\}$.
It follows from Lemmas~\ref{diag-sym-lem} and \ref{outer-corner-lem}
that $E'$ is a reduced pipe dream for some element $z \in \Ifpf_{n+2}$.
Since $E'$ is symmetric, one has
$E  = E' \cap \ltriangneq_n \in \FP(z)$.
Finally, by considering the pipes crossing at position $(j,i)$ in $E$ we deduce that $z \in \fPhi(y,j)$.

Thus $D \mapsto D \sqcup \{(j,i)\}$ is a well-defined
map $\FP(y) \to \bigsqcup_{z\in \fPhi(y,j)} \FP(z)$.
This map is clearly injective.
To show that it is also surjective, suppose $E \in \FP(z)$ for some $z \in \fPhi(y,j)$.
Lemma~\ref{fpf-bruhat-lem} implies that there exists a position $(l,k) \in E$ 
such that $E \setminus \{(l,k)\} \in \FP(y)$. If $(j,i) \neq (l,k)$ then it would follow
as in the proof Theorem~\ref{thm:inv-dominant-transition}
that $z(j) = y(j)=ys_{n+1}(j)=i$, which is impossible if $z = t_{js}\cdot ys_{n+1}\cdot t_{js}$ where  $i =y(j) < j < s \leq n + 2$.
Thus $(j,i) = (l,k)$ so the map in part (a) is also surjective.
Part (b) holds because $\left\{  zs_{n+1} : z \in \Ifpf_{n}\right\}$ contains all involutions in $\Ifpf_{n+2}$
with fpf-involution pipe dreams that are subsets of $\{ (j,i)  : i\leq j, i +j \leq n\}$.
\end{proof}

\begin{ex}\label{fpf-dom-ex}
If $y = 351624
 \in \Ifpf_6$
then
 \[
\FP(y) = 
\left\{
\begin{tikzpicture}[scale=0.3,baseline=(b.base)]
\node (b) at (0,3) {}; 
\filldraw (0.5, 5.5) circle[radius=.6mm]; 
\draw 
(0,5.5) to[bend right] (0.5,6) 
(0.5,5) to[bend left] (1,5.5); 
\filldraw (1.5, 5.5) circle[radius=.6mm]; 
\draw 
(1,5.5) to[bend right] (1.5,6) 
(1.5,5) to[bend left] (2,5.5); 
\filldraw (2.5, 5.5) circle[radius=.6mm]; 
\draw 
(2,5.5) to[bend right] (2.5,6) 
(2.5,5) to[bend left] (3,5.5); 
\filldraw (3.5, 5.5) circle[radius=.6mm]; 
\draw 
(3,5.5) to[bend right] (3.5,6) 
(3.5,5) to[bend left] (4,5.5); 
\filldraw (4.5, 5.5) circle[radius=.6mm]; 
\draw 
(4,5.5) to[bend right] (4.5,6) 
(4.5,5) to[bend left] (5,5.5); 
\filldraw (5.5, 5.5) circle[radius=.6mm]; 
\draw 
(5,5.5) to[bend right] (5.5,6); 
\filldraw (0.5, 4.5) circle[radius=.6mm]; 
\draw 
(0,4.5) to (1,4.5) 
(0.5,4) to (0.5,5); 
\filldraw (1.5, 4.5) circle[radius=.6mm]; 
\draw 
(1,4.5) to[bend right] (1.5,5) 
(1.5,4) to[bend left] (2,4.5); 
\filldraw (2.5, 4.5) circle[radius=.6mm]; 
\draw 
(2,4.5) to[bend right] (2.5,5) 
(2.5,4) to[bend left] (3,4.5); 
\filldraw (3.5, 4.5) circle[radius=.6mm]; 
\draw 
(3,4.5) to[bend right] (3.5,5) 
(3.5,4) to[bend left] (4,4.5); 
\filldraw (4.5, 4.5) circle[radius=.6mm]; 
\draw 
(4,4.5) to[bend right] (4.5,5); 
\filldraw (0.5, 3.5) circle[radius=.6mm]; 
\draw 
(0,3.5) to[bend right] (0.5,4) 
(0.5,3) to[bend left] (1,3.5); 
\filldraw (1.5, 3.5) circle[radius=.6mm]; 
\draw 
(1,3.5) to[bend right] (1.5,4) 
(1.5,3) to[bend left] (2,3.5); 
\filldraw (2.5, 3.5) circle[radius=.6mm]; 
\draw 
(2,3.5) to[bend right] (2.5,4) 
(2.5,3) to[bend left] (3,3.5); 
\filldraw (3.5, 3.5) circle[radius=.6mm]; 
\draw 
(3,3.5) to[bend right] (3.5,4); 
\filldraw (0.5, 2.5) circle[radius=.6mm]; 
\draw 
(0,2.5) to (1,2.5) 
(0.5,2) to (0.5,3); 
\filldraw (1.5, 2.5) circle[radius=.6mm]; 
\draw 
(1,2.5) to[bend right] (1.5,3) 
(1.5,2) to[bend left] (2,2.5); 
\filldraw (2.5, 2.5) circle[radius=.6mm]; 
\draw 
(2,2.5) to[bend right] (2.5,3); 
\filldraw (0.5, 1.5) circle[radius=.6mm]; 
\draw 
(0,1.5) to[bend right] (0.5,2) 
(0.5,1) to[bend left] (1,1.5); 
\filldraw (1.5, 1.5) circle[radius=.6mm]; 
\draw 
(1,1.5) to[bend right] (1.5,2); 
\filldraw (0.5, 0.5) circle[radius=.6mm]; 
\draw 
(0,0.5) to[bend right] (0.5,1); 
\end{tikzpicture}
,\quad
\begin{tikzpicture}[scale=0.3,baseline=(b.base)]
\node (b) at (0,3) {}; 
\filldraw (0.5, 5.5) circle[radius=.6mm]; 
\draw 
(0,5.5) to[bend right] (0.5,6) 
(0.5,5) to[bend left] (1,5.5); 
\filldraw (1.5, 5.5) circle[radius=.6mm]; 
\draw 
(1,5.5) to[bend right] (1.5,6) 
(1.5,5) to[bend left] (2,5.5); 
\filldraw (2.5, 5.5) circle[radius=.6mm]; 
\draw 
(2,5.5) to[bend right] (2.5,6) 
(2.5,5) to[bend left] (3,5.5); 
\filldraw (3.5, 5.5) circle[radius=.6mm]; 
\draw 
(3,5.5) to[bend right] (3.5,6) 
(3.5,5) to[bend left] (4,5.5); 
\filldraw (4.5, 5.5) circle[radius=.6mm]; 
\draw 
(4,5.5) to[bend right] (4.5,6) 
(4.5,5) to[bend left] (5,5.5); 
\filldraw (5.5, 5.5) circle[radius=.6mm]; 
\draw 
(5,5.5) to[bend right] (5.5,6); 
\filldraw (0.5, 4.5) circle[radius=.6mm]; 
\draw 
(0,4.5) to (1,4.5) 
(0.5,4) to (0.5,5); 
\filldraw (1.5, 4.5) circle[radius=.6mm]; 
\draw 
(1,4.5) to[bend right] (1.5,5) 
(1.5,4) to[bend left] (2,4.5); 
\filldraw (2.5, 4.5) circle[radius=.6mm]; 
\draw 
(2,4.5) to[bend right] (2.5,5) 
(2.5,4) to[bend left] (3,4.5); 
\filldraw (3.5, 4.5) circle[radius=.6mm]; 
\draw 
(3,4.5) to[bend right] (3.5,5) 
(3.5,4) to[bend left] (4,4.5); 
\filldraw (4.5, 4.5) circle[radius=.6mm]; 
\draw 
(4,4.5) to[bend right] (4.5,5); 
\filldraw (0.5, 3.5) circle[radius=.6mm]; 
\draw 
(0,3.5) to[bend right] (0.5,4) 
(0.5,3) to[bend left] (1,3.5); 
\filldraw (1.5, 3.5) circle[radius=.6mm]; 
\draw 
(1,3.5) to (2,3.5) 
(1.5,3) to (1.5,4); 
\filldraw (2.5, 3.5) circle[radius=.6mm]; 
\draw 
(2,3.5) to[bend right] (2.5,4) 
(2.5,3) to[bend left] (3,3.5); 
\filldraw (3.5, 3.5) circle[radius=.6mm]; 
\draw 
(3,3.5) to[bend right] (3.5,4); 
\filldraw (0.5, 2.5) circle[radius=.6mm]; 
\draw 
(0,2.5) to[bend right] (0.5,3) 
(0.5,2) to[bend left] (1,2.5); 
\filldraw (1.5, 2.5) circle[radius=.6mm]; 
\draw 
(1,2.5) to[bend right] (1.5,3) 
(1.5,2) to[bend left] (2,2.5); 
\filldraw (2.5, 2.5) circle[radius=.6mm]; 
\draw 
(2,2.5) to[bend right] (2.5,3); 
\filldraw (0.5, 1.5) circle[radius=.6mm]; 
\draw 
(0,1.5) to[bend right] (0.5,2) 
(0.5,1) to[bend left] (1,1.5); 
\filldraw (1.5, 1.5) circle[radius=.6mm]; 
\draw 
(1,1.5) to[bend right] (1.5,2); 
\filldraw (0.5, 0.5) circle[radius=.6mm]; 
\draw 
(0,0.5) to[bend right] (0.5,1); 
\end{tikzpicture}\
\right\}
\]
so $\shdom{y}=\{(2,1)\}$,
and  $(3,1)$ is an outer corner.
In this case, $\fPhi(y,3) = \{532614,456123\}$.
As predicted by the theorem with $(j,i) = (3,1)$,
both elements of $\fPhi(y,3)$ are fpf-dominant since 
\[
\FP(532614) = 
\left\{
\begin{tikzpicture}[scale=0.3,baseline=(b.base)]
\node (b) at (0,3) {}; 
\filldraw (0.5, 5.5) circle[radius=.6mm]; 
\draw 
(0,5.5) to[bend right] (0.5,6) 
(0.5,5) to[bend left] (1,5.5); 
\filldraw (1.5, 5.5) circle[radius=.6mm]; 
\draw 
(1,5.5) to[bend right] (1.5,6) 
(1.5,5) to[bend left] (2,5.5); 
\filldraw (2.5, 5.5) circle[radius=.6mm]; 
\draw 
(2,5.5) to[bend right] (2.5,6) 
(2.5,5) to[bend left] (3,5.5); 
\filldraw (3.5, 5.5) circle[radius=.6mm]; 
\draw 
(3,5.5) to[bend right] (3.5,6) 
(3.5,5) to[bend left] (4,5.5); 
\filldraw (4.5, 5.5) circle[radius=.6mm]; 
\draw 
(4,5.5) to[bend right] (4.5,6) 
(4.5,5) to[bend left] (5,5.5); 
\filldraw (5.5, 5.5) circle[radius=.6mm]; 
\draw 
(5,5.5) to[bend right] (5.5,6); 
\filldraw (0.5, 4.5) circle[radius=.6mm]; 
\draw 
(0,4.5) to (1,4.5) 
(0.5,4) to (0.5,5); 
\filldraw (1.5, 4.5) circle[radius=.6mm]; 
\draw 
(1,4.5) to[bend right] (1.5,5) 
(1.5,4) to[bend left] (2,4.5); 
\filldraw (2.5, 4.5) circle[radius=.6mm]; 
\draw 
(2,4.5) to[bend right] (2.5,5) 
(2.5,4) to[bend left] (3,4.5); 
\filldraw (3.5, 4.5) circle[radius=.6mm]; 
\draw 
(3,4.5) to[bend right] (3.5,5) 
(3.5,4) to[bend left] (4,4.5); 
\filldraw (4.5, 4.5) circle[radius=.6mm]; 
\draw 
(4,4.5) to[bend right] (4.5,5); 
\filldraw (0.5, 3.5) circle[radius=.6mm]; 
\draw 
(0,3.5) to (1,3.5) 
(0.5,3) to (0.5,4); 
\filldraw (1.5, 3.5) circle[radius=.6mm]; 
\draw 
(1,3.5) to[bend right] (1.5,4) 
(1.5,3) to[bend left] (2,3.5); 
\filldraw (2.5, 3.5) circle[radius=.6mm]; 
\draw 
(2,3.5) to[bend right] (2.5,4) 
(2.5,3) to[bend left] (3,3.5); 
\filldraw (3.5, 3.5) circle[radius=.6mm]; 
\draw 
(3,3.5) to[bend right] (3.5,4); 
\filldraw (0.5, 2.5) circle[radius=.6mm]; 
\draw 
(0,2.5) to (1,2.5) 
(0.5,2) to (0.5,3); 
\filldraw (1.5, 2.5) circle[radius=.6mm]; 
\draw 
(1,2.5) to[bend right] (1.5,3) 
(1.5,2) to[bend left] (2,2.5); 
\filldraw (2.5, 2.5) circle[radius=.6mm]; 
\draw 
(2,2.5) to[bend right] (2.5,3); 
\filldraw (0.5, 1.5) circle[radius=.6mm]; 
\draw 
(0,1.5) to[bend right] (0.5,2) 
(0.5,1) to[bend left] (1,1.5); 
\filldraw (1.5, 1.5) circle[radius=.6mm]; 
\draw 
(1,1.5) to[bend right] (1.5,2); 
\filldraw (0.5, 0.5) circle[radius=.6mm]; 
\draw 
(0,0.5) to[bend right] (0.5,1); 
\end{tikzpicture}\
\right\}
\quand
\FP(456123) = 
\left\{
\begin{tikzpicture}[scale=0.3,baseline=(b.base)]
\node (b) at (0,3) {}; 
\filldraw (0.5, 5.5) circle[radius=.6mm]; 
\draw 
(0,5.5) to[bend right] (0.5,6) 
(0.5,5) to[bend left] (1,5.5); 
\filldraw (1.5, 5.5) circle[radius=.6mm]; 
\draw 
(1,5.5) to[bend right] (1.5,6) 
(1.5,5) to[bend left] (2,5.5); 
\filldraw (2.5, 5.5) circle[radius=.6mm]; 
\draw 
(2,5.5) to[bend right] (2.5,6) 
(2.5,5) to[bend left] (3,5.5); 
\filldraw (3.5, 5.5) circle[radius=.6mm]; 
\draw 
(3,5.5) to[bend right] (3.5,6) 
(3.5,5) to[bend left] (4,5.5); 
\filldraw (4.5, 5.5) circle[radius=.6mm]; 
\draw 
(4,5.5) to[bend right] (4.5,6) 
(4.5,5) to[bend left] (5,5.5); 
\filldraw (5.5, 5.5) circle[radius=.6mm]; 
\draw 
(5,5.5) to[bend right] (5.5,6); 
\filldraw (0.5, 4.5) circle[radius=.6mm]; 
\draw 
(0,4.5) to (1,4.5) 
(0.5,4) to (0.5,5); 
\filldraw (1.5, 4.5) circle[radius=.6mm]; 
\draw 
(1,4.5) to[bend right] (1.5,5) 
(1.5,4) to[bend left] (2,4.5); 
\filldraw (2.5, 4.5) circle[radius=.6mm]; 
\draw 
(2,4.5) to[bend right] (2.5,5) 
(2.5,4) to[bend left] (3,4.5); 
\filldraw (3.5, 4.5) circle[radius=.6mm]; 
\draw 
(3,4.5) to[bend right] (3.5,5) 
(3.5,4) to[bend left] (4,4.5); 
\filldraw (4.5, 4.5) circle[radius=.6mm]; 
\draw 
(4,4.5) to[bend right] (4.5,5); 
\filldraw (0.5, 3.5) circle[radius=.6mm]; 
\draw 
(0,3.5) to (1,3.5) 
(0.5,3) to (0.5,4); 
\filldraw (1.5, 3.5) circle[radius=.6mm]; 
\draw 
(1,3.5) to (2,3.5) 
(1.5,3) to (1.5,4); 
\filldraw (2.5, 3.5) circle[radius=.6mm]; 
\draw 
(2,3.5) to[bend right] (2.5,4) 
(2.5,3) to[bend left] (3,3.5); 
\filldraw (3.5, 3.5) circle[radius=.6mm]; 
\draw 
(3,3.5) to[bend right] (3.5,4); 
\filldraw (0.5, 2.5) circle[radius=.6mm]; 
\draw 
(0,2.5) to[bend right] (0.5,3) 
(0.5,2) to[bend left] (1,2.5); 
\filldraw (1.5, 2.5) circle[radius=.6mm]; 
\draw 
(1,2.5) to[bend right] (1.5,3) 
(1.5,2) to[bend left] (2,2.5); 
\filldraw (2.5, 2.5) circle[radius=.6mm]; 
\draw 
(2,2.5) to[bend right] (2.5,3); 
\filldraw (0.5, 1.5) circle[radius=.6mm]; 
\draw 
(0,1.5) to[bend right] (0.5,2) 
(0.5,1) to[bend left] (1,1.5); 
\filldraw (1.5, 1.5) circle[radius=.6mm]; 
\draw 
(1,1.5) to[bend right] (1.5,2); 
\filldraw (0.5, 0.5) circle[radius=.6mm]; 
\draw 
(0,0.5) to[bend right] (0.5,1); 
\end{tikzpicture}\
\right\}.
\]
\end{ex}

We may now prove the second half of Theorem~\ref{thm:inv-pipe-dream-formula},
concerning the polynomials $\iSfpf_z$.

\begin{thm}\label{fpf-pipe-schubert-thm}
If $z \in \Ifpf_n$ then $\iSfpf_z = \sum_{D \in \FP(z)} \prod_{(i,j) \in D} (x_i + x_j)$.
\end{thm}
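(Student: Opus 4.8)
The plan is to adapt, essentially verbatim, the proof of Theorem~\ref{inv-pipe-schubert-thm}, replacing $\ltriangeq_n$, $\shdom{\cdot}$, $\iPhi$, the transition identity \eqref{itransition-eq}, the base-case formula \eqref{inv-rev-eq}, and Theorem~\ref{thm:inv-dominant-transition} by $\ltriangneq_n$, $\shdomneq{\cdot}$, $\fPhi$, their fixed-point-free counterparts, and Theorem~\ref{thm:fpf-dominant-transition}. Concretely: fix an even integer $n$, abbreviate $\fBJS_z = \sum_{D\in\FP(z)}\prod_{(i,j)\in D}(x_i+x_j)$, and prove $\iSfpf_z = \fBJS_z$ by downward induction on $\ell(z)$ for $z\in\Ifpf_n$.

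I would first assemble two external ingredients. For the inductive step I need the fixed-point-free analogue of \eqref{itransition-eq}: if $(j,i)\in\ltriangneq_n$ is an outer corner of $z\in\Ifpf_n$ then $(x_i+x_j)\iSfpf_z = \sum_{u\in\fPhi(z,j)}\iSfpf_u$, which is available from \cite{HMP3} (parallel to how \eqref{itransition-eq} comes from \cite[Thm.~3.30]{HMP3}). For the base case I need the fixed-point-free analogue of \eqref{inv-rev-eq}: writing $w_0 = n\cdots 321\in S_n$, one has $\iSfpf_{w_0} = \prod_{1\le i<j\le n-i}(x_i+x_j)$, which follows from the work of Wyser and Yong (see \cite{HMP1}). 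Then the base case is quick: $w_0$ is the unique $\ell$-maximal element of $\Ifpf_n$ (it is the top of $(S_n,\le)$ and lies in $\Ifpf_n$ since $n$ is even), it is dominant so $\RP(w_0)=\{D(w_0)\}$ by Lemma~\ref{dom1-lem}, and $D(w_0)=D(w_0)^T$ is symmetric, so $\FP(w_0)=\{\shdomneq{w_0}\}$ and $\fBJS_{w_0}=\prod_{(j,i)\in\shdomneq{w_0}}(x_i+x_j)=\prod_{1\le i<j\le n-i}(x_i+x_j)=\iSfpf_{w_0}$.

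For the inductive step, I would take $z\in\Ifpf_n$ with $z\ne w_0$ and first show $z$ has an outer corner $(j,i)$ with $i<j$ and $i+j\le n$. If it did not, then $\shdomneq{z}$ would already equal the ambient staircase $\shdomneq{w_0}$; since $\shdomneq{z}\subseteq D$ for every $D\in\FP(z)$ by Corollary~\ref{fpf-maximal-lower-cor}, Corollary~\ref{fpf-dom-subset-cor} applied to the fpf-dominant $w_0$ would give $w_0\le z$, forcing $z=w_0$, a contradiction. With such a corner in hand, Theorem~\ref{thm:fpf-dominant-transition}(a) says $D\mapsto D\sqcup\{(j,i)\}$ is a bijection $\FP(z)\to\bigsqcup_{u\in\fPhi(z,j)}\FP(u)$, and part (b) (using $i+j\le n$) says each $u\in\fPhi(z,j)$ equals $vs_{n+1}$ for some $v\in\Ifpf_n$. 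Since $v$ fixes $n+1$ we get $\ell(v)=\ell(vs_{n+1})-1=\ell(u)-1=\ell(z)+1>\ell(z)$, so the inductive hypothesis gives $\iSfpf_v=\fBJS_v$, while $\iSfpf_u=\iSfpf_v$ and $\fBJS_u=\fBJS_v$ follow from the stabilization identities $\iRfpf(vs_{n+1})=\iRfpf(v)$, $\Afpf(vs_{n+1})=\Afpf(v)$ combined with Theorem~\ref{thm:fpf-almost-symmetric}. Chaining the transition identity, the inductive hypothesis, and Theorem~\ref{thm:fpf-dominant-transition}(a) then yields $(x_i+x_j)\iSfpf_z=\sum_{u}\iSfpf_u=\sum_u\fBJS_u=(x_i+x_j)\fBJS_z$, and dividing by $x_i+x_j$ closes the induction.

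The main obstacle is not a hard computation but the bookkeeping forced by the fact that $\fPhi$ increases the ambient size from $n$ to $n+2$ --- unlike the involution case, where $\iPhi(z,j)\subset\I_n$ once $i+j\le n$. Closing the induction within $\Ifpf_n$ thus requires (i) choosing the outer corner with $i+j\le n$ so that Theorem~\ref{thm:fpf-dominant-transition}(b) identifies $\fPhi(z,j)$ with a subset of $\{vs_{n+1}:v\in\Ifpf_n\}$, which is exactly what the ``$z\ne w_0$ forces a small enough outer corner'' argument provides, and (ii) invoking the stabilization properties $\iSfpf_{vs_{n+1}}=\iSfpf_v$, $\FP(vs_{n+1})=\FP(v)$, $\fBJS_{vs_{n+1}}=\fBJS_v$ to transfer the identity from $\Ifpf_{n+2}$ back to $\Ifpf_n$. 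Everything else is a routine transcription of the proof of Theorem~\ref{inv-pipe-schubert-thm}.
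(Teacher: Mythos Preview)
Your proposal is correct and follows essentially the same approach as the paper's proof: downward induction on $\ell(z)$, using the fpf transition identity from \cite{HMP3} for the inductive step, the Wyser--Yong product formula for the base case $z=n\cdots 321$, and Theorem~\ref{thm:fpf-dominant-transition} to match the $\fBJS$ side. Your write-up is in fact more explicit than the paper's about the stabilization issue (that $\fPhi(z,j)$ lands in $\Ifpf_{n+2}$ and one must invoke $\iSfpf_{vs_{n+1}}=\iSfpf_v$ and $\FP(vs_{n+1})=\FP(v)$ to stay within $\Ifpf_n$), which the paper handles only implicitly.
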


\begin{proof}
It follows from \cite[Thm. 4.17]{HMP3} that 
if $(j,i) \in \ltriangneq_n$ is an outer corner of $z \in \Ifpf_n$
then
\be\label{ftransition-eq}  (x_i+x_j) \iSfpf_z = \sum_{u \in \fPhi(z,j)} \iSfpf_u.\ee
Moreover, as $n$ is even, \cite[Thm. 1.3]{HMP1} implies that we have 
\be\label{fpf-rev-eq}
\iSfpf_{n\cdots 321} = \prod_{1\leq i < j \leq n-i} (x_i + x_j).\ee
Let $\fBJS_z = \sum_{D \in \FP(z)} \prod_{(i,j) \in D} (x_i+x_j)$.
If $\ell(z) = \max\left\{\ell(y): y \in \Ifpf_n\right\}$ then
$z=n\cdots 321$
and the identity
$\iSfpf_z = \fBJS_z$
is equivalent to \eqref{fpf-rev-eq}.
Otherwise, the transpose of $\shdomneq{z}$ shifted up one row is a proper subset of 
$\shdomneq{n\cdots 321}^{\uparrow T}=\SD_{(n-2,n-4,\dots,4,2)}$ by Corollary~\ref{fpf-dom-subset-cor},
so $z$ has an outer corner 
$(j,i) \in \ltriangneq_n$ with $i+j \leq n$.
In this case, 
$(x_i+x_j) \iSfpf_z = \sum_{u \in \fPhi(z,j)} \iSfpf_u= \sum_{u \in \fPhi(z,j)} \fBJS_u = (x_i+x_j) \fBJS_z$
by \eqref{ftransition-eq},
induction,
and Theorem~\ref{thm:fpf-dominant-transition}.
Dividing by $x_i+x_j$ completes the proof.
\end{proof}

\begin{ex}
Continuing Example~\ref{fpf-dom-ex}, we have 
\[
 \iSfpf_{532614} =(x_2 + x_1)(x_3+ x_1)(x_4+x_1) \quand
\iSfpf_{456123} =(x_2 + x_1)(x_3 + x_1)(x_3 + x_2), 
\]
so 
$\iSfpf_{351624} = \frac{1}{x_3 + x_1}\left( \iSfpf_{532614}+\iSfpf_{456123}\right) = (x_2  + x_1)(x_1 + x_2 + x_3 + x_4)$.
\end{ex}

\section{Generating pipe dreams} 
\label{sec:generating-pipe-dreams}

Bergeron and Billey \cite{bergeron-billey} proved
that the set $\RP(w)$ 
 is generated by applying simple transformations to 
a unique ``bottom'' pipe dream. Here, we derive versions of this result for
the sets of
involution pipe dreams $\IP(y)$ and $\FP(z)$.
This leads to algorithms for computing the sets $\IP(y)$ and $\FP(z)$ 
that are much more efficient than the 
naive methods suggested by our original definitions.

\subsection{Ladder moves}

Let $D$ and $E$ be subsets of $\PP\times \PP$,
depicted as positions marked by ``$+$'' in a matrix.
If
$E$ is obtained from $D$ by replacing a subset of the form
\[ 
\arraycolsep=1.5pt
 \barr{cc}
 \cdot & \cdot \\
   + & + \\
   \vdots & \vdots \\
   + & + \\
   + & \cdot
   \earr
\qquad\text{by}\qquad
\arraycolsep=1.5pt
 \barr{cc}
 \cdot & + \\
   + & + \\
   \vdots & \vdots \\
   + & + \\
   \cdot & \cdot
   \earr \]
   then we say that 
$E$ is obtained from $D$ by a \emph{ladder move} and write $D \lessdot_\RP E$.
More formally:
\begin{defn}
We write $D \lessdot_\RP E$ if 
for some
integers $i< j$ and $k$
the following holds:
\begin{itemize}
\item  One has $\{i+1,i+2,\dots,j-1\}\times\{k,k+1\} \subset D$.
\item It holds that $(j,k) \in D$ but $(i,k),(i,k+1),(j,k+1)\notin D$.
\item  One has $E = D\setminus \{(j,k)\} \cup \{(i,k+1)\}$.
\end{itemize}
One can have $i+1=j$ in this definition, in which case the first condition holds vacuously. 
Let $<_\RP$ be the transitive closure of $\lessdot_\RP$. 
This relation is a strict partial order. 
Let $\sim_\RP$ denote the symmetric closure of the partial order $\leq_\RP$.
\end{defn}

%


Recall the definition of the bottom pipe dream $\Dbot(w)$ from 
\eqref{eq:dbot}.

\begin{thm}[{\cite[Thm. 3.7]{bergeron-billey}}]\label{bb-thm}
Let $w \in S_n$. Then \[\RP(w)=
\left\{
E   : \Dbot(w) \leq_\RP E
\right\}
=
\left\{
E  : \Dbot(w) \sim_\RP E
\right\}
.\]
Thus $\RP(w)$ is an upper and lower set of $\leq_\RP$, with unique minimum $\Dbot(w)$.
\end{thm}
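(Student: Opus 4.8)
The plan is to prove the two inclusions $\RP(w)\subseteq\{E:\Dbot(w)\leq_\RP E\}$ and $\{E:\Dbot(w)\sim_\RP E\}\subseteq\RP(w)$. Since $\{E:\Dbot(w)\leq_\RP E\}\subseteq\{E:\Dbot(w)\sim_\RP E\}$ always holds, these two inclusions force all three sets to coincide, and the final sentence is then immediate: $\Dbot(w)$ lies in the common set and is $\leq_\RP$-below every element of it. Two preliminary facts are needed.

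\emph{(1) $\Dbot(w)\in\RP(w)$, and $\Dbot(w)$ is the only left-justified reduced pipe dream for $w$.} By Theorem~\ref{pipe-thm} it suffices to show that the standard reading word of $\Dbot(w)$, namely the ``canonical'' word $(c_1,c_1-1,\dots,1,\ c_2+1,c_2,\dots,2,\ c_3+2,\dots,3,\ \dots)$ attached to the Lehmer code $c(w)=(c_1,c_2,\dots)$, is a reduced word for $w$. I would do this by induction on $\ell(w)$: peeling off the first block/row identifies the remaining word with the canonical word of $\phi(g)$, where $g=(s_{c_1}\cdots s_1)^{-1}w$ fixes $1$ (so acts on $\{2,3,\dots\}$), $\phi$ re-indexes $s_i\mapsto s_{i-1}$, and $c(\phi(g))=(c_2,c_3,\dots)$; since $\ell(w)=c_1+\ell(g)$, the word is reduced. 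Running this computation in reverse shows that any left-justified $D\in\RP(w)$ has row-length vector exactly $c(w)$, hence equals $\Dbot(w)$.

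\emph{(2) A single ladder move, in either direction, preserves membership in $\RP(w)$.} If $D\in\RP(w)$ and $E$ differs from $D$ by one ladder move, then $D$ and $E$ agree outside a two-column window, and a local inspection of the wiring diagram shows that the pipes enter and leave that window along the same strands; hence $E$ determines the same permutation $w$, and since $|E|=|D|=\ell(w)$ the pipe dream $E$ is automatically reduced. (Equivalently, one chooses a reading order making the affected cells consecutive and realizes the move by commutations and a single braid relation.) This is routine but must be checked with care.

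The heart of the argument is: \emph{if $D\in\RP(w)$ is not left-justified, then an inverse ladder move is available at $D$}. Among all cells $(a,b)\in D$ with $(a,b-1)\notin D$, choose one with $a$ maximal and then $b$ minimal, and write $(i,k+1)=(a,b)$, so $(i,k)\notin D$ and every row of $D$ below row $i$ is left-justified. Let $j>i$ be minimal with $(j,k)\notin D$ or $(j,k+1)\notin D$, so rows $i+1,\dots,j-1$ are full in columns $k,k+1$. If $(j,k)\notin D$ and $(j,k+1)\notin D$, the inverse ladder move sending $(i,k+1)$ to $(j,k)$ applies (and lands in $\RP(w)$ by (2)). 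The case $(j,k+1)\in D$, $(j,k)\notin D$ contradicts maximality of $i$. The remaining case $(j,k)\in D$, $(j,k+1)\notin D$ is impossible: the pipe running up column $k$ and turning right at row $i$, and the pipe running down column $k+1$ and turning left at row $j$, would cross both at $(i,k+1)$ and at $(j,k)$, contradicting reducedness of $D$ -- this double-crossing step is the one place reducedness is genuinely used, and is the main obstacle. Given this, for any $D\in\RP(w)$ we repeatedly apply inverse ladder moves: each stays in $\RP(w)$ by (2), and the monovariant $\Phi(D)=\sum_{(a,b)\in D}b$ strictly decreases, so after finitely many steps we reach a left-justified $D'\in\RP(w)$, which equals $\Dbot(w)$ by (1); hence $\Dbot(w)\leq_\RP D$, proving the first inclusion. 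The second inclusion follows because $\Dbot(w)\in\RP(w)$ by (1) and, by (2), the entire $\sim_\RP$-class of $\Dbot(w)$ lies in $\RP(w)$.
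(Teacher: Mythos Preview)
The paper does not give a proof of this theorem; it is quoted with a citation to \cite{bergeron-billey} and invoked as a known result. Your argument is essentially the proof from \cite{bergeron-billey}, and it is correct apart from one small imprecision. When you select $(a,b)\in D$ with $(a,b-1)\notin D$, you must also require $b\geq 2$: otherwise every nonempty row contributes the pair $(a,1)$ (since $(a,0)\notin D$ trivially), so ``$a$ maximal'' just picks the largest occupied row of $D$, and then ``$b$ minimal'' can yield $b=1$, $k=0$, at which point the ladder-move window in columns $k,k+1$ makes no sense. With the restriction $b\geq 2$, the existence of such a cell is exactly the hypothesis that $D$ is not left-justified, the claim ``every row below row $i$ is left-justified'' follows from the maximality of $a$, and your case analysis---in particular the double-crossing argument ruling out Case~3---goes through as written.
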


Define $\leq^\textsf{chute}_\RP$ to be the partial order with $D \leq^\textsf{chute}_\RP E$ if and only if 
$E^T \leq_\RP D^T$, and let  $\Dtop(w) = \Dbot(w^{-1})^T$ for $w \in S_n$.
Then 
 $\RP(w)=
\left\{
E   :   E \leq^\textsf{chute}_\RP \Dtop(w)
\right\}
$
by Corollary~\ref{transpose-cor} and Theorem~\ref{bb-thm}. Bergeron and Billey 
\cite{bergeron-billey} refer to the covering relation in $\leq^\textsf{chute}_\RP$ as a \emph{chute move}.
In the next sections, we will see that there are natural versions of  
$\leq_\RP$ and $\Dbot(w)$ for (fpf-)involution pipe dreams. There do not seem to be good involution analogues of  
$\leq^{\textsf{chute}}_\RP$ or $\Dtop(w)$, however.

\subsection{Involution ladder moves}
\label{ss:inv-ladder}

To prove an analogue of Theorem~\ref{bb-thm} for involution pipe dreams,
we need to introduce a more general partial order $<_\IP$ on subsets of $\PP\times \PP$.
Again let $D$ and $E$ be subsets of $\PP\times \PP$.
Informally, we define $<_\IP$ 
to be the transitive closure of $\lessdot_\RP$ and the relation that has
$D \lessdot_\IP E$ whenever $E$ is obtained from $D$ by replacing a subset of the form
\begin{equation}
    \def\arraystretch{0.8}
\label{eq:inv-ladder}
\arraycolsep=1.5pt
 \begin{array}{cccccc}
  & & \nearrow & \nearrow & \nearrow & \nearrow \\
   & \cdot & \cdot & \cdot & \cdot\\
 \cdot & \cdot  & \cdot & \cdot\\
   + & \cdot  & \cdot\\
   + & + \\
   \vdots & \vdots \\
   + & +   \\
   + & \cdot & {\phantom{+}} &  {\phantom +} &  {\phantom+}
   \end{array}
\qquad\text{by}\qquad 
\arraycolsep=1.5pt
 \begin{array}{cccccc}
 & & \nearrow & \nearrow & \nearrow & \nearrow \\
   & \cdot & \cdot & \cdot & \cdot\\
 \cdot & \cdot  & \cdot & \cdot\\
   + & +  & \cdot\\
   + & + \\
   \vdots & \vdots \\
   + & +   \\
   \cdot & \cdot & {\phantom+} &  {\phantom+} &  {\phantom+}
   \end{array}
\end{equation}
where the upper parts of the antidiagonals with $\nearrow$ are required to be empty.
For example,  
\[
\arraycolsep=1.5pt
 \begin{array}{cccccc}
 \cdot & \cdot  & \cdot & \cdot\\
   + & \cdot  & \cdot & +\\
   + & \cdot & \cdot& \cdot\\
   \cdot& + & \cdot & \cdot 
   \end{array}
\ \lessdot_\IP\ 
\arraycolsep=1.5pt
 \begin{array}{cccccc}
 \cdot & \cdot  & \cdot & \cdot\\
   + & +  & \cdot & +\\
   \cdot & \cdot & \cdot& \cdot\\
   \cdot & + & \cdot & \cdot 
   \end{array}
   \qquand
\arraycolsep=1.5pt
 \begin{array}{cccccc}
 \cdot & \cdot  & \cdot & \cdot\\
   + & \cdot  & \cdot & +\\
   + & + & \cdot& \cdot\\
   + & \cdot & \cdot & \cdot 
   \end{array}
\ \lessdot_\IP\ 
\arraycolsep=1.5pt
 \begin{array}{cccccc}
 \cdot & \cdot  & \cdot & \cdot\\
   + & +  & \cdot & +\\
   + & + & \cdot& \cdot\\
   \cdot & \cdot & \cdot & \cdot 
   \end{array}
\]
but  
\[
   \arraycolsep=1.5pt
 \begin{array}{cccccc}
 \cdot & \cdot  & + & \cdot\\
   + & \cdot  & \cdot & \cdot \\
   + & + & \cdot& \cdot\\
   + & \cdot & \cdot & \cdot 
   \end{array}
\ \not<_\IP\ 
\arraycolsep=1.5pt
 \begin{array}{cccccc}
 \cdot & \cdot  & + & \cdot\\
   + & +  & \cdot & \cdot\\
   + & + & \cdot& \cdot\\
   \cdot & \cdot & \cdot & \cdot 
   \end{array}
   \qquand
   \arraycolsep=1.5pt
 \begin{array}{cccccc}
 \cdot & \cdot  & \cdot & +\\
   + & \cdot  & \cdot & \cdot \\
   + & + & \cdot& \cdot\\
   + & \cdot & \cdot & \cdot 
   \end{array}
\ \not<_\IP\ 
\arraycolsep=1.5pt
 \begin{array}{cccccc}
 \cdot & \cdot  & \cdot & +\\
   + & +  & \cdot & \cdot\\
   + & + & \cdot& \cdot\\
   \cdot & \cdot & \cdot & \cdot 
   \end{array}
   \]
   since the relevant antidiagonals in \eqref{eq:inv-ladder} are not empty.
The precise definition of $\lessdot_{\IP} $ is below:

\begin{defn}\label{i-ladder-def}
We write $ D \lessdot_{\IP} E$ if for some
integers $i< j$ and $k$ the following holds:
\begin{itemize}
\item  One has $\{i+1,i+2,\dots,j-1\}\times\{k,k+1\} \subset D$.
\item It holds that $(i,k),(j,k) \in D$ but $(i,k+1),(i,k+2),(j,k+1)\notin D$.
\item  One has $E = D\setminus \{(j,k)\} \cup \{(i,k+1)\}$.
\item The set $D$ contains no positions strictly northeast of and in the same antidiagonal as $(i,k-1)$, $(i,k)$, $(i,k+1)$, or $(i,k+2)$.
\end{itemize}
One may again have $i+1=j$, in which case the first condition holds vacuously.
We define $<_\IP$ to be the transitive closure of $\lessdot_\RP$ and $\lessdot_{\IP}$,
and write $\sim_\IP$ for the symmetric closure of 
$\leq_\IP$.
\end{defn}

Our goal is to show that $<_\IP$ defines a partial order on $\IP(z)$;
for an example of this poset, see Figure~\ref{ip-fig}.
To proceed, we must recall a few nontrivial properties of the set $\A(z)$ from Section~\ref{schub-sect}.

\begin{lem}[{\cite[Lem. 6.3]{HMP2}}]
\label{l:321-atoms}
Let $z \in \I_n$ and $w \in \A(z)$.
Then no subword $w(a)w(b)w(c)$ of $w(1)w(2)\cdots w(n)$ for $1\leq a<b<c\leq n$ has the form $(i-1)i(i+1)$
for any integer $1<i<n$.
\end{lem}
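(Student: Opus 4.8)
The plan is to prove Lemma~\ref{l:321-atoms} by connecting atoms of $z$ to involution words and exploiting the structure of the Demazure product. Recall that $w \in \A(z)$ means $w$ is a minimal-length permutation with $z = w^{-1} \circ w$, and that any reduced word for $w$ is then an involution word for $z$. The key algebraic fact I would use is the characterization of atoms from \cite{HMP2}: a permutation $w$ is an atom of some involution if and only if $w^{-1}$ avoids a certain pattern; more usefully here, the one-line notation of an atom has a restricted descent structure. I would first recall (or re-derive from the definition $z = w^{-1}\circ w$) the explicit description of $\A(z)$: writing $z$ in terms of its cycles $(a_1 < b_1), (a_2 < b_2), \dots$ together with fixed points, each atom $w$ is obtained by choosing, for each $2$-cycle, an ordering of its two entries and interleaving, subject to the minimal-length condition. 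Concretely, \cite[\S6]{HMP2} shows that $w(1)w(2)\cdots w(n)$ arises by reading off the pairs $\{a_i, b_i\}$ and singletons in a prescribed way so that each $2$-cycle $(a,b)$ contributes the value $b$ in position $a$ (or a related ``first available'' slot).

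First I would set up the contradiction: suppose $w \in \A(z)$ and there exist positions $a < b < c$ with $w(a) = i-1$, $w(b) = i$, $w(c) = i+1$ for some $1 < i < n$. Next I would use the combinatorial model for atoms to locate where the values $i-1$, $i$, $i+1$ can appear in the one-line notation of an atom. The crucial structural constraint is that in an atom, consecutive values $i-1, i, i+1$ cannot appear as a left-to-right increasing subsequence, because this would reflect a ``wasted'' configuration that contradicts minimality of $\ell(w)$ among permutations with $w^{-1}\circ w = z$: intuitively, if $i-1, i, i+1$ occur increasingly, one could multiply $w$ on the right by the transposition swapping the positions of $i-1$ and $i+1$ (or use a braid-type reduction on the corresponding involution word) to get a shorter permutation $w'$ with $(w')^{-1}\circ w' = z$. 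I would make this precise by invoking the exchange/deletion property for involution words already cited in the paper (used in the proof of Theorem~\ref{thm:almost-symmetric}, via \cite[Lem. 3.4]{hultman-twisted-involutions}): an increasing run $i-1, i, i+1$ in $w$ corresponds to a subword of an involution word for $z$ that can be shortened, contradicting that $w$ has minimal length in $\A(z)$.

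The cleanest route, which I would actually carry out, is via the transition between $\A(z)$ and reduced words together with the known inductive description of atoms. By \cite[Lem. 6.3]{HMP2} (which is precisely the statement being quoted, so in the published version this is just a citation), or by re-deriving it: one fixes a reduced word $a_1 \cdots a_\ell$ for $w$, hence an involution word for $z$, and notes that the pattern $w(a)w(b)w(c) = (i-1)\,i\,(i+1)$ forces the pipe dream / wiring-diagram picture to contain three pipes that could be ``uncrossed'' in a way incompatible with $z = z^{-1}$. The main obstacle is that the statement really is an external input about the fine structure of $\A(z)$; within the excerpt the honest proof is to cite \cite[Lem. 6.3]{HMP2}, and any self-contained argument requires unpacking the explicit parametrization of atoms (the ``minimal atom'' $\alpha^{\min}$ and the action of generating atoms by certain transpositions) from that reference. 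So I would structure the proof as: (1) recall the parametrization of $\A(z)$ from \cite{HMP2}; (2) observe that in this parametrization every atom's one-line notation, restricted to the positions of any three consecutive values $i-1, i, i+1$, is never left-to-right increasing, because the values $i-1$ and $i+1$ always straddle $i$ in the ``two entries of a $2$-cycle'' slots; (3) conclude. The subtle case to watch is when one or more of $i-1, i, i+1$ is a fixed point of $z$ versus lying in a $2$-cycle — I would check each of the (at most four) cases separately, but each reduces to the same observation that fixed points of $z$ appear in increasing order among themselves while $2$-cycle entries are forced into a decreasing-then-increasing shape that blocks a length-three increasing run on three consecutive values.
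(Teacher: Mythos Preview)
The paper does not prove this lemma at all: it is stated with the citation \cite[Lem.~6.3]{HMP2} and used as a black box. So there is nothing to compare your argument against in this paper.

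That said, your proposal is not a proof but a collection of heuristics, and the one concrete mechanism you offer has a genuine gap. You suggest that if $w(a)w(b)w(c) = (i{-}1)\,i\,(i{+}1)$ with $a<b<c$, then one can ``multiply $w$ on the right by the transposition swapping the positions of $i{-}1$ and $i{+}1$'' to produce a shorter $w'$ with $(w')^{-1}\circ w' = z$. The length does drop, but you give no reason why the Demazure square $(w')^{-1}\circ w'$ should still equal $z$; in general it will not. The exchange/deletion property you invoke from \cite{hultman-twisted-involutions} governs how involution words shorten, not how arbitrary right-multiplications by transpositions interact with $w^{-1}\circ w$. Your fallback plan---unpack the explicit parametrization of $\A(z)$ from \cite{HMP2} and do a case analysis on whether $i{-}1,i,i{+}1$ are fixed points or lie in $2$-cycles---is the right idea and is essentially how the result is proved in \cite{HMP2}, but what you have written (``fixed points of $z$ appear in increasing order among themselves while $2$-cycle entries are forced into a decreasing-then-increasing shape'') is too vague to count as a proof and is not literally correct as stated. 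If you want a self-contained argument, you need to actually carry out that case analysis using the description of atoms via $\alpha_{\min}(z)$ and the covering relation $\prec_\A$ recalled in Theorem~\ref{t:atoms}.
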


Fix $z \in \I_n$.
The \emph{involution code} of  $z $ is $\ic(z) = (\ic_1(z),\ic_2(z),\dots,\ic_n(z))$
with $\ic_i(z)$ the number of integers $j>i$ with $z(i) > z(j)$ and $i\geq z(j)$.
 Note that we always have $\ic_i(z) \leq i$.

Suppose $a_1<a_2<\dots<a_l$ are the integers $a\in [n]$ with $a \leq z(a)$
and  set $b_i = z(a_i)$.
Define $\alpha_{\min}(z) \in S_n$ to be the permutation
whose inverse is given in one-line notation by removing all repeated letters from
$b_1 a_1 b_2 a_2\cdots b_l a_l$.
For example, if $z = 4231 \in \I_4$ then the latter word is $412233$ and 
$\alpha_{\min}(z) = (4123)^{-1} = 2341 \in S_4$.
Additionally, $\ic(z) = c(\alpha_{\min}(z))$~\cite[Lem. 3.8]{HMP1}.

Finally, let $\prec_\A$ be the transitive closure of the relation on $S_n$ that has $v \prec_\A w$ whenever the inverses of
$v,w \in S_n$ have the same one-line representations outside of three consecutive positions where 
$v^{-1} = \cdots cab\cdots $ and $ w^{-1}=\cdots bca\cdots$
for some integers $a<b<c$.
The relation $\prec_\A$ is a strict partial order.
Let $\sim_\A$ denote the symmetric closure of the partial order $\preceq_\A$.

\begin{thm}[{\cite[\S6.1]{HMP2}}]
\label{t:atoms} Let $z \in \I_n$. Then
\[
\A(z) = \{w \in S_n: \alpha_{\min}(z) \preceq_\A w\} = \{w \in S_n: \alpha_{\min}(z) \sim_\A w\}.
\]
Thus $\A(z)$ is an upper and lower set of $\preceq_\A$, with unique minimum $\alpha_{\min}(z)$.
\end{thm}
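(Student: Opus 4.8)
The plan is to prove the two displayed equalities by establishing three facts: that $\alpha_{\min}(z)$ is itself an atom; that $\A(z)$ is closed under the elementary moves generating $\sim_\A$, so that the whole $\sim_\A$-class of $\alpha_{\min}(z)$ lies in $\A(z)$; and that conversely every atom is $\sim_\A$-connected to $\alpha_{\min}(z)$. Granting these, $\{w:\alpha_{\min}(z)\preceq_\A w\}\subseteq\{w:\alpha_{\min}(z)\sim_\A w\}\subseteq\A(z)$ and $\A(z)\subseteq\{w:\alpha_{\min}(z)\preceq_\A w\}$, forcing all three sets to coincide; the closure fact also shows $\A(z)$ is simultaneously an up-set and down-set for $\preceq_\A$ on $S_n$, and since every down move flows toward $\alpha_{\min}(z)$ it is the unique minimum.

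\emph{Step 1: $\alpha_{\min}(z)\in\A(z)$.} Using the explicit one-line form of $\alpha_{\min}(z)^{-1}$ — the word $b_1a_1b_2a_2\cdots b_la_l$ with repeated letters deleted, where $a_1<\cdots<a_l$ are the weak exceedances of $z$ and $b_i=z(a_i)$ — I would read off a reduced word and push it through the Demazure product using the exchange/lifting property, verifying directly that the resulting element of \eqref{iii-eq} is $z$ (each two-cycle $(a_i,b_i)$ of $z$ is produced in turn). Thus $\alpha_{\min}(z)^{-1}\circ\alpha_{\min}(z)=z$. Minimality of its length, hence atom-hood, follows from $\ellhat(z)=\sum_i\ic_i(z)=\sum_i c_i(\alpha_{\min}(z))=\ell(\alpha_{\min}(z))$, using the cited identity $\ic(z)=c(\alpha_{\min}(z))$.

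\emph{Step 2: closure under $\sim_\A$.} Suppose $w\in\A(z)$ and $v\sim_\A w$ by a single move, say $w^{-1}=\cdots bca\cdots$ and $v^{-1}=\cdots cab\cdots$ in three consecutive positions with $a<b<c$. That $\ell(v)=\ell(w)$ is elementary: the values $\{a,b,c\}$ occupy the same three positions, so inversions between these and all other positions are unaffected, while $bca$ and $cab$ each contribute exactly two inversions internally. The substantive point is $v^{-1}\circ v=w^{-1}\circ w$. I would prove this by picking a reduced word $\rho=(c_1,\dots,c_l)$ for $w$ adapted to the three active positions, forming the palindromic word $(c_l,\dots,c_2,c_1,c_2,\dots,c_l)$ whose $0$-Hecke product is exactly $w^{-1}\circ w$, and checking that the move corresponds to a braid relation inside this word; since the $0$-Hecke monoid satisfies the braid relations, the product is unchanged. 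This is the main computational lemma.

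\emph{Step 3: connectivity.} This is the step I expect to be the principal obstacle. I would show that $\alpha_{\min}(z)^{-1}$ is the lexicographically largest one-line word among $\{w^{-1}:w\in\A(z)\}$, and that any atom $w\ne\alpha_{\min}(z)$ admits a ``down'' move $bca\to cab$ in three consecutive positions of $w^{-1}$; such a move strictly increases $w^{-1}$ lexicographically and, by Step 2, produces another atom. Finiteness of $\A(z)$ then forces iteration to terminate at the lex-maximal atom, which admits no down move. The remaining work is to show that ``no down move'' — i.e.\ $w^{-1}(p+2)>w^{-1}(p)$ at every ascent $p$ of $w^{-1}$ — together with $w\in\A(z)$ (via the Demazure identity of Step 1, and Lemma~\ref{l:321-atoms} to control the relevant patterns in $w^{-1}(1)\cdots w^{-1}(n)$), pins $w$ down uniquely as $\alpha_{\min}(z)$; this uniqueness-of-source argument, reconciling the pattern condition with the recursive description of $\alpha_{\min}(z)^{-1}$, is the delicate part.
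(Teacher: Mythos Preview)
The paper does not prove this theorem: it is quoted from \cite[\S6.1]{HMP2} and stated without proof here, so there is no in-paper argument to compare your proposal against. Your three-step outline (membership of $\alpha_{\min}(z)$, closure of $\A(z)$ under the elementary $\sim_\A$ moves, and connectivity via a lexicographic descent to $\alpha_{\min}(z)$) is broadly the shape of the argument in the cited reference, so you are not going in a wrong direction.

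That said, one point in Step~2 deserves a caution. The elementary move $w^{-1}=\cdots bca\cdots \leftrightarrow v^{-1}=\cdots cab\cdots$ is a rearrangement of three consecutive values in the \emph{one-line notation of the inverse}, not a Coxeter braid move on a reduced word of $w$. Translating this into a manipulation of the palindromic Hecke word for $w^{-1}\circ w$ requires more than ``a braid relation inside this word'': one has to choose the reduced word for $w$ so that the three relevant simple reflections sit in a specific local pattern, and then the invariance of $w^{-1}\circ w$ comes from a short sequence of $0$-Hecke relations (including the idempotent relation $s_i\circ s_i=s_i$), not a single braid move. This is exactly the content of the ``involution braid relations'' in \cite{HMP2}; your sketch underestimates the bookkeeping. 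Step~3 is, as you anticipate, the part with real combinatorial content, and the characterization in Lemma~\ref{l:321-atoms} together with the explicit form of $\alpha_{\min}(z)^{-1}$ is indeed what drives it.
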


For $z \in \I_n$, let
$\IP^+(z) = \bigsqcup_{w \in \A(z)} \RP(w) $,
so that $\IP(z) = \{D \in \IP^+(z) : D \subseteq \ltriang_n\}$.

\begin{lem}\label{inv-ladder-lem}
Let $z \in \I_n$. Suppose $D$ and $E$ are subsets of $\PP\times \PP$
with $D <_{\IP} E $. Then $D \in \IP^+(z)$ if and only if $E \in \IP^+(z)$. 
\end{lem}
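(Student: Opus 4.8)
\textbf{Plan for the proof of Lemma~\ref{inv-ladder-lem}.}
Since $<_\IP$ is the transitive closure of $\lessdot_\RP$ and $\lessdot_\IP$, it suffices to treat a single covering step, i.e.\ to prove: if $D \lessdot_\RP E$ or $D \lessdot_\IP E$, then $D \in \IP^+(z)$ iff $E \in \IP^+(z)$. The case $D \lessdot_\RP E$ is easy: an ordinary ladder move preserves the permutation of a reduced pipe dream, so by Theorem~\ref{bb-thm} (or directly by Theorem~\ref{pipe-thm}, since the reading word changes only by a commutation inside a reduced word) $D \in \RP(w) \Leftrightarrow E \in \RP(w)$ for each $w \in S_n$; hence $D \in \IP^+(z) \Leftrightarrow E \in \IP^+(z)$ since $\IP^+(z) = \bigsqcup_{w\in\A(z)}\RP(w)$.

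The substance is the case $D \lessdot_\IP E$. Here I would first read off from Definition~\ref{i-ladder-def} exactly how the reading word changes. Using the unimodal-diagonal reading order (Definition~\ref{unimodal-def}), the cell $(j,k)$ of $D$ contributes the letter $a := j+k-1$ and the cell $(i,k)$ contributes $a' := i+k-1 < a$; removing $(j,k)$ and adjoining $(i,k+1)$ replaces $a$ by $a+1$ in one position of the word. The ``empty antidiagonal'' hypotheses (no cell of $D$ strictly northeast in the same antidiagonal as $(i,k-1),(i,k),(i,k+1),(i,k+2)$) together with the column of $+$'s between rows $i$ and $j$ guarantee that, in any reading word of $D$, the letters $a,a-1,a'=a',\dots$ appearing from that configuration sit in a controlled local pattern. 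The key combinatorial claim to isolate and prove is: writing a reading word of $D$ as $u\,(a)\,v$ where $(a)$ is the letter from $(j,k)$, the subword $u$ already contains the letters $i+k-1,\,i+k,\,\dots,\,j+k-2$ (from the column of $+$'s and the cell $(i,k)$) while $v$ contains none of $i+k-1,\dots,j+k$, and the whole word has no letter $\ge a+1$ occurring to the left of position $a$. Granting this, the reading word of $E$ is obtained from that of $D$ by the local replacement $\cdots\,s_{i+k}\cdots s_{j+k-2}\,s_{j+k-1}\,\cdots \ \leadsto\ \cdots\,s_{i+k}\cdots s_{j+k-2}\,s_{j+k-1}\,s_{j+k}?$—more precisely a single application of a braid-type move $s_{a-1}s_a s_{a-1}\!\cdots \to s_a s_{a-1}s_a\!\cdots$ stabilized by the column of crossings—so the two reading words represent one-line-notation permutations differing by exactly the ``$cab \to bca$'' swap of Theorem~\ref{t:atoms}. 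Hence $D \in \RP(v)$ and $E \in \RP(w)$ with $v^{-1} = \cdots cab\cdots$, $w^{-1} = \cdots bca\cdots$, so $v \sim_\A w$ and by Theorem~\ref{t:atoms} $v \in \A(z) \Leftrightarrow w \in \A(z)$, giving $D \in \IP^+(z) \Leftrightarrow E \in \IP^+(z)$. (One should also check the length bookkeeping: $|E| = |D|$, and the reading words remain reduced, which is where Lemma~\ref{l:321-atoms}, forbidding consecutive patterns $(i-1)i(i+1)$, is used to rule out the degenerate case that the move would force a non-reduced word.)

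Concretely the steps are: (1) reduce to a single cover; (2) dispatch $\lessdot_\RP$ via Theorem~\ref{bb-thm}/Theorem~\ref{pipe-thm}; (3) for $\lessdot_\IP$, choose the unimodal-diagonal reading order and compute $\udiag(D)$ and $\udiag(E)$ explicitly in terms of the local configuration \eqref{eq:inv-ladder}, using the emptiness hypotheses to pin down where the affected letters sit; (4) identify the change $\udiag(D)\to\udiag(E)$ with a single $\prec_\A$-move on inverse one-line notations (equivalently a commutation composed with a braid move inside a reduced word), verifying reducedness via Lemma~\ref{l:321-atoms}; (5) conclude with Theorem~\ref{t:atoms} that membership of the underlying permutation in $\A(z)$ is preserved, hence $D\in\IP^+(z)\Leftrightarrow E\in\IP^+(z)$.

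The main obstacle is step (3)–(4): carefully verifying that the four emptiness conditions in Definition~\ref{i-ladder-def} are exactly what is needed so that the reading-word change is a \emph{single} $cab\leftrightarrow bca$ move on the inverse one-line notation and not something worse (e.g.\ that no stray crossing northeast on the relevant antidiagonals could turn the local braid move into a longer rearrangement, or could make the resulting word non-reduced). I expect this to require a somewhat delicate but finite case analysis of the pipes passing through the $2$-column strip, of the kind already used in the proof of Theorem~\ref{thm:almost-symmetric}; the heaps-of-pieces viewpoint behind Lemma~\ref{commutation-lem} should make the bookkeeping manageable.
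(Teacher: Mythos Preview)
Your overall architecture is correct and matches the paper: reduce to a single cover, dispatch $\lessdot_\RP$ via Theorem~\ref{bb-thm}, and for $\lessdot_\IP$ show that $D\in\RP(v)$ and $E\in\RP(w)$ with $v\prec_\A w$, then invoke Theorem~\ref{t:atoms}. The target conclusion that $v^{-1}=\cdots cab\cdots$ and $w^{-1}=\cdots bca\cdots$ at three consecutive positions is exactly what the paper establishes.

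However, your step (4) contains a genuine conceptual confusion. The passage from $D$ to $E$ is \emph{not} a braid move or any Coxeter relation applied to a reduced word: the cell $(j,k)$ contributes the letter $j+k-1$ while $(i,k+1)$ contributes $i+k$, and these are different unless $j=i+1$. The reading words $\word(D,\omega)$ and $\word(E,\omega)$ are reduced words for \emph{different} permutations $v\neq w$, not two expressions for the same permutation. So there is nothing to gain by trying to realize the change as ``a commutation composed with a braid move inside a reduced word''; you must verify $v\prec_\A w$ by some other means. Relatedly, Lemma~\ref{l:321-atoms} plays no role here --- reducedness of $E$ (and of $D$) is not in doubt and is checked directly; that lemma is used later, in the proof of Theorem~\ref{pre-t:inv-ladder}, not in this lemma.

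The paper's mechanism for verifying $v\prec_\A w$ is quite different from your reading-word analysis and considerably cleaner. It uses the column-by-column reading order (down column $n$, then column $n-1$, etc.) and first reduces WLOG to the case where columns $1,\dots,k-1$ are empty, since deleting those columns truncates the same suffix from both reading words. Then, rather than manipulating words, it traces three specific pipes in the wiring diagram of $E$: the ones lying in the antidiagonals through $(i,k)$, $(i,k+1)$, and $(i+1,k+1)$. The empty-antidiagonal hypotheses force these pipes to have consecutive top labels $m,m+1,m+2$; following them through the $+$'s in the strip shows they exit column $k$ (and hence, by the column reduction, reach the left boundary) in relative order $m+2,m,m+1$, giving $w^{-1}(m)w^{-1}(m{+}1)w^{-1}(m{+}2)=bca$. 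Replacing the $+$ at $(i,k+1)$ by one at $(j,k)$ re-routes exactly these three pipes without creating a double crossing, so $D$ is reduced for some $v$ with $v^{-1}(m)v^{-1}(m{+}1)v^{-1}(m{+}2)=cab$. Your plan would eventually need an argument equivalent to this pipe-tracing; the wiring-diagram viewpoint gets there with much less bookkeeping than a word-level case analysis.
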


\begin{proof}
If $D \lessdot_\RP E$ then 
we have
$D \in \IP^+(z)$ if and only if $E \in \IP^+(z)$
by Theorem~\ref{bb-thm}.
Assume $D \lessdot_{\IP}E$ and 
let $i<j$ and $k$ be as in Definition~\ref{i-ladder-def}.

Consider the reading order $\omega$ that lists the positions $(i,j)\in [n]\times [n]$ such that $(-j,i)$ increases lexicographically, i.e., the order that goes down column $n$, then down column $n-1$, and so on. In view of Theorem~\ref{thm:almost-symmetric},
we may assume without loss of generality that columns $1,2,\dots,k-1$
of $D$ and $E$ are both empty, since omitting these positions has the effect of truncating the same final sequence of letters from $\word(D,\omega)$ and $\word(E,\omega)$.

Suppose  $E \in \RP(w)\subseteq \IP^+(z)$  for some permutation $w \in \A(z)$.
To show that $D \in \IP^+(z)$, it  
 suffices by Theorem~\ref{t:atoms}
to check that 
$D \in \RP(v)$ for a permutation $v \prec_\A w$.

Consider the wiring diagram of $E$ and let $m,m+1$ and $m+2$ be the top  indices of the wires in the antidiagonals containing
the cells
$
(i,k),$ $(i,k+1)$, and $(i+1,k+1)
$,
respectively.
Since the northeast parts of these antidiagonals are empty,
it follows that as one goes from northeast to southwest,
wire $m$ of $E$ enters the top of the $+$ in cell $(i,k)$,
wire $m+1$ enters the top of the $+$ in cell $(i,k+1)$, and 
wire $m+2$ enters the right of the $+$ in cell $(i,k+1)$.
Tracing these wires through the wiring diagram of $E$, we see that they exit column $k$ on the left in relative order $m+2,m,m+1$.
Since we assume columns $1,2,\dots,k-1$ are empty,
the wires must arrive at the far left in the same relative order.
This means that there are numbers $a<b<c$ such that $w^{-1}(m)w^{-1}(m+1)w^{-1}(m+2)=bca$.

Moving the $+$ in cell $(i,k+1)$ of $E$ to $(j,k)$ gives $D$ by assumption.
This transformation only alters the trajectories of wires $m$, $m+1$ and $m+2$
and causes no pair of wires to cross more than once, so $D$ is a
reduced pipe dream for some $v \in S_n$.
By examining the wiring diagram of $D$, we see that $v^{-1}(m)v^{-1}(m+1)v^{-1}(m+2)=cab$,
so $v \prec_\A w$ and $D \in \IP^+(z)$ as needed.
The same considerations show that if $D \in \RP(v)$ for some $v \in \A(z)$ then  $E \in \RP(w)$
for a permutation $w$ with $v \prec_\A w$. In this case, it follows that $w \in \A(z)$ by Theorem~\ref{t:atoms}
so $E \in \IP^+(z)$.  
\end{proof}

We define the \emph{bottom involution pipe dream} of $z \in \I_n$ to be the set
\be
\iDbot(z) = \{(i,j) \in [n] \times [n]: j \leq \ic_i(z)\} \subseteq \ltriang_n.
\ee
Since  $\ic(z) = c(\alpha_{\min}(z))$, it follows 
by Theorem~\ref{thm:almost-symmetric} that $\iDbot(z)   = \Dbot(\alpha_{\min}(z))\in \IP(z).$

\begin{thm}\label{pre-t:inv-ladder}
Let $z \in \I_n$. Then
$
\IP^+(z) = \left\{E : \iDbot(z) \leq_\IP E\right\}
= \left\{E  : \iDbot(z) \sim_\IP E\right\}. 
$
Thus $\IP^+(z)$ is an upper and lower set of $\leq_\IP$, with unique minimum $\iDbot(z)$.
\end{thm}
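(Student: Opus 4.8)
I would establish the theorem by combining Lemma~\ref{inv-ladder-lem} with the structural description of $\A(z)$ in Theorem~\ref{t:atoms} and the ordinary ladder-move theorem (Theorem~\ref{bb-thm}), using the already-noted identity $\iDbot(z) = \Dbot(\alpha_{\min}(z))$. Since $\leq_\IP \subseteq \sim_\IP$, the three sets satisfy $\{E : \iDbot(z) \leq_\IP E\} \subseteq \{E : \iDbot(z) \sim_\IP E\}$, and by Lemma~\ref{inv-ladder-lem} any $E$ with $\iDbot(z) \sim_\IP E$ lies in $\IP^+(z)$ (since $\iDbot(z) \in \IP(z) \subseteq \IP^+(z)$, and $\sim_\IP$-equivalence preserves membership). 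So the real content is the reverse inclusion: every $E \in \IP^+(z)$ satisfies $\iDbot(z) \leq_\IP E$. Once that is proven, all three sets coincide, and the ``upper and lower set'' assertion is then immediate: $\IP^+(z)$ being exactly the $\sim_\IP$-class of $\iDbot(z)$ forces it to be closed upward and downward under $\leq_\IP$, with $\iDbot(z)$ the unique minimum because everything above it under $\leq_\IP$ lies in the class and $\iDbot(z)$ itself is $\leq_\IP$-below everything in the class.

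**The key inclusion.** To show $E \in \IP^+(z) \implies \iDbot(z) \leq_\IP E$, fix $w \in \A(z)$ with $E \in \RP(w)$. By Theorem~\ref{bb-thm}, $\Dbot(w) \leq_\RP E$, hence $\Dbot(w) \leq_\IP E$. So it suffices to show $\iDbot(z) \leq_\IP \Dbot(w)$ for every atom $w \in \A(z)$. I would prove this by induction along the poset $(\A(z), \preceq_\A)$: Theorem~\ref{t:atoms} says $\A(z)$ has unique minimum $\alpha_{\min}(z)$, and $\iDbot(z) = \Dbot(\alpha_{\min}(z))$ handles the base case trivially ($\leq_\IP$ is reflexive). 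For the inductive step, I take a covering $v \prec_\A w$ in the atom poset --- where $v^{-1} = \cdots cab\cdots$ and $w^{-1} = \cdots bca\cdots$ differ in three consecutive positions with $a<b<c$ --- and show that $\Dbot(v) \leq_\IP \Dbot(w)$. Granting this, if $\iDbot(z) \leq_\IP \Dbot(v)$ by induction then $\iDbot(z) \leq_\IP \Dbot(w)$ by transitivity, and since every atom is reached from $\alpha_{\min}(z)$ by a chain of such coverings (Theorem~\ref{t:atoms}), we are done.

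**The main obstacle.** The crux, and the step I expect to require the most care, is precisely the claim that a covering $v \prec_\A w$ induces $\Dbot(v) \leq_\IP \Dbot(w)$ --- i.e., translating the three-letter move on one-line notations of inverses into a sequence of ordinary ladder moves $\lessdot_\RP$ together with (at least one) involution ladder move $\lessdot_\IP$. Here one must compute the codes $c(v)$ and $c(w)$ explicitly: the swap $cab \rightsquigarrow bca$ in positions, say, $p, p+1, p+2$ of the inverse word changes the Rothe diagram in a controlled way, altering $c_p, c_{p+1}, c_{p+2}$ (one column's worth of cells migrates), and I must verify that $\Dbot(w)$ is obtained from $\Dbot(v)$ by the cell-motion pattern in \eqref{eq:inv-ladder} --- a single cell moving from the bottom of a left column to a top position one column right, with the intervening cells present and the requisite antidiagonal emptiness. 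The antidiagonal-emptiness condition in Definition~\ref{i-ladder-def} is exactly what Lemma~\ref{l:321-atoms} (no $(i-1)i(i+1)$ pattern in atoms) is designed to guarantee, so I would invoke that lemma to certify the last bullet of Definition~\ref{i-ladder-def}. Some bookkeeping is needed when the three affected columns of the codes are not adjacent or when intermediate ordinary ladder moves are required to first ``left-justify'' a staircase of cells before the involution move can fire; I would package this as: the move $v \prec_\A w$ can be realized as $\Dbot(v) \lessdot_\RP^* \cdot \lessdot_\IP \cdot \lessdot_\RP^* \Dbot(w)$ (a few chute/ladder moves, then one involution ladder move, then a few more), each step justified by a direct code computation. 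This is the only genuinely technical point; everything else is formal manipulation of the three characterizations plus Lemma~\ref{inv-ladder-lem}.
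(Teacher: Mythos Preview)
Your overall reduction is the same as the paper's: use Lemma~\ref{inv-ladder-lem} for one containment, use Theorem~\ref{bb-thm} to reduce the other containment to bottom pipe dreams $\Dbot(w)$ of atoms, and then exploit the structure of $(\A(z),\preceq_\A)$ from Theorem~\ref{t:atoms}. The paper, however, organizes the core step as a \emph{descent} rather than an upward induction: instead of proving $\Dbot(v)\leq_\IP\Dbot(w)$ for every covering $v\prec_\A w$, it shows directly that any $\Dbot(w)\neq\iDbot(z)$ admits a single $\lessdot_\IP$-predecessor $D$ (which lies in $\IP^+(z)$ by Lemma~\ref{inv-ladder-lem}), then iterates by finiteness. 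This is less work, because it only requires analyzing one carefully chosen $bca$ pattern in $w^{-1}$---namely the one that minimizes $i=w^{-1}(p{+}2)$---rather than an arbitrary one.

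Two points in your technical sketch need adjustment. First, the code entries that change under the $cab\to bca$ swap at positions $p,p{+}1,p{+}2$ of $w^{-1}$ are $c_a$ and $c_b$ (where $a=w^{-1}(p{+}2)$ and $b=w^{-1}(p)$), not $c_p,c_{p+1},c_{p+2}$; row $a$ of $\Dbot$ gains a cell and row $b$ loses one. Second, Lemma~\ref{l:321-atoms} by itself does not secure the antidiagonal-emptiness condition of Definition~\ref{i-ladder-def} for a generic covering: the paper combines it with the minimality of $i$ to deduce $w(h)<p$ for all $h<i$, and it is precisely this bound that forces rows above $i$ in $\Dbot(w)$ to be short enough. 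For an arbitrary $bca$ pattern those antidiagonals may be obstructed, and your proposed $\lessdot_\RP^*\cdot\lessdot_\IP\cdot\lessdot_\RP^*$ factorization would need real justification. Since your induction only needs \emph{one} covering below each $w$, the cleanest fix is simply to adopt the paper's minimal choice of pattern; with that choice a single $\lessdot_\IP$ step suffices and no extra ladder moves are needed.
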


\begin{proof}
Both sets are contained in $\IP^+(z)$ by Lemma~\ref{inv-ladder-lem}.
Note that $\IP^+(z)$ is finite since $\A(z)$ is finite and each set $\RP(w)$ is finite.
Suppose $ \iDbot(z) \neq E = \Dbot(w)$ for some $w \in \A(z)$.
In view of Theorem~\ref{bb-thm},
we need only show that 
there exists a subset $D\subset \PP\times \PP$ with $D\lessdot_\IP E$.

As we assume $w \neq \alpha_{\min}(z)$, 
it follows from Theorem~\ref{t:atoms} 
that there exists some $p \in [n-2]$ with $w^{-1}(p{+}2) < w^{-1}(p) < w^{-1}(p{+}1)$.
Set $i = w^{-1}(p{+}2)$, and choose $p$ to minimize $i$.
We claim that if $h < i$ then $w(h) < p$.
To show this, we argue by contradiction.
Suppose there exists $1\leq h < i$ with $w(h) \geq p$.
Choose $h$ with this property so that $w(h)$ is as small as possible.
Then $w(h) > p+2 \geq 3$, and by the minimality of $w(h)$, the values $w(h){-}1$ and $w(h){-}2$ appear after position $h$ in the word $w(1)w(2)\cdots w(n)$. 
Therefore, by Lemma~\ref{l:321-atoms}, the one-line representation of $w$ must have the form
$\cdots w(h) \cdots w(h){-}2\cdots w(h){-}1\cdots$.
This contradicts the minimality of $i$, so no such $h$ can exist. 

Let $j > i$ be minimal with $w(j) < w(i)$ and define 
$k = c_i(w) - 1$. It is evident from the definition of $i$ that such an index $j$ exists and that $k$ is positive.
Now consider Definition~\ref{i-ladder-def} applied to these values of $i<j$ and $k$.
It follows from the claim in the previous paragraph if $h<i$ then $c_h(w) - c_i(w) \leq  i-h -3$.
Therefore, we see that the required antidiagonals are empty.
The minimality of $j$ implies that $c_m(w) \geq c_i(w)$ for all $i < m < j$,
and since we must have $j \leq w^{-1}(p)$,
it follows that  $c_j(w) < c_i(w) - 1$.
We conclude that replacing position $(i,k+1)$ in $E$ by $(j,k)$
produces a subset $D$ with $D \lessdot_\IP E$, as we needed to show.
\end{proof}

\begin{thm}
\label{t:inv-ladder}
If $z \in \I_n$ then 
$ \IP(z) = \left\{ E \subseteq \ltriang_n : \iDbot(z) \leq_\IP E \right\}
=
 \left\{ E \subseteq \ltriang_n : \iDbot(z) \sim_\IP E \right\}
.$
\end{thm}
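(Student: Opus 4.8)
The plan is to deduce Theorem~\ref{t:inv-ladder} from Theorem~\ref{pre-t:inv-ladder} by intersecting everything with $\ltriang_n$ and checking that this intersection is compatible with the relation $\leq_\IP$. Recall that $\IP(z) = \{D \in \IP^+(z) : D \subseteq \ltriang_n\}$ and that $\iDbot(z) \subseteq \ltriang_n$, so the containments $\left\{E \subseteq \ltriang_n : \iDbot(z) \leq_\IP E\right\} \subseteq \IP(z)$ and $\left\{E \subseteq \ltriang_n : \iDbot(z) \sim_\IP E\right\} \subseteq \IP(z)$ are immediate from Theorem~\ref{pre-t:inv-ladder}: any such $E$ lies in $\IP^+(z)$ and is contained in $\ltriang_n$. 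So the content is the reverse inclusion, namely that every $E \in \IP(z)$ satisfies $\iDbot(z) \leq_\IP E$.

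First I would take $E \in \IP(z)$, so $E \in \IP^+(z)$ and $E \subseteq \ltriang_n$. By Theorem~\ref{pre-t:inv-ladder} there is a chain $\iDbot(z) = D_0 \lessdot D_1 \lessdot \cdots \lessdot D_m = E$ where each step is either a $\lessdot_\RP$ move or a $\lessdot_\IP$ move. The key step is to argue that this chain can be chosen to stay inside $\ltriang_n$ — that is, every intermediate $D_t$ is contained in $\ltriang_n$. One route: examine the proof of Theorem~\ref{pre-t:inv-ladder} more carefully. That proof produces, for any $E = \Dbot(w)$ with $w \in \A(z)$ and $w \neq \alpha_{\min}(z)$, an explicit predecessor $D \lessdot_\IP E$; iterating this downward procedure gives a canonical chain from $\iDbot(z)$ to $\Dbot(w)$. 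But a general $E \in \RP(w)$ need not be of the form $\Dbot(w)$, so one must also descend within a fixed $\RP(w)$ using $\lessdot_\RP$ moves, which by Theorem~\ref{bb-thm} reaches $\Dbot(w)$. The crucial observation is that if $E \subseteq \ltriang_n$ then the $\lessdot_\RP$-predecessor $D$ of $E$ produced by Theorem~\ref{bb-thm}'s descent still lies in $\ltriang_n$: a ladder move replaces $(j,k)$ by $(i,k+1)$ with $i < j$, and since almost-symmetry (Theorem~\ref{thm:almost-symmetric}) constrains which cells of a pipe dream in $\RP(w)$ can lie strictly above the diagonal, one checks that moving a below-diagonal cell up along a ladder keeps it weakly below the diagonal — the receiving cell $(i,k+1)$ has $k+1 \le i$ because $E$'s cells in columns $\ge$ something near the diagonal are forced. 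The same must be verified for the $\lessdot_\IP$ descent step: the predecessor $D = E \setminus\{(i,k+1)\} \cup \{(j,k)\}$ replaces an above-or-on-diagonal-adjacent cell by a cell $(j,k)$ with $j > i$, which is even further below the diagonal, so $D \subseteq \ltriang_n$ automatically. Thus the entire descending chain stays in $\ltriang_n$, giving $\iDbot(z) \leq_\IP E$ with all intermediate terms in $\ltriang_n$, hence also $\iDbot(z) \sim_\IP E$.

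An alternative, perhaps cleaner, route avoids re-reading the previous proof: use Lemma~\ref{inv-ladder-lem} together with a direct induction on $|E| - |\iDbot(z)|$, or rather on a suitable statistic like the number of atoms above $\alpha_{\min}(z)$ plus the $\lessdot_\RP$-distance. Given $E \in \IP(z)$ with $E \neq \iDbot(z)$, one wants a single move $D \lessdot E$ with $D \subseteq \ltriang_n$; then $D \in \IP^+(z)$ by Lemma~\ref{inv-ladder-lem}, so $D \in \IP(z)$, and induction finishes. If $E \notin \{\iDbot(z)\}$ but $E = \Dbot(w)$ for the atom $w = \alpha_{\min}(z)$ is false, we invoke the $\lessdot_\IP$ construction from Theorem~\ref{pre-t:inv-ladder}; otherwise $w = \alpha_{\min}(z)$ but $E \neq \Dbot(\alpha_{\min}(z)) = \iDbot(z)$, and we invoke a $\lessdot_\RP$ move from Theorem~\ref{bb-thm}. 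In both cases the predecessor's new cell $(j,k)$ or $(i,k+1)$ must be shown to lie in $\ltriang_n$; the $\lessdot_\IP$ case is free since $(j,k)$ moves down, and the $\lessdot_\RP$ case needs the almost-symmetry bound on the positions of $E$.

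The main obstacle is exactly this last point: verifying that a $\lessdot_\RP$ descent step starting from $E \subseteq \ltriang_n$ lands back inside $\ltriang_n$. A priori a chute/ladder move on a general reduced pipe dream could push a cell across the diagonal. What saves us is that $E$, being an involution pipe dream, is the below-diagonal part $D' \cap \ltriang_n$ of an almost-symmetric $D' \in \RP(z)$, and almost-symmetry sharply limits which cells sit just above the diagonal — essentially only cells forced by the ``no two pipes cross twice'' condition, which cannot be the target of a ladder move of the relevant shape. I would phrase this as a short lemma ("if $E \subseteq \ltriang_n$ lies in $\IP^+(z)$ and $D \lessdot_\RP E$ is the descent move produced by Theorem~\ref{bb-thm}, then $D \subseteq \ltriang_n$"), prove it by a direct case analysis on the position of the moving cell relative to the diagonal using Lemma~\ref{l:321-atoms} and the structure of $\Dbot(\alpha_{\min}(z))$, and then the rest of the argument is bookkeeping.
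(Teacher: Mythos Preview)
Your proposal works much harder than necessary, and in the process introduces a confusion that would need fixing. The theorem follows from Theorem~\ref{pre-t:inv-ladder} in one line: since $\IP(z) = \{E \in \IP^+(z) : E \subseteq \ltriang_n\}$ and Theorem~\ref{pre-t:inv-ladder} already asserts $\IP^+(z) = \{E : \iDbot(z) \leq_\IP E\} = \{E : \iDbot(z) \sim_\IP E\}$, intersecting both descriptions with $\{E : E \subseteq \ltriang_n\}$ gives the claim directly. The relation $\leq_\IP$ is defined on \emph{all} finite subsets of $\PP\times\PP$, so the condition ``$\iDbot(z) \leq_\IP E$'' does not require any connecting chain to stay in $\ltriang_n$; hence your entire discussion of tracking chains is unnecessary for the literal statement, and the ``reverse inclusion'' you call the content is immediate from $\IP(z)\subseteq\IP^+(z)$.

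The paper does add the remark that $\{E : E \subseteq \ltriang_n\}$ is a lower set under $\leq_\IP$, which is the ``chains stay in $\ltriang_n$'' fact you were after. But this is trivial: in either covering relation one has $D = E \setminus\{(i,k+1)\} \cup \{(j,k)\}$ with $i<j$, and $(i,k+1) \in E \subseteq \ltriang_n$ forces $k+1 \leq i < j$, so $(j,k) \in \ltriang_n$. Note the direction: to show the predecessor $D$ lies in $\ltriang_n$, you must check that the \emph{new} cell $(j,k)$ is weakly below the diagonal, not the cell $(i,k+1)$ (which is in $E$ and hence already below the diagonal by hypothesis). Your paragraph on the $\lessdot_\RP$ case argues about $(i,k+1)$ and ``moving up'', which is the wrong direction, and the appeals to almost-symmetry and Lemma~\ref{l:321-atoms} are red herrings.
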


\begin{proof}
This is clear from Theorem~\ref{pre-t:inv-ladder} since 
$\ltriang_n$ is a lower set under $\leq_\IP$.
\end{proof}

\begin{figure}[h]
\begin{center}
{\small
\begin{tikzpicture}[xscale=1.2,yscale=1.2]
\node at (0,0) (a) {
$\arraycolsep=1.5pt
\begin{array}{ccccc}
\cdot &  {\phantom+} &  {\phantom+}  & {\phantom+}  & {\phantom+}  \\
\cdot & \cdot &    &    \\
+ & \cdot   &\cdot   \\
+ & +    &\cdot   &\cdot\\
+ &\cdot  &\cdot &\cdot&\cdot
\end{array}$
};

\node at (-4,3) (b1) {
$\arraycolsep=1.5pt
\begin{array}{ccccc}
\cdot &  {\phantom+} &  {\phantom+}  & {\phantom+}  & {\phantom+}  \\
\cdot & + &    &    \\
\cdot & \cdot   &\cdot   \\
+ & +   &\cdot  & \cdot   \\
+ &\cdot  &\cdot & \cdot & \cdot 
\end{array}$
};

\node at (0,3) (b2) {
$\arraycolsep=1.5pt
\begin{array}{ccccc}
\cdot &  {\phantom+} &  {\phantom+}  & {\phantom+}  & {\phantom+}  \\
\cdot & \cdot &    &    \\
+ & \cdot   & +  \\
+ & \cdot   & \cdot & \cdot  \\
+ &\cdot  & \cdot& \cdot & \cdot 
\end{array}$
};

\node at (4,3) (b3) {
$\arraycolsep=1.5pt
\begin{array}{ccccc}
\cdot &  {\phantom+} &  {\phantom+}  & {\phantom+}  & {\phantom+}  \\
\cdot & \cdot  &  &    &    \\
+ & +  &\cdot    \\
+ & +      &\cdot& \cdot  \\
\cdot &\cdot  &\cdot & \cdot & \cdot 
\end{array}$
};

\node at (-4,6) (c1) {
$\arraycolsep=1.5pt
\begin{array}{ccccc}
\cdot &  {\phantom+} &  {\phantom+}  & {\phantom+}  & {\phantom+}  \\
\cdot & + &    &    \\
\cdot & +   &\cdot   \\
+ & +    &\cdot  & \cdot  \\
\cdot &\cdot  &\cdot & \cdot & \cdot 
\end{array}$
};

\node at (0,6) (c2) {
$\arraycolsep=1.5pt
\begin{array}{ccccc}
\cdot &  {\phantom+} &  {\phantom+}  & {\phantom+}  & {\phantom+}  \\
\cdot & + &    &    \\
\cdot & \cdot& +    \\
+ & \cdot & \cdot   & \cdot   \\
+ &\cdot  & \cdot& \cdot & \cdot 
\end{array}$
};

\node at (4,6) (c3) {
$\arraycolsep=1.5pt
\begin{array}{ccccc}
\cdot &  {\phantom+} &  {\phantom+}  & {\phantom+}  & {\phantom+}  \\
\cdot & \cdot &    &    \\
+ & +    & + \\
+ & \cdot    &\cdot& \cdot  \\
\cdot &\cdot  & \cdot& \cdot & \cdot 
\end{array}$
};

\node at (0,9) (d2) {
$\arraycolsep=1.5pt
\begin{array}{ccccc}
\cdot &  {\phantom+} &  {\phantom+}  & {\phantom+}  & {\phantom+}  \\
\cdot & + &    &    \\
\cdot & + & +    \\
\cdot & \cdot  & \cdot & \cdot     \\
+ &\cdot & \cdot & \cdot & \cdot 
\end{array}$
};

\node at (4,9) (d3) {
$\arraycolsep=1.5pt
\begin{array}{ccccc}
\cdot &  {\phantom+} &  {\phantom+}  & {\phantom+}  & {\phantom+}  \\
\cdot & + &    &    \\
+ & +    & + \\
\cdot & \cdot    &\cdot & \cdot \\
\cdot &\cdot  & \cdot& \cdot & \cdot 
\end{array}$
};

\node at (0,12) (e) {
$\arraycolsep=1.5pt
\begin{array}{ccccc}
\cdot &  {\phantom+} &  {\phantom+}  & {\phantom+}  & {\phantom+}  \\
\cdot & + &    &    \\
\cdot & + & +    \\
\cdot & +  & \cdot  & \cdot    \\
\cdot &\cdot & \cdot & \cdot & \cdot 
\end{array}$
};

\draw[->] (a) -- (b1);
\draw[->] (a) -- (b2);
\draw[red,->,dashed] (a) -- (b3);

\draw[->] (b2) -- (c2);
\draw[->] (b1) -- (c2);
\draw[->] (b1) -- (c1);
\draw[red,->,dashed] (b3) -- (c3);

\draw[->] (c2) -- (d2);
\draw[->] (c3) -- (d3);
\draw[->] (d2) -- (e);

\end{tikzpicture}}
\end{center}
\caption{Hasse diagram of $(\IP(z), <_\IP)$ for $z=(3,6)(4,5) \in \I_6$. The dashed red arrows indicate the covering relations of the form $D \lessdot_\IP E$.}
\label{ip-fig}
\end{figure}

\subsection{Fixed-point-free involution ladder moves}

In this subsection, we assume $n$ is a positive even integer.
Our goal is to replicate the results in Section~\ref{ss:inv-ladder} for fixed-point-free involutions.
To this end, 
we introduce a third partial order $<_\FP$.
Again let $D$ and $E$ be subsets of $\PP\times \PP$.
We define $<_\FP$ as the transitive closure of $\lessdot_\RP$ and 
the relation that has $D \lessdot_\FP E$ whenever $E$ is obtained from $D $
by replacing a subset of the form
\begin{equation}
\label{eq:fpf-ladder}
\arraycolsep=1.5pt
\def\arraystretch{0.8}
 \begin{array}{ccccccc}
 {\phantom+} & {\phantom+} & \nearrow & \nearrow & \nearrow & \nearrow & \nearrow \\
   {\phantom+} & \cdot & \cdot & \cdot &\cdot & \cdot \\
\cdot & \cdot & \cdot  & \cdot & \cdot\\
\cdot &   + & \cdot  & \cdot\\
  & + & + \\
  & \vdots & \vdots \\
 &  + & +   \\
  & + & \cdot &  {\phantom+} & {\phantom+} &  {\phantom+}&  {\phantom+}
      \end{array}
\qquad\text{by}\qquad
\arraycolsep=1.5pt
 \begin{array}{ccccccc}
 {\phantom+} & {\phantom+} & \nearrow & \nearrow & \nearrow & \nearrow & \nearrow \\
  {\phantom+}  & \cdot & \cdot & \cdot &\cdot & \cdot \\
\cdot & \cdot & \cdot  & \cdot & \cdot\\
+ &   + & \cdot  & \cdot\\
  & + & + \\
  & \vdots & \vdots \\
 &  + & +   \\
  & \cdot & \cdot &  {\phantom+} & {\phantom+} &  {\phantom+}&  {\phantom+}
      \end{array}
\end{equation}
Here, all positions containing ``$\ \cdot\ $'' should be empty, including 
the five antidiagonals extending upwards beyond each $\nearrow$.
For example,  
\[
\arraycolsep=1.5pt
 \begin{array}{cccccc}
 \cdot & \cdot & \cdot  & \cdot & \cdot\\
 \cdot &   + & \cdot  & \cdot & +\\
 + &   + & \cdot & +& \cdot\\
 \cdot &   \cdot& + & \cdot & \cdot 
   \end{array}
\ \lessdot_\FP\ 
\arraycolsep=1.5pt
 \begin{array}{cccccc}
 \cdot & \cdot & \cdot  & \cdot & \cdot\\
 + &   + & \cdot  & \cdot & +\\
 + &   \cdot & \cdot & +& \cdot\\
 \cdot &   \cdot & + & \cdot & \cdot 
   \end{array}
   \qquand
\arraycolsep=1.5pt
 \begin{array}{cccccc}
 \cdot & \cdot & \cdot  & \cdot & \cdot\\
 \cdot &   + & \cdot  & \cdot & +\\
 \cdot &   + & + & \cdot& \cdot\\
  + &  + & \cdot & + & \cdot 
   \end{array}
\ \lessdot_\FP\ 
\arraycolsep=1.5pt
 \begin{array}{cccccc}
 \cdot & \cdot & \cdot  & \cdot & \cdot\\
 + &   + & \cdot  & \cdot & +\\
 \cdot &   + & + & \cdot& \cdot\\
 + &   \cdot & \cdot & + & \cdot 
   \end{array}
\]
but  
\[
   \arraycolsep=1.5pt
 \begin{array}{cccccc}
\cdot &  \cdot & \cdot  & \cdot & +\\
 \cdot &   + & \cdot  & \cdot & \cdot\\
 \cdot &   + & \cdot & +& \cdot\\
 \cdot &   \cdot& + & \cdot & \cdot 
   \end{array}
\ \not<_\FP\ 
\arraycolsep=1.5pt
 \begin{array}{cccccc}
 \cdot & \cdot & \cdot  & \cdot & +\\
 + &   + & \cdot  & \cdot & \cdot \\
 \cdot &   \cdot & \cdot & +& \cdot\\
 \cdot &   \cdot & + & \cdot & \cdot 
   \end{array}
      \qquand
\arraycolsep=1.5pt
 \begin{array}{cccccc}
 + & \cdot & \cdot  & \cdot & \cdot\\
 \cdot &   + & \cdot  & \cdot & +\\
 \cdot &   + & + & \cdot& \cdot\\
  \cdot &  + & \cdot & + & \cdot 
   \end{array}
\ \not<_\FP\ 
\arraycolsep=1.5pt
 \begin{array}{cccccc}
 + & \cdot & \cdot  & \cdot & \cdot\\
 + &   + & \cdot  & \cdot & +\\
 \cdot &   + & + & \cdot& \cdot\\
 \cdot &   \cdot & \cdot & + & \cdot 
   \end{array}
   \]
   since the relevant antidiagonals in \eqref{eq:fpf-ladder} are not empty.
The precise definition of $\lessdot_\FP$ is as follows:

\begin{defn}
\label{fpf-ladder-def}
We write $ D \lessdot_{\FP} E$ if for some
integers  $0<i<j$ and $k\geq 2$ the following holds:
\begin{itemize}
\item  One has $\{i+1,i+2,\dots,j-1\}\times\{k,k+1\} \subset D$.
\item It holds that $(i,k),(j,k) \in D$ but $(i,k-1),(i,k+1),(i,k+2),(j,k+1)\notin D$.
\item  One has $E = D\setminus \{(j,k)\} \cup \{(i,k-1)\}$.
\item The set $D$ contains no positions strictly northeast of and in the same antidiagonal as $(i,k-2)$, $(i,k-1)$, $(i,k)$, $(i,k+1)$, or $(i,k+2)$.
\end{itemize}
When $i+1=j$, the first condition holds vacuously; see the lower dashed arrow in Figure~\ref{fp-fig}.
Define $<_\FP$ to be the transitive closure of $\lessdot_\RP$ and $\lessdot_{\FP}$.
Write $\sim_\FP$ for the symmetric closure of 
$\leq_\FP$.
\end{defn}

We will soon show that $<_\FP$ defines a partial order on $\FP(z)$, as one can see in the example
shown in Figure~\ref{fp-fig}.
For this, we will need a lemma from \cite{can-joyce-wyser} concerning the set $\Afpf(z)$.

\begin{lem}[{\cite[Cor. 2.16]{can-joyce-wyser}}]
\label{l:fpf-atoms}
Let $w \in S_{n}$ and $z \in \Ifpf_{n}$.
Then $w \in \Afpf(z)$ if and only if for all $a,b,c,d \in [n]$
with $a<b=z(a)$ and $c<d=z(c)$, the following holds:
\begin{enumerate}
\item[(1)] One has $w(a) = 2i-1$ and $w(b) = 2i$ for some $i \in [n/2]$.
\item[(2)] If $a<c$ and $b < d$, then $w(b) < w(c)$.
\end{enumerate}

\end{lem}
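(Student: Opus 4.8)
The plan is to realize $\Afpf(z)$ as the set of minimal-length elements of
$V(z) := \{w \in S_n : w z w^{-1} = \idfpf_n\}$ (which is the same as $\{w: z = w^{-1}\idfpf_n w\}$), and then to compute $\ell(w)$ explicitly for every $w \in V(z)$ and read off which $w$ minimize it. First I would note that $w \in V(z)$ precisely when $w$ carries each $2$-cycle $\{a,b\}$ of $z$ onto some block $\{2i-1,2i\}$ of $\idfpf_n$; thus each $w \in V(z)$ amounts to a bijection $\beta_w$ from the $2$-cycles of $z$ to $\{1,\dots,n/2\}$, together with, for each $2$-cycle $\{a<b\}$ with $\beta_w(\{a,b\}) = i$, a choice of orientation — either $(w(a),w(b)) = (2i-1,2i)$ ("ascending") or $(w(a),w(b)) = (2i,2i-1)$ ("descending"). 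Condition (1) is exactly the requirement that every $2$-cycle be ascending.

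Next I would split $\ell(w)$, the number of inversions of $w$, into intra-cycle inversions (pairs of positions lying in a common $2$-cycle of $z$) and inter-cycle inversions. The intra-cycle part equals the number of descending $2$-cycles, so it vanishes iff (1) holds. For the inter-cycle part, fix two distinct $2$-cycles $C = \{a<b\}$ and $C' = \{c<d\}$ with $\beta_w$-values $i$ and $j$; if $i<j$ then every value $w$ assigns on $C$ is smaller than every value it assigns on $C'$, so the number of inversions between the positions of $C$ and those of $C'$ equals $\#\{(x,y) : x \in \{a,b\},\ y \in \{c,d\},\ x > y\}$. The crucial point is that this quantity does not depend on the two orientations: it sees only the relative position of the two arcs and which of them received the smaller $\beta_w$-value. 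A short case check over the six ways two disjoint arcs can be interleaved then shows that, for a separated pair the count is $0$ if the arc with the smaller left endpoint got the smaller $\beta_w$-value and $4$ otherwise; for a crossing pair it is $1$ in the former case and $3$ in the latter; and for a nested pair it is always $2$.

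Summing, I would obtain $\ell(w) \ge \mathrm{cr}(z) + 2\,\mathrm{ne}(z)$ for every $w \in V(z)$, where $\mathrm{cr}(z)$ and $\mathrm{ne}(z)$ count crossing and nesting pairs of arcs of $z$, with equality precisely when every $2$-cycle is ascending and, for each non-nested pair of arcs, the arc with the smaller left endpoint gets the smaller $\beta_w$-value. It then remains to recognize these two equality conditions as (1) and (2): the first is (1) by construction, and, assuming (1), for arcs $\{a<b\}$ and $\{c<d\}$ one has $w(b) = 2i$ and $w(c) = 2j-1$, so $w(b) < w(c) \iff i < j$; moreover the hypothesis "$a<c$ and $b<d$" describes exactly the non-nested pairs listed with $a$ before $c$, so imposing $w(b)<w(c)$ there is exactly condition (2). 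Finally I would exhibit an element of $V(z)$ satisfying (1) and (2) — list the arcs of $z$ in increasing order of left endpoint and send the $m$-th of them to block $\{2m-1,2m\}$, ascendingly — which shows the lower bound is attained, so $\Afpf(z)$ is precisely the set of $w \in V(z)$ satisfying (1) and (2).

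I expect the main obstacle to be the two-arc case analysis: one must verify carefully that the inter-cycle inversion count really is insensitive to the intra-block orientations, and then keep straight the six arc configurations together with the two possible block orderings in each, matching the minimizing choices against conditions (1) and (2). Once that bookkeeping is done, the remaining steps (the reduction to minimal-length elements of $V(z)$, the orientation observation, and the construction of a minimizer) are routine.
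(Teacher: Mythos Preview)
Your argument is correct. The decomposition of $\ell(w)$ into intra-cycle and inter-cycle inversions is sound; in particular, the key observation that the inter-cycle count between two arcs depends only on their $\beta_w$-values (not on the orientations) follows exactly as you say, since when $\beta_w(C)<\beta_w(C')$ every value on $C$ is below every value on $C'$. Your six-case tabulation checks out, the identification of the equality conditions with (1) and (2) is accurate (the hypothesis ``$a<c$ and $b<d$'' picks out precisely the non-nested ordered pairs), and your explicit minimizer---sorting arcs by left endpoint---shows the bound is attained and simultaneously verifies the lower bound is sharp for every pair.

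As for comparison with the paper: there is nothing to compare. The paper does not prove this lemma; it simply quotes it as \cite[Cor.~2.16]{can-joyce-wyser}. Your argument is therefore a self-contained substitute for that citation. It is essentially the same idea one finds in the literature (reduce to analyzing inversions pair-of-arcs by pair-of-arcs), carried out cleanly. If you want to tighten the write-up, the only place that deserves a sentence of extra care is the passage from ``the minimum over each pair is attained'' to ``the global minimum is attained'': you handle this by exhibiting $\betamin(z)$, but it is worth saying explicitly that for nested pairs both choices of ordering give the same contribution, so the constraint imposed by non-nested pairs (smaller left endpoint $\Rightarrow$ smaller $\beta_w$-value) is internally consistent---which is exactly why sorting by left endpoint works.
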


The involution code and partial order $\prec_\A$ both have fixed-point-free versions.
Fix $z \in \Ifpf_n$. 
The \emph{fpf-involution code} of $z$ is the integer sequence 
\[
\icfpf(z) = (\icfpf_1(z), \icfpf_2(z), \dots, \icfpf_n(z))
\] where $\icfpf_i(z)$ is the number of integers $j > i$ with $z(i) > z(j)$ and $i > z(j)$.
It always holds that $\icfpf_i(z) < i$.
If $a_1<a_2<\dots <a_{n/2}$ are the numbers $a\in [n]$ with $a< z(a)$ and $b_i = z(a_i)$,
then let
\[ \betamin(z) = (a_1b_1a_2b_2\dots a_{n/2} b_{n/2})^{-1} 
= s_1  s_3 s_5\cdots s_{n-1}  \alpha_{\min}(z) 
\in S_n.\]
For example, if $z = 632541 \in \Ifpf_6$ then 
we
have
$\betamin(z) = (162345)^{-1} = 134562 \in S_6$.
One can check that $\icfpf(z) = c(\betamin(z))$ \cite[Lem. 3.8]{HMP1}.

Define $\prec_{\Afpf}$ to be the transitive closure of the relation in $S_n$
that has
$v \prec_{\Afpf} w$ whenever the inverses of $v,w \in S_n$ have the same one-line representations outside of four consecutive positions where
$v^{-1} = \cdots adbc \cdots$ and $w^{-1} = \cdots bcad \cdots$ for some integers $a<b<c<d$.
This is a strict partial order on $S_n$.
Let $\sim_{\Afpf}$ denote the symmetric closure of the partial order $\preceq_{\cAfpf}$.

\begin{thm}[{\cite[\S6.2]{HMP2}}]
\label{t:fpf-order}
Let $z \in \Ifpf_n$. Then
\[
\Afpf(z) = \left\{w \in S_n: \betamin(z) \preceq_{\Afpf} w \right\}
=
 \left\{w \in S_n: \betamin(z) \sim_{\Afpf} w \right\}
.
\]
Thus $\Afpf(z)$ is an upper and lower set of $\preceq_{\Afpf}$, with unique minimum $\betamin(z)$.
\end{thm}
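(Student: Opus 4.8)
The plan is to imitate the proof of the ordinary-atom statement in Theorem~\ref{t:atoms}, reducing everything to three claims: (i) $\betamin(z)\in\Afpf(z)$; (ii) $\Afpf(z)$ is stable under the generating moves of $\sim_{\Afpf}$ in both directions, meaning that whenever $v\prec_{\Afpf}w$ is one of these moves and at least one of $v,w$ lies in $\Afpf(z)$, then so does the other; and (iii) if $w\in\Afpf(z)$ and $w\neq\betamin(z)$, then $w^{-1}$ has four consecutive entries equal to $b,c,a,d$ for some $a<b<c<d$ (so that some $v\prec_{\Afpf}w$ exists), and any such $v$ lies in $\Afpf(z)$. Granting these, the theorem is formal: a generating move strictly lowers the one-line word of the inverse permutation in lexicographic order (its first affected position drops from $b$ to $a$ with $a<b$, while earlier positions are unchanged), so since $\Afpf(z)\subseteq S_n$ is finite, iterating (iii) from any $w\in\Afpf(z)$ must terminate, and by the contrapositive of (iii) it can only terminate at $\betamin(z)$; the resulting chain witnesses $\betamin(z)\preceq_{\Afpf}w$. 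This gives $\Afpf(z)\subseteq\{w:\betamin(z)\preceq_{\Afpf}w\}$, and the reverse inclusions follow from (i), (ii), and the trivial containment $\{w:\betamin(z)\preceq_{\Afpf}w\}\subseteq\{w:\betamin(z)\sim_{\Afpf}w\}$; the ``upper and lower set'' assertion is then immediate from (ii).

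For (i) I would apply Lemma~\ref{l:fpf-atoms} to $\betamin(z)$ directly. Writing $a_1<a_2<\cdots$ for the openers of $z$ and $b_i=z(a_i)$ for the closers, one has $\betamin(z)(a_i)=2i-1$ and $\betamin(z)(b_i)=2i$ by definition, so clause (1) holds with the pair $(a_i,b_i)$ occupying the block $\{2i-1,2i\}$; and if $(a_i,b_i)$ and $(a_j,b_j)$ satisfy $a_i<a_j$ and $b_i<b_j$, then $a_i<a_j$ forces $i<j$, hence $\betamin(z)(b_i)=2i\leq 2j-2<2j-1=\betamin(z)(a_j)$, which is clause (2).

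The substance is in (ii) and (iii). First I would record that if $w^{-1}$ has entries $b,c,a,d$ in consecutive positions $p,p+1,p+2,p+3$ (equivalently $w(b)=p$, $w(c)=p+1$, $w(a)=p+2$, $w(d)=p+3$), then the permutation $v$ obtained by reordering these to $a,d,b,c$ is $v=w\cdot(a,b)(c,d)$, i.e., $v$ swaps the values of $w$ in positions $a,b$ and in positions $c,d$. Now suppose $w\in\Afpf(z)$. Clause (1) of Lemma~\ref{l:fpf-atoms} forces $p$ to be odd: were $p$ even, $w(a)=p+2$ would be even, making $a$ a closer whose matching opener (carrying the odd value $p+1$) would sit at position $c>a$, which is impossible. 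With $p$ odd one reads off from clause (1) that $z$ pairs $a$ with $d$ and $b$ with $c$ inside the window, with openers $a,b$ carrying the odd values $p+2,p$ and closers $d,c$ the even values $p+3,p+1$; after the move $v$ assigns $a,b,c,d$ the values $p,p+2,p+3,p+1$, and one checks that clauses (1) and (2) persist — clause (1) by inspection, clause (2) because the four altered values stay inside the interval $\{p,\dots,p+3\}$, so comparisons against positions outside the window are unaffected, while inside the window the hypotheses of clause (2) are never met. Running the same bookkeeping in reverse treats an upward move out of an atom. For (iii), the plan is to show that $\betamin(z)$ is the unique atom whose inverse word avoids four consecutive entries in the shape $b,c,a,d$: clause (1) forces every atom $w$ to list, in the odd-indexed positions of $w^{-1}$, the openers of the $z$-pairs in some order $\pi$, with the matching closer in the following even-indexed position; such a forbidden window can then only start at an odd position, and only when $\pi$ has a descent $\pi(i)>\pi(i+1)$ whose corresponding closers are increasing; if no such descent exists, applying clause (2) to the pairs indexed $\pi(i+1)<\pi(i)$ yields a contradiction unless $\pi=\mathrm{id}$, i.e., $w=\betamin(z)$. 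Reversing such a window is a downward move, which stays in $\Afpf(z)$ by (ii).

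The main obstacle will be (ii)–(iii): confirming that the four-entry moves preserve both clauses of Lemma~\ref{l:fpf-atoms}, and isolating $\betamin(z)$ as the unique atom with no downward move. This is a finite but delicate analysis, organized by the parity of $p$ and by how $a<b<c<d$ sit among the blocks $\{2i-1,2i\}$ and among the openers and closers of $z$; the three-letter analogue for ordinary atoms (Theorem~\ref{t:atoms}, via Lemma~\ref{l:321-atoms}) is the guide, the fixed-point-free case being heavier because the moves involve four letters rather than three.
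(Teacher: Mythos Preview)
The paper does not actually prove Theorem~\ref{t:fpf-order}: it is quoted from \cite[\S6.2]{HMP2} and used as a black box, so there is no ``paper's own proof'' to compare against. Your proposal is therefore a self-contained argument, and it is essentially correct.

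A few small comments on the execution. Your check of (i) via Lemma~\ref{l:fpf-atoms} is fine. For (ii), the parity argument forcing $p$ odd and hence $z(a)=d$, $z(b)=c$ is right; when you say ``comparisons against positions outside the window are unaffected,'' this is true but deserves one more line: the point is that any outside pair $(e,f)$ has $w(e),w(f)\notin\{p,p{+}1,p{+}2,p{+}3\}$, and combining this with the parity constraints from clause~(1) (e.g.\ $w(e)$ odd and $w(e)\neq p{+}2$ forces $w(e)\geq p{+}4$ once $w(e)>p{+}1$) is what makes each inequality survive the swap in both directions. Your argument for (iii) is clean: clause~(1) forces $w^{-1}$ to list opener--closer pairs in consecutive odd--even positions, a descent $\pi(i)>\pi(i{+}1)$ in the sequence of openers must have $b_{\pi(i)}<b_{\pi(i+1)}$ (else clause~(2) applied to the two pairs gives $2(i{+}1)<2i{-}1$), and this yields the required $bcad$ window starting at the odd position $2i{-}1$. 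The termination via lexicographic decrease of $w^{-1}$ is standard. So modulo spelling out the outside-pair case analysis in (ii), the proposal is complete.
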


For $z \in \Ifpf_n$, let $\FP^+(z) = \bigsqcup_{w\in\Afpf(z)} \RP(w)$,
so $\FP(z) = \FP^+(z) \cap \ltriangneq_n$.

\begin{lem}\label{fpf-ladder-lem}
Let $z \in \Ifpf_n$. Suppose $D$ and $E$ are subsets of $\PP\times \PP$
with $D <_{\FP} E $. Then $D \in \FP^+(z)$ if and only if $E \in \FP^+(z)$. 
\end{lem}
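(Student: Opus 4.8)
The plan is to adapt the proof of Lemma~\ref{inv-ladder-lem} essentially verbatim, with Theorem~\ref{t:fpf-order} playing the role of Theorem~\ref{t:atoms}. Since $\lFP$ is the transitive closure of $\ldRP$ and $\ldFP$, it suffices to treat a single covering relation of each type. If $D \ldRP E$, then by Theorem~\ref{bb-thm} each $\RP(w)$ is both an upper and a lower set of $\leqRP$, so $D \in \RP(w) \iff E \in \RP(w)$ for every $w \in S_n$; since $\FP^+(z) = \bigsqcup_{w \in \Afpf(z)} \RP(w)$, it follows that $D \in \FP^+(z) \iff E \in \FP^+(z)$.

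So assume $D \ldFP E$, and fix integers $0<i<j$ and $k\geq 2$ as in Definition~\ref{fpf-ladder-def}, so that $E = D \setminus \{(j,k)\} \cup \{(i,k-1)\}$. As in the proof of Lemma~\ref{inv-ladder-lem}, use the reading order $\omega$ that lists the cells of $[n]\times[n]$ so that $(-j,i)$ increases lexicographically (going down column $n$, then down column $n-1$, and so on); since the move alters only columns $k-1$ and $k$, we may assume as there that columns $1,2,\dots,k-2$ of $D$ and $E$ are empty, as omitting them truncates a common final segment from $\word(D,\omega)$ and $\word(E,\omega)$, in view of Theorem~\ref{thm:fpf-almost-symmetric}. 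Suppose $E \in \RP(w) \subseteq \FP^+(z)$ with $w \in \Afpf(z)$; by Theorem~\ref{t:fpf-order} it is enough to produce $v \prec_{\Afpf} w$ with $D \in \RP(v)$. The emptiness conditions of Definition~\ref{fpf-ladder-def} force the wiring diagram of $E$ to have, running from northeast to southwest, exactly four affected wires with consecutive top endpoints $m,m+1,m+2,m+3$ passing through the cells $(i,k-1),(i,k),(i+1,k),(i+1,k+1)$ in a controlled way; tracing them down through the strip $\{i+1,\dots,j-1\}\times\{k,k+1\}$ and out past column $k-1$, and using that the columns to the left are empty, one finds integers $a<b<c<d$ such that $w^{-1}$ reads $bcad$ in the four consecutive positions $m,m+1,m+2,m+3$. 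Relocating the single crossing from $(j,k)$ to $(i,k-1)$ affects only these four wires and causes no pair of wires to cross twice, so $D \in \RP(v)$ for some $v \in S_n$; moreover $v^{-1}$ agrees with $w^{-1}$ outside positions $m,m+1,m+2,m+3$ and reads $adbc$ there, hence $v \prec_{\Afpf} w$ and $D \in \FP^+(z)$. The reverse implication is symmetric: if $D \in \RP(v)$ with $v \in \Afpf(z)$, the same analysis produces $E \in \RP(w)$ with $v \prec_{\Afpf} w$, and then $w \in \Afpf(z)$ because $\Afpf(z)$ is an upper set of $\prec_{\Afpf}$ by Theorem~\ref{t:fpf-order}.

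The crux of the argument, and the only part requiring care, is the wire-tracing in the second paragraph: one must verify that the five emptiness conditions in Definition~\ref{fpf-ladder-def} are precisely what is needed to pin down four (and no fewer) affected wires, that their top endpoints are consecutive, and that the move $\ldFP$ transforms their left-hand configuration exactly according to the elementary relation $adbc \leftrightarrow bcad$ defining $\prec_{\Afpf}$. This is the fixed-point-free counterpart --- with four strands in place of three --- of the key computation in the proof of Lemma~\ref{inv-ladder-lem}; the extra strand (reflecting the four-position move in Theorem~\ref{t:fpf-order}) lengthens the case analysis of the possible tile configurations slightly but introduces no new idea, and everything else is a direct transcription of the $\IP^+(z)$ argument.
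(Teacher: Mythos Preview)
Your approach is the same as the paper's, and the argument is essentially correct. There is, however, one technical point you skipped that the paper handles explicitly and that is genuinely needed.

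In the $\IP$ case the move stays inside columns $k$ and $k{+}1$, so emptying columns $1,\dots,k-1$ suffices. In the $\FP$ case the target cell $(i,k-1)$ lies in column $k-1$, so you cannot simply empty everything to the left of column~$k$. You correctly empty columns $1,\dots,k-2$, but you should \emph{also} assume that the positions of column $k-1$ strictly below row $i$ are empty. This is a legitimate WLOG reduction by the same ``common suffix'' argument: in the reading order $\omega$ these cells come after $(i,k-1)$, and they are identical in $D$ and $E$. The paper states it as ``we may assume \dots columns $1,2,\dots,k-2$, as well as all positions below row $i$ in column $k-1$, are empty,'' and then uses the emptiness of the whole rectangle weakly southwest of $(i{+}1,k{-}1)$ to conclude that the four wires reach the far left in the order $m{+}2,\,m,\,m{+}1,\,m{+}3$. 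Without this extra assumption the claim can fail: in $E$, wire $m$ drops vertically through the $+$ at $(i,k-1)$ into column $k-1$ at row $i{+}1$, while wire $m{+}1$ enters column $k-1$ from the right at row $j$; crossings in column $k-1$ between rows $i{+}1$ and $j$ can then swap their relative order, so the pattern need not be $bcad$.

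A minor slip of direction: once you have $E\in\RP(w)$, you obtain $D$ by moving the crossing \emph{from} $(i,k-1)$ \emph{to} $(j,k)$, not the reverse as you wrote; $E$ has no crossing at $(j,k)$.
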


 \begin{proof}
If $D \lessdot_\RP E$ then 
the result follows 
by Theorem~\ref{bb-thm}.
Assume $D \lessdot_{\FP}E$ and 
let $i<j$ and $k$ be as in Definition~\ref{fpf-ladder-def}.

As in the proof of Theorem~\ref{inv-ladder-lem}, consider the reading order $\omega$ that lists the positions $(i,j)\in [n]\times [n]$ such that $(-j,i)$ increases lexicographically. 
In view of Theorem~\ref{thm:fpf-almost-symmetric},
we may assume without loss of generality
that columns $1,2,\dots,k-2$, as well as all positions below row $i$ in column $k-1$, are empty in both
of $D$ and $E$.
This follows since  omitting these positions has the effect of truncating the same final sequence of letters from $\word(D,\omega)$ and $\word(E,\omega)$.

Assume $E \in \RP(v) \subseteq \FP^+(z)$ for some $w \in \Afpf(z)$.
To show that $D \in \FP^+(z)$, 
we will check that $D \in \RP(v)$ for some $v\in S_n$ with $v \prec_{\Afpf} w$.

Consider the wiring diagram of $E$ and let $m$, $m+1$, $m+2$, and $m+3$ be the top indices of the wires in the antidiagonals containing the cells $(i,k-1)$, $(i,k)$, $(i,k+1)$, and $(i,k+2)$, respectively.
Since the northeast parts of these antidiagonals are empty,
it follows that as one goes from northeast to southwest,
wire $m$ of $E$
enters the top of the $+$ in cell $(i,k-1)$, wire $m+1$ enters the top of the $+$ is cell $(i,k)$,
wire $m+2$ enters the right of the $+$ in cell $(i,k)$, and wire $m+3$ enters the 
top of cell $(i+1,k+1)$, which contains a $+$ if $i+1<j$.
Tracing these wires through the wiring diagram of $E$, we see that they exit column $k-1$ on the left in relative order $m+2$, $m$, $m+1$, $m+3$.
Since we assume that $D$ and $E$ contain no positions in the rectangle weakly southwest of $(i+1,k-1)$,
the wires must arrive at the far left in the same relative order.
This means that  $w^{-1}(m)w^{-1}(m+1) w^{-1}(m+2) w^{-1}(m+3) = bcad$
for some numbers $a<b<c<d$.

Moving the $+$ in cell $(i,k-1)$ of $E$ to $(j,k)$ gives $D$ by assumption. This transformation only alters the trajectories of wires $m$, $m+1$, $m+2$, and $m+3$
and causes no pair of wires to cross more than once, so $D$ is a 
reduced pipe dream for some $v \in S_n$.
By examining the wiring diagram of $D$, it is easy to check that $v^{-1}(m)v^{-1}(m+1) v^{-1}(m+2) v^{-1}(m+3) = adbc$
so $v \prec_{\Afpf} w$ as needed.

If  instead $D \in \RP(v)\subset \FP^+(z)$
for some $v \in \Afpf(z)$, then a similar argument shows that 
$E \in \RP(w)$ for some $w \in S_n$ with $v \prec_{\Afpf} w$, which implies that $E \in \FP^+(z)$ by Theorem~\ref{t:fpf-order}.
\end{proof}

We define the \emph{bottom fpf-involution pipe dream} of $z \in \Ifpf_n$ to be the set
\be
\iDbotfpf(z)  =  \left\{(i,j) \in [n]\times [n]: j \leq \icfpf_i(z)\right\} \subseteq \ltriangneq_n.
\ee
Since $\icfpf(z) = c(\betamin(z))$, 
Theorem~\ref{thm:fpf-almost-symmetric} implies that 
$
\iDbotfpf(z) = \Dbot(\betamin(z)) \in \FP(z).
$

\begin{thm}\label{pre-t:fpf-ladder}
Let $z \in \Ifpf_n$. Then
$
\FP^+(z) = \left\{E : \iDbotfpf(z) \leq_\FP E\right\}
= \left\{E  : \iDbotfpf(z) \sim_\FP E\right\}. 
$
Thus $\FP^+(z)$ is an upper and lower set of $\leq_\FP$, with unique minimum $\iDbotfpf(z)$.
\end{thm}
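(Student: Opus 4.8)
The plan is to follow the proof of Theorem~\ref{pre-t:inv-ladder} step for step, replacing each ingredient by its fixed-point-free counterpart: Lemma~\ref{fpf-ladder-lem} for Lemma~\ref{inv-ladder-lem}, Theorem~\ref{t:fpf-order} for Theorem~\ref{t:atoms}, Lemma~\ref{l:fpf-atoms} for Lemma~\ref{l:321-atoms}, and the identity $\iDbotfpf(z) = \Dbot(\betamin(z)) \in \FP(z)$ recorded just above. The containments ``$\supseteq$'' are immediate from Lemma~\ref{fpf-ladder-lem}: starting from $\iDbotfpf(z) \in \FP^+(z)$, any single step $\lessdot_\RP$ or $\lessdot_\FP$ in either direction stays inside $\FP^+(z)$, and $\FP^+(z) = \bigsqcup_{w \in \Afpf(z)} \RP(w)$ is finite. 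Since $\leq_\RP$ is contained in $\leq_\FP$ by definition, and since by Theorem~\ref{bb-thm} each block $\RP(w)$ is an upper and lower set of $\leq_\RP$ with minimum $\Dbot(w)$, everything (including the $\sim_\FP$ version and the upper/lower-set assertions, which then follow formally using $\leq_\FP \subseteq \sim_\FP$ and Lemma~\ref{fpf-ladder-lem}) reduces to the single claim that $\iDbotfpf(z) \leq_\FP \Dbot(w)$ for every $w \in \Afpf(z)$.

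I would prove this by induction on the height of $w$ in $(\Afpf(z), \preceq_{\Afpf})$, the base case $w = \betamin(z)$ being the equality $\Dbot(\betamin(z)) = \iDbotfpf(z)$. For the inductive step, given $w \neq \betamin(z)$, it suffices to produce a set $D$ with $D \lessdot_\FP \Dbot(w)$ that also lies in $\RP(v)$ for some $v \in \Afpf(z)$ with $v \prec_{\Afpf} w$: then $\iDbotfpf(z) \leq_\FP \Dbot(v) \leq_\RP D \lessdot_\FP \Dbot(w)$ by induction, Theorem~\ref{bb-thm}, and transitivity. By Theorem~\ref{t:fpf-order}, $w$ is not $\preceq_{\Afpf}$-minimal, so (as in the derivation of the pattern $w^{-1}(p+2) < w^{-1}(p) < w^{-1}(p+1)$ in the involution proof, now invoking Lemma~\ref{l:fpf-atoms}) there are integers $a<b<c<d$ with $w^{-1} = \cdots bcad \cdots$ on four consecutive positions. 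I would take $i$ to be the row singled out by this configuration, chosen among all such configurations so as to minimize $i$ --- the analogue of the choice $i = w^{-1}(p+2)$ in the proof of Theorem~\ref{pre-t:inv-ladder}. The crucial auxiliary fact, proved exactly as there but with Lemma~\ref{l:fpf-atoms} in place of Lemma~\ref{l:321-atoms}, is a bound of the form ``if $h < i$ then $w(h)$ is small,'' established by contradiction from the forbidden patterns of Lemma~\ref{l:fpf-atoms} together with the minimality of $i$. Granting this, I would set $j > i$ minimal with $w(j) < w(i)$ and $k = c_i(w)$, observe $k \geq 2$, and verify the hypotheses of Definition~\ref{fpf-ladder-def}: that $\{i+1,\dots,j-1\}\times\{k,k+1\} \subseteq \Dbot(w)$, that $(i,k-1),(i,k) \in \Dbot(w)$ while $(i,k+1),(i,k+2),(j,k),(j,k+1) \notin \Dbot(w)$ (the last two from $c_j(w) \leq c_i(w) - 1$), and that the five antidiagonals above $(i,k-2),\dots,(i,k+2)$ are empty in $\Dbot(w)$. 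Then $D := \Dbot(w) \setminus \{(i,k-1)\} \cup \{(j,k)\}$ satisfies $D \lessdot_\FP \Dbot(w)$, and that $D \in \RP(v)$ for some $v \prec_{\Afpf} w$ is precisely the wiring-diagram computation in the proof of Lemma~\ref{fpf-ladder-lem} with $E = \Dbot(w)$.

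The main obstacle is the bookkeeping inside this inductive step, which is genuinely more delicate than in the involution case for two reasons. First, because $\betamin(z) = s_1 s_3 \cdots s_{n-1}\,\alpha_{\min}(z)$, the fpf-ladder move relocates a cell from $(i,k-1)$ down to $(j,k)$ --- down and to the \emph{right} rather than down and to the left --- so the relevant column is $k = c_i(w)$ (not $c_i(w)-1$), and the emptiness conditions must be re-derived, now involving \emph{five} antidiagonals rather than four. Second, the ``$w(h)$ is small for $h<i$'' bound must be re-established via Lemma~\ref{l:fpf-atoms}; here the fine structure of $\Afpf(z)$ --- in particular that the $w$-values on the $2$-cycle pairs of $z$ come in consecutive odd--even blocks --- enters essentially, and I expect this to need a short case analysis on the relative positions of $a,b,c,d$ and of the values immediately below them. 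Everything else is a routine transcription of the proof of Theorem~\ref{pre-t:inv-ladder}.
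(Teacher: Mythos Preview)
Your proposal is correct and matches the paper's proof essentially step for step: reduce via Theorem~\ref{bb-thm} and Lemma~\ref{fpf-ladder-lem} to showing that each $\Dbot(w)$ with $w \in \Afpf(z)\setminus\{\betamin(z)\}$ admits some $D \lessdot_\FP \Dbot(w)$, then choose the (odd) $p$ with $w^{-1}(p)\cdots w^{-1}(p{+}3)=bcad$ minimizing $i:=a$, set $k=c_i(w)$, and take $j>i$ minimal with $w(j)<w(i)$. The only detail where the paper differs from your sketch is the auxiliary bound ``$w(h)<p$ for $h<i$'': rather than a pattern-avoidance contradiction \`a la Lemma~\ref{l:321-atoms}, the paper argues directly from the cycle-pair decomposition $w^{-1}(p{+}2)\cdots w^{-1}(n)=a_0b_0a_1b_1\cdots$ forced by Lemma~\ref{l:fpf-atoms}(1), using part (2) and the minimality of $a_0=a$ to get $a_0<a_i$ for all $i$, which yields $c_h(w)-c_i(w)\le i-h-4$ and clears the five antidiagonals.
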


\begin{proof}
Both sets are contained in $\FP^+(z)$ by Lemma~\ref{fpf-ladder-lem},
and the set $\FP^+(z)$ is clearly finite.
Suppose $ \iDbotfpf(z) \neq E = \Dbot(w)$ for some $w \in \Afpf(z)$.
As in the proof of Theorem~\ref{pre-t:inv-ladder},
it suffices to show that
there exists a subset $D\subset \PP\times \PP$ with $D\lessdot_\FP E$.

Since  $w \neq \betamin(z)$, 
 Lemma~\ref{l:fpf-atoms} and Theorem~\ref{t:fpf-order} imply
that there exists an odd integer $p \in [n - 3]$ such that $
w^{-1}(p)w^{-1}(p+1)w^{-1}(p+2)w^{-1}(p+3) = bcad$ for some numbers $a<b<c<d$.
Choose $p$ such that $a$ is as small as possible.
We claim that $a<w^{-1}(q)$ for all  $q$ with $p+3 <q \leq n$.
To show this, let $a_0=a$ and $b_0  =d$ and suppose $a_i $ and $b_i$ are the integers
such that 
\[w^{-1}(p+2)w^{-1}(p+3)\cdots w^{-1}(n) = a_0b_0 a_1 b_1 \cdots a_k b_k.\]
Part (1) of Lemma~\ref{l:fpf-atoms} implies that $a_i < b_i =z(a_i)$ for all $i$, so it suffices to show 
 that $a_0<a_i$ for $i \in [k]$. This holds since 
if $i \in [k]$ were minimal with $a_i < a_0$,
then it would follow from part (2) of Lemma~\ref{l:fpf-atoms} that $a_i < a_{i-1} < b_{i-1}< b_i$, 
contradicting the minimality of $a$.
 
Now, to match Definition~\ref{fpf-ladder-def}, let $i =  a=w^{-1}(p+2)$, define $j > i$ to be minimal with $w(j) < w(i)$, and set $k = c_i(w)$.
 It is clear from the definition of $i$ that such an index $j$ exists and that $k\geq 2$.
The claim in the previous paragraph shows that if $1\leq h < i$ then $h$ must appear before position $p$ in the one-line representation
of $w^{-1}$, which means that   $w(h)<p$ and therefore $c_h(w) - c_i(w) \leq i-h- 4$.
The antidiagonals described in Definition~\ref{fpf-ladder-def} are thus empty as needed.
Since  $j \leq b= w^{-1}(p)$,
it follows that  $c_j(w) < c_i(w)$;
moreover, if $i < m < j$ then $w(m) > w(d) = p+3$ so $c_m(w) \geq c_i(w) + 1$.
Collecting these observations,  
we conclude that replacing $(i,k-1)$ in $E$ with $(j,k)$
gives a subset $D$ with $D \lessdot_\IP E$, as we needed to show.
\end{proof}

\begin{thm}
\label{t:fpf-ladder}
If $z \in \Ifpf_n$ then 
\[ \FP(z) = \left\{ E \subseteq \ltriangneq_n : \iDbotfpf(z) \leq_\FP E \right\}
=
 \left\{ E \subseteq \ltriangneq_n : \iDbotfpf(z) \sim_\FP E \right\}
.\]
\end{thm}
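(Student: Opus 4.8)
The plan is to derive this as an immediate corollary of Theorem~\ref{pre-t:fpf-ladder}, in complete parallel with the way Theorem~\ref{t:inv-ladder} was deduced from Theorem~\ref{pre-t:inv-ladder}. The starting point is the identification $\FP(z) = \{D \in \FP^+(z) : D \subseteq \ltriangneq_n\}$, which combines the definition of $\FP(z)$ with Theorem~\ref{thm:fpf-almost-symmetric}. Substituting the description $\FP^+(z) = \{E : \iDbotfpf(z) \leq_\FP E\}$ supplied by Theorem~\ref{pre-t:fpf-ladder} and intersecting with the constraint $E \subseteq \ltriangneq_n$ then gives the first claimed equality $\FP(z) = \{E \subseteq \ltriangneq_n : \iDbotfpf(z) \leq_\FP E\}$ at once, using that $\iDbotfpf(z)$ already lies in $\ltriangneq_n$.

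For the equality involving the symmetric closure $\sim_\FP$, I would argue by two inclusions. Since $\iDbotfpf(z) \leq_\FP E$ implies $\iDbotfpf(z) \sim_\FP E$, the set $\{E \subseteq \ltriangneq_n : \iDbotfpf(z) \leq_\FP E\}$ is contained in $\{E \subseteq \ltriangneq_n : \iDbotfpf(z) \sim_\FP E\}$. Conversely, if $E \subseteq \ltriangneq_n$ and $\iDbotfpf(z) \sim_\FP E$, then Theorem~\ref{pre-t:fpf-ladder} first yields $E \in \FP^+(z)$ and then upgrades this to $\iDbotfpf(z) \leq_\FP E$; together with $E \subseteq \ltriangneq_n$ this places $E$ in the first set, so the two descriptions coincide.

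The only auxiliary fact worth recording—the analogue of the remark in the proof of Theorem~\ref{t:inv-ladder} that $\ltriang_n$ is a lower set for $\leq_\IP$—is that $\ltriangneq_n$ is a lower set of $(\PP\times\PP,\leq_\FP)$. This is a short check against the defining conditions of $\lessdot_\RP$ and $\lessdot_\FP$: whenever the larger of two related configurations is contained in $\ltriangneq_n$, the cell added to the smaller configuration forces the row index $i$ to be large enough relative to the column $k$ that, since $i<j$, the cell $(j,k)$ restored upon passing downward again lies strictly below the diagonal. I do not expect any genuine obstacle in this proof: the substantive combinatorics has already been carried out in Theorem~\ref{pre-t:fpf-ladder} (and, behind it, in the description of $\Afpf(z)$ from \cite{HMP2}), and what remains is purely formal set manipulation.
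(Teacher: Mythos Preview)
Your proposal is correct and takes essentially the same approach as the paper: both deduce the result directly from Theorem~\ref{pre-t:fpf-ladder} by intersecting the description of $\FP^+(z)$ with the constraint $E\subseteq\ltriangneq_n$, noting (as the paper does in one phrase) that $\ltriangneq_n$ is a lower set for $\leq_\FP$. Your write-up is simply a more detailed unpacking of the paper's one-line proof; note, incidentally, that your direct substitution argument already establishes both equalities without needing the lower-set check, since Theorem~\ref{pre-t:fpf-ladder} gives $\FP^+(z)=\{E:\iDbotfpf(z)\leq_\FP E\}=\{E:\iDbotfpf(z)\sim_\FP E\}$ outright.
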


\begin{proof}
This is clear from Theorem~\ref{pre-t:fpf-ladder} since 
$\ltriangneq_n$ is a lower set under $\leq_\FP$.
\end{proof}

\begin{figure}[h] 
\begin{center}
{\scriptsize
\begin{tikzpicture}[xscale=1.1,yscale=0.725]

\node at (0,0) (a) {
$\arraycolsep=1.5pt
\begin{array}{cccccc}
\cdot &  {\phantom+} &  {\phantom+}  & {\phantom+}  & {\phantom+}  & {\phantom+}\\
\cdot & \cdot &    &    \\
\cdot & \cdot   &\cdot   \\
+ & \cdot    &\cdot   &\cdot\\
+ &+  &\cdot &\cdot&\cdot \\
+ &+  &\cdot &\cdot&\cdot & \cdot 
\end{array}$
};

\node at (-4,4) (b1) {
$\arraycolsep=1.5pt
\begin{array}{cccccc}
\cdot &  {\phantom+} &  {\phantom+}  & {\phantom+}  & {\phantom+}  & {\phantom+}\\
\cdot & \cdot &    &    \\
\cdot & +   &\cdot   \\
\cdot & \cdot    &\cdot   &\cdot\\
+ &+  &\cdot &\cdot&\cdot \\
+ &+  &\cdot &\cdot&\cdot & \cdot 
\end{array}$
};

\node at (0,4) (b2) {
$\arraycolsep=1.5pt
\begin{array}{cccccc}
\cdot &  {\phantom+} &  {\phantom+}  & {\phantom+}  & {\phantom+}  & {\phantom+}\\
\cdot & \cdot &    &    \\
\cdot & \cdot   &\cdot   \\
+ & \cdot    &+   &\cdot\\
+ &\cdot &\cdot &\cdot&\cdot \\
+ &+  &\cdot &\cdot&\cdot & \cdot 
\end{array}$
};

\node at (-4,8) (c1) {
$\arraycolsep=1.5pt
\begin{array}{cccccc}
\cdot &  {\phantom+} &  {\phantom+}  & {\phantom+}  & {\phantom+}  & {\phantom+}\\
\cdot & \cdot &    &    \\
\cdot & +   &\cdot   \\
\cdot & \cdot    &+   &\cdot\\
+ &\cdot  &\cdot &\cdot&\cdot \\
+ &+  &\cdot &\cdot&\cdot & \cdot 
\end{array}$
};

\node at (0,8) (c2) {
$\arraycolsep=1.5pt
\begin{array}{cccccc}
\cdot &  {\phantom+} &  {\phantom+}  & {\phantom+}  & {\phantom+}  & {\phantom+}\\
\cdot & \cdot &    &    \\
\cdot & +   &\cdot   \\
\cdot & \cdot    &+   &\cdot\\
+ &\cdot  &\cdot &\cdot&\cdot \\
+ &+  &\cdot &\cdot&\cdot & \cdot 
\end{array}$
};

\node at (4,8) (c3) {
$\arraycolsep=1.5pt
\begin{array}{cccccc}
\cdot &  {\phantom+} &  {\phantom+}  & {\phantom+}  & {\phantom+}  & {\phantom+}\\
\cdot & \cdot &    &    \\
\cdot & \cdot   &\cdot   \\
+ & \cdot    &+   &\cdot\\
+ &\cdot  &+ &\cdot&\cdot \\
+ &\cdot  &\cdot &\cdot&\cdot & \cdot 
\end{array}$
};

\node at (-4,12) (d1) {
$\arraycolsep=1.5pt
\begin{array}{cccccc}
\cdot &  {\phantom+} &  {\phantom+}  & {\phantom+}  & {\phantom+}  & {\phantom+}\\
\cdot & \cdot &    &    \\
\cdot & +   &\cdot   \\
\cdot & +    &+   &\cdot\\
\cdot &\cdot  &\cdot &\cdot&\cdot \\
+ &+  &\cdot &\cdot&\cdot & \cdot 
\end{array}$
};

\node at (0,12) (d2) {
$\arraycolsep=1.5pt
\begin{array}{cccccc}
\cdot &  {\phantom+} &  {\phantom+}  & {\phantom+}  & {\phantom+}  & {\phantom+}\\
\cdot & \cdot &    &    \\
\cdot & +   &\cdot   \\
\cdot & \cdot    &+   &\cdot\\
+ &\cdot  &+ &\cdot&\cdot \\
+ &\cdot  &\cdot &\cdot&\cdot & \cdot 
\end{array}$
};

\node at (4,12) (d3) {
$\arraycolsep=1.5pt
\begin{array}{cccccc}
\cdot &  {\phantom+} &  {\phantom+}  & {\phantom+}  & {\phantom+}  & {\phantom+}\\
\cdot & \cdot &    &    \\
\cdot & \cdot   &\cdot   \\
+ & +    &+   &\cdot\\
+ &\cdot  &\cdot &\cdot&\cdot \\
+ &\cdot  &\cdot &\cdot&\cdot & \cdot 
\end{array}$
};

\node at (0,16) (e1) {
$\arraycolsep=1.5pt
\begin{array}{cccccc}
\cdot &  {\phantom+} &  {\phantom+}  & {\phantom+}  & {\phantom+}  & {\phantom+}\\
\cdot & \cdot &    &    \\
\cdot & +   &\cdot   \\
\cdot & +    &+   &\cdot\\
\cdot &\cdot  &+ &\cdot&\cdot \\
+ &\cdot  &\cdot &\cdot&\cdot & \cdot 
\end{array}$
};

\node at (4,16) (e2) {
$\arraycolsep=1.5pt
\begin{array}{cccccc}
\cdot &  {\phantom+} &  {\phantom+}  & {\phantom+}  & {\phantom+}  & {\phantom+}\\
\cdot & \cdot &    &    \\
\cdot & +   &\cdot   \\
+ & +    &+   &\cdot\\
\cdot &\cdot  &\cdot &\cdot&\cdot \\
+ &\cdot  &\cdot &\cdot&\cdot & \cdot 
\end{array}$
};

\node at (0,20) (f1) {
$\arraycolsep=1.5pt
\begin{array}{cccccc}
\cdot &  {\phantom+} &  {\phantom+}  & {\phantom+}  & {\phantom+}  & {\phantom+}\\
\cdot & \cdot &    &    \\
\cdot & +   &\cdot   \\
\cdot & +    &+   &\cdot\\
\cdot &+  &+ &\cdot&\cdot \\
\cdot &\cdot  &\cdot &\cdot&\cdot & \cdot 
\end{array}$
};

\node at (4,20) (f2) {
$\arraycolsep=1.5pt
\begin{array}{cccccc}
\cdot &  {\phantom+} &  {\phantom+}  & {\phantom+}  & {\phantom+}  & {\phantom+}\\
\cdot & \cdot &    &    \\
\cdot & +   &\cdot   \\
+ & +    &+   &\cdot\\
\cdot &+  &\cdot &\cdot&\cdot \\
\cdot &\cdot  &\cdot &\cdot&\cdot & \cdot 
\end{array}$
};

\node at (4,24) (g1) {
$\arraycolsep=1.5pt
\begin{array}{cccccc}
\cdot &  {\phantom+} &  {\phantom+}  & {\phantom+}  & {\phantom+}  & {\phantom+}\\
\cdot & \cdot &    &    \\
+ & +   &\cdot   \\
+ & +    &+   &\cdot\\
\cdot &\cdot  &\cdot &\cdot&\cdot \\
\cdot &\cdot  &\cdot &\cdot&\cdot & \cdot 
\end{array}$
};

\draw[->] (a) -- (b1);
\draw[->] (a) -- (b2);

\draw[->] (b1) -- (c1);
\draw[->] (b2) -- (c2);
\draw[->] (b2) -- (c3);

\draw[->] (c1) -- (d1);
\draw[->] (c1) -- (d2);
\draw[->] (c2) -- (d1);
\draw[->] (c2) -- (d2);
\draw[->] (c3) -- (d2);
\draw[red,->,dashed] (c3) -- (d3);

\draw[->] (d1) -- (e1);
\draw[->] (d2) -- (e1);
\draw[->] (d3) -- (e2);

\draw[->] (e1) -- (f1);
\draw[->] (e2) -- (f2);

\draw[red,->,dashed] (f2) -- (g1);

\end{tikzpicture}}
\end{center}
\caption{Hasse diagram of $(\FP(z), <_\FP)$ for $z=(1,2)(3,7)(4,8)(5,6) \in \Ifpf_6$. The dashed red arrows indicate the covering relations of the form $D \lessdot_\FP E$.
}\label{fp-fig}
\end{figure}

\section{Future directions}
\label{sec:future}

In this final section we discuss some related identities and open problems.

\subsection{Enumerating involution pipe dreams}
\label{sec:enumeration}

Choose $w \in S_n$ and let $p =\ell(w)$.
Macdonald~\cite[(6.11)]{macdonald1991notes} proved that the following specialization of a Schubert 
polynomial gives an exact formula for the number of reduced
pipe dreams for $w$:
\be\label{bhy-eq}
|\RP(w)| = \fkS_w(1,1,\dots,1) = \frac{1}{p!}\sum_{(a_1,a_2,\dots,a_{p}) \in \R(w)} a_1 a_2\cdots a_{p}.
\ee
Recall that $\kappa(y)$ is the number of 2-cycles in $y \in\I_n$.
For $D \in \IP(y)$, define 
$\wt(D) = 2^{\kappa(y) -d_D}$ where $d_D $ is the number of diagonal positions in $D$.
For  $\cX \subseteq  \IP(y)$, define $\|\cX \| = \sum_{D \in \cX} \wt(D)$.
\begin{cor}\label{bhy-cor}
Suppose $y \in \I_n$, $z \in \Ifpf_{2n}$, and $p = \ellhat(y) =\ellfpf(z)$.
\ben
\item[(a)] $\|\IP(y)\| = 2^{\kappa(y)} \iS_y(\tfrac{1}{2},\tfrac{1}{2},\dots,\tfrac{1}{2}) = \frac{1}{2^pp!}\sum_{(a_1,a_2,\dots,a_{p}) \in \iR(y)} 2^{\kappa(y)}a_1 a_2\cdots a_{p}$.

\item[(b)] $ |\FP(z)| =  \iSfpf_z(\tfrac{1}{2},\tfrac{1}{2},\dots,\tfrac{1}{2}) = \frac{1}{2^pp!}\sum_{(a_1,a_2,\dots,a_{p}) \in \iRfpf(z)} a_1 a_2\cdots a_{p}$.
\een
\end{cor}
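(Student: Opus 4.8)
The plan is to deduce both parts from Theorem~\ref{thm:inv-pipe-dream-formula} and Macdonald's specialization \eqref{bhy-eq} by evaluating all the Schubert polynomials involved at $x_1 = x_2 = \cdots = \tfrac12$. The first equality in each part will come from the pipe dream formula, and the second from \eqref{bhy-eq} together with the definitions $\iS_y = \sum_{w\in\A(y)}\fkS_w$ and $\iSfpf_z = \sum_{w\in\Afpf(z)}\fkS_w$ of Definitions~\ref{inv-sch-def} and \ref{fpf-inv-sch-def}.

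First I would verify the leftmost equalities. By Theorem~\ref{inv-pipe-schubert-thm}, $\iS_y = \sum_{D\in\IP(y)}\prod_{(i,j)\in D}2^{-\delta_{ij}}(x_i+x_j)$. Setting every $x_i = \tfrac12$, a diagonal cell $(i,i)$ contributes $2^{-1}(\tfrac12+\tfrac12) = \tfrac12$ while an off-diagonal cell contributes $\tfrac12+\tfrac12 = 1$, so $\iS_y(\tfrac12,\dots,\tfrac12) = \sum_{D\in\IP(y)}2^{-d_D}$, and multiplying by $2^{\kappa(y)}$ gives $\sum_{D\in\IP(y)}2^{\kappa(y)-d_D} = \sum_{D\in\IP(y)}\wt(D) = \|\IP(y)\|$. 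Likewise, by Theorem~\ref{fpf-pipe-schubert-thm}, $\iSfpf_z = \sum_{D\in\FP(z)}\prod_{(i,j)\in D}(x_i+x_j)$; since each $D\in\FP(z)$ is contained in $\ltriangneq_n$ and hence has no diagonal cell, every factor equals $1$ at $x_i = \tfrac12$, so $\iSfpf_z(\tfrac12,\dots,\tfrac12) = |\FP(z)|$.

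For the remaining equalities I would use that $\fkS_w$ is homogeneous of degree $\ell(w)$, so $\fkS_w(\tfrac12,\dots,\tfrac12) = 2^{-\ell(w)}\fkS_w(1,\dots,1)$. For $w\in\A(y)$ one has $\ell(w) = \ellhat(y) = p$; summing \eqref{bhy-eq} over $w\in\A(y)$ and using $\iR(y) = \bigsqcup_{w\in\A(y)}\R(w)$ then yields $\iS_y(\tfrac12,\dots,\tfrac12) = \tfrac{1}{2^pp!}\sum_{(a_1,\dots,a_p)\in\iR(y)}a_1\cdots a_p$, and multiplying by $2^{\kappa(y)}$ completes part (a). Part (b) is the same computation with $\A(y)$ replaced by $\Afpf(z)$, $\ellhat(y)$ by $\ellfpf(z) = p$, and $\iR(y)$ by $\iRfpf(z) = \bigsqcup_{w\in\Afpf(z)}\R(w)$.

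I do not expect a genuine obstacle here, as the statement is a formal consequence of results already established; the only points requiring care are the bookkeeping that the specialization at $\tfrac12$ followed by multiplication by $2^{\kappa(y)}$ produces exactly the weight $2^{\kappa(y)-d_D}$ appearing in $\|\IP(y)\|$, and the use of homogeneity of $\fkS_w$ together with the constancy of $\ell(w)$ over $\A(y)$ (respectively over $\Afpf(z)$) to pull the global factor $2^{-p}$ out of the sum.
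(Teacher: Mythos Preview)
Your proposal is correct and follows essentially the same approach as the paper, which simply cites Theorem~\ref{thm:inv-pipe-dream-formula} for the first equalities and \eqref{bhy-eq} via Definitions~\ref{inv-sch-def} and \ref{fpf-inv-sch-def} for the second. You have spelled out precisely the details the paper leaves implicit: the specialization at $\tfrac12$ producing $2^{-d_D}$, and the homogeneity argument combined with $\iR(y)=\bigsqcup_{w\in\A(y)}\R(w)$ (respectively $\iRfpf(z)=\bigsqcup_{w\in\Afpf(z)}\R(w)$).
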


\begin{proof}
In both parts, 
the first equality is immediate from Theorem~\ref{thm:inv-pipe-dream-formula}
and the second equality is a consequence of \eqref{bhy-eq}, via Definitions~\ref{inv-sch-def} and \ref{fpf-inv-sch-def}.
\end{proof}

Billey, Holroyd, and Young gave the first bijective proof of \eqref{bhy-eq} 
(and of a more general $q$-analogue) in the recent paper 
 \cite{billey2019bijective}.
This follow-up problem is natural:

\begin{problem}
Find bijective proofs of the identities in Corollary~\ref{bhy-cor}.
\end{problem}

For some permutations, better formulas  than \ref{bhy-eq} are available.
A \emph{reverse plane partition} of shape $D \subset \PP\times \PP$
is a map $T: D \to \NN$ such that 
$ T(i,j) \leq T(i+1,j)$ and $T(i,j) \leq T(i,j+1)$
for all relevant $(i,j) \in D$.
If $\lambda$ is a partition, 
then let $\RPP_\lambda(k)$ be the set of reverse plane partitions of Ferrers shape 
$\D_\lambda  = \{ (i,j) \in \PP\times \PP : j \leq \lambda_i\}$
 with entries in $\{0,1,\dots,k\}$.

Given $w \in S_n$ write $1^k \times w$ for the permutation in $S_{n+k}$ 
that fixes $1,2,\dots,k$ while mapping $i+k \mapsto w(i) + k$ for $i \in [n]$.
Fomin and Kirillov \cite[Thm. 2.1]{fomin1997reduced} showed that if $w \in S_n$ is dominant then
$
|\RP(1^k \times w)| = |\RPP_{\lambda}(k)|
$
for the partition $\lambda$ with $\dom{w} = \D_\lambda$.
In particular:
\be
|\RP(1^k \times n \cdots 321)| =  |\RPP_{(n-1,\dots,3,2,1)}(k)| = \prod_{1 \leq i < j \leq n} \frac{ i + j+2k -1}{i + j -1}.
\ee
Serrano and Stump gave a  bijective proof of this identity in \cite{serrano2012maximal}.
There are similar formulas 
counting (weighted) involution pipe dreams. For example:
\begin{prop}
Let  $g_n = n{+1} \dots 2n\ 1 \dots n \in \I_{2n}$.
Then
	\[|\IP(1^k \times g_n)| = |\RPP_{(n,\dots,3,2,1)}(\lfloor k/2 \rfloor)|
\quad\text{for all $k \in \NN$.}\]
\end{prop}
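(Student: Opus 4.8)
The plan is to reduce the count to Fomin and Kirillov's enumeration of reduced pipe dreams for $1^m$-shifts of dominant permutations. First I would compute the involution code of $y=1^k\times g_n$. From the one-line form $g_n=(n+1)(n+2)\cdots(2n)\,1\,2\cdots n$ one checks directly that $\ic(g_n)=(1,2,\dots,n,0,\dots,0)$, so prepending $k$ fixed points gives $\ic(1^k\times g_n)=(0^k,1,2,\dots,n,0^n)$. Hence, using $\ic(z)=c(\alpha_{\min}(z))$ and the definition of the bottom involution pipe dream, $\iDbot(1^k\times g_n)$ is the ``inverted staircase'' $\Delta_k:=\{(k+a,c):1\le c\le a\le n\}$, which for $k\ge 1$ lies strictly below the diagonal. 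By Theorem~\ref{t:inv-ladder}, $\IP(1^k\times g_n)$ is exactly the set of $E\subseteq\ltriang_{k+2n}$ reachable from $\Delta_k$ by $\ldRP$- and $\ldIP$-moves.

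The core step is a bijection
\[
\IP(1^k\times g_n)\ \xrightarrow{\ \sim\ }\ \RP\bigl(1^{\lfloor k/2\rfloor}\times w_0\bigr),\qquad w_0:=(n+1)\,n\cdots 2\,1\in S_{n+1},
\]
after which the claim follows from Fomin--Kirillov's theorem (cited above), since $w_0$ is dominant with $\dom(w_0)=\D_{\delta_n}$ where $\delta_n=(n,n-1,\dots,1)$, together with Serrano and Stump's evaluation if the explicit product is wanted. To build the bijection I would imitate the Fomin--Kirillov correspondence ``across the diagonal'': each $D\in\IP(1^k\times g_n)$ is obtained from $\Delta_k$ by lifting its $\binom{n+1}{2}$ cells up their columns, and recording for each cell the number of rows it has been lifted produces a candidate reverse plane partition of the transposed staircase $\delta_n$. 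The key structural claim is that no cell of any $D\in\IP(1^k\times g_n)$ can be pushed above row $\lceil k/2\rceil$: an ordinary ladder move carrying a cell across the main diagonal is never available inside $\ltriang_{k+2n}$, while the involution ladder move $\ldIP$, which \emph{is} available there, acts like a reflection off the diagonal and consumes two of the $k$ available ``free rows'' for each row of net progress it makes near the diagonal. Granting this, deleting the $\lceil k/2\rceil$ empty top rows of $D$ (and restoring, from the almost-symmetric data above the diagonal, the information of which pipes cross where) yields a bona fide reduced pipe dream for $1^{\lfloor k/2\rfloor}\times w_0$; the assignment is reversible because each such pipe dream is obtained from $\Dbot(1^{\lfloor k/2\rfloor}\times w_0)$ by ordinary ladder moves, which unfold uniquely to $\ldRP$/$\ldIP$-chains issuing from $\Delta_k$.

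The hard part is precisely this reflection analysis: one must pin down which $\ldIP$-moves are reachable from $\Delta_k$ while staying in $\ltriang_{k+2n}$, and verify that they compose with ordinary ladder moves so that the resulting ``lift'' statistic is a reverse plane partition of $\delta_n$ with entries in $\{0,1,\dots,\lfloor k/2\rfloor\}$, every such filling arising exactly once. (The small cases $n=1$ and $n=2$, where $\IP(1^k\times g_n)$ is easy to list, provide a useful check on the reflection bookkeeping.) A plausible alternative, bypassing reduced pipe dreams altogether, is an induction on $k$ using the transition bijections of Section~\ref{sec:inv-polynom} for the polynomials $\iS_y$ on the left-hand side and a peeling recursion $|\RPP_{\delta_n}(m)|=\sum_{\mu\subseteq\delta_n}|\RPP_\mu(m-1)|$ on the right-hand side; the substantive point would then be to show that the shifted dominant components of the involutions produced on the left side range, after the appropriate number of transition steps, exactly over the sub-staircases $\D_\mu$ with $\mu\subseteq\delta_n$.
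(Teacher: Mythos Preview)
Your proposal has a genuine gap stemming from a missed structural fact: the involution $1^k\times g_n$ has a \emph{unique} atom, namely $1^k\times w_n$ with $w_n=2\,4\,\cdots\,(2n)\,1\,3\,\cdots\,(2n-1)$. (This follows immediately from Theorem~\ref{t:atoms}, since in $\alpha_{\min}(g_n)^{-1}$ the pairs $b_ia_i$ are $(n{+}1)1,(n{+}2)2,\dots,(2n)n$, and no rewriting $cab\to bca$ is ever available.) Consequently $\IP(1^k\times g_n)=\{D\in\RP(1^k\times w_n):D\subseteq\ltriang_{k+2n}\}$, and by Theorem~\ref{pre-t:inv-ladder} the set is connected by \emph{ordinary} ladder moves alone. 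No $\lessdot_\IP$ moves ever occur, so the entire ``reflection analysis'' you identify as the hard part is vacuous here.

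Your proposed bijection to $\RP(1^{\lfloor k/2\rfloor}\times w_0)$ by ``deleting the $\lceil k/2\rceil$ empty top rows'' does not work as stated. The sets $D\in\IP(1^k\times g_n)$ are reduced pipe dreams for $1^k\times w_n$, not for any shift of the reverse permutation $w_0$, and row deletion does not convert one into the other. For a concrete failure, take $n=2$, $k=2$: the element $\{(2,2),(3,2),(3,3)\}\in\IP(1^2\times g_2)$ becomes $\{(1,2),(2,2),(2,3)\}$ after deleting one row, whose standard reading word $243$ gives the permutation $13524$, not $14325=1\times w_0$. (Invoking ``the almost-symmetric data above the diagonal'' does not help: that data reconstructs a pipe dream for $1^k\times g_n$, again not for $w_0$.) So while the two sets happen to have the same cardinality, your map is not the witness.

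The paper's argument is far more direct. Once one knows the atom is unique and that the moves connecting $\IP(1^k\times g_n)$ are the simple ones $(i,j)\mapsto(i{-}1,j{+}1)$, each pipe dream is obtained from $\iDbot=\{(j+k,i):1\le i\le j\le n\}$ by sliding every cell some distance along its antidiagonal; recording that distance at each cell gives a filling of $\iDbot$ with rows weakly increasing and columns weakly decreasing, i.e.\ an element of $\RPP_{(n,\dots,2,1)}$. The entry bound $\lfloor k/2\rfloor$ is exactly the constraint that the diagonal cells $(j+k,j)$ remain in $\ltriang_{k+2n}$ after sliding. No detour through Fomin--Kirillov is needed.
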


\begin{proof}
It follows from Theorem~\ref{t:atoms}
that $\A(g_n) = \{w_n \}$ for $w_n = 246 \cdots (2n) 1 35 \cdots (2n-1) \in S_{2n}$, and that
$\A(1^k\times g_n) = \{1^k \times w_n\}$.
Moreover, we have $\iDbot(1^k\times g_n) = \{ (j+k,i) : 1 \leq i \leq j \leq n\}$.
From these facts, we see that 
$\IP(1^k \times g_n)$ is connected by ordinary ladder moves that are \emph{simple} in that they replace a single cell $(i,j)$ by $(i-1,j+1)$.

Now consider all ways of filling the cells $(i,j)\in \iDbot(1^k\times g_n)$ by numbers  $a \in \{0,1,\dots,\lfloor k/2\rfloor\}$
such that rows are weakly increasing and columns are weakly decreasing.
The set of such fillings is obviously in bijection with $\RPP_{(n,\dots,3,2,1)}(\lfloor k/2 \rfloor)$.
On the other hand, we can transform such a filling into 
a subset of $\ltriang_n$ by replacing each cell $(i,j)$ filled with $a$ by $(i-a,j+a)$.
It is easy to see that this operation is a bijection from our set of fillings to  $\IP(1^k\times g_n)$.
\end{proof}

Computations indicate that
if $k,n \in \NN$ and $\{p,q\}= \{\lfloor n/2 \rfloor, \lceil n/2 \rceil\}$
then
\begin{equation}
\label{eq:pp}
\ba \|\IP(1^k \times n \cdots 321)\| &= \prod_{i=1}^{p}
\prod_{j=1}^{q} \frac{i+j+k-1}{i+j-1}
\ea\ee
and
\be\label{eq:pp2}
\ba
|\FP(\idfpf_{2k} \times 2n \cdots 321)| 
	 &= \prod_{\substack{i,j \in [n], i\neq j }} \frac{i + j +2k-1}{i + j -1}.
	 \ea
\ee
The right-hand side of \eqref{eq:pp}
is the number of reverse plane partitions with entries at most $k$ of shifted shape $\SD_\lambda= \{ (i,i+j-1) : (i,j) \in \D_\lambda\}$ for
$\lambda= (p+q-1,p+q-3,p+q-5,\dots)$ \cite{proctor1983shifted}.
Similar formulas should  hold for
$\|\IP(1^k \times y)\|$ and $|\FP(\idfpf_{2k} \times z)|$ when $y \in\I_n$ and $z \in \Ifpf_{2n}$ are any (fpf-)dominant involutions.
We expect that one can prove such identities algebraically using the Pfaffian formulas  for $\iS_y$ and $\iSfpf_z$
in  \cite[\S5]{Pawlowski}.
A more interesting open problem is the following:

\begin{problem}
Find bijective proofs of \eqref{eq:pp} and \eqref{eq:pp2} and their dominant generalizations.
\end{problem}

\subsection{Ideals of matrix Schubert varieties} \label{sec:ideals}

Another open problem is to find a geometric explanation for the formulas in Theorem~\ref{thm:inv-pipe-dream-formula}. Such an explanation exists in the double Schubert case, as we briefly explain.

Recall that $A_{[i][j]}$ denotes the upper-left $i\times j$ submatrix of a matrix $A$.
Let $\mathcal{Z}$ be the matrix of indeterminates $(z_{ij})_{i,j \in [n]}$. For $w \in S_n$, let $I_w \subseteq \CC[z_{ij} : i,j \in [n]] = \CC[\Mat_n]$ be the ideal generated by all $(\rank w_{[i][j]}+1) \times (\rank w_{[i][j]}+1)$ minors of $\mathcal{Z}_{[i][j]}$ for $i,j \in [n]$.  The vanishing locus of $I_w$ in the space $\Mat_n$
of $n\times n$ complex matrices is exactly the matrix Schubert variety $\MX_w$.

Let $\init(I_w)$ be the \emph{initial ideal} of leading terms in $I_w$ with respect to any term order on $\CC[z_{ij}]$ with the property that the leading term of $\det(A)$ for any submatrix $A$ of $\mathcal{Z}$ is the product of the antidiagonal entries of $A$. For instance, lexicographic order with the variable ordering $z_{1n} < \cdots < z_{11} < z_{2n} < \cdots < z_{21} < \cdots$ has this property.

\begin{thm}[\cite{knutson-miller}]
For each permutation $w \in S_n$, the ideal $I_w$ is prime, 
and there is a prime decomposition
$
    \init(I_w) = \bigcap_{D \in \RP(w)} (z_{ij} : (i,j) \in D)
$. 
\end{thm}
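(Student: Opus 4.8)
The plan is to deduce everything from the two ingredients established in \cite{knutson-miller}: first, that Fulton's generators (the minors defining $I_w$) form a Gr\"obner basis with respect to any antidiagonal term order; and second, that the resulting initial ideal $\init(I_w)$ is a squarefree monomial ideal whose Stanley--Reisner complex is the subword complex associated to the pipe dreams of $w$. Granting the Gr\"obner basis statement, $\init(I_w)$ is generated by the antidiagonal leading terms of the minors, which are squarefree monomials. A squarefree monomial ideal $J$ has a canonical prime decomposition $J = \bigcap_{F} (z_{ij}: (i,j)\in F)$, where $F$ ranges over the minimal vertex covers of the associated hypergraph (equivalently, the complements of the facets of the Stanley--Reisner complex of $J$). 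So the content of the theorem is the identification of those facet complements with the reduced pipe dreams $\RP(w)$.

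First I would recall the combinatorial description of $\init(I_w)$: its generators are the monomials $\prod_{(a,b)\in A} z_{ab}$, where $A$ runs over the antidiagonals of size $\rank w_{[i][j]}+1$ sitting inside the northwest $i\times j$ rectangle, for all $i,j$. A subset $D\subseteq [n]\times[n]$ is a face of the Stanley--Reisner complex exactly when $D$ contains no such antidiagonal, i.e.\ when for every $i,j$ the set $D\cap([i]\times[j])$ has no antidiagonal chain of length $\rank w_{[i][j]}+1$; by the easy direction of Greene's theorem this says $D\cap([i]\times[j])$ has ``antidiagonal width'' at most $\rank w_{[i][j]}$ for all $i,j$. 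The facets of this complex are then the maximal such $D$, and the key claim — proved in \cite{knutson-miller} via the theory of subword complexes — is that these maximal faces are precisely the complements (in $[n]\times[n]$) of the reduced pipe dreams for $w_0 w$, so that after the standard reindexing $(i,j)\mapsto(i,n+1-j)$ (or the appropriate bookkeeping matching the conventions of this paper's $\rtriang_n$), the associated primes of $\init(I_w)$ are exactly $(z_{ij}:(i,j)\in D)$ for $D\in\RP(w)$. Here I would invoke Theorem~\ref{pipe-thm} to pass freely between reduced pipe dreams and reduced words, and note that all such $D$ have the same cardinality $\ell(w)$ — the codimension of $\MX_w$ — so that $\init(I_w)$ is unmixed and no proper subset of $\RP(w)$ suffices.

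The primality of $I_w$ follows separately: Fulton \cite{fulton1992flags} showed that the scheme $V(I_w)=\MX_w$ is irreducible and reduced (it is the closure of a single $B\times B^+$-orbit inside $\Mat_n$, which is irreducible, and Fulton's minors generate the full ideal), so $I_w$ is prime. One can also see this homologically: since $\init(I_w)$ is a squarefree, hence radical, ideal of the correct height $\ell(w)=\operatorname{codim}\MX_w$, flat degeneration shows $I_w$ is itself radical of that height with $\MX_w$ as its unique component, whence prime. I expect the main obstacle to be not the monomial-ideal bookkeeping but the Gr\"obner basis statement itself — proving that Fulton's minors are a Gr\"obner basis under an antidiagonal term order — which is the technical heart of \cite{knutson-miller} and which I would quote rather than reprove; the remaining work is the translation of the resulting Stanley--Reisner complex into the language of pipe dreams via Theorem~\ref{pipe-thm} and the subword complex formalism.
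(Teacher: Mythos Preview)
The paper does not give its own proof of this statement: it is stated as a theorem of Knutson and Miller with citation \cite{knutson-miller} and no argument supplied, since it is being quoted as background motivating Conjecture~\ref{conj:ideals}. So there is no ``paper's proof'' to compare against.

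Your sketch is a fair outline of the Knutson--Miller argument and would be appropriate as an expository summary. A couple of small points of caution. First, your parenthetical about facets being complements of reduced pipe dreams for $w_0 w$, together with the reindexing $(i,j)\mapsto(i,n+1-j)$, is muddled: in the subword-complex formulation the facets of $\Delta(Q,w)$ are directly the complements of the reduced subwords for $w$ inside the ambient word $Q$, and with the conventions here (pipe dreams in $\rtriang_n$, northwest rank conditions) no $w_0$-twist or column reflection is needed. Second, the step ``flat degeneration shows $I_w$ is itself radical'' is not automatic from $\init(I_w)$ being radical; radicality does not in general lift from the initial ideal. What one actually uses (and what Knutson--Miller do) is that $\init(I_w)$ is unmixed of codimension $\ell(w)$, so the Gr\"obner degeneration preserves the Hilbert series and hence $I_w$ has the same codimension; combined with Fulton's identification of $V(I_w)$ with the irreducible variety $\MX_w$, this forces $I_w$ to be prime. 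Your first route to primality (via Fulton) is the clean one.
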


Given the description of the class $[\MX_w]$ in \S \ref{subsec:eq-cohom}, this result implies the pipe dream formula \eqref{eq:double-schubert-def} for $[\MX_w] = \fkS_w(x,\yvar)$.

Now let
 $\hat{\mathcal{Z}}$ be the symmetric matrix 
 of indeterminates $[z_{\max(i,j), \min(i,j)}]_{i,j \in [n]}$.
 Define $\hat I_y \subseteq \CC[z_{ij} : 1 \leq j < i \leq n] = \CC[\SM_n]$ for $y \in \I_n$
 to be the ideal generated by all $(1+\rank y_{[i][j]}) \times (1+\rank y_{[i][j]})$ minors of $\hat{\mathcal{Z}}_{[i][j]}$ for $i,j \in [n]$. 
 The vanishing locus of $\hat I_y$ is $\iMX_y$.

\begin{conj} \label{conj:ideals} For $y \in \I_n$, the ideal $\hat I_y$ is prime, and there is a primary decomposition of $\init(\hat I_y)$ whose top-dimensional components are $\left(z_{ij}^{\cwt{i}{j}{D}} : (i,j) \in D\right)$ for $D \in \IP(y)$, where
$\cwt{i}{j}{D}=2$ if the pipes crossing at $(i,j)$ are labeled $p$ and $z(p)$ for some $p\in[n]$, and otherwise $\cwt{i}{j}{D}=1$.
 \end{conj}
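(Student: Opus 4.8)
The plan is to carry out the Knutson--Miller program \cite{knutson-miller} in the symmetric setting, which amounts to three successive tasks: (i) prove that $\hat I_y$ is prime; (ii) prove that the minors generating $\hat I_y$ form a Gr\"obner basis for an antidiagonal term order on $\CC[\SM_n]$, so that $\init(\hat I_y)$ is the monomial ideal generated by the antidiagonal terms of Fulton's essential minors of $\hat{\mathcal{Z}}$; and (iii) decompose that initial ideal, showing its top-dimensional primary components are the ideals $(z_{ij}^{\cwt{i}{j}{D}} : (i,j) \in D)$ over $D \in \IP(y)$. As indicated in the remark after Theorem~\ref{thm:inv-pipe-dream-formula}, (i)--(iii) together with \eqref{eq:equivariant-class} and Theorem~\ref{thm:inv-schubert-classes} then yield a geometric proof of Theorem~\ref{thm:inv-pipe-dream-formula}, the exponents $\cwt{i}{j}{D}$ accounting for the factor $2^{\kappa(y)}$. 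The skew-symmetric case should proceed identically, with Pfaffians replacing minors and the classical Pfaffian ideals of a generic skew-symmetric matrix playing the role of the base case.

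For primeness, I would first record that $\iMX_y$ is irreducible: $\iMX_y \cap \GL_n$ is a single $B$-orbit on the symmetric space $\GL_n/\O_n \cong \SM_n \cap \GL_n$ of Richardson and Springer, and $\iMX_y$ is its closure (implicit in the proof of Theorem~\ref{thm:inv-schubert-classes} via the flat map $\sigma : g \mapsto gg^T$). Since $[\iMX_y]$ has degree $\ellhat(y)$ by Theorems~\ref{thm:inv-schubert-classes} and~\ref{thm:inv-pipe-dream-formula}, the codimension of $\iMX_y$ in $\SM_n$ is $\ellhat(y)$; a Fulton-style ``northwest'' induction on the essential rank conditions then shows $\hat I_y$ cuts out a scheme of this dimension, so $\sqrt{\hat I_y}$ is the prime ideal of $\iMX_y$. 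The delicate point is \emph{reducedness}. The classical shortcut --- a squarefree initial ideal forces radicality --- fails here because $\init(\hat I_y)$ is genuinely non-reduced, containing squares $z_{ij}^2$. Instead I would establish that $\iMX_y$ is Cohen--Macaulay (I expect this from a resolution of $\iMX_y$ as a degeneracy locus in the style of \cite{harris-tu}, but it must be checked) and then localize at a generic point of the dense orbit, where $\hat I_y$ becomes the ideal of minors of a generic symmetric matrix tensored with a polynomial ring --- prime by the classical theory of symmetric determinantal ideals --- and spread reducedness over $\iMX_y$ using flatness of $\sigma$ and the $T$-action.

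For the Gr\"obner basis statement one cannot simply transport the Knutson--Miller Gr\"obner basis for $I_w$ along the substitution $z_{ij} \mapsto z_{\max(i,j),\min(i,j)}$, since identifying $z_{ij}$ with $z_{ji}$ causes the antidiagonal leading terms of minors to collapse or reorder; a direct argument is needed. I expect the most reliable route is the ``lifting a degeneration'' technique of Knutson \cite{Knutson} used to reprove \eqref{eq:double-schubert-def} in Section~\ref{sec:inv-polynom}, degenerating $\iMX_y$ directly rather than working over the generic matrix. An alternative, given that $\hat I_y$ and $\init(\hat I_y)$ share a Hilbert series, is to verify by downward induction on $\ellhat(y)$ --- using the transition equation \eqref{itransition-eq} and the bijection of Theorem~\ref{thm:inv-dominant-transition}, exactly as in the proof of Theorem~\ref{inv-pipe-schubert-thm} --- that the monomial ideal generated by the antidiagonal terms of the generators has the correct codimension and multidegree.

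Finally, to decompose $\init(\hat I_y)$: I would identify its radical with the Stanley--Reisner ideal of a ``shifted subword complex'' whose facets are exactly the involution pipe dreams of $y$, using the word characterization of $\IP(y)$ in Theorem~\ref{thm:almost-symmetric} (and Theorem~\ref{thm:fpf-almost-symmetric} for the skew case), with the ladder-move description of Theorem~\ref{t:inv-ladder} supplying shellability and hence pure-dimensionality; this pins down the top-dimensional associated primes as $(z_{ij} : (i,j) \in D)$ for $D \in \IP(y)$. The exponent $\cwt{i}{j}{D}$ is then the length of the local ring of $\init(\hat I_y)$ at that prime, which a local analysis should show equals $2$ precisely when the two pipes crossing at $(i,j)$ are labeled $p$ and $z(p)$ for a $2$-cycle of $y$, and $1$ otherwise; consistency can be checked against \eqref{eq:equivariant-class}, since $\prod_{(i,j) \in D} \cwt{i}{j}{D}$ must equal $2^{\kappa(y) - d_D}$, where $d_D$ counts the diagonal cells of $D$, for the class to reproduce $2^{\kappa(y)}\iS_y = [\iMX_y]$. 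Throughout, the \textbf{main obstacle} is precisely the loss of squarefreeness: it blocks the standard reducedness argument, it forces one to control (or prove irrelevant) the embedded non-top-dimensional primary components of $\init(\hat I_y)$, and it means the Gr\"obner basis claim cannot be deduced formally from the classical case but must come from a new degeneration or an independent Hilbert-series computation.
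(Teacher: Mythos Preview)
The statement you are addressing is a \emph{conjecture}, not a theorem: the paper does not prove it and offers no proof to compare against. What the paper does provide is the remark (after Theorem~\ref{thm:inv-pipe-dream-formula} and again in Section~\ref{sec:ideals}) that a Knutson--Miller style program is the hoped-for route, together with two worked examples illustrating the expected primary decomposition. Your proposal is therefore not a proof but a research outline for attacking an open problem, and you are appropriately candid about this, flagging each step with ``I expect,'' ``should,'' or ``must be checked.'' As an outline it is reasonable and aligns with the paper's own stated hope; in particular you correctly isolate the central difficulty, namely that $\init(\hat I_y)$ is not squarefree, which simultaneously blocks the standard radicality argument, forces one to control embedded components, and prevents a formal reduction to the classical Gr\"obner basis.

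One concrete point where your plan diverges from the paper and should be corrected: you write that ``the skew-symmetric case should proceed identically, with Pfaffians replacing minors.'' The paper explicitly contradicts this in the final sentence of Section~\ref{sec:ideals}: for $z \in \Ifpf_n$ the ideal generated by the relevant minors of a generic skew-symmetric matrix \emph{need not be prime}, and the authors state that they do not have an analogue of Conjecture~\ref{conj:ideals} in that case. So step~(i) of your program already fails for the obvious candidate ideal in the symplectic setting, and formulating the correct conjecture there is itself open.
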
 

As per \S\ref{subsec:eq-cohom}, the conjecture would give a direct geometric proof of Theorem~\ref{thm:inv-pipe-dream-formula}.

\begin{ex}
Let $y = 1243 = (3,4) \in \I_4$.  Then $A \in \iMX_y$ if and only if $\rank A_{[i][j]} \leq m_{ij}$ for
\begin{equation*}
(m_{ij})_{1\leq i,j\leq 4} = 
\left(\begin{array}{cccc}
1 & 1 & 1 & 1\\
1 & 2 & 2 & 2\\
1 & 2 & 2 & 3\\
1 & 2 & 3 & 4
\end{array}\right).
\end{equation*} 
These rank conditions all follow from $\rank A_{[3][3]} \leq 2$, so $\hat I_y$ is generated by $\det \hat{\mathcal{Z}}_{[3][3]}$.
 The ideals in the primary decomposition $\init(\hat I_y) = (z_{31}^2 z_{22}) = (z_{31}^2) \cap (z_{22})$ correspond to the two involution pipe dreams
in the set
$
\IP(y) = \{ \{ (3,1)\}, \{(2,2)\}\}
$ for $y=(3,4)$.
\end{ex}

\begin{ex}
Let $y = 14523 = (2,4)(3,5) \in \I_5$. One computes that
\begin{equation*}
\init(\hat I_y) = (z_{21}^2, z_{31} z_{21}, z_{22}z_{31}, z_{31}^2, z_{32}z_{31}, z_{32}^2) = (z_{21}^2, z_{31}, z_{32}^2) \cap (z_{21}, z_{22}, z_{31}^2, z_{32}).
\end{equation*}
There is a single involution pipe dream for $y$ given by $\{(2,1),(3,1),(3,2)\}.$
This pipe dream corresponds to the codimension $3$ component $(z_{21}^2, z_{31}, z_{32}^2)$ of $\init(\hat I_y)$, while the codimension $4$ component $(z_{21}, z_{22}, z_{31}^2, z_{32})$ does not correspond to a pipe dream of $y$.
\end{ex}
For $z \in \Ifpf_n$, the ideal generated by the $(\rank z_{[i][j]}+1) \times (\rank z_{[i][j]}+1)$ of a skew-symmetric matrix of indeterminates need not be prime, and we do not have an analogue of Conjecture~\ref{conj:ideals}.

\subsection{Pipe dream formulas for $K$-theory}
\label{sec:k-theory}

A third source of open problems 
concerns pipe dreams for $K$-theory.
A subset  $D\subseteq [n]\times [n]$ is a \emph{$K$-theoretic pipe dream} for $w \in S_n$ if 
$\word(D) = (a_1,a_2,\dots,a_p)$
satisfies
$
s_{a_1} \circ s_{a_2} \circ \dots \circ s_{a_p} = w
$
where $\circ$ is
the Demazure product.
Let $\KRP(w)$ denote the set of $K$-theoretic pipe dreams of $w$.
Fomin and Kirillov \cite{FominKirillov94} introduce these objects in order 
to state this formula for
the \emph{(generalized) Grothendieck polynomial} $\fkG_w$ of 
a permutation $w\in S_n$:
\begin{equation}
\label{eq:groth}
\fkG_w = \sum_{D \in \KRP(w)} \beta^{|D| - \ell(w)} \prod_{(i,j) \in D} x_i \in \ZZ[\beta][x_1,x_2,\dots,x_n].
\end{equation}
This identity is nontrivial to derive from 
 Lascoux and Sch\"utzenberger's original definition of Grothendieck polynomials in terms of isobaric divided difference operators~\cite{lascoux1990anneau,lascoux1983symmetry}.
 
 Grothendieck polynomials becomes Schubert polynomials on setting $\beta=0$.
 In \cite{MP2019,MP2019a}, continuing work of Wyser and Yong \cite{wyser-yong-orthogonal-symplectic}, the second two authors studied \emph{orthogonal} and \emph{symplectic Grothendieck polynomials} $\fkG^{\O}_y$ and $\fkG^{\Sp}_z$ indexed by $y \in \I_n$ and $z \in \Ifpf_n$.
 These polynomials likewise recover the involution Schubert polynomials $\iS_y$ and $\iSfpf_z$ on setting $\beta=0$,
 and it would be interesting to know if they have analogous pipe dream formulas.

The symplectic case of this question is more tractable.
The polynomials $\fkG^{\Sp}_z$ have a formulation in terms of isobaric divided difference operators due to Wyser and Yong \cite{wyser-yong-orthogonal-symplectic},
which suggests a natural $K$-theoretic variant of the set $\FP(z)$.
A formula for $\fkG^{\Sp}_z$ involving these objects appears in \cite[\S4]{MP2020}.
 By contrast, no simple algebraic formula is known for the  polynomials $\fkG^{\O}_y$.
It is a nontrivial problem even to identify 
the correct $K$-theoretic generalization of $\IP(y)$.

 \begin{problem}
Find a pipe dream formula for 
the polynomials $\fkG^{\O}_y$
involving an appropriate ``$K$-theoretic'' generalization of the sets of involution pipe dreams 
$\IP(y)$.
\end{problem}

%
%




\end{document}